\documentclass[12pt]{article}
\usepackage{amssymb,amsfonts,amsmath,amsthm,comment,dsfont,xcolor,tikz,tikz-cd}
\usepackage[colorlinks, citecolor=purple,linkcolor=blue,urlcolor=teal]{hyperref}
\usepackage[lite,alphabetic]{amsrefs}
\usepackage{cleveref}
\usepackage[a4paper, portrait, margin=24mm]{geometry}

\newcommand{\1}{\mathds{1}}
\newcommand{\0}{\mathds{O}}

\newcommand{\Q}{\mathbb{Q}}
\newcommand{\R}{\mathbb{R}}

\newcommand{\N}{\mathbb{N}}

\newcommand{\Ba}{\mathrm{B}}

\newcommand{\So}{\mathrm{S}}

\newcommand{\8}{\infty}
\newcommand{\ph}{\mathrm{ph}}
\newcommand{\oc}{\mathrm{oc}}
\newcommand{\soc}{\mathrm{\sigma oc}}

\newcommand{\Sol}{\mathrm{Sol}}

\newcommand{\Co}{\mathcal{C}}

\newcommand{\bbot}{\bot\bot}

\newcommand{\too}{\xrightarrow[]{\mathrm{o}}}
\newcommand{\touo}{\xrightarrow[]{\mathrm{uo}}}
\newcommand{\toso}{\xrightarrow[]{\sigma\mathrm{o}}}

\newcommand{\touso}{\xrightarrow[]{\mathrm{u\sigma o}}}

\newcommand{\toww}{\xrightarrow[]{{\mathrm{w}^{*}}}}
\newcommand{\tobw}{\xrightarrow[]{{\mathrm{bw}^{*}}}}

\newtheoremstyle{mytheorem}
  {12pt}   
  {4pt}   
  {\itshape} 
  {}       
  {\bfseries} 
  {.}      
  {.5em}   
  {}       

\newtheoremstyle{mydefinition}
  {12pt}        
  {8pt}       
  {\normalfont} 
  {}           
  {\bfseries}   
  {.}           
  {.5em}        
  {}

\newcounter{dummy} \numberwithin{dummy}{section}
\theoremstyle{mytheorem}
\newtheorem{theorem}[dummy]{Theorem}
\newtheorem{lemma}[dummy]{Lemma}

\newtheorem{proposition}[dummy]{Proposition}
\newtheorem{corollary}[dummy]{Corollary}
\newtheorem{question}[dummy]{Question}
\theoremstyle{mydefinition}
\newtheorem{remark}[dummy]{Remark}
\newtheorem{example}[dummy]{Example}
\usepackage{scalerel}
\DeclareMathOperator*{\bigplus}{\scalerel*{+}{\sum}}

\begin{document}

\title{Variants of order semicontinuity in Banach lattices}

\author{Eugene Bilokopytov\thanks{Instituto de Ciencias Matem\'aticas, Madrid, Spain (\texttt{bilokopy@ualberta.ca}).}}
\maketitle

\begin{abstract}
In this article we consider various properties of a normed lattice, which are similar to semicontinuity (also known as the Fatou property), establish relations between these properties, and discuss stability of these properties under renorming. In particular, we show that a normed lattice $F$ is order continuous iff every renorming of $F$ is semicontinuous. We also prove that the weakly Fatou (we propose the term ``demicontinuous'') normed lattices are precisely the ones isomorphic to regular sublattices of monotonically complete Banach lattices. In order to do so we introduce the concept of the Lorentz completion of a demicontinuous normed lattice, which is somewhat analogous to the universal completion from the vector lattice theory. Furthermore, we unify and simplify the proofs of the characterizations of monotone completeness from \cite{aw} and \cite{taylor} and provide their quantitative versions.

The semicontinuity-related properties in AM-spaces have some additional features. While the classical Kakutani theorem states that AM-spaces are precisely the closed sublattices of $\Co\left(K\right)$-spaces, we show, using two different methods, that the semicontinuous AM-spaces are precisely the closed regular sublattices of $\Co\left(K\right)$-spaces. Finally, we prove that for a normed space $E$, the AM-space of positively homogeneous weak* continuous functions on $\Ba_{E^{*}}$ is semicontinuous iff it is a regular sublattice of $\Co\left(\Ba_{E^{*}},\mathrm{w}^{*}\right)$ iff $\dim E<\8$.\medskip

\emph{Keywords:} Normed lattices, semicontinuity, spaces of continuous positively homogeneous functions.

MSC2020 46A19, 46A40, 46B42, 46E05, 46E15.
\end{abstract}

\section{Introduction}

Semicontinuity (a.k.a. the Fatou property)\footnote{There is a lot of ambiguity in the terminology used in the literature. The term ``Fatou property'' in the Banach lattice literature means what we call ``semicontinuity'', but in Banach function space literature it is usually means what we call ``$1$-monotone completeness''; what we call ``monotone completeness'' is often referred to as ``Levi property'', but the latter term sometimes means what we call ``monotone boundedness''. For this reason, we propose to use the unambiguous term ``order semicontinuity'', which has been used in the Japanese and Soviet literature, and develop an appropriate nomenclature for various related properties.} is classical in the Banach lattice theory. Less well known are demicontinuity (this is our proposed term for the weak Fatou property) and the concept of a semicontinuous (a.k.a. Fatou) topology. This article is dedicated to clarification of the relationships between these and related concepts in the context of general normed lattices, as well as in the context of AM-spaces.\medskip

Recall that semicontinuity can be stated as order closedness of the unit ball of a normed lattice. A locally solid topology on a vector lattice is called semicontinuous if it has a local base consisting of order closed sets. Clearly, every semicontinuous normed lattice has a semicontinuous topology, but a semicontinuous normed lattice can have a non-semicontinuous renorming (which nevertheless generates the same topology, which is semicontinuous). If a normed lattice has a semicontinuous topology, it is then demicontinuous, i.e. the order limits of nets in the unit ball have a bounded norm.

We prove in Theorem \ref{oc} that a normed lattice is order continuous if and only if it has no non-semicontinuous renormings. On the other hand, we show (Proposition \ref{qf}) that a normed lattice has a semicontinuous topology if and only if it has a semicontinuous renorming, which is also equivalent to the fact that the order closure of the unit ball is norm bounded. At the same time demicontinuity is equivalent to the fact that the order adherence of the unit ball is norm bounded (Proposition \ref{wf}). We introduce pointwise demicontinuity for normed lattices, which may be viewed as the ``limit case'' of demicontinuity. It turns out (Proposition \ref{barely}) that this property is equivalent to the fact that every directed norm bounded set is dominable, i.e. order bounded in the universal completion. 

In Theorem \ref{awt} we present a simplified proof of the characterizations of monotone completeness from \cite{aw} and \cite{taylor} and provide their quantitative versions in Theorem \ref{rawt}. A more general property of monotone boundedness is characterized in Theorem \ref{lev}. It turns out (Corollary \ref{lorentz}) that a normed lattice is demicontinuous if and only if it embeds regularly into a monotonically complete Banach lattice. In order to prove this we introduce a completion of a demicontinuous normed lattice which we call the Lorentz completion. Its properties, which are somewhat analogous to those of the universal completion, are explored in Theorem \ref{delta}. Note that the notion of monotone completeness is also analogous to that of universal completeness, but in the presence of a norm. In order to deal with this and related topics we have thoroughly analyzed order adherence of solid sets in vector lattices in Section \ref{s2}.

Recall that AM-spaces are precisely the Banach lattices which embed isometrically as closed sublattices of spaces of the form $\Co\left(K\right)$, for some compact Hausdorff space $K$. One may ask when the embedding can be chosen order continuous, i.e. when the space embeds as a regular sublattice of $\Co\left(K\right)$. We prove, using two different methods, in Theorems \ref{main} and \ref{david} that semicontinuous AM-spaces are exactly the Banach lattices of this kind. On the other hand, we show in Theorem \ref{regu} that any AM-space embeds as a regular sublattice of $\Co\left(X\right)$, for some $\sigma$-compact Hausdorff space $X$.

For a normed space $E$ the space $\Co_{\ph}\left(\Ba_{E^{*}},\mathrm{w}^{*}\right)$ of positively homogeneous weak* continuous functions on $\Ba_{E^{*}}$ is a closed sublattice of $\Co\left(\Ba_{E^{*}},\mathrm{w}^{*}\right)$, hence an AM-space. This space is closely related to the free Banach lattice over $E$, and has been studied recently. We show (Theorem \ref{cph}) that $\Co_{\ph}\left(\Ba_{E^{*}},\mathrm{w}^{*}\right)$ is semicontinuous iff it is pointwise demicontinuous iff it is regular in $\Co\left(\Ba_{E^{*}},\mathrm{w}^{*}\right)$ iff $\dim E<\8$, thus providing a large supply of AM-spaces which are not pointwise demicontinuous. One can also consider the space $\Co_{\ph}\left(E^{*},\mathrm{bw}^{*}\right)$ of positively homogeneous functions on $E^{*}$, which are continuous with respect to the bounded weak* topology on $E^{*}$. It is easy to see that this space is isomorphic to $\Co_{\ph}\left(\Ba_{E^{*}},\mathrm{w}^{*}\right)$, and we show in Corollary \ref{regu2} that it is regular in $\Co\left(E^{*},\mathrm{bw}^{*}\right)$.

In Sections \ref{s9} and \ref{s10} we consider the countable analogues of the properties studied in this article. Most of the results can be transferred into the countable setting.\medskip

We conclude the introduction with a quick review of gauges. Given a convex balanced set $B$ in a vector space $E$, the \emph{Minkowski functional} or a \emph{gauge} of $B$ is defined by $\|e\|_{B}:=\bigwedge\left\{r\ge 0,~ e\in rB\right\}$. Note that $\|e\|_{B}<\8$ if and only if $e\in E_{B}:=\bigcup\limits_{n\in\N}nB$, which is a linear subspace of $E$. It is not hard to show that $\|\cdot\|_{B}$ is a seminorm on $E_{B}$ and the unit ball with respect to $\|\cdot\|_{B}$ is $\bigcap\limits_{r>1}rB$. Note that $\ker\|\cdot\|_{B}=\bigcap\limits_{n\in\N}\frac{1}{n}B=\left\{e\in E,~ \forall n\in\N:~ ne\in B\right\}$, in particular $\|\cdot\|_{B}\equiv 0$ iff $B=E$. We call $B$ \emph{radially bounded} if it contains no rays emanating from $0_{E}$, i.e. if for every $e\ne 0_{E}$ there is $n\in\N$ such that $ne\notin B$. This condition is equivalent to the fact that $\|\cdot\|_{B}$ is a norm. We also call $B$ \emph{radially closed} if for every $e\in E$ such $\left(1-\frac{1}{n}\right)e\in B$, for every $n\in\N$, we have $e\in B$. Equivalently, the unit ball with respect to $\|\cdot\|_{B}$ is equal to $B$. Finally, note that if $E$ is a vector lattice, and $B$ is solid, then $\|\cdot\|_{B}$ is a solid seminorm on $E_{B}$.

\section{Order adherence of a solid set}\label{s2}

In this section, we develop the necessary background from vector lattice theory that will be used later. Throughout this section, $F$ is an Archimedean vector lattice. We start by briefly introducing order and unbounded order convergences. For details about the results mentioned in this paragraph and further information, the reader is referred to, e.g., \cite{bctv}. If $P\subset F$ is upward directed, we will often view it as a net indexed by itself, while if $P$ is downward directed, we will view it as a net indexed by $-P$.

We say that a net $\left(f_{p}\right)_{p\in P}\subset F$ converges \emph{in order} to $f\in F$ (denoted $f_{p}\too f$) if there is $Q\subset F$ with $Q\downarrow 0_{F}$ which \emph{dominates the tails} of $\left(f_{p}-f\right)_{p\in P}$, i.e. for each $q\in Q$ there is $p_{q}$ such that $\left|f_{p}-f\right|\le q$, whenever $p\ge p_{q}$. Next, we say that $\left(f_{p}\right)_{p\in P}$ \emph{unbounded order (uo)} converges to $f$ (denoted $f_{p}\touo f$) if $\left|f_{p}-f\right|\wedge e\too 0_{F}$, for every $e\in F_{+}$. It is easy to see that for an increasing net its supremum is also its order and uo limit (in particular for such a net the supremum and both limits exist simultaneously). Order convergence implies uo convergence to the same limit; the converse is true for order bounded nets. The linear and lattice operations are continuous on $F$, with respect to both order and uo convergences. The \emph{order adherence} of $B\subset F$ is the set of all existing order limits of nets in $B$. We say that $B$ is \emph{order closed} if its order adherence equals $B$. The \emph{order closure}  of $B$ is the intersection of all order closed sets containing $B$. This set can be obtained by transfinitely iterating the order adherence. Uo adherence and closure are defined similarly.

Let $E\subset F$ be a sublattice. For $f,g\in F$ we denote $\left[f,g\right]_{E}:=\left[f,g\right]\cap E$. Recall that $E$ is \emph{order dense} in $F$ if for every $f>0_{F}$ there is $e\in E$ such that $0_{F}<e\le f$. Equivalently, for every $f\in F_{+}$ we have $f=\bigvee\left[0_{F},f\right]_{E}$ (see \cite[Theorem 1.27]{ab0}).

\begin{lemma}\label{solid}
Let $E\subset F$ be an order dense sublattice and let $B\subset F$ be solid. The following sets are equal:
\item[(i)] The set of all $f\in F$ such that there is $Q\subset B_{+}$ with $Q\uparrow\left|f\right|$;
\item[(i')] The set of all $f\in F$ such that there is $Q\subset B_{+}\cap E$ with $Q\uparrow\left|f\right|$;
\item[(ii)] The order adherence of $B$ in $F$;
\item[(ii')] The order adherence of $B\cap E$ in $F$;
\item[(iii)] The uo adherence of $B$ in $F$;
\item[(iii')] The uo adherence of $B\cap E$ in $F$.\medskip

This set is solid and its intersection with $E$ is the common order and uo adherence of $B\cap E$ in $E$. The order and uo closures of $B$ and $B\cap E$ also coincide, and are a solid set. Both the adherence and the closure are convex, whenever $B$ is.
\end{lemma}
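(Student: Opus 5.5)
I will prove the chain of inclusions (i')$\subset$(i)$\subset$(ii)$\subset$(iii)$\subset$(i') for $B$, together with (i')$\subset$(ii')$\subset$(iii')$\subset$(iii). The trivial parts come first. (i')$\subset$(i) is immediate. The inclusions (ii)$\subset$(iii) and (ii')$\subset$(iii') hold because order convergence implies uo convergence. Also (ii')$\subset$(ii) and (iii')$\subset$(iii), since $B\cap E\subset B$.

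For (i)$\subset$(ii), take $Q\subset B_{+}$ with $Q\uparrow|f|$. Then $Q\wedge f^{+}\uparrow f^{+}$ and $Q\wedge f^{-}\uparrow f^{-}$ by distributivity, so the net $\left(q\wedge f^{+}-q\wedge f^{-}\right)_{q\in Q}$ order converges to $f$. Each of its terms has modulus $q\wedge|f|\le q$, so it lies in $B$ by solidity. The same argument gives (i')$\subset$(ii').

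The key step is (iii)$\subset$(i'). Let $(f_{p})\subset B$ with $f_{p}\touo f$. Then $|f_{p}|\wedge|f|\touo|f|$, and these terms lie in $B_{+}$ and are dominated by $|f|$, so in fact $|f_{p}|\wedge|f|\too|f|$. Now let $Q$ be the set of all $e\in E$ with $0_{F}\le e\le g$ for some $g\in B_{+}\cap[0_{F},|f|]$; each such $e$ lies in $B$ by solidity. I claim $Q$ is upward directed with supremum $|f|$.

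For the supremum, suppose $h$ is an upper bound of $Q$ with $h\le|f|$. Each $g\in B_{+}\cap[0,|f|]$ equals $\bigvee[0,g]_{E}$ by order density, so $g\le h$. Hence $h\ge|f_{p}|\wedge|f|$ for all $p$, and taking the order limit gives $h\ge|f|$. Therefore $\sup Q=|f|$.

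Directedness is the main obstacle, because $B$ is merely solid and the supremum of two elements of $B$ need not lie in $B$. I would handle this by replacing $Q$ with an upward directed subset built from the net itself. Put $g_{p}:=|f_{p}|\wedge|f|$, and let $Q_{p}$ be the set of $e\in E_{+}$ with $e\le g_{p'}$ for all $p'\ge p$, that is, $e\le\bigwedge_{p'\ge p}g_{p'}$ when the infimum exists, or $e$ below every tail element otherwise. The union $\bigcup_{p}Q_{p}$ is upward directed: given $e_{1}\in Q_{p_{1}}$ and $e_{2}\in Q_{p_{2}}$, pick $p\ge p_{1},p_{2}$; then $e_{1}\vee e_{2}\le g_{p}$ and $e_{1}\vee e_{2}\in E$. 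Each element lies below some $g_{p}\in B$, hence in $B\cap E$.

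It remains to see that the supremum is $|f|$. Order convergence gives $Q'\downarrow0$ with $g_{p'}\ge|f|-q'$ for all $p'\ge p_{q'}$. Any $e\in E$ with $0\le e\le(|f|-q')^{+}$ then belongs to $Q_{p_{q'}}$. By order density, the supremum of these $e$ is $(|f|-q')^{+}$, and $(|f|-q')^{+}\uparrow|f|$. This proves (iii)$\subset$(i'), and the same argument applied to a net in $B\cap E$ settles (iii') as well.

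For the remaining claims, solidity of the common set follows from description (i): if $|h|\le|f|$ and $Q\uparrow|f|$, then $Q\wedge|h|\uparrow|h|$ and $Q\wedge|h|\subset B_{+}$. For the intersection with $E$, I use that order dense sublattices are regular, so for nets in $E$ order convergence in $E$ agrees with order convergence in $F$, and likewise for uo convergence. Hence, for $f\in E$, description (i') read inside $E$ matches the adherence of $B\cap E$ in $E$. For closures, I iterate transfinitely: each adherence is solid, so the equality of the adherences of $B$ and $B\cap E$ propagates stage by stage, and solid unions at limit ordinals remain solid. Finally, convexity: a convex combination of order (or uo) convergent nets indexed by a product directed set converges to the convex combination of the limits. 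So the adherence of a convex set is convex, and by transfinite induction so is the closure.
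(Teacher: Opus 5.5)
Your proof is correct and follows the paper's route: you reduce everything to (iii)$\subset$(i') and obtain it, as the paper does, from $|f_{p}|\wedge|f|\too|f|$, from the tail domination that puts $(|f|-q)^{+}$ below elements of $B$, and from order density of $E$; your set $\bigcup_{p}Q_{p}$ of tail lower bounds contains the paper's $\bigcup_{q}[0_{F},(|f|-q)^{+}]_{E}$, and your solidity argument via description (i) is a slightly shorter variant of the paper's net argument. One caution on wording: for order dense $E$, order convergence in $F$ of a net from $E$ does not in general give order convergence in $E$ (take $\frac{1}{n}(e_{1}+\dots+e_{n})$ in $c_{00}\subset\ell_{\infty}$), but the intersection claim only needs two facts: this implication for increasing nets whose supremum lies in $E$, where it is trivial, and the fact that suprema taken in $E$ remain suprema in $F$ by regularity.
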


Taking $E=F$ in the main claim of Lemma \ref{solid} recovers the following well known (but not obvious) fact.

\begin{corollary}\label{solid1}
If $B\subset F$ is solid, then both of its order and uo adherences consist of $f\in F$ such that there is $Q\subset B_{+}$ with $Q\uparrow \left|f\right|$.
\end{corollary}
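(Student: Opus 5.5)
This is immediate from Lemma \ref{solid} applied with $E=F$. The plan is to check that this choice is admissible and then read off the equalities of the listed sets.

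First, $F$ is trivially an order dense sublattice of itself: for $f>0_{F}$ one takes $e=f$. So Lemma \ref{solid} applies to any solid $B\subset F$ with $E=F$. In this case $B\cap E=B$ and $B_{+}\cap E=B_{+}$, so the primed items (i'), (ii'), (iii') coincide with (i), (ii), (iii).

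The lemma asserts that all six sets are equal. In particular, the order adherence (ii) and the uo adherence (iii) both equal the set (i). The set (i) is exactly the set of $f\in F$ for which some $Q\subset B_{+}$ satisfies $Q\uparrow\left|f\right|$, which is the claimed description.

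There is no real obstacle here, since all the work is contained in Lemma \ref{solid}. If one wanted a direct argument instead, the substantive inclusion is that an order limit, or uo limit, $f$ of a net $(f_p)$ in $B$ admits an increasing family in $B_{+}$ with supremum $\left|f\right|$. The natural candidates are elements such as $\left|f\right|\wedge\bigwedge_{p\ge p_0}\left|f_p\right|$, when these infima exist. Making this work without Dedekind completeness would require passing to the Dedekind completion of $F$, which is presumably why the lemma is stated for order dense sublattices.
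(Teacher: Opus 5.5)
Your proposal is correct and follows the paper's route exactly: the paper obtains the corollary by taking $E=F$ in Lemma \ref{solid}, and notes that the proof of the lemma's main claim already establishes it. Your closing aside is off the mark, though: no Dedekind completion is needed, because if $Q\downarrow 0_{F}$ dominates the tails of $\left|f\right|-\left|f\right|\wedge\left|f_{p}\right|$, then the elements $\left(\left|f\right|-q\right)^{+}$, $q\in Q$, lie in $B_{+}$ by solidity and increase to $\left|f\right|$; the order dense generality of the lemma is there for later applications such as $F\subset F^{u}$.
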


\begin{proof}[Proof of Lemma \ref{solid}]
First, it is clear that the sets derived from $B\cap E$ are contained in sets derived from $B$ in a similar way. Also, since order convergence implies uo convergence to the same limit, it follows that the sets derived using order adherence are contained in the sets derived using uo adherence. It is also clear that the sets in (i) and (i') are contained in the sets in (ii) and (ii'), respectively. Hence, in order to complete the proof of the equivalences it is enough to show that the set in (iii) is contained in the set in (i').\medskip

Let $f$ be in the uo adherence of $B$, so that there is $\left(f_{p}\right)_{p\in P}\subset B$ which uo converges to $f$. It then follows that $B\ni \left|f\right|\wedge\left|f_{p}\right|\touo \left|f\right|$. For $p\in P$ let $e_{p}:=\left|f\right|\wedge\left|f_{p}\right|$; the net $\left(e_{p}\right)_{p\in P}$ is order bounded and uo-convergent to $\left|f\right|$, hence $e_{p}\too \left|f\right|$. Therefore, there is $Q\subset F$ such that $Q\downarrow 0_{F}$, which dominates the tails of $\left(\left|f\right|-e_{p}\right)_{p\in P}$.

Fix $q\in Q$. There is $p\in P$ such that $\left|f\right|-e_{p}\le q$, so that $g_{q}:=\left(\left|f\right|-q\right)^{+}\le e_{p}\in B$. As $B$ is solid, it follows that $g_{q}\in B$, and so $\left[0_{F},g_{q}\right]_{E}\subset B\cap E$.

Let us show that $S:=\bigcup\limits_{q\in Q}\left[0_{F},g_{q}\right]_{E}$ is directed. If $s,t\in S$, there are $q,r\in Q$ such that $s\in \left[0_{F},g_{q}\right]_{E}$ and $t\in \left[0_{F},g_{r}\right]_{E}$. Find $u\in Q$ such that $u\le q,r$. Then, $g_{u}\ge g_{q},g_{r}$, hence $s\vee t\in E$, and $s\vee t\le g_{u}$. Thus, $s\vee t\in \left[0_{F},g_{u}\right]_{E}\subset S$. Also, note that since $E$ is order dense in $F$, it follows that $g_{q}=\bigvee\left[0_{F},g_{q}\right]_{E}$, for every $q\in Q$. Finally, $\left(\left|f\right|-q\right)^{+}\uparrow \left|f\right|$ yields $S\uparrow \left|f\right|$. This concludes the proof of the main claim, and consequently Corollary \ref{solid1} is also proven.\medskip

Let $C$ be the obtained set. We now prove that $C$ is solid. Assume that $f\in\Sol C$, so that there is a net $\left(g_{p}\right)_{p\in P}\subset B$ which order converges to $g$ with $\left|f\right|\le \left|g\right|$. Since $B$ is solid, by replacing $g_{p}$ with $\left|g_{p}\right|$, for every $p\in P$, we may assume that $g_{p}\in B_{+}$, and also that $g\in F_{+}$. Then, $f_{p}:=g_{p}\wedge f^{+}-g_{p}\wedge f^{-}$ satisfies $\left|g_{p}\wedge f^{+} - g_{p}\wedge f^{-}\right|=g_{p}\wedge \left|f\right|\le g_{p}$, hence $f_{p}\in B$, for every $p\in P$. Moreover, $g_{p}\wedge f^{\pm}\too g\wedge f^{\pm}=f^{\pm}$ yields $f_{p}\too f$. Thus, $f\in C$.\medskip

As the order and uo closures are equal to the iterated adherences, one can easily prove by (transfinite) induction that order and uo closures of $B$ and $B\cap E$ also coincide, and are a solid set. It follows that $C\cap E$ is the set of all existing in $E$ order (or uo) limits of nets in $B\cap E$, hence the common order and uo adherence of $B\cap E$ in $E$. Since order convergence is linear, the order limit preserves convex combinations, and so if $B$ is convex, then so is $C$. The case of closure can also be accomplished by induction.
\end{proof}

\begin{lemma}\label{conve}
If $B\subset F$ is convex and closed under taking the absolute value, then $\Sol B$ is convex.
\end{lemma}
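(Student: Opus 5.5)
Here $\Sol B=\{f\in F:~\exists g\in B,~|f|\le|g|\}$. Since $B$ is closed under taking absolute values, we may always choose the dominating element $g$ in $B_{+}$. So
$$\Sol B=\{f\in F:~\exists g\in B_{+},~|f|\le g\}.$$
The plan is a direct verification: take $f_{1},f_{2}\in\Sol B$ and $\lambda\in[0,1]$, and show that $\lambda f_{1}+(1-\lambda)f_{2}\in\Sol B$.

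\emph{First step.} Pick $g_{1},g_{2}\in B$ with $|f_{i}|\le|g_{i}|$, and set $h_{i}:=|g_{i}|$. Then $h_{i}\in B_{+}$ by the hypothesis on $B$. Put $h:=\lambda h_{1}+(1-\lambda)h_{2}$; it lies in $B$ by convexity and is positive.

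\emph{Second step.} By the triangle inequality in the vector lattice,
$$|\lambda f_{1}+(1-\lambda)f_{2}|\le\lambda|f_{1}|+(1-\lambda)|f_{2}|\le\lambda h_{1}+(1-\lambda)h_{2}=h.$$
Hence $|\lambda f_{1}+(1-\lambda)f_{2}|\le h=|h|$ with $h\in B$. This is exactly membership in $\Sol B$.

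There is no real obstacle here. The only point needing care is that convexity of $B$ alone does not give $\lambda|g_{1}|+(1-\lambda)|g_{2}|\in B$; one needs $|g_{i}|\in B$ first. That is precisely where the hypothesis that $B$ is closed under absolute value is used.
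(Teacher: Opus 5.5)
Your proof is correct and follows essentially the same route as the paper's: replace the dominating elements $g_{i}\in B$ by $|g_{i}|\in B$, form the convex combination of these in $B$, and bound $|\lambda f_{1}+(1-\lambda)f_{2}|$ by it via the triangle inequality. You also correctly identify that closure under absolute value is what puts $\lambda|g_{1}|+(1-\lambda)|g_{2}|$ in $B$, which is exactly how the paper uses the hypothesis.
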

\begin{proof}
Let $e,f\in\Sol B$ and let $s,t\ge 0$ be such that $s+t=1$. Let $g,h\in B$ be such that $\left|e\right|\le\left|g\right|$ and $\left|f\right|\le\left|h\right|$. Our assumptions about $B$ yield $s\left|g\right|+t\left|h\right|\in B$, and so $\left|se+tf\right|\le s\left|e\right|+t\left|f\right|\le s\left|g\right|+t\left|h\right|$ implies $se+tf\in\Sol B$. Thus, $\Sol B$ is convex.
\end{proof}

\begin{corollary}\label{solid2}
Let $E\subset F$ be an order dense sublattice and let $B\subset E$ be solid in $E$. Then, the order and uo adherences of $B$ in $F$ and the order and uo adherences of $\Sol_{F} B$ all coincide. This set is always solid, and it is convex whenever $B$ is. $f\in F$ belongs to this set if and only if there is $Q\subset B_{+}$ with $Q\uparrow\left|f\right|$.
\end{corollary}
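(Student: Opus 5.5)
Set $D:=\Sol_{F}B$ and apply Lemma \ref{solid} to the solid set $D\subset F$ with the order dense sublattice $E$. The first thing to check is $D\cap E=B$. If $e\in E$ and $\left|e\right|\le\left|b\right|$ for some $b\in B$, then $e\in B$, because $B$ is solid in $E$ and both $e$ and $b$ lie in $E$. The reverse inclusion $B\subset D\cap E$ is trivial.

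With $D\cap E=B$, Lemma \ref{solid} says that the following six sets coincide:
\begin{itemize}
\item the order adherence of $D$ in $F$;
\item the uo adherence of $D$ in $F$;
\item the order adherence of $D\cap E=B$ in $F$;
\item the uo adherence of $D\cap E=B$ in $F$;
\item the set of $f$ with some $Q\subset D_{+}$, $Q\uparrow\left|f\right|$;
\item the set of $f$ with some $Q\subset D_{+}\cap E=B_{+}$, $Q\uparrow\left|f\right|$.
\end{itemize}
This gives every equality claimed in Corollary \ref{solid2}, including the description via $Q\subset B_{+}$. Lemma \ref{solid} also states that this common set is solid.

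Convexity is the only point needing care, since Lemma \ref{solid} gives convexity of the adherence only when the solid set $D$ is itself convex. Suppose $B$ is convex. Because $B$ is solid in $E$ and $\left|b\right|\in E$, the set $B$ is closed under taking absolute values. Lemma \ref{conve}, applied in $F$ to $B$, then shows that $\Sol_{F}B=D$ is convex, and the last claim of Lemma \ref{solid} yields convexity of the adherence.

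If one prefers to avoid Lemma \ref{conve}, convexity can be read off the $Q\uparrow\left|f\right|$ description directly. Given $Q\uparrow\left|f\right|$ and $R\uparrow\left|g\right|$ in $B_{+}$, the set $\left\{sq+tr\right\}$ is upward directed, lies in $B_{+}$, and has supremum $s\left|f\right|+t\left|g\right|\ge\left|sf+tg\right|$. Solidity of the adherence then finishes the argument. I expect no real obstacle; the only subtle step is the identification $D\cap E=B$, which uses solidity of $B$ in $E$ and not in $F$.
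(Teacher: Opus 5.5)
Your proposal is correct and is essentially the paper's own proof: set $C:=\Sol_{F}B$, observe $C\cap E=B$ (using solidity of $B$ in $E$), obtain convexity of $C$ from Lemma \ref{conve}, and read off all the remaining claims from Lemma \ref{solid}. Your alternative convexity argument via the $Q\uparrow\left|f\right|$ description is also valid, but it is not needed.
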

\begin{proof}
Take $C:=\Sol_{F} B$ and note that $C\cap E=B$. It follows from Lemma \ref{conve} that if $B$ is convex then so is $C$. The result now follows from Lemma \ref{solid}.
\end{proof}

\begin{proposition}\label{solid3}
Assume that $F$ is endowed with a Hausdorff order continuous locally solid topology $\tau$ and $B\subset F$ is solid. Then, $\overline{B}^{\tau}$ coincides with the common second order and uo adherence of $B$.
\end{proposition}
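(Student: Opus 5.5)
The plan is to prove the two inclusions separately. Throughout, let $C$ be the order adherence of $B$ and $D$ the order adherence of $C$. By Corollary \ref{solid1}, $C$ is also the uo adherence of $B$, and it is solid by Lemma \ref{solid}. Applying Corollary \ref{solid1} once more, now to the solid set $C$, shows that $D$ is also the uo adherence of $C$. So "the common second order and uo adherence" is well defined. Moreover, $f\in D$ if and only if there is $Q\subset C_{+}$ with $Q\uparrow\left|f\right|$.

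\emph{Inclusion $D\subset\overline{B}^{\tau}$.} This direction is straightforward. Order continuity of $\tau$ means that every order convergent net $\tau$-converges to the same limit. Hence the order adherence of any set lies in its $\tau$-closure, so $C\subset\overline{B}^{\tau}$. Applying this again, $D\subset\overline{C}^{\tau}\subset\overline{B}^{\tau}$, because $\overline{B}^{\tau}$ is closed.

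\emph{Inclusion $\overline{B}^{\tau}\subset D$.} Since $\tau$ is locally solid, $\overline{B}^{\tau}$ is solid, and $D$ is solid as well. It therefore suffices to take $x\in\overline{B}^{\tau}\cap F_{+}$. Choose a net $(f_{p})\subset B$ with $f_{p}\xrightarrow{\tau}x$ and set $g_{p}:=\left|f_{p}\right|\wedge x$. Each $g_{p}$ lies in $B\cap[0_{F},x]$ because $B$ is solid, and $g_{p}\xrightarrow{\tau}x$ by continuity of the lattice operations. Put $S:=C\cap[0_{F},x]$, which contains every $g_{p}$. By the characterization of $D$ above, the goal is to produce an upward directed subset of $S$ whose supremum is $x$.

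First I would show that $x$ is the only upper bound of $S$ lying in $[0_{F},x]$. Let $u\in[0_{F},x]$ be an upper bound of $S$. For each $p$, the element $(g_{p}-u)^{+}\le g_{p}$ belongs to $B\subset S$, so $(g_{p}-u)^{+}\le u$. Also $(g_{p}-u)^{+}\le x-u$. These elements $\tau$-converge to $x-u$. Since $\tau$ is Hausdorff and locally solid, the positive cone is $\tau$-closed, so $x-u\le u\wedge(x-u)$. I would then like to bootstrap from this to $u=x$. One way is to repeat the argument with shifted upper bounds. Another is to first project onto the band generated by $(x-u)$, using that $(g_{p}-u)^{+}$ is disjoint from $(u-g_{p})^{+}$.

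\emph{Main obstacle: directedness.} Knowing $\sup S=x$ is not enough, because $S$ need not be upward directed: $C$ is solid but need not be convex, so $a,b\in S$ does not give $a\vee b\in C$. Two repairs seem possible.
\begin{itemize}
\item Replace $S$ by the set of all $h\in C\cap[0_{F},x]$ that dominate some tail-related element, imitating the sets $\left[0_{F},(\left|f\right|-q)^{+}\right]$ from the proof of Lemma \ref{solid}.
\item Work with the elements $(x-v)^{+}$, where $v$ ranges over a downward directed family obtained from the neighborhoods of $\tau$, and show each of them lies in $C$ because it is an order limit of the elements $g_{p}\wedge(x-v)^{+}$.
\end{itemize}
I would try the second repair first, since it directly yields a directed family in $C$ increasing to $x$. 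Verifying that this family is indeed contained in $C$ is where the real work lies.
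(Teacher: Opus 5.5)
Your preliminary reductions are correct: identifying the second order and uo adherence via Corollary \ref{solid1}, the inclusion $D\subset\overline{B}^{\tau}$, and the reduction to $x\in\overline{B}^{\tau}\cap F_{+}$. However, the proposal stops exactly where the proof has to happen, and neither repair you sketch can be completed with the ingredients you use.

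From $x\in\overline{B}^{\tau}$ you only extract a net $g_{p}\in B\cap\left[0_{F},x\right]$ with $g_{p}\to x$ in $\tau$. Everything afterwards is soft: closedness of $F_{+}$ and lattice identities. The upper-bound step is actually immediate, with no bootstrapping needed: each $g_{p}$ lies in $S$, so $g_{p}\le u$, and closedness of $F_{+}$ gives $x\le u$. But, as you note yourself, this says nothing about membership in $D$.

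The first repair has no input. The sets $\left[0_{F},\left(\left|f\right|-q\right)^{+}\right]$ in Lemma \ref{solid} come from a set $Q\downarrow 0_{F}$ dominating the tails, i.e.\ from order convergence of the net. If $(g_{p})$ had such a $Q$, you would get $x\in C$, which Example \ref{secon} shows is false in general.

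In the second repair, the claim that $(x-v)^{+}$ is an order limit of the elements $g_{p}\wedge(x-v)^{+}$ is precisely the assertion that $\tau$-convergence can be upgraded to order convergence. That assertion fails globally, again by Example \ref{secon}. Moreover, neighborhoods of $\tau$ do not by themselves produce elements $v\downarrow 0_{F}$ for which it holds.

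The missing ingredient is a localization on which that upgrade is possible. The paper takes the carrier $H$ of $\tau$. This is an order dense ideal (here the Hausdorff assumption is used) such that every $\tau$-null net in $H$ contains an order-null sequence \cite[Theorems 4.17(c) and 4.19]{ab0}.

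First take $f\in\overline{B}^{\tau}\cap H_{+}$ and $f_{p}\in B$ with $f_{p}\to f$ in $\tau$. The net $f\wedge f_{p}^{+}\in B\cap H$ $\tau$-converges to $f$, so some sequence from it order converges to $f$, and hence $f\in C$. No directedness is needed here, since the order adherence consists of limits of arbitrary nets.

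For general $x\in\overline{B}^{\tau}_{+}$, the directed family you were looking for is simply $\left[0_{F},x\right]_{H}$:
\begin{itemize}
\item it is directed because $H$ is a sublattice;
\item it increases to $x$ because $H$ is order dense;
\item it lies in $\overline{B}^{\tau}\cap H\subset C$ because $\overline{B}^{\tau}$ is solid.
\end{itemize}
Therefore $x\in D$. Without an ingredient of this kind, which exploits the specific structure of $\tau$ rather than just the closedness of $F_{+}$ and continuity of the lattice operations, your plan does not close.
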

\begin{proof}
Let $C$ be the order adherence of $B$. Since $\tau$ is an order continuous topology, and $\overline{B}^{\tau}$ is $\tau$-closed, it is order closed, and hence it contains $C$, as well as its order adherence, and so we only need to prove the converse inclusion. Moreover, since all the sets involved are solid, it is enough to show that if $f\in \overline{B}^{\tau}_{+}$, there is $P\subset C$ such that $P\uparrow f$.

Let $H$ be the carrier of $\tau$ (see \cite[Definition 4.15 or Exercise 4.16(c)]{ab0}), which is order dense in $F$ (see \cite[Theorem 4.17(b)]{ab0}). According to \cite[Theorems 4.17(c) and 4.19]{ab0}, every $\tau$-null net in $H$ contains an order-null sequence. Assume first that $f\in H_{+}$. There is a net $\left(f_{p}\right)_{p\in P}\subset B$ which $\tau$-converges to $f$. Then, $B\cap H\ni f\wedge f_{p}^{+}\to f$ in $\tau$, and so there is a sequence contained in this net which order converges to $f$. Hence, $f\in C$. In the general case, we have that $P:=\left[0_{F},f\right]_{H}\uparrow f$ due to order density of $H$, and moreover, since $\overline{B}^{\tau}$ is solid, we have $P\subset \overline{B}^{\tau}\cap H\subset C$, as required.
\end{proof}

The following example shows that in the context of Proposition \ref{solid3} the first order adherence of $B$ may be strictly contained in $\overline{B}^{\tau}$.

\begin{example}\label{secon}
Let $I$ be the unit interval $\left[0,1\right]$, let $U_{n}:=\left(-\frac{1}{n},\frac{1}{n}\right)$, and let $\lambda$ be the Lebesgue measure on $I$. For a finite $K\subset I$ and $n\in\N$ let $K^{n}:=\left(K+U_{mn}\right)\cap I$, where $m$ is the cardinality of $K$. We have that $K^{n}$ is open in $I$, contains $K$ and $\lambda\left(K^{n}\right)\le \frac{2}{n}$. 

Let $F:=\R^{I}\oplus L_{0}\left(I,\lambda\right)$, where the first summand is endowed with the pointwise topology, while the second -- with the topology of convergence in measure. For a finite $K\subset I$ and $n\in\N$ let $e_{K,n}:=\1_{K}\oplus \1_{I\backslash K^{n}}$ and  $B_{K,n}:=\left\{g\in F,~ \left|g\right|\le e_{K,n}\right\}$. Let $B:=\bigcup\limits_{K,n}B_{K,n}$, where $n\in\N$ and $K$ runs over finite subsets of $I$. Clearly, $B$ is solid. It is easy to see that $f:=\1\oplus\1$ is contained in the closure of $\left\{e_{K,n}\right\}_{K,n}$, hence $f\in \overline{B}$.

Assume that $f$ is in the order adherence of $B$. Then, according to Corollary \ref{solid1} there is a directed $P\subset B_{+}$ such that $P\uparrow f$. Then, $\left(f-P\right)\downarrow 0_{F}$ and for every $p\in P$ there is $K_{p}, n_{p}$ such that $p\le e_{K_{p},n_{p}}$, hence $0_{F}\le f-e_{K_{p},n_{p}}\le f-p$. It follows that $e_{K_{p},n_{p}}\too 0_{F}$, and projecting onto the second coordinate yields $\1_{K_{p}^{n_{p}}}\too \0$. Since $L_{0}\left(I,\lambda\right)$ is order complete it follows that $\bigwedge\limits_{p\in P}\bigvee\limits_{q\ge p}\1_{K_{p}^{n_{p}}}=\0$.

For $p\in P$, consider the decomposition $p=p^{1}\oplus p^{2}$, where $p^{1}\in \R^{I}$ and $p^{2}\in L_{0}\left(I,\lambda\right)$. We have that $p^{1}\uparrow_{p\in P}\1$, and so $p^{1}\left(t\right)\uparrow_{p\in P}1$, for every $t\in I$.

Fix $p\in P$ and for every $t\in I$, find $q_{t}\ge p$ such that $p^{1}\left(t\right)>0$. Since $p^{1}\le \1_{K_{p}}$, it follows that $t\in K_{p_{t}}$, and so $K_{p_{t}}^{n_{p_{t}}}$ is an open neighborhood of $t$. By compactness there are $t_{1},...,t_{m}\in I$ such that $I=\bigcup\limits_{k=1}^{m}K_{p_{t_{k}}}^{n_{p_{t_{k}}}}$. Then, $\bigvee\limits_{k=1}^{m}\1_{K_{p_{t_{k}}}^{n_{p_{t_{k}}}}}=\1$, hence $\bigvee\limits_{q\ge p}\1_{K_{q}^{n_{q}}}=\1$. As $p$ was arbitrary we get $\bigwedge\limits_{p\in P}\bigvee\limits_{q\ge p}\1_{K_{p}^{n_{p}}}=\1$, contradicting the previous conclusion.
\qed\end{example}

Recall that $F$ has the \emph{principal projection property (PPP)}, if every band of the form $\left\{e\right\}^{\bbot}$, for $e\in F$ is a projection band. In the next result we use the fact that if $P$ is the projection onto $\left\{\left(f-g\right)^{+}\right\}^{\bbot}$, then $Pf\ge Pg$. Indeed, since $\left(f-g\right)^{+}\bot \left(f-g\right)^{-}$, it follows that $P\left(f-g\right)^{-}=0_{F}$, hence $Pf-Pg=P\left(f-g\right)=P\left(f-g\right)^{+}-P\left(f-g\right)^{-}=P\left(f-g\right)^{+}\ge 0_{F}$.

Also, recall that $f$ is a \emph{fragment} of $e$ (also known as \emph{component}) if $f\bot e-f$.

\begin{proposition}\label{eps}
Assume that $F$ is a vector lattice with PPP and let $Q\subset F_{+}$ and $e\in F$ be such that $e=\bigvee Q$. Then, for every $0\le r<1$ there is a disjoint $D\subset F_{+}$ such that $\bigvee D=re$ and for every $d\in D$ there is $q\in Q$ such that $d\le q$. Moreover, if $Q$ is countable then $D$ can also be chosen to be countable.
\end{proposition}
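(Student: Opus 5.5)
The plan is a transfinite (or, in the countable case, ordinary inductive) exhaustion by principal projections, using the observation above: if $P$ is the band projection onto $\left\{\left(f-g\right)^{+}\right\}^{\bbot}$, then $Pf\ge Pg$. For $q\in Q$ let $P_{q}$ be the projection onto $\left\{\left(q-re\right)^{+}\right\}^{\bbot}$. Then $P_{q}q\ge P_{q}(re)=rP_{q}e$. The projection $P_{q}$ cuts out the part of $re$ which is already dominated by $q$ on its carrier.

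First I show that the bands $\left\{\left(q-re\right)^{+}\right\}^{\bbot}$, $q\in Q$, together carry all of $e$. Suppose $h\in F_{+}$ satisfies $h\le e$ and $h\perp\left(q-re\right)^{+}$ for all $q\in Q$. Let $R$ be the projection onto $\left\{h\right\}^{\bbot}$. Then $R\left(q-re\right)^{+}=0_{F}$, so $Rq\le rRe$ for all $q$. Since band projections are order continuous, $Re=\bigvee Rq\le rRe$, and $r<1$ gives $Re=0_{F}$, so $h=Rh\le Re=0_{F}$. Hence the family $\left\{\left(q-re\right)^{+}\right\}_{q\in Q}$ is order dense in the band $\left\{e\right\}^{\bbot}$, i.e. $\left(\bigcup_q\left\{\left(q-re\right)^{+}\right\}\right)^{\bbot}\supseteq\left\{e\right\}^{\bbot}$. (Here I will assume $e\ge 0_{F}$, which is automatic since $Q\subset F_{+}$, discarding the trivial case $Q=\varnothing$.)

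Next I disjointify. Well-order $Q=\left\{q_{\alpha}\right\}_{\alpha<\kappa}$, with $\kappa=\omega$ in the countable case. Put $x_{\alpha}:=\left(q_{\alpha}-re\right)^{+}$. Let $T_{\alpha}$ be the projection onto $\left\{x_{\alpha}\right\}^{\bbot}$ and define
$$d_{\alpha}:=T_{\alpha}\Big(re-\bigvee_{\beta<\alpha}T_{\beta}(re)\Big).$$
Equivalently, $d_{\alpha}$ is the part of $re$ lying in $\left\{x_{\alpha}\right\}^{\bbot}$ but disjoint from all $x_{\beta}$, $\beta<\alpha$.

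The main obstacle is that with only PPP, and not the projection property, the band generated by infinitely many $x_{\beta}$ need not be a projection band, so this supremum may not exist. I would avoid it by defining $d_{\alpha}$ instead as the projection of $re$ onto $\left\{x_{\alpha}\right\}^{\bbot}\cap\left\{x_{\beta}:\beta<\alpha\right\}^{\perp}$. This is simply $re$ minus all its parts over earlier $x_{\beta}$, and I would compute it via the principal projection onto $\left\{\left(x_{\alpha}-\sum\right)\right\}$. Concretely, I would take $d_{\alpha}:=\bigwedge_{\beta<\alpha}\left(T_{\alpha}re-T_{\beta}T_{\alpha}re\right)$, which is a decreasing family of fragments of $T_{\alpha}re$ and exists as an infimum of fragments. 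If this existence is problematic, I would fall back to defining $D$ as the set of all nonzero fragments $d$ of $re$ such that $d\in\left\{x_{q}\right\}^{\bbot}$ for some $q$, and taking a maximal disjoint subfamily by Zorn.

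In either form each $d\in D$ is a fragment of $re$ lying in some $\left\{x_{q}\right\}^{\bbot}$, so $d=T_{q}d\le T_{q}(re)\le T_{q}q\le q$. Disjointness is built in. To get $\bigvee D=re$, suppose $u$ is an upper bound of $D$ with $re\not\le u$. Take $h:=\left(re-u\right)^{+}$ and let $R$ be the projection onto $\left\{h\right\}^{\bbot}$. Then $R(re)$ is a nonzero fragment of $re$ disjoint from $D$ (since $d\le u$ forces $d\perp h$). By the first step, $R(re)$ is not disjoint from some $x_{q}$, so $T_{q}R(re)$ is a nonzero fragment in $\left\{x_{q}\right\}^{\bbot}$ disjoint from $D$. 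This contradicts maximality.

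For countable $Q$, the Zorn step is replaced by the explicit recursion over $\omega$ with finite suprema only: $d_{n}:=T_{q_{n}}\left(re\right)-T_{q_{n}}\left(re\right)\wedge\bigvee_{k<n}T_{q_{k}}\left(re\right)$. These are well defined fragments, and the $d_{n}$ are disjoint and countable. The same argument as above, applied to an upper bound $u$, shows $\bigvee_{n}d_{n}=re$, because $\bigvee_{k\le n}d_{k}=\bigvee_{k\le n}T_{q_{k}}(re)$.
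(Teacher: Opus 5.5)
Your Zorn fallback and your countable recursion are essentially the paper's proof. The fallback takes a maximal disjoint family of nonzero fragments of $re$ lying in some $\{(q-re)^{+}\}^{\bbot}$ and enlarges it by $T_{q}R(re)$ when $\bigvee D\ne re$. In the recursion, your $d_{n}$ coincides with the paper's $P_{(q_{n}-re)^{+}}(re-d_{1}-\dots-d_{n-1})$, and your $T_{q}R(re)$ is the paper's $P_{g}re$ with $g$ the meet of your $h$ and $(q-re)^{+}$. Your Step 1, via order continuity of $R$, plays the role of the paper's identity $\bigvee_{q}f\wedge(q-re)^{+}=f\wedge(1-r)e$.

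The transfinite construction you put first should be deleted rather than kept as the main route. The band $\{x_{\alpha}\}^{\bbot}\cap\{x_{\beta}:\beta<\alpha\}^{\perp}$ need not be a projection band under PPP alone. The family $\{T_{\alpha}re-T_{\beta}T_{\alpha}re\}_{\beta<\alpha}$ is not decreasing in $\beta$, and infima of infinitely many fragments need not exist under PPP. So only the Zorn version is actually a proof.
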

\begin{proof}
By Zorn's lemma we can find a maximal (with respect to inclusion) disjoint $D\subset F_{+}$ such that every $d\in D$ is a fragment of $re$ and there is $q\in Q$ such that $d\le q$. Assume that $re$ is not the supremum of $D$, so that there is $f>0_{F}$ such that $re-d\ge f$, for every $d\in D$. Clearly, $re\ge f$. Note that $re-d\bot d$, hence $f\bot d$, for every $d\in D$. Since $$\bigvee\limits_{q\in Q}\left(f\wedge \left(q-re\right)^{+}\right)=f\wedge\left(\bigvee Q-re\right)^{+}=f\wedge \left(1-r\right)e\ge f\wedge \frac{1-r}{r}f >0_{F},$$ there is $q\in Q$ such that $g:=f\wedge \left(q-re\right)^{+}>0_{F}$. Let $h:=P_{g}re$, which is a fragment of $re$. Moreover, $h\in \left\{g\right\}^{\bbot}\subset \left\{f\right\}^{\bbot}$ guarantees that $h\bot D$. We have $h\le P_{\left(q-re\right)^{+}}re\le P_{\left(q-re\right)^{+}}q\le q$. Hence, $D\cup\left\{h\right\}$ exhibits non-maximality of $D$. This contradiction shows that $re=\bigvee D$.\medskip

We now consider the case when $Q$ is countable, and so can be denumerated as $\left\{q_{n}\right\}_{n\in\N}$. Let $d_{1}:=P_{\left(q_{1}-re\right)^{+}}re$, and if $d_{1},...,d_{n}$ are selected, put $d_{n+1}:=P_{\left(q_{n+1}-re\right)^{+}}\left(re-d_{1}-...-d_{n}\right)$, which is a fragment of $re$ (induction). Clearly, $\left\{d_{n}\right\}_{n\in\N}$ is a disjoint sequence, and moreover, $d_{n}\le P_{\left(q_{n}-re\right)^{+}}re\le P_{\left(q_{n}-re\right)^{+}}q_{n}\le q_{n}$, for every $n\in\N$. For every $n\in\N$ we have $0_{F}\le re-d_{1}-...-d_{n}\bot \left(q_{1}-re\right)^{+},...,\left(q_{n}-re\right)^{+}$. Let $d\le re$ be an upper bound for $\left\{d_{n}\right\}_{n\in\N}$. Then, for every $n\in\N$ we have $0_{F}\le re-d\le re-d_{1}-...-d_{n}$, hence $\left(re-d\right)\wedge\left(q_{n}-re\right)^{+}=0_{F}$. Therefore, $$0_{F}=\bigvee\limits_{n\in\N}\left(\left(re-d\right)\wedge\left(q_{n}-re\right)^{+}\right)=\left(re-d\right)\wedge\left(\bigvee\limits_{n\in\N}q_{n}-re\right)^{+}=\left(re-d\right)\wedge \left(1-r\right)e,$$
which implies that $re-d=0_{F}$. It follows that $d=re$, and so $\bigvee D=re$.
\end{proof}

\begin{remark}\label{smp}
Note that for the ``uncountable case'' the ``PPP'' assumption can be relaxed to ``SMP'' (\emph{sufficiently many projections}, i.e. the property that every nonzero band in $F$ contains a nonzero projection band).

It is not always true that in the context of Proposition \ref{eps} there is a disjoint $D\subset F_{+}$ such that $\bigvee D=e$ and for every $d\in D$ there is $q\in Q$ such that $d\le q$, even if $Q$ is an increasing sequence, and under various additional assumptions about $F$. Let $F:=L_{p}\left[0,1\right]$, for $1\le p\le \8$, enumerate $\Q\cap\left(0,1\right)=\left\{q_{n}\right\}_{n\in\N}$, and for every $n\in\N$ let $f_{n}$ be a function such that $f_{n}\left(0\right)=0=f_{n}\left(1\right)$, $f_{n}\left(q_{n}\right)=1$, and $f_{n}$ is affine on $\left[0,q_{n}\right]$ and $\left[q_{n},1\right]$. It is easy to see that the pointwise supremum of $\left\{f_{n}\right\}_{n\in\N}$ is $\1_{\left(0,1\right)}$, and so $g_{n}\uparrow\1$, in $F$, where $g_{n}=f_{1}\vee...\vee f_{n}$, $n\in\N$. Assume that $D\subset F_{+}$ is disjoint and such that for every $d\in D$ there is $n\in \N$ with $d\le g_{n}$. We claim that $\1$ is not the supremum of $D$. Indeed, fix $d\in D$ and find $n\in\N$ such that $d\le g_{n}$. There are $0<r\le s<1$ and a set $U\subset\left[0,1\right]$ of positive measure such that $r\1_{U}\le d\wedge \1_{U}\le s\1_{U}$. Then, every $d\ne e\in D$ vanishes on $U$, and so $e\le \1_{\left[0,1\right]\backslash U}$. We conclude that $D\le \1_{\left[0,1\right]\backslash U}+s\1_{U}<\1$.
\qed\end{remark}

\section{Semicontinuity and demicontinuity}\label{s3}

Throughout the section $F$ is a normed lattice. Let us recall some important classes of normed lattices. Namely, we will say that $F$ is

\begin{itemize}
\item an \emph{AM-space} if it is norm complete, and satisfies $\|e\vee f\|=\|e\|\vee\|f\|$, for every $e,f\in F_{+}$;
\item \emph{order continuous} if whenever $P\subset F$ is such that $P\downarrow 0_{F}$, it then follows that $P\to 0_{F}$ in norm, that is $\bigwedge\limits_{p\in P}\|p\|=0$;
\item \emph{(order)\footnote{For better readability, throughout the article we will be dropping the word ``order'' from this term and similar terms. This does not seem to create ambiguity.} semicontinuous\footnote{Also known as Fatou.}} if whenever $P\subset\Ba_{F_{+}}$ and $f\in F$ are such that $P\uparrow f$ it then follows that $\|f\|\le 1$, i.e. $f\in \Ba_{F}$.
\end{itemize}

Note that $F$ is order continuous if and only if order convergence of a net in $F$ implies norm convergence to the same limit (see e.g. \cite[Theorem 3.11]{bctv}). It is well-known that the only order continuous AM-spaces are normed lattices of the form $c_{0}\left(X\right)$, for some set $X$ (see e.g. \cite[Theorem 5.7]{bgdmt}). Also, note that order continuity is a topological property and so it is preserved under renorming.

Let us consider an important example of a semicontinuous AM-space.

\begin{example}\label{ous}
Recall that $e\in F_{+}$ is a \emph{strong unit} if for every $f\in F$ there is $r\ge 0$ such that $\left|f\right|\le re$. Then, $\|\cdot\|_{e}$ defined by $\|f\|_{e}:=\bigwedge\left\{r\ge 0,~ \left|f\right|\le re\right\}$ is the gauge of $\left[-e,e\right]$. The latter interval is radially closed and radially bounded, and so $\|\cdot\|_{e}$ is a norm, and $\left[-e,e\right]$ is the unit ball with respect to $\|\cdot\|_{e}$. It is now easy to see that $\|\cdot\|_{e}$ is a semicontinuous norm. We also recall Krein--Kakutani theorem (see \cite[Theorem 45.3]{lz}) which asserts that there is an injective homomorphism $J$ from $F$ onto a dense sublattice of $\Co\left(K\right)$, for some compact Hausdorff space, such that $Je=\1$, and so $J$ is an isometry from $\left(F,\|\cdot\|_{e}\right)$ into $\Co\left(K\right)$ with the supremum norm. Hence, $\left(F,\|\cdot\|_{e}\right)$ is an AM-space iff it is $\|\cdot\|_{e}$-complete iff it is isometrically isomorphic to $\Co\left(K\right)$.
\qed\end{example}

A semicontinuous AM-space can have a non-semicontinuous AM-renorming.

\begin{example}\label{cn}
Let $c$ be the space of convergent sequences, which is semicontinuous, by Example \ref{ous}. For $n\in\N$ let $c_{n}:=\left(c,\|\cdot\|^{n}\right)$, where $\|\cdot\|^{n}$ is a norm on $c$ defined by $\left\|\left(t_{k}\right)_{k\in\N}\right\|^{n}:=\bigvee\limits_{k\in\N}\left|t_{k}\right|\vee n\left|\lim\limits_{k\in\N}t_{k}\right|$. Note that $\|\cdot\|^{n}$ is an AM-norm, equivalent to the supremum norm on $c$. Let us show that if $n\ne 1$ then $c_{n}$ fails to be semicontinuous. Let $\left(e_{k}\right)_{k\in\N}$ be the standard basis of $c_{0}$, let $f_{k}:=e_{1}+...+e_{k}$, for $k\in\N$, and let $\1$ be the constant one sequence. It is clear that $f_{k}\uparrow \1$, but $f_{k}\in\Ba_{c_{n}}$, for every $k\in\N$, while $\left\|\1\right\|^{n}=n$.
\qed\end{example}

It turns out that many semicontinuous normed lattices admit non-semicontinuous renormings, as explained by the next result.

\begin{theorem}\label{oc}
$F$ is order continuous if and only if every renorming of $F$ is semicontinuous.
\end{theorem}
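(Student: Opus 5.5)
Both directions will be handled separately; the converse is the substantial one.

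\emph{Order continuous implies every renorming is semicontinuous.} Order continuity depends only on the topology, so every renorming of $F$ is again order continuous. Hence it suffices to show that an order continuous normed lattice is semicontinuous. Let $P\subset\Ba_{F_{+}}$ with $P\uparrow f$. Then $f-P\downarrow 0_{F}$, so $\bigwedge_{p\in P}\|f-p\|=0$. Therefore
$$\|f\|\le\|p\|+\|f-p\|\le 1+\|f-p\|$$
for every $p\in P$, and taking the infimum over $p$ gives $\|f\|\le 1$.

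\emph{Not order continuous implies some renorming fails semicontinuity.} Assume $F$ is not order continuous. Then there is a directed $P\subset F_{+}$ with $P\downarrow 0_{F}$ and $\|p\|\ge\varepsilon>0$ for all $p\in P$; after rescaling we may take $\varepsilon=1$. Fix $p_{0}\in P$ and replace $P$ by its tail below $p_{0}$. The set $Q:=p_{0}-P$ satisfies $Q\uparrow p_{0}$ and $\|p_{0}-q\|\ge 1$ for every $q\in Q$. So $p_{0}$ is an order limit of $Q$ but stays at norm distance at least $1$ from $Q$. Set $u:=p_{0}$, $\delta:=\|u\|\ge 1$, and $V:=\Ba_{F}$.

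I want an equivalent solid norm in which $u$ is far from the convex solid hull of $Q$. I will take the gauge of
$$B:=\Sol\bigl(\mathrm{conv}\bigl(V\cup \{\pm q:q\in Q\}\bigr)\bigr),$$
or, more simply, of $\Sol\{v+tq:\ v\in\Ba_{F},\ q\in Q,\ |t|\le 1\}$. This set is convex by Lemma \ref{conve}, after checking closure under absolute values by using $|v+tq|\le|v|+|t|q$. Since $B$ contains $\Ba_{F}$ and is contained in $(1+\delta)\Ba_{F}$, the gauge is an equivalent solid norm. In it, $\|q\|_{B}\le 1$ for all $q\in Q$ and $Q\uparrow u$. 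If $\|u\|_{B}>1$, semicontinuity fails.

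The key check, and the main obstacle, is showing $\|u\|_{B}>1$. Suppose $\|u\|_{B}\le 1$, so that $(1-\eta)u\le |v|+tq$ roughly, with $v\in\Ba_{F}$, $q\in Q$, $t\in[0,1]$. Since $q\le u$, this gives
$$(1-\eta-t)u\le|v|.$$
That only forces $t$ to be near $1$, and then I need $\|u-q\|$ small, which the decomposition does not directly give. So I expect to tune scalings. I would use $\Ba_{F}$ scaled by a small $\lambda$, i.e.
$$B_{\lambda}=\Sol\{\lambda v+tq\}.$$
Then from $u\le\lambda|v|+tq+\eta u$ with $t\le 1$ we get
$$u-q\le (u-tq)\le\lambda|v|+\eta u,$$
hence $1\le\|u-q\|\le\lambda+\eta\delta$. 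For $\lambda,\eta$ small this is a contradiction. Thus $\|u\|_{B_{\lambda}}\ge 1+\eta_{0}>1$ once $\lambda<1$, and the gauge of $B_{\lambda}$ is an equivalent norm because $\lambda\Ba_{F}\subset B_{\lambda}\subset(\lambda+\delta)\Ba_{F}$. Convex combinations in $B_{\lambda}$ still have the form $\lambda v+w$ with $0\le w\le q'$ for some $q'\in Q$, using directedness of $Q$. With that form, the same estimate $u-q'\le\lambda|v|+\eta u$ goes through, and this is the point to verify carefully.
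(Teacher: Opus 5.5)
Your proposal is correct and is essentially the paper's argument. The paper likewise takes the gauge of the solid convex set $\Ba_{F}+\bigcup_{p}\left[-p,p\right]$ and keeps the order limit outside the new unit ball by the same monotonicity estimate ($0\le f-p\le f-h$ for $h\in\left[-p,p\right]$). It differs only cosmetically: it normalizes $\|f-p\|\ge 2$ rather than shrinking $\Ba_{F}$ by $\lambda<1$, and it passes to the norm closure rather than checking $u\notin rB_{\lambda}$ for $r$ near $1$. One minor point: Lemma \ref{conve} does not apply verbatim, since $\left\{\lambda v+tq\right\}$ is neither convex nor closed under absolute values. However, $B_{\lambda}=\left\{x:\ \left|x\right|\le\lambda w+q,\ w\in\Ba_{F_{+}},\ q\in Q\right\}$ is convex directly by directedness of $Q$, as you indicate.
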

\begin{proof}
Necessity: It is enough to show that every order continuous normed lattice is semicontinuous. Let $P\subset\Ba_{F_{+}}$ and $f\in F$ be such that $P\uparrow f$. Then, $f-P\downarrow 0_{F}$, hence $f-P\to 0_{F}$, and so $P\to f$. Since $\Ba_{F}$ is norm closed, it follows that $f\in\Ba_{F}$.\medskip

Sufficiency: Assume that $F$ is not order continuous, so that there are a directed $P\subset F_{+}$ and $f\in F$ such that $P\uparrow f$, but $\|f-p\|\ge 2$, for every $p\in P$. Note that $\bigcup\limits_{p\in P}\left[-p,p\right]$ is solid, convex (as a directed union of convex sets) and order bounded (because it is contained in $\left[-f,f\right]$). Let $B:=\Ba_{F}+\bigcup\limits_{p\in P}\left[-p,p\right]$ which is a norm bounded convex solid set. It is easy to see that $\overline{B}$ is a norm bounded (hence, radially bounded) norm closed (hence, radially closed) convex solid set.

If $g\in \Ba_{F}$ and $h\in \left[-p,p\right]$, for some $p\in P$, then $f-h\ge f-p\ge 0_{F}$, hence $\|f-g-h\|\ge \|f-h\|-\|g\|\ge \|f-p\|-\|g\|\ge 1$. Thus, $f\notin\overline{B}$. The gauge $\|\cdot\|_{\overline{B}}$ of $\overline{B}$ is a solid norm on $F$ equivalent to the given norm. However, $P\subset \overline{B}$ and $P\uparrow f\notin \overline{B}$ implies that $\|f\|_{\overline{B}}>1$ but $\|p\|_{\overline{B}}\le 1$, for every $p\in P$. Thus, $\|\cdot\|_{\overline{B}}$ is not a semicontinuous norm.
\end{proof}

\begin{proposition}[Cf. Theorem 4.6, \cite{ab0}]\label{fatou}
The following conditions are equivalent:
\item[(i)] $F$ is semicontinuous;
\item[(ii)] $\Ba_{F}$ is order closed;
\item[(ii')] $\Ba_{F}$ is uo closed;
\item[(iii)] If $f_{p}\too f$, then $\|f\|\le\liminf\|f_{p}\|$;
\item[(iii')] If $f_{p}\touo f$, then $\|f\|\le\liminf\|f_{p}\|$.
\end{proposition}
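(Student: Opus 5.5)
The cheapest route goes through Corollary \ref{solid1}. The unit ball $\Ba_{F}$ is solid, so by Corollary \ref{solid1} its order adherence and its uo adherence are the same set. That set consists of all $f\in F$ for which there is $Q\subset \left(\Ba_{F}\right)_{+}$ with $Q\uparrow\left|f\right|$. I will prove the cycle (i)$\Rightarrow$(ii')$\Rightarrow$(ii)$\Rightarrow$(i), and then attach (iii) and (iii') to it.

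For (i)$\Rightarrow$(ii'), let $f$ lie in the uo adherence of $\Ba_{F}$. Corollary \ref{solid1} gives a directed $Q\subset \Ba_{F_{+}}$ with $Q\uparrow\left|f\right|$. Semicontinuity then gives $\left\|\left|f\right|\right\|\le 1$, so $\|f\|\le 1$ because the norm is a lattice norm. Hence the uo adherence of $\Ba_{F}$ is $\Ba_{F}$ itself, which is (ii').

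For (ii')$\Rightarrow$(ii), note that order convergence implies uo convergence, so the order adherence of $\Ba_{F}$ is contained in its uo adherence, which is $\Ba_{F}$. For (ii)$\Rightarrow$(i), if $P\subset\Ba_{F_{+}}$ and $P\uparrow f$, then $P$, viewed as a net, order converges to $f$. Thus $f$ is in the order adherence of $\Ba_{F}$, and so $\|f\|\le 1$.

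Next I will show (ii')$\Rightarrow$(iii'), which is a standard scaling argument. Let $f_{p}\touo f$ and put $r:=\liminf\|f_{p}\|$. We may assume $r<\8$. Fix $s>r$. Then the set of indices $p$ with $\|f_{p}\|<s$ is cofinal. Restricting the net to these indices gives a subnet, which still uo converges to $f$. After dividing by $s$, this subnet lies in $\Ba_{F}$ and uo converges to $\frac{1}{s}f$. By (ii'), $\|f\|\le s$, and letting $s\downarrow r$ gives $\|f\|\le r$.

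Once this is done, the remaining implications are immediate. (iii')$\Rightarrow$(iii) holds because order convergence implies uo convergence to the same limit. (iii)$\Rightarrow$(i) holds because an increasing net $P\uparrow f$ order converges to $f$, so $\|f\|\le\liminf\|p\|\le 1$.

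The only nontrivial ingredient is that uo adherence of a solid set reduces to suprema of increasing nets. That is already provided by Corollary \ref{solid1}, so I do not expect any real obstacle. The one point requiring care is the cofinal restriction in the $\liminf$ argument: I need the restricted net to still uo converge. This is clear, since a tail-domination witness for the original net also works for the restricted net.
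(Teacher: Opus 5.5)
Your proof is correct and takes essentially the same approach as the paper. Corollary \ref{solid1} gives the key step from semicontinuity to closedness of $\Ba_{F}$, and the $\liminf$ inequality comes from the same rescaling-along-a-subnet argument. The only difference is that you arrange the implications as one chain through (ii'), whereas the paper runs two parallel cycles for order and uo convergence.
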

\begin{proof}
(i)$\Rightarrow$(ii) follows immediately from Corollary \ref{solid1}.

(ii)$\Rightarrow$(iii): Let $r:=\liminf\|f_{p}\|$. If $r=\8$, there is nothing to prove. Otherwise, let $s>r$ and let $\left(f_{p_{q}}\right)_{q\in Q}$ be a subnet of $\left(f_{p}\right)_{p\in P}$ such that $\|f_{p_{q}}\|\to t<s$. Then, there is $q_{0}$ such that $\|f_{p_{q}}\|\le s$, for every $q\ge q_{0}$. Hence, $s\Ba_{F}\ni f_{p_{q}}\too f$ implies $f\in s\Ba_{F}$, and so $\|f\|\le s$. Since $s$ was arbitrary we conclude that $\|f\|\le r$.

(iii)$\Rightarrow$(i) is trivial. (i)$\Rightarrow$(ii')$\Rightarrow$(iii')$\Rightarrow$(i) is proven similarly.
\end{proof}

Recall that a linear topology on a vector lattice is called a \emph{(order) semicontinuous topology\footnote{Also known as Fatou topology.}} if it has a local base at the origin which consists of solid order closed sets. Clearly, a semicontinuous norm induces a semicontinuous topology. The converse is false, since a renorming of a semicontinuous normed lattice may fail to be semicontinuous (see Example \ref{cn}). We say that $F$ is \emph{topologically (order) semicontinuous}, if it has an equivalent semicontinuous norm. More specifically, we will call $F$ \emph{(order) $r$-semicontinuous}, for some $r\ge 1$, if there is a semicontinuous norm $\|\cdot\|'$ on $F$ such that $\|\cdot\|\le\|\cdot\|'\le r\|\cdot\|$ (equivalently, there is a semicontinuous norm $\|\cdot\|''$ on $F$ such that $\frac{1}{r}\|\cdot\|\le\|\cdot\|''\le \|\cdot\|$). Clearly, semicontinuity implies the topological semicontinuity, and the latter property is stable under renorming. In addition to the following characterizations see also \cite[Theorem 3]{fk}.

\begin{proposition}\label{qf}
$F$ is $r$-semicontinuous if and only if the common order and uo closure of $\Ba_{F}$ is contained in $r\Ba_{F}$. Moreover, the following conditions are equivalent:
\item[(i)] $F$ is topologically semicontinuous;
\item[(ii)] The norm topology is semicontinuous;
\item[(iii)] The common order and uo closure of $\Ba_{F}$ is norm bounded;
\item[(iv)] Order closure preserves norm bounded sets;
\item[(iv')] Uo closure preserves norm bounded sets.
\end{proposition}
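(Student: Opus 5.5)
Write $C$ for the common order and uo closure of $\Ba_{F}$. By Lemma \ref{solid}, $C$ is a convex solid set, and it is order closed and uo closed. I would first prove the quantitative statement and then derive the list of equivalences from it.

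\emph{Necessity.} Let $\|\cdot\|'$ be semicontinuous with $\|\cdot\|\le\|\cdot\|'\le r\|\cdot\|$. Then $\Ba_{F}\subset r\Ba_{(F,\|\cdot\|')}$. By Proposition \ref{fatou}, the ball $\Ba_{(F,\|\cdot\|')}$ is order closed, and so is its dilate by $r$. Hence $C\subset r\Ba_{(F,\|\cdot\|')}\subset r\Ba_{F}$.

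\emph{Sufficiency.} Assume $C\subset r\Ba_{F}$. I take $\|\cdot\|'':=\|\cdot\|_{C}$, the gauge of $C$.
\begin{itemize}
\item Since $\Ba_{F}\subset C\subset r\Ba_{F}$, $C$ is radially bounded, and $\frac{1}{r}\|\cdot\|\le\|\cdot\|''\le\|\cdot\|$.
\item Since $C$ is solid and convex, $\|\cdot\|''$ is a solid norm, so it is a renorming.
\item Since $C$ is order closed and $\left(1-\frac{1}{n}\right)e\too e$, $C$ is radially closed. So the unit ball of $\|\cdot\|''$ is exactly $C$.
\end{itemize}
As this ball is order closed, Proposition \ref{fatou} shows that $\|\cdot\|''$ is semicontinuous. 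Scaling to $r\|\cdot\|''$ gives the first form of the definition.

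\emph{Equivalences.}
\begin{itemize}
\item (i)$\Leftrightarrow$(iii) follows from the quantitative part, taking $r$ arbitrary.
\item (i)$\Rightarrow$(ii): the balls of the equivalent semicontinuous norm are solid and order closed and form a local base.
\item (ii)$\Rightarrow$(iii): a zero neighbourhood $U\subset\Ba_{F}$ that is solid and order closed contains $\varepsilon\Ba_{F}$ for some $\varepsilon>0$. Then $\varepsilon C$ is the order closure of $\varepsilon\Ba_{F}$, hence contained in $U\subset\Ba_{F}$. So $C\subset\frac{1}{\varepsilon}\Ba_{F}$.
\item (iii)$\Rightarrow$(iv),(iv'): a norm bounded set lies in $s\Ba_{F}$, whose order and uo closure is $sC$, which is norm bounded.
\item (iv) or (iv')$\Rightarrow$(iii): apply the hypothesis to $\Ba_{F}$ itself, since its order and uo closures coincide (both equal $C$).
\end{itemize}

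The step that needs the most care is checking that the unit ball of the gauge is exactly $C$. This uses order closedness of $C$ under the radial limits, not just its convexity and solidity. It is also where the fact that $C$ is a closure, and not merely an adherence, is essential, because Example \ref{secon}-type phenomena mean the adherence need not be order closed.
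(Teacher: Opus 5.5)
Your proof is correct and is essentially the paper's: necessity via order closedness of the dilated unit ball of the semicontinuous norm, sufficiency via the gauge of the order closure (radially bounded, radially closed because order closed, hence its own unit ball, and semicontinuous by Proposition \ref{fatou}), and (ii)$\Rightarrow$(iii) by the same scaling argument with a solid order closed neighbourhood. The only difference is in how the equivalences are chained. You route (iv)/(iv') back to (iii) and obtain (i)$\Leftrightarrow$(iii) directly from the quantitative claim, whereas the paper closes the cycle with (iv)$\Rightarrow$(i); this is immaterial.
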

\begin{proof}
We start with the main claim. Necessity: Let $\|\cdot\|'$ be a semicontinuous norm on $F$ such that $\|\cdot\|\le\|\cdot\|'\le r\|\cdot\|$. Then, the unit ball $B$ with respect to $\|\cdot\|'$ satisfies $\Ba_{F}\supset B\supset \frac{1}{r}\Ba_{F}$, hence $\Ba_{F}\subset rB\subset r\Ba_{F}$. Since $B$ is order closed, it follows that the order closure of $\Ba_{F}$ is contained in $rB$, thus in $r\Ba_{F}$.\medskip

Sufficiency: Let $D\subset r\Ba_{F}$ be the order closure of $\Ba_{F}$, which is a convex solid set. It is clear that $D$ is radially bounded, but it is also radially closed, because it is order closed. The gauge $\|\cdot\|_{D}$ of $D$ is a solid seminorm on $F$ whose unit ball is $D$. As the latter is order closed, Proposition \ref{fatou} guarantees that $\|\cdot\|_{D}$ is a semicontinuous norm. At the same time $r\Ba_{F}\supset D\supset \Ba_{F}$ implies $\frac{1}{r}\|\cdot\|\le\|\cdot\|_{D}\le \|\cdot\|$.\medskip

We now prove equivalences.

(i)$\Rightarrow$(ii): Let $\|\cdot\|'$ be an equivalent semicontinuous norm. Then the balls with respect to $\|\cdot\|'$ form a local base for the norm topology of $F$ made up of solid order closed sets.

(ii)$\Rightarrow$(iii): Let $U$ be an order closed neighborhood of $0_{F}$ contained in $\Ba_{F}$. Since $\Ba_{F}$ is bounded, there is $r>0$ such that $\Ba_{F}\subset rU$. As $U$ is order closed, it follows that the order closure of $\Ba_{F}$ is contained in $rU\subset r\Ba_{F}$, and so it is norm bounded.

(iii)$\Rightarrow$(iv) and (iii)$\Rightarrow$(iv') are straightforward. (iv)$\Rightarrow$(i) and (iv')$\Rightarrow$(i) follow from the main claim.
\end{proof}

\begin{example}\label{dual}
Let $F^{\sim}_{\oc}$ be the set of all order continuous functionals on $F$, and assume that $F^{*}_{\oc}:=F^{\sim}_{\oc}\cap F^{*}$ is $r$-norming. Then, $F$ is $r$-semicontinuous. Define $\|\cdot\|'$ by $\|f\|':=\bigvee\limits_{\nu\in F^{\sim}_{\oc}\cap \Ba_{F^{*}}}\left|\nu\left(f\right)\right|$. This norm satisfies $\frac{1}{r}\|\cdot\|\le\|\cdot\|'\le\|\cdot\|$, and so it is left to show that this is a semicontinuous norm, i.e. that the set $\left\{f\in F,~ \forall \nu\in F^{\sim}_{\oc}\cap \Ba_{F^{*}}:~ \left|\nu\left(f\right)\right|\le 1\right\}$ is solid and order closed. Order closedness follows from order closedness of $\left\{f\in F,~ \left|\nu\left(f\right)\right|\le 1\right\}$, for every $\nu\in F^{\sim}_{\oc}\cap \Ba_{F^{*}}$. Solidness can be deduced from solidness of $F^{\sim}_{\oc}\cap \Ba_{F^{*}}$ and Riesz-Kantorovich theorem.
\qed\end{example}

We say that $F$ is \emph{(order) $r$-demicontinuous}, for some $r\ge 1$, if whenever $P\subset\Ba_{F_{+}}$ and $f\in F$ are such that $P\uparrow f$ it then follows that $\|f\|\le r$. Observe that $1$-demicontinuity is precisely semicontinuity. We will call $F$ \emph{(order) demicontinuous\footnote{Also known as weakly Fatou.}} if it is $r$-demicontinuous, for some $r\ge 1$. It is easy to see that demicontinuity is stable under renorming. If $F$ is $r$-semicontinuous, then it is $r$-demicontinuous. Indeed, let $\|\cdot\|'$ be a semicontinuous norm on $F$ such that $\|\cdot\|\le\|\cdot\|'\le r\|\cdot\|$; then if $P\subset\Ba_{F_{+}}$ and $f\in F$ are such that $P\uparrow f$ it then follows that $\|f\|\le\|f\|'\le \bigvee\limits_{g\in P}\|g\|'\le \bigvee\limits_{g\in P}r\|g\|\le r$. Consequently, every topologically semicontinuous normed lattice is demicontinuous. The following two results are straightforward.

\begin{proposition}\label{rwf}
The following conditions are equivalent:
\item[(i)] $F$ is $r$-demicontinuous;
\item[(ii)] The common order and uo adherence of $\Ba_{F}$ is contained in $r\Ba_{F}$;
\item[(iii)] Whenever $f_{p}\too f$, then $\|f\|\le r\liminf\|f_{p}\|$;
\item[(iii')] Whenever $f_{p}\touo f$, then $\|f\|\le r\liminf\|f_{p}\|$.
\end{proposition}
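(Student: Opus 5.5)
The plan is to mirror the proof of Proposition \ref{fatou}, with Corollary \ref{solid1} and the solidity of $\Ba_{F}$ doing the main work. Write $C$ for the common order and uo adherence of $\Ba_{F}$; Corollary \ref{solid1} guarantees that these two adherences coincide. We prove (i)$\Rightarrow$(ii)$\Rightarrow$(iii)$\Rightarrow$(i) and (ii)$\Rightarrow$(iii')$\Rightarrow$(i).

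For (i)$\Rightarrow$(ii), let $f\in C$. By Corollary \ref{solid1}, applied to the solid set $\Ba_{F}$, there is $Q\subset \Ba_{F_{+}}$ with $Q\uparrow\left|f\right|$. Then $r$-demicontinuity gives $\|f\|=\left\|\left|f\right|\right\|\le r$, so $C\subset r\Ba_{F}$.

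For (ii)$\Rightarrow$(iii) and (ii)$\Rightarrow$(iii'), let $f_{p}\too f$ (respectively $f_{p}\touo f$) and put $t:=\liminf\|f_{p}\|$. If $t=\8$ there is nothing to prove. If $0<t<\8$, fix $s>t$ and pass to a subnet whose norms converge to some value below $s$. A tail of this subnet lies in $s\Ba_{F}$, and that tail still converges to $f$ in order (respectively uo). Hence $f$ lies in the adherence of $s\Ba_{F}$, which equals $sC$ because the adherence commutes with scaling; scaling is an order and uo homeomorphism. So $\|f\|\le rs$, and letting $s\downarrow t$ gives $\|f\|\le rt$. If $t=0$, the same argument works for every $s>0$ and gives $\|f\|\le rs$ for all $s>0$, so $f=0_{F}$.

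For (iii)$\Rightarrow$(i) and (iii')$\Rightarrow$(i), let $P\subset \Ba_{F_{+}}$ with $P\uparrow f$. Viewed as a net indexed by itself, $P$ converges to $f$ both in order and in uo, and $\liminf\|p\|\le 1$. Either (iii) or (iii') then gives $\|f\|\le r$.

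The only point that needs care is the $\liminf$ and subnet bookkeeping in the second step, in particular the case $t=0$ and the fact that tails of subnets keep the same order and uo limits. The rest follows directly from Corollary \ref{solid1}, and all of it is routine, as the paper indicates.
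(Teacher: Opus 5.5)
Your proof is correct and is the argument the paper intends. The paper leaves this result as straightforward; its proof is the direct analogue of the proof of Proposition \ref{fatou}: Corollary \ref{solid1} gives (i)$\Rightarrow$(ii), the subnet and $\liminf$ argument applied to $s\Ba_{F}$ gives (ii)$\Rightarrow$(iii) and (iii'), and the reverse implications are trivial. (Your separate treatment of $t=0$ is harmless but unnecessary, since the argument for arbitrary $s>t$ already covers it.)
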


\begin{proposition}\label{wf}
The following conditions are equivalent:
\item[(i)] $F$ is demicontinuous;
\item[(ii)] The common order and uo adherence of $\Ba_{F}$ is norm bounded;
\item[(iii)] Order adherence preserves norm bounded sets;
\item[(iii')] Uo adherence preserves norm bounded sets.
\end{proposition}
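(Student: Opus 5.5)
The plan is to deduce Proposition \ref{wf} from Proposition \ref{rwf} and Corollary \ref{solid1}. Let $C$ denote the common order and uo adherence of $\Ba_{F}$; by Lemma \ref{solid}, applied with $E=F$ and the solid set $B=\Ba_{F}$, these two adherences coincide. Moreover, $C$ consists of those $f$ for which some $Q\subset \Ba_{F_{+}}$ satisfies $Q\uparrow\left|f\right|$. Since $\|f\|=\|\left|f\right|\|$, the definition of $r$-demicontinuity says exactly that $C\subset r\Ba_{F}$. This is the equivalence (i)$\Leftrightarrow$(ii) of Proposition \ref{rwf}, which we may take as given.

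For (i)$\Rightarrow$(ii), assume $F$ is $r$-demicontinuous. Proposition \ref{rwf} then gives $C\subset r\Ba_{F}$, so $C$ is norm bounded. Conversely, if $C\subset r\Ba_{F}$ for some $r$, then we may take $r\ge1$ because $\Ba_{F}\subset C$. Proposition \ref{rwf} then shows that $F$ is $r$-demicontinuous, which gives (ii)$\Rightarrow$(i).

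For (ii)$\Rightarrow$(iii) and (ii)$\Rightarrow$(iii'), let $A$ be norm bounded, say $A\subset s\Ba_{F}$ with $s>0$. Order adherence is monotone with respect to inclusion, so the order adherence of $A$ lies in the order adherence of $s\Ba_{F}$. Because scalar multiplication is compatible with order convergence, the latter set equals $sC$, which is norm bounded. The uo case is handled in the same way, since uo convergence is also compatible with scalar multiplication. For (iii)$\Rightarrow$(ii) and (iii')$\Rightarrow$(ii), we simply apply the hypothesis to the bounded set $A=\Ba_{F}$.

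The only points needing care are the identification of the order and uo adherences of $\Ba_{F}$, which Lemma \ref{solid} supplies, and the scaling identity for adherences. Both are routine, so I do not expect a real obstacle.
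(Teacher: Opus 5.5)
Your proposal is correct and follows the route the paper intends. The paper gives no written proof and calls this result straightforward; your argument supplies it by using Lemma \ref{solid} and Corollary \ref{solid1} to identify the common order/uo adherence of $\Ba_{F}$ with the set of $f$ admitting $Q\subset\Ba_{F_{+}}$ with $Q\uparrow\left|f\right|$, and then the scaling identity (the adherence of $s\Ba_{F}$ is $s$ times the adherence of $\Ba_{F}$) together with monotonicity of adherence for (iii) and (iii').
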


Recall that a sublattice $E\subset F$ is \emph{regular} if whenever $P\subset E$ is such that $\bigwedge_{E}P=0_{F}$, it then follows that $\bigwedge_{F}P=0_{F}$. It is not hard to show that $E$ is regular if and only if the embedding of $E$ into $F$ is order continuous. Every ideal is regular, along with every order dense sublattice.

\begin{proposition}\label{pass}
$r$-demicontinuity and $r$-semicontinuity are inherited by regular sublattices, for every $r\ge 1$. In particular, semicontinuity, topological semicontinuity and demicontinuity pass down to regular sublattices. Order continuity also passes down to regular sublattices.
\end{proposition}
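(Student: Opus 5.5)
The plan is to reduce everything to the single observation that, for a regular sublattice $E\subset F$ with the restricted norm, a directed set $P\subset E_{+}$ with $P\uparrow f$ in $E$ also satisfies $P\uparrow f$ in $F$. Indeed, $P\uparrow f$ in $E$ means $\bigwedge_{E}(f-P)=0_{F}$, and regularity gives $\bigwedge_{F}(f-P)=0_{F}$, so $f=\bigvee_{F}P$. With this in hand, each claimed property of $E$ follows from the corresponding property of $F$.

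For $r$-demicontinuity the argument is direct. Take $P\subset\Ba_{E_{+}}\subset\Ba_{F_{+}}$ with $P\uparrow f$ in $E$. Then $P\uparrow f$ in $F$, so $\|f\|\le r$ by $r$-demicontinuity of $F$. The case $r=1$ is semicontinuity.

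For $r$-semicontinuity, let $\|\cdot\|'$ be a semicontinuous norm on $F$ with $\|\cdot\|\le\|\cdot\|'\le r\|\cdot\|$. Restrict it to $E$; the restriction is a lattice norm on $E$ satisfying the same inequalities with respect to the restricted norm. Semicontinuity of the restriction follows from the case $r=1$ applied to $(F,\|\cdot\|')$, since regularity of $E$ depends only on the order. Topological semicontinuity and demicontinuity are unions over $r$ of these properties, so they pass down as well.

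For order continuity, take $P\subset E$ with $P\downarrow 0_{F}$ in $E$. By regularity $P\downarrow 0_{F}$ in $F$, so $\bigwedge_{p\in P}\|p\|=0$ by order continuity of $F$, and the norms agree on $E$.

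There is no real obstacle here. The only point needing care is that suprema and infima computed in $E$ agree with those computed in $F$, which is exactly what regularity provides. No results beyond the definitions are needed.
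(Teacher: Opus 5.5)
Your proof is correct. It follows the only natural route: the paper states Proposition \ref{pass} without proof, relying on the preceding remark that $E$ is regular iff the embedding $E\hookrightarrow F$ is order continuous, and that is exactly your observation that $P\uparrow f$ in $E$ forces $P\uparrow f$ in $F$. Your treatment of $r$-semicontinuity (restrict the semicontinuous norm $\|\cdot\|'$ and reapply the $r=1$ case), of the unions over $r$, and of order continuity fills in the omitted details correctly.
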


The following result may be viewed as a converse to Example \ref{dual}. It improves \cite[Lemma 2.4.20]{mn}.

\begin{proposition}\label{dua}
Assume that $F$ is $r$-demicontinuous and that $F^{*}_{\oc}$ separates points of $F$. Then, $F^{*}_{\oc}$ is $r^{2}$-norming.
\end{proposition}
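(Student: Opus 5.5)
The plan is to realize $F^{*}_{\oc}$-norming as a statement about a weak closure of $\Ba_{F}$, and then compute that closure via Proposition \ref{solid3} and $r$-demicontinuity. For $f\in F$ define $\rho\left(f\right):=\bigvee\limits_{\nu\in F^{*}_{\oc}\cap\Ba_{F^{*}}}\left|\nu\left(f\right)\right|$. Clearly $\rho\le\|\cdot\|$, so the claim is $\|f\|\le r^{2}\rho\left(f\right)$. By homogeneity it suffices to show that $\rho\left(f\right)\le 1$ implies $f\in r^{2}\Ba_{F}$.

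\textbf{Step 1: the weak closure.} Consider the dual pair $\left\langle F,F^{*}_{\oc}\right\rangle$. The polar of $\Ba_{F}$ in $F^{*}_{\oc}$ is $F^{*}_{\oc}\cap\Ba_{F^{*}}$, so by the bipolar theorem the $\sigma\left(F,F^{*}_{\oc}\right)$-closure of the convex balanced set $\Ba_{F}$ is exactly $\left\{f\in F,~\rho\left(f\right)\le 1\right\}$. It therefore suffices to show that this weak closure lies in $r^{2}\Ba_{F}$.

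\textbf{Step 2: pass to a locally solid topology.} Let $\tau:=\left|\sigma\right|\left(F,F^{*}_{\oc}\right)$ be the absolute weak topology, generated by the seminorms $f\mapsto\left|\nu\right|\left(\left|f\right|\right)$ for $\nu\in F^{*}_{\oc}$. For this to make sense we need $F^{*}_{\oc}$ to be closed under $\nu\mapsto\left|\nu\right|$:
\begin{itemize}
\item $F^{*}$ is an ideal in $F^{\sim}$, so $\left|\nu\right|\in F^{*}$;
\item $F^{\sim}_{\oc}$ is a band in $F^{\sim}$, so $\left|\nu\right|$ is order continuous.
\end{itemize}
With this, $\tau$ has the following properties.
\begin{itemize}
\item It is locally solid.
\item It is Hausdorff, because $F^{*}_{\oc}$ separates points.
\item It is order continuous: if $f_{p}\too 0_{F}$ then $\left|f_{p}\right|\le q$ on tails with $Q\downarrow 0_{F}$, and $\left|\nu\right|\left(Q\right)\downarrow 0$.
\item It is a locally convex topology compatible with the same dual pair, since the dual of $\left(F,\tau\right)$ is still $F^{*}_{\oc}$, again using that $F^{*}_{\oc}$ is an ideal in $F^{\sim}$. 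Hence $\tau$ and $\sigma\left(F,F^{*}_{\oc}\right)$ have the same closed convex sets, and $\overline{\Ba_{F}}^{\tau}$ equals the weak closure from Step 1.
\end{itemize}

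\textbf{Step 3: apply Proposition \ref{solid3} and demicontinuity.} Since $\Ba_{F}$ is solid, Proposition \ref{solid3} gives that $\overline{\Ba_{F}}^{\tau}$ is the second order adherence of $\Ba_{F}$. By Proposition \ref{rwf}(ii), the order adherence $C$ of $\Ba_{F}$ lies in $r\Ba_{F}$. Since order adherence commutes with scaling, the order adherence of $C$ lies in the order adherence of $r\Ba_{F}$, which is $r$ times the order adherence of $\Ba_{F}$, hence in $r^{2}\Ba_{F}$. This is precisely where the exponent $r^{2}$ comes from. Combining Steps 1–3, $\rho\left(f\right)\le1$ implies $\|f\|\le r^{2}$, as required.

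The main obstacle is Step 2. One must check carefully that $\left|\nu\right|$ stays in $F^{*}_{\oc}$, and that the absolute weak topology has the same dual as the weak topology. The latter is the standard fact that for an ideal $I\subset F^{\sim}$ one has $\left(F,\left|\sigma\right|\left(F,I\right)\right)^{*}=I$. Once these are verified, the rest is a direct application of Proposition \ref{solid3} and Proposition \ref{rwf}.
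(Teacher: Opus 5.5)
Your proof is correct and follows the paper's argument essentially step for step. You reduce $r^{2}$-norming to the $\sigma\left(F,F^{*}_{\oc}\right)$-closure of $\Ba_{F}$ (via the bipolar theorem, where the paper cites a textbook exercise), identify that closure with the closure in the absolute weak topology generated by $F^{*}_{\oc}$, and then apply Proposition \ref{solid3} and Proposition \ref{rwf} twice. Your explicit checks that $\left|\nu\right|\in F^{*}_{\oc}$ and that the second order adherence lands in $r^{2}\Ba_{F}$ spell out details the paper leaves implicit.
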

\begin{proof}
It is enough to show that the closure $B$ of $\Ba_{F}$ in $\sigma\left(F,F^{*}_{\oc}\right)$ is contained in $r^{2}\Ba_{F}$ (see \cite[Exercise 3.90]{fhhmz}). Let $\tau$ be the absolute weak topology on $F$ generated by $F^{*}_{\oc}$, i.e. $f_{p}\to 0_{F}$ in $\tau$ if $\nu\left(\left|f_{p}\right|\right)\to 0$, for every $\nu\in F^{*}_{\oc}$. Clearly, $\tau$ is an order continuous locally solid topology, and it is Hausdorff, because $F^{*}_{\oc}$ separates points of $F$. Moreover, the dual of $\left(F,\tau\right)$ is $F^{*}_{\oc}$ (see \cite[Theorem 2.33]{ab0}), and since $\Ba_{F}$ is convex, it follows that $B=\overline{\Ba_{F}}^{\tau}$. According to Proposition \ref{solid3} $B$ is the second order adherence of $\Ba_{F}$. Hence, it follows from Proposition \ref{rwf} that $C\subset r\Ba_{F}$, hence $B\subset r^{2}\Ba_{F}$.
\end{proof}

Proposition \ref{dua} in tandem with Example \ref{dual} yield the following.

\begin{corollary}\label{wfqf}
If $F$ is demicontinuous and $F^{*}_{\oc}$ separates points of $F$, then $F$ is topologically semicontinuous.
\end{corollary}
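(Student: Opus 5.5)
Corollary \ref{wfqf} follows by chaining Proposition \ref{dua} with Example \ref{dual}; the one thing to watch is that the constants fit together.

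First, since $F$ is demicontinuous, fix $r\ge 1$ such that $F$ is $r$-demicontinuous. By hypothesis $F^{*}_{\oc}$ separates the points of $F$. Proposition \ref{dua} therefore applies and shows that $F^{*}_{\oc}$ is $r^{2}$-norming. Concretely, for every $f\in F$ we have
$$\bigvee\limits_{\nu\in F^{*}_{\oc}\cap\Ba_{F^{*}}}\left|\nu\left(f\right)\right|\ge \tfrac{1}{r^{2}}\|f\|.$$

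Second, feed this into Example \ref{dual}, taking $r^{2}$ in place of $r$. That example produces the norm
$$\|f\|':=\bigvee\limits_{\nu\in F^{\sim}_{\oc}\cap\Ba_{F^{*}}}\left|\nu\left(f\right)\right|.$$
Its unit ball is solid: this follows from the Riesz--Kantorovich formula together with the solidity of $F^{\sim}_{\oc}\cap\Ba_{F^{*}}$. The unit ball is also order closed, because it is an intersection of sets $\left\{f,~\left|\nu\left(f\right)\right|\le 1\right\}$, each order closed since $\nu$ is order continuous. By Proposition \ref{fatou}, $\|\cdot\|'$ is therefore a semicontinuous norm. Moreover $\frac{1}{r^{2}}\|\cdot\|\le\|\cdot\|'\le\|\cdot\|$.

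Hence $F$ is $r^{2}$-semicontinuous, and in particular topologically semicontinuous.

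The only point needing care is matching the two notions of norming: Proposition \ref{dua} yields norming by $F^{*}_{\oc}\cap\Ba_{F^{*}}$, which is exactly the set used in the definition of $\|\cdot\|'$. No other step poses a real obstacle.
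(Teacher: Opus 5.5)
Your proposal is correct and follows the paper's own route: the paper derives the corollary from Proposition~\ref{dua}, which makes $F^{*}_{\oc}$ $r^{2}$-norming, together with Example~\ref{dual}, which turns this into a semicontinuous norm $\|\cdot\|'$ with $\frac{1}{r^{2}}\|\cdot\|\le\|\cdot\|'\le\|\cdot\|$. Your observation that $F^{*}_{\oc}\cap\Ba_{F^{*}}=F^{\sim}_{\oc}\cap\Ba_{F^{*}}$ is the right bookkeeping, so $F$ is indeed $r^{2}$-semicontinuous.
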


\section{Pointwise demicontinuity and anti-demicontinuity}\label{s4}

Throughout the section $F$ is a normed lattice. $F$ is \emph{pointwise demicontinuous} if for every $f>0$ there is $t>0$ such that whenever $P\subset s\Ba_{F_{+}}$ is such that $P\uparrow f$, it then follows that $s\ge t$. If $F$ is $r$-demicontinuous, then one can take $t:=\frac{\|f\|}{r}$. Hence, every demicontinuous normed lattice is pointwise demicontinuous. It is not hard to verify that pointwise demicontinuity is stable under renorming. Note that $F$ fails to be pointwise demicontinuous if and only if there is $f>0_{F}$ such that for every $s>0$ there is $P\subset s\Ba_{F_{+}}$ with $P\uparrow f$. Alternatively, there is $f>0_{F}$ such that for every $n\in\N$ there is $P_{n}\subset \Ba_{F_{+}}$ with $P_{n}\uparrow nf$.

\begin{proposition}\label{bp}
The class of pointwise demicontinuous normed lattices is closed under $\ell_{\8}$ sums.
\end{proposition}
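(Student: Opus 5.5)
Let $\left(F_{i}\right)_{i\in I}$ be pointwise demicontinuous normed lattices and $F:=\left(\bigoplus_{i\in I}F_{i}\right)_{\ell_{\8}}$, the set of families $f=\left(f_{i}\right)_{i\in I}$ with $\|f\|:=\bigvee_{i}\|f_{i}\|_{F_{i}}<\8$, ordered coordinatewise. The plan is to reduce to a single coordinate: each coordinate projection $\pi_{i}:F\to F_{i}$ is a contractive lattice homomorphism, and we use the fact that suprema of directed sets in $F$ are computed coordinatewise.

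First I will check this fact about suprema. Suppose $P\subset F_{+}$ is directed and $P\uparrow f$ in $F$. I claim that $\pi_{i}P\uparrow f_{i}$ in $F_{i}$ for each $i$. Clearly $f_{i}$ is an upper bound of $\pi_{i}P$. Let $g\in F_{i}$ be another upper bound; replacing $g$ by $g\wedge f_{i}$ we may assume $g\le f_{i}$. Define $h\in F$ by $h_{i}:=g$ and $h_{j}:=f_{j}$ for $j\ne i$. Then $h$ is norm bounded, since $\|h\|\le\|f\|\vee\|g\|$, so $h\in F$ and $h$ is an upper bound of $P$. Hence $f\le h$, which gives $f_{i}\le g$. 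So $f_{i}=\bigvee\pi_{i}P$. This step is exactly where the $\ell_{\8}$ structure matters: modifying one coordinate keeps us inside $F$.

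Now fix $f>0_{F}$ and choose $i$ with $f_{i}>0$. Let $t_{i}>0$ be the constant given by pointwise demicontinuity of $F_{i}$ for the element $f_{i}$, and set $t:=t_{i}$. Suppose $P\subset s\Ba_{F_{+}}$ with $P\uparrow f$. By the first step, $\pi_{i}P\subset s\Ba_{\left(F_{i}\right)_{+}}$, because $\pi_{i}$ is contractive and positive, and $\pi_{i}P\uparrow f_{i}$ with $\pi_{i}P$ directed. Therefore $s\ge t_{i}=t$, so $F$ is pointwise demicontinuous.

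There is no real obstacle here. The only point needing care is the coordinatewise computation of suprema in the first step, in particular making sure the competitor $h$ stays bounded. The argument works verbatim for $c_{0}$-type sums as well, but only the $\ell_{\8}$ case is claimed.
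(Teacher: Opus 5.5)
Your proof is correct and takes the same route as the paper: pick a coordinate $i$ with $f_i>0$ and push $P$ down to $F_i$ via the coordinate projection, which is contractive, positive and order continuous. The only difference is that you verify the order continuity of the projection explicitly, by showing directed suprema are computed coordinatewise, whereas the paper simply asserts it.
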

\begin{proof}
Let $\left\{F^{i}\right\}_{i\in I}$ be a collection of pointwise demicontinuous normed lattices and let $F:=\bigoplus_{\8}\left\{F^{i},~ i\in I\right\}$. Let $0_{F}\ne f=\left(f_{i}\right)_{i\in I}\in F_{+}$. There is $i\in I$ such that $f_{i}>0_{F^{i}}$. Let $t>0$ be such that whenever $Q\subset s\Ba_{F^{i}_{+}}$ is such that $Q\uparrow f_{i}$, it then follows that $s\ge t$. Let $T_{i}:F\to F^{i}$ be the coordinate projection, which is order continuous. If $P\subset s\Ba_{F_{+}}$ is such that $P\uparrow f$, it then follows that $T_{i}P\subset s\Ba_{F^{i}_{+}}$ and $T_{i}P\uparrow f_{i}$, which implies that $s\ge t$. We conclude that $F$ is pointwise demicontinuous.
\end{proof}

Note that in a similar way one can prove that the class of $r$-semicontinuous and $r$-demicontinuous normed lattices is also closed under $\ell_{\8}$ sums, for every fixed $r\ge 1$, including the class of semicontinuous normed lattices. However, this is not true for the classes of topologically semicontinuous and demicontinuous normed lattices, as the next example shows (it also exhibits a pointwise demicontinuous AM-space which fails to be demicontinuous).

\begin{example}[Example 7, \cite{wickstead}]\label{pcn}
For every $n\in\N$ let $c_{n}$ be as in Example \ref{cn}. Note that $\|\cdot\|^{n}$ is an AM-norm, equivalent to the supremum norm on $c$, hence topologically semicontinuous, thus pointwise demicontinuous. Let $F:=\bigoplus_{\8}\left\{c_{n},~ n\in \N\right\}$, which is pointwise demicontinuous, according to Proposition \ref{bp}. Let us show that $F$ is not demicontinuous. For every $n\in\N$ let $J_{n}:c\to F$ be the embedding into the $n$-th coordinate. Note that $\left.J_{n}\right|_{c_{0}}$ is an isometry. Let $\left(f_{k}\right)_{k\in\N}$ be as in Example \ref{cn}. It is clear that $f_{k}\uparrow \1$, hence $J_{n}f_{k}\uparrow J_{n}\1$, for every $n\in\N$. It is left to observe that $J_{n}f_{k}\in\Ba_{F}$, for every $k,n\in\N$, while $\left\|J_{n}\1\right\|=n$.
\qed\end{example}

\begin{proposition}\label{barely2}
Let $E$ be pointwise demicontinuous and let $J:F\to E$ be a continuous injective order continuous homomorphism. Then, $F$ is pointwise demicontinuous.
\end{proposition}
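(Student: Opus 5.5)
The plan is a direct transfer argument: push the witnessing net from $F$ into $E$ via $J$, use pointwise demicontinuity of $E$ at the point $Jf$, and pull the resulting lower bound back using continuity of $J$.

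Fix $f>0_{F}$. Since $J$ is injective and a homomorphism, $Jf>0_{E}$: indeed $Jf=J\left|f\right|=\left|Jf\right|\ge 0_{E}$, and $Jf\ne 0_{E}$ by injectivity. Pointwise demicontinuity of $E$ at $Jf$ gives a number $t'>0$ with the following property: whenever $Q\subset s\Ba_{E_{+}}$ and $Q\uparrow Jf$, then $s\ge t'$. I will show that $t:=\frac{t'}{\|J\|}$ works for $f$ in $F$. Note that $\|J\|>0$, since $J$ is injective and $F\ne\left\{0_{F}\right\}$.

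Let $P\subset s\Ba_{F_{+}}$ with $P\uparrow f$. Since $J$ is a homomorphism, it is positive, so $JP\subset E_{+}$. Moreover $JP$ is upward directed, because $J\left(p\vee q\right)=Jp\vee Jq$. Order continuity of $J$ means it preserves order convergence. Since $P\uparrow f$ entails $P\too f$ (viewing $P$ as a net indexed by itself), we get $JP\too Jf$. An increasing net converges in order only to its supremum, so $JP\uparrow Jf$.

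Continuity gives $\|Jp\|\le\|J\|\|p\|\le \|J\|s$, so $JP\subset \|J\|s\Ba_{E_{+}}$. The defining property of $t'$ then yields $\|J\|s\ge t'$, i.e. $s\ge t$. Hence $F$ is pointwise demicontinuous.

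There is no real obstacle here. The only point needing care is the identification of the order limit of the increasing net $JP$ with its supremum, which is the standard fact recalled in Section \ref{s2}. Positivity of $JP$ is also needed, and it is automatic since $J$ is a homomorphism.
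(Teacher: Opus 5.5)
Your proof is correct and is essentially the paper's argument. The paper simply normalizes $\|J\|\le 1$ at the outset, whereas you carry the constant explicitly through $t:=t'/\|J\|$; otherwise the steps coincide: $Jf>0_{E}$ by injectivity, $JP\uparrow Jf$ by order continuity, and $JP\subset \|J\|s\Ba_{E_{+}}$ by continuity.
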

\begin{proof}
We may assume that $\|J\|\le 1$. Let $f>0_{F}$. Then, $Jf>0_{E}$, and so there is $t>0$ such that whenever $Q\subset s\Ba_{E_{+}}$ is such that $Q\uparrow Jf$, it then follows that $s\ge t$. Assume that $P\subset s\Ba_{F_{+}}$ is such that $P\uparrow f$. Then, $JP\subset s\Ba_{E_{+}}$, and since $J$ is order continuous, it follows that $JP\uparrow Jf$. By assumption, $s\ge t$.
\end{proof}

\begin{proposition}\label{barely3}
Assume that $E\subset F$ is an order dense sublattice which is pointwise demicontinuous in the induced norm. Then, $F$ is pointwise demicontinuous.
\end{proposition}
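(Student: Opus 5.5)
Fix $f>0_{F}$. Since $E$ is order dense in $F$, there is $e\in E$ with $0_{F}<e\le f$. By pointwise demicontinuity of $E$ (in the norm induced from $F$) there is $t>0$ such that whenever $R\subset s\Ba_{E_{+}}$ and $R\uparrow e$ in $E$, we have $s\ge t$. We will show that this same $t$ works for $f$ in $F$, i.e. that $P\subset s\Ba_{F_{+}}$ with $P\uparrow f$ in $F$ forces $s\ge t$.

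So let $P\subset s\Ba_{F_{+}}$ with $P\uparrow f$. First replace $P$ by $\left\{p\wedge e\right\}_{p\in P}$. This is still in $s\Ba_{F_{+}}$ by solidity of the norm, and it is directed upward. Since lattice operations are order continuous, $p\wedge e\uparrow f\wedge e=e$ in $F$. The elements $p\wedge e$ need not lie in $E$, so we use order density a second time. Let
$$R:=\bigcup\limits_{p\in P}\left[0_{F},p\wedge e\right]_{E}.$$
Then $R\subset s\Ba_{E_{+}}$ by solidity of the norm. It is upward directed by the same argument as in the proof of Lemma \ref{solid}: if $x\le p\wedge e$ and $y\le q\wedge e$, pick $u\ge p,q$ in $P$, and then $x\vee y\in\left[0_{F},u\wedge e\right]_{E}$. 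Moreover $R\uparrow e$ in $F$. Indeed, each $p\wedge e=\bigvee\left[0_{F},p\wedge e\right]_{E}$ by order density, so any upper bound of $R$ dominates every $p\wedge e$, hence dominates $e$. Also $R\le e$.

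Since $e\in E$ and $R\subset E$ has supremum $e$ in $F$, it also has supremum $e$ in $E$: every upper bound of $R$ in $E$ is an upper bound in $F$, hence is $\ge e$. Therefore $R\uparrow e$ in $E$, with $R\subset s\Ba_{E_{+}}$, and the choice of $t$ gives $s\ge t$. This shows that $F$ is pointwise demicontinuous.

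The only delicate point is the middle step. The net $P$ lives in $F$, so it has to be pushed into $E$ without raising the norm while still keeping the supremum. This is handled by truncating at $e\in E$ and then approximating from below using order density. An alternative route would be to apply Corollary \ref{solid2} to the solid set $s\Ba_{E}$: then $e$ lies in the order adherence of $\Sol_{F}\left(s\Ba_{E}\right)\supset P\wedge e$, and the corollary gives $Q\subset s\Ba_{E_{+}}$ with $Q\uparrow e$ directly.
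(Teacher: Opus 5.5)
Your argument is correct and is essentially the paper's proof with the two steps done in the opposite order. The paper first passes to $Q=\bigcup_{p\in P}\left[0_{F},p\right]_{E}$, a directed subset of $s\Ba_{E_{+}}$ with $Q\uparrow f$, and then truncates to $Q\wedge e\uparrow e$, which stays in $E$ automatically because $E$ is a sublattice; you truncate first and then approximate from below, and you also make explicit the passage from the supremum in $F$ to the supremum in $E$, which the paper leaves implicit.

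One caution about your closing aside: the inclusion $P\wedge e\subset\Sol_{F}\left(s\Ba_{E}\right)$ is false in general. Take $F=c$ with the lattice norm $\|x\|=\sup_{j}\left|x_{j}\right|+\left|\lim_{j}x_{j}\right|$, let $E$ be the eventually constant sequences, and put $e=f=\1$, $s=1$ and $p=\left(1/j\right)_{j\in\N}\vee\1_{\left\{1,\dots,k\right\}}$. Then every $h\in E$ with $h\ge p$ has norm at least $1+\lim_{j}h_{j}>1$. So that route should instead apply Lemma \ref{solid}, (i)$=$(i'), to the $F$-solid set $s\Ba_{F}$, whose trace on $E$ is $s\Ba_{E}$.
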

\begin{proof}
Let $f>0_{F}$, and find $e\in E$ with $0_{F}<e\le f$. There is $t>0$ such that whenever $Q\subset s\Ba_{E_{+}}$ is such that $Q\uparrow e$, it then follows that $s\ge t$. Assume that $P\subset s\Ba_{F_{+}}$ is such that $P\uparrow f$. Then, $Q:=\bigcup\limits_{p\in P}\left[0_{F},p\right]_{E}$ is a directed subset of $s\Ba_{E_{+}}$ such that $Q\uparrow f$ (see the argument in the proof of Lemma \ref{solid}). Therefore, $Q\wedge e$ is a directed subset of $s\Ba_{E_{+}}$ such that $Q\wedge e\uparrow f\wedge e=e$. It then follows that $s\ge t$.
\end{proof}

We will show in Example \ref{completion} that the class of pointwise demicontinuous normed lattices is not stable under completion. However, combining Proposition \ref{barely3} with \cite[Definition 5.26(i) and Theorem 5.29]{ab0} we obtain the following result.

\begin{corollary}
Assume that $F$ is pointwise demicontinuous and such that every Cauchy sequence which decreases to $0_{F}$ converges to $0_{F}$. Then, the norm completion of $F$ is pointwise demicontinuous.
\end{corollary}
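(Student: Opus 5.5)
The plan is to deduce the corollary from Proposition \ref{barely3}, applied with $F$ sitting inside its norm completion $\widehat{F}$. The whole argument reduces to showing that $F$ is an order dense sublattice of $\widehat{F}$. Once that is done, Proposition \ref{barely3} applies directly: $F$ is order dense in $\widehat{F}$ and pointwise demicontinuous in the induced norm, which is just its own norm. Hence $\widehat{F}$ is pointwise demicontinuous.

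First, recall that $\widehat{F}$ is a Banach lattice and that $F$ is a norm dense sublattice of it. Lattice operations extend by uniform continuity, and the positive cone of $\widehat{F}$ is the norm closure of $F_{+}$. It remains to verify order density: for every $g>0$ in $\widehat{F}$ there should exist $f\in F$ with $0<f\le g$. This is exactly where the hypothesis enters. By \cite[Definition 5.26(i) and Theorem 5.29]{ab0}, for a normed lattice, the condition that every norm Cauchy sequence decreasing to $0_{F}$ norm converges to $0_{F}$ is equivalent to $F$ being order dense in its norm completion. So I would simply invoke this theorem.

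Were I to prove the order density directly, the sketch would go as follows. Take $g>0$ in $\widehat{F}$ and choose $f_{n}\in F_{+}$ with $\|f_{n}-g\|<2^{-n}$ and $\|g\|>0$. Then form a suitable decreasing sequence, such as $h_{n}:=f_{1}\wedge\dots\wedge f_{n}$ corrected by small tail terms, so that the sequence is Cauchy in $F$ and converges in norm to something positive below $g$. If no element of $F$ lay in $(0,g]$, the lower bounds in $F$ of these approximants would be trivial. The hypothesis, that a Cauchy decreasing sequence with infimum $0_{F}$ tends to $0$ in norm, would then contradict $\|g\|>0$.

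Arranging that decreasing Cauchy sequence is the delicate step, and it is precisely the content of the cited theorem. I would therefore rely on the reference rather than reprove it.
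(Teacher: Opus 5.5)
Your proposal is correct and is the paper's own argument: the result cited from \cite[Definition 5.26(i) and Theorem 5.29]{ab0} turns the hypothesis into order density of $F$ in its norm completion. Proposition \ref{barely3}, applied with the induced norm (which is just the original norm of $F$), then gives pointwise demicontinuity of the completion. Your direct sketch is not needed, though it does work: choose $f_{n}\in F_{+}$ with $\sum_{n}\|f_{n}-g\|<\|g\|$. Then $f_{1}\wedge\dots\wedge f_{n}$ is a decreasing norm Cauchy sequence with no nonzero lower bound in $F$ (if $F\cap(0,g]=\varnothing$), and its limit lies in $(0,g]$ in the completion.
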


It is possible to embed $F$ as an order dense sublattice of a \emph{universally complete} vector lattice (order complete, and such that every disjoint subset is order bounded, see e.g. \cite[Theorem 7.21]{ab0}). This vector lattice is essentially unique and is denoted $F^{u}$. A set in $F$ is called \emph{dominable} if it is order bounded in $F^{u}$. Note that $P\subset F_{+}$ is dominable if and only if for every $h>0_{F}$ there is $f>0_{F}$ and $k\in\N$ such that $\left(kh-g\right)^{+}\ge f$, for every $g\in P$; in other words $\bigwedge\left(kh-P\right)^{+}\ne 0_{F}$ (in the sense that the infimum either does not exist or is non-zero; see \cite[Section 7.1]{ab0}), or equivalently $\bigvee\left(P\wedge kh\right)^{+}\ne kh$. In the following result (iii)$\Rightarrow$(iv) is proven similarly to \cite[Theorem 7.50(i)]{ab0} and \cite[Proposition 2.3]{taylor}.

\begin{proposition}\label{barely}
The following conditions are equivalent:
\item[(i)] $F$ is pointwise demicontinuous;
\item[(ii)] The common order and uo adherence of $\Ba_{F}$ in $F^{u}$ is radially bounded;
\item[(iii)] The common order and uo adherence of $\Ba_{F}$ in $F$ is radially bounded;
\item[(iv)] Every directed subset of $\Ba_{F_{+}}$ is dominable.
\end{proposition}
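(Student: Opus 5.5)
We prove the chain (i)$\Rightarrow$(iii)$\Rightarrow$(iv)$\Rightarrow$(ii)$\Rightarrow$(i). Throughout we use Corollary \ref{solid1} in $F$ and Corollary \ref{solid2} with $E=F$ and ambient lattice $F^{u}$. By these results the adherence of the solid convex set $\Ba_{F}$, in $F$ or in $F^{u}$, is the set $C$ (resp.\ $C^{u}$) of elements $f$ with $Q\uparrow\left|f\right|$ for some $Q\subset\Ba_{F_{+}}$. Note that $C^{u}\cap F=C$, and both sets are solid and convex.

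For (i)$\Leftrightarrow$(iii), observe that $C$ is radially bounded iff for each $f>0_{F}$ there is $n$ with $nf\notin C$. Equivalently, there is no $f>0_{F}$ such that for every $n\in\N$ some $P_{n}\subset\Ba_{F_{+}}$ satisfies $P_{n}\uparrow nf$. This is exactly the reformulation of failure of pointwise demicontinuity stated before Proposition \ref{bp}. Since $C$ is solid, it suffices to test $f>0_{F}$.

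For (ii)$\Rightarrow$(i), $C=C^{u}\cap F$, so radial boundedness of $C^{u}$ gives radial boundedness of $C$, hence (iii) and then (i).

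For (iii)$\Rightarrow$(iv), take a directed $P\subset\Ba_{F_{+}}$ that is not dominable, aiming for a contradiction. By the criterion recalled above, there is $h>0_{F}$ such that $\bigvee\left(P\wedge kh\right)=kh$ for every $k\in\N$. Then the directed set $P\wedge kh\subset\Ba_{F_{+}}$ increases to $kh$, so $kh\in C$ for all $k$, and $C$ contains the ray spanned by $h$.

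For (iv)$\Rightarrow$(ii), suppose $0<u\in F^{u}$ and $nu\in C^{u}$ for all $n$. Since $F$ is order dense in $F^{u}$, pick $h\in F$ with $0<h\le u$. By solidity $nh\in C^{u}\cap F=C$, so for each $n$ there is a directed $P_{n}\subset\Ba_{F_{+}}$ with $P_{n}\uparrow nh$. The main obstacle is to produce from the $P_{n}$ a single directed subset of $\Ba_{F_{+}}$ that is not dominable. A union of the $P_{n}$ is not directed, and taking finite suprema leaves the unit ball.

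I would follow the idea of \cite[Theorem 7.50(i)]{ab0}. Use convexity and set $Q:=\sum_{n}2^{-n}P_{4^{n}}$, i.e.\ finite sums $\sum_{n\le N}2^{-n}p_{n}$ with $p_{n}\in P_{4^{n}}$, directed by the product order. Then $Q\subset\Ba_{F_{+}}$ because $\Ba_{F}$ is convex and closed. Moreover $Q\ge 2^{-N}P_{4^{N}}$ in the sense that $Q\wedge 2^{N}h$ has supremum $2^{N}h$. Hence $\bigvee\left(Q\wedge kh\right)=kh$ for every $k$ (take $2^{N}\ge k$), so $Q$ is not dominable, contradicting (iv).

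The point to verify carefully is that each partial sum lies in $\Ba_{F}$: the coefficients sum to less than $1$ and each $p_{n}\in\Ba_{F}$. It must also be checked that the domination criterion is applied with the correct quantifiers.
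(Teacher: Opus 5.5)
Your proof is correct and uses the same ingredients as the paper: the contrapositive via $P\wedge kh$ for (iii)$\Rightarrow$(iv), and a directed set of finite sums of rescaled $P_n$'s to build a non-dominable subset of $\Ba_{F_+}$ (yours is $\sum_n 2^{-n}P_{4^n}$, the paper's is $\bigplus_n nP_n$ with $P_n\subset\frac{1}{n2^n}\Ba_{F_+}$). The only difference is bookkeeping: the paper uses the order-density/solidity reduction from $F^u$ to $F$ to prove (i)$\Rightarrow$(ii) and the sum construction for (iv)$\Rightarrow$(i), whereas you fold that reduction into (iv)$\Rightarrow$(ii) and obtain (i)$\Leftrightarrow$(iii) directly from the reformulation of the failure of pointwise demicontinuity.
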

\begin{proof}
(i)$\Rightarrow$(ii): Let $B$ be the common order and uo adherence of $\Ba_{F}$ in $F^{u}$. Let $0_{F^{u}}\ne g\in F^{u}$ and let $0_{F}<f\in F$ be such that $f\le\left|g\right|$. Let $t>0$ be such that whenever $P\subset s\Ba_{F_{+}}$ satisfies $P\uparrow f$, it then follows that $s\ge t$. Assume that $rg\in B$. Since $B$ is solid by virtue of Corollary \ref{solid2}, it follows that $rf\in B$ and so $f\in\frac{1}{r}B$. By Corollary \ref{solid2} there is $P\subset \frac{1}{r}\Ba_{F_{+}}$ with $P\uparrow f$, and so by assumption we get $\frac{1}{r}\ge t$, hence $r\le \frac{1}{t}$.\medskip

(ii)$\Rightarrow$(iii) is trivial.

(iii)$\Rightarrow$(iv): Let $P\subset F_{+}$ be an upward directed set contained in $\Ba_{F}$. Let $h>0_{F}$ and find $n\in\N$ such that $nh$ does not belong to the order adherence of $\Ba_{F}$. Then, $P\wedge nh\not\uparrow nh$, and since $h$ was chosen arbitrarily, we conclude that $P$ is dominable.\medskip

(iv)$\Rightarrow$(i): Assume that $f>0_{F}$ is such that for every $n\in\N$ there is $P_{n}\subset\frac{1}{n2^{n}}\Ba_{F_{+}}$ such that $P_{n}\uparrow f$. WLOG we may assume that $0_{F}\in P_{n}$, for every $n\in\N$. Let $P:=\bigplus\limits_{n\in\N}nP_{n}\subset \Ba_{F_{+}}$ (the set of finite sums), which is upwards directed. For every $n\in\N$ we have that $P\supset nP_{n}$, and since $nP_{n}\uparrow nf$, it follows that $\bigwedge\left(nf-nP_{n}\right)^{+}=0_{F}$, and so $\bigwedge\left(nf-P\right)^{+}=0_{F}$. We conclude that $P$ is not dominable.
\end{proof}

Recall that if $F$ is a normed lattice, the \emph{Lorentz seminorm} $\|\cdot\|_{L}$ on $F^{u}$ is defined by $\|f\|_{L}=\bigwedge\left\{r>0,~ \exists P\subset r\Ba_{F_{+}} ~ P\uparrow \left|f\right|\right\}$. By virtue of Corollary \ref{solid2}, this seminorm is the gauge of the order adherence $\Ba_{F}^{L}$ of $\Ba_{F}$ in $F^{u}$, which is a convex solid set. Consequently, $\|\cdot\|_{L}$ is a solid seminorm (with possibly infinite values), and that $\|\cdot\|_{L}\le \|\cdot\|$ on $F$. It follows that $F$ is semicontinuous iff $\|\cdot\|_{L}=\|\cdot\|$ on $F$, demicontinuous iff $\|\cdot\|_{L}$ and $\|\cdot\|$ are equivalent on $F$, and pointwise demicontinuous iff $\|\cdot\|_{L}$ is a norm on $F$. Note that a renorming of $F$ results in an equivalent Lorentz seminorm.

Assume that $F$ is pointwise demicontinuous. According to Proposition \ref{barely} $\Ba_{F}^{L}$ is radially bounded, and so $\|\cdot\|_{L}$ is a solid norm on $F^{L}:=\bigcup\limits_{n\in\N}n\Ba_{F}^{L}$. We will call $\left(F^{L},\|\cdot\|_{L}\right)$ the \emph{Lorentz completion} of $F$. Since $\Ba_{F}^{L}$ is solid, it follows that $F^{L}$ is always an ideal in $F^{u}$, and hence order complete. The Lorentz completion in the case when $F$ is demicontinuous will be investigated in Section \ref{s6}. In the general case we have the following result.

\begin{proposition}
If $F$ is pointwise demicontinuous then $F^{L}$ is a Banach lattice.
\end{proposition}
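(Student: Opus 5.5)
We already know that $\left(F^{L},\|\cdot\|_{L}\right)$ is a normed lattice: $F^{L}$ is an ideal of $F^{u}$ and $\|\cdot\|_{L}$ is a solid norm on it. So only completeness remains. I will use the standard criterion for normed lattices: it suffices that every increasing sequence $0\le g_{1}\le g_{2}\le\dots$ in $F^{L}$ with $\sum_{n}\|g_{n+1}-g_{n}\|_{L}<\8$ converges in norm. Equivalently, whenever $h_{n}\ge 0$ and $\sum_{n}\|h_{n}\|_{L}<\8$, the partial sums $s_{N}=h_{1}+\dots+h_{N}$ converge. Here absolute values reduce the problem to positive summands, and a Cauchy sequence has a subsequence with summable increments. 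By rescaling I may assume $\|h_{n}\|_{L}<2^{-n}$.

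\textbf{Step 1: produce the candidate limit.} Since $\|h_{n}\|_{L}<2^{-n}$, Corollary \ref{solid2} gives directed sets $P_{n}\subset 2^{-n}\Ba_{F_{+}}$ with $P_{n}\uparrow h_{n}$ in $F^{u}$, and I add $0_{F}$ to each $P_{n}$. I then form $P:=\bigplus_{n\in\N}P_{n}$, the set of finite sums, exactly as in the proof of Proposition \ref{barely}. This $P$ is an upward directed subset of $\Ba_{F_{+}}$, because $\sum_{n}2^{-n}\le 1$. By Proposition \ref{barely}(iv), pointwise demicontinuity makes $P$ dominable, i.e. order bounded in $F^{u}$. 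As $F^{u}$ is order complete, $s:=\bigvee P$ exists in $F^{u}$.

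Next I check that $s=\bigvee_{N}s_{N}$. For each fixed $N$, the finite sums over the indices $1,\dots,N$ increase to $s_{N}$, by continuity of addition with respect to order convergence. This gives $s\ge s_{N}$. Conversely, every element of $P$ lies below some $s_{N}$. Hence $s_{N}\uparrow s$.

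\textbf{Step 2: $s\in F^{L}$ and the tails are small.} The directed set $P$ lies in $\Ba_{F_{+}}$ and increases to $s$, so $\|s\|_{L}\le 1$ and $s\in F^{L}$. For the tails I repeat the construction starting at index $N+1$. The set $\bigplus_{n>N}P_{n}$ is directed, lies in $2^{-N}\Ba_{F_{+}}$, and increases to $\bigvee_{M}(s_{M}-s_{N})=s-s_{N}$. Therefore $\|s-s_{N}\|_{L}\le 2^{-N}\to 0$, and $F^{L}$ is complete.

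The main obstacle is Step 1. We need the supremum $s$ to exist at all, and this is precisely where Proposition \ref{barely} and the order completeness of $F^{u}$ enter. Without pointwise demicontinuity the set $P$ need not be dominable, and the construction breaks down.
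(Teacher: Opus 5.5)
Your proof is correct and is essentially the paper's argument: the same directed set $P$ of finite sums of elements of the $P_{n}$, dominable by Proposition \ref{barely}, whose supremum in $F^{u}$ equals $\bigvee_{N}s_{N}$ and has Lorentz norm at most $1$. The only difference is the last step: the paper concludes completeness by citing \cite[Theorem 1]{bb}, whereas you prove it directly by rerunning the construction on the tails to get $\|s-s_{N}\|_{L}\le 2^{-N}$, which makes the argument self-contained. One small correction: the reduction to $\|h_{n}\|_{L}<2^{-n}$ does not follow from rescaling. It follows from choosing the Cauchy subsequence with increments below $2^{-n}$, or, as in the paper, from working with any summable $r_{n}>\|h_{n}\|_{L}$, which your tail estimate handles equally well.
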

\begin{proof}
Let $\left(g_{n}\right)_{n\in\N}\subset F^{L}_{+}$ be such that $\sum\limits_{n\in\N}\|g_{n}\|_{L}<1$. There is $\left(r_{n}\right)_{n\in\N}\subset \R_{+}$ such that $\sum\limits_{n\in\N}r_{n}\le 1$, and $\|g_{n}\|_{L}<r_{n}$, for every $n\in\N$. For every $n\in\N$ there is a directed $P_{n}\subset r_{n}\Ba_{F_{+}}$ such that $P_{n}\uparrow g_{n}$. We may assume that $0_{F}\in P_{n}$, for every $n\in\N$. Let $P:=\bigplus\limits_{n\in\N}P_{n}\subset \Ba_{F_{+}}$ (the set of finite sums), which is upwards directed, hence dominable, according to Proposition \ref{barely}. It follows that there is $g\in F^{u}$ such that $g=\bigvee P$. By definition it follows that $g\in \Ba_{F}^{L}\subset F^{L}$. It is not hard to see that $g=\bigvee\limits_{n\in\N}\sum\limits_{k=1}^{n}g_{k}$. Thus, completeness of the norm follows from  \cite[Theorem 1]{bb}.
\end{proof}

\begin{lemma}\label{noatoms}
Let $f\in F_{+}$. Then:
\item[(i)] If $0_{F}< e\le f$ is an atom, then $\|f\|_{L}\ge\|e\|$.
\item[(ii)] If $0_{F^{*}}\ne \nu\in F^{*}_{\oc}$, then $\|f\|_{L}\ge\frac{\left|\nu\left(f\right)\right|}{\|\nu\|}$.
\end{lemma}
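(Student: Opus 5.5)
Both parts rest on the definition $\|f\|_{L}=\bigwedge\left\{r>0,~ \exists P\subset r\Ba_{F_{+}},~ P\uparrow f\right\}$ (here $f\ge 0_{F}$). In each case we fix $r>0$ and a directed $P\subset r\Ba_{F_{+}}$ with $P\uparrow f$, and we show that $r\ge\|e\|$, respectively $r\ge\frac{\left|\nu\left(f\right)\right|}{\|\nu\|}$. Taking the infimum over admissible $r$ then gives the claim. If no such $P$ exists for any $r$, then $\|f\|_{L}=\8$ and there is nothing to prove. Throughout, the supremum $P\uparrow f$ is taken in $F$, which is legitimate because $F$ is order dense in $F^{u}$ and $f\in F$.

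For (i), we use the continuity of lattice operations with respect to order convergence. From $P\uparrow f$ we get $P\wedge e\uparrow f\wedge e=e$. Since $e$ is an atom, every element of $\left[0_{F},e\right]$ is a scalar multiple of $e$. So for each $p\in P$ there is $\lambda_{p}\in\left[0,1\right]$ with $p\wedge e=\lambda_{p}e$. The scalars $\lambda_{p}$ increase along $P$ and $\bigvee_{p}\lambda_{p}e=e$, so $\bigvee_{p}\lambda_{p}=1$. Here we use that $F$ is Archimedean: if $\bigvee\lambda_{p}=\lambda<1$, then $\lambda e$ would be an upper bound strictly below $e$. Hence for every $\varepsilon>0$ there is $p\in P$ with $\lambda_{p}>1-\varepsilon$. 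For such $p$, solidity of the norm gives $(1-\varepsilon)\|e\|\le\|p\wedge e\|\le\|p\|\le r$. Letting $\varepsilon\to 0$ yields $\|e\|\le r$.

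For (ii), order continuity of $\nu$ gives $\nu\left(p\right)\to\nu\left(f\right)$ along the net $P$, because $P\uparrow f$ means $P$ order converges to $f$. Also $\left|\nu\left(p\right)\right|\le\|\nu\|\|p\|\le r\|\nu\|$ for every $p\in P$. Passing to the limit gives $\left|\nu\left(f\right)\right|\le r\|\nu\|$, that is, $r\ge\frac{\left|\nu\left(f\right)\right|}{\|\nu\|}$. The one point to check is that order continuity of $\nu\in F^{\sim}_{\oc}$ includes nets: $\nu$ is order bounded, so $\left|\nu\right|$ sends the downward directed set $f-P\downarrow 0_{F}$ to values decreasing to $0$. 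This gives $\left|\nu\left(f-p\right)\right|\le\left|\nu\right|\left(f-p\right)\to 0$.

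Neither step is a real obstacle. The only mildly delicate point is in (i), namely extracting $\bigvee\lambda_{p}=1$ from $P\wedge e\uparrow e$, which is where the Archimedean property is used.
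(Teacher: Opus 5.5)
Your proof is correct and is essentially the paper's. In (i) the paper assumes $r<\|e\|$ and notes that $p\wedge e\le\frac{r}{\|e\|}e<e$ for all $p\in P$, contradicting $P\wedge e\uparrow e$; this is your $\lambda_{p}$ argument in contrapositive form, and it shows the step $\bigvee\lambda_{p}=1$ needs only that $\lambda e$ is an upper bound strictly below $e$ when $\lambda<1$, not the Archimedean property. In (ii) the paper writes $\left|\nu\left(f\right)\right|\le\left|\nu\right|\left(f\right)=\bigvee\left|\nu\right|\left(P\right)\le r\|\nu\|$, which is your limit argument.
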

\begin{proof}
(i): Assume that $r<\|e\|$ but $P\subset r\Ba_{F_{+}}$ is such that $P\uparrow f$. Then, for every $p\in P$ we have that $p\wedge e$ is a multiple of $e$ of the norm at most $r$, hence $p\wedge e\le \frac{r}{\|e\|}e<e$, which contradicts $P\wedge e\uparrow f\wedge e=e$.\medskip

(ii): Assume that $r>0$ is such that there is $P\subset r\Ba_{F_{+}}$ with $P\uparrow f$. Then, $\left|\nu\left(f\right)\right|\le \left|\nu\right|\left(f\right)=\bigvee\left|\nu\right|\left(P\right)\le r\|\nu\|$, and so $r\ge\frac{\left|\nu\left(f\right)\right|}{\|\nu\|}$.
\end{proof}

\begin{corollary}\label{sufficient}
If $F$ is atomic, or $F^{*}_{\oc}$ separates points of $F$, then $F$ is pointwise demicontinuous.
\end{corollary}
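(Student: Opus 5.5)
The plan is to derive Corollary \ref{sufficient} directly from Lemma \ref{noatoms}. I will use the observation made right after the definition of the Lorentz seminorm: $F$ is pointwise demicontinuous if and only if $\|\cdot\|_{L}$ is a norm on $F$. Equivalently, by the definition of pointwise demicontinuity, it suffices to show that for every $f>0_{F}$ we have $\|f\|_{L}>0$. One can then take $t:=\|f\|_{L}/2$, say; indeed, if $P\subset s\Ba_{F_{+}}$ and $P\uparrow f$, then $s\ge\|f\|_{L}$ by definition of $\|\cdot\|_{L}$. Since $\|\cdot\|_{L}$ is solid and $\|f\|_{L}=\||f|\|_{L}$, it is enough to treat $f\in F_{+}\backslash\left\{0_{F}\right\}$.

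\emph{Atomic case.} If $F$ is atomic, then for every $f>0_{F}$ there is an atom $e$ with $0_{F}<e\le f$. This is the definition of atomicity, i.e. every nonzero positive element dominates an atom. Lemma \ref{noatoms}(i) then gives $\|f\|_{L}\ge\|e\|>0$.

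\emph{Case when $F^{*}_{\oc}$ separates points.} Fix $f>0_{F}$ and choose $\nu\in F^{*}_{\oc}$ with $\nu(f)\ne0$. Then $\nu\ne0_{F^{*}}$, and Lemma \ref{noatoms}(ii) yields $\|f\|_{L}\ge|\nu(f)|/\|\nu\|>0$.

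The argument is essentially immediate, and there is no real obstacle. The only point to check is the translation between positivity of $\|f\|_{L}$ and the defining condition of pointwise demicontinuity. By definition, $\|f\|_{L}$ is the infimum of those $s$ for which some $P\subset s\Ba_{F_{+}}$ satisfies $P\uparrow f$. So $\|f\|_{L}>0$ is exactly the existence of a positive lower bound $t$ for such $s$.
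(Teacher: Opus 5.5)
Your proof is correct and is the argument the paper intends: the corollary is stated right after Lemma \ref{noatoms} as an immediate consequence of it, with parts (i) and (ii) giving $\|f\|_{L}>0$ for every $f>0_{F}$, which is exactly pointwise demicontinuity. Your translation between $\|f\|_{L}>0$ and the existence of the threshold $t$ is also right, since $s=0$ cannot occur when $P\uparrow f>0_{F}$.
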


We will call $F$ \emph{anti-demicontinuous} if $\|\cdot\|_{L}\equiv 0$ on $F$, which is equivalent to the fact that the order adherence of $\Ba_{F}$ is equal to $F$, or in other words, for every $f\in F_{+}$ there is $P\subset\Ba_{F_{+}}$ such that $P\uparrow f$. It is easy to see that anti-demicontinuity is stable under renormings. According to Lemma \ref{noatoms} an anti-demicontinuous normed lattice cannot contain any atoms and it does not admit any nonzero order continuous norm bounded functionals. Clearly, a pointwise demicontinuous normed lattice cannot be anti-demicontinuous. We will present some examples of anti-demicontinuous AM-spaces in Examples \ref{nb} and \ref{completion} as well as Theorem \ref{cph} and Proposition \ref{faml}, but now we consider a simple example of an anti-demicontinuous normed lattice, which also exhibits the fact that anti-demicontinuity is not stable under norm completion.

\begin{example}\label{l1}
Let $F$ be $\Co\left[0,1\right]$ considered as a sublattice of $L_{1}\left[0,1\right]$. Since $F$ is dense in $L_{1}\left[0,1\right]$, it follows that the latter is the norm completion of $F$. Let us show that $F$ is anti-demicontinuous. Since $\ker\|\cdot\|_{L}$ is an ideal, it is enough to show that $\|\1\|_{L}=0$. Enumerate $\Q\cap\left[0,1\right]=\left\{q_{n}\right\}_{n\in\N}$. For $m,n\in\N$ let $f_{mn}\in\Co\left[0,1\right]$ be such that $f_{mn}\left(q_{m}\right)=1$, $\0\le f_{mn}\le \1$ and $f_{mn}$ vanish outside $\left(q_{m}-\frac{1}{n},q_{m}+\frac{1}{n}\right)$; note that $\|f_{mn}\|\le \frac{2}{m}$. For $m,n\in\N$ let $g_{mn}:=\bigvee\limits_{k=1}^{n}f_{2^{m+k},k}$; then $g_{mn}\le \1$, $g_{mn}\left(q_{k}\right)=1$, for every $k=1,...,n$, and $\|g_{mn}\|\le \sum\limits_{k=1}^{n}2^{1-m-k}\le 2^{1-m}$. It is easy to see that $2^{1-m}\Ba_{F_{+}}g_{mn}\uparrow_{n\in\N}\1$, for every $m\in\N$, thus justifying $\|\1\|_{L}=0$. Hence, $F$ is anti-demicontinuous, but its norm completion $L_{1}\left[0,1\right]$ is order continuous, and hence cannot be anti-demicontinuous.
\qed\end{example}

Let us summarize the relations between the considered properties. We have
$$\mbox{Order continuous}\Rightarrow\mbox{semi}\Rightarrow\mbox{topologically semi}\Rightarrow\mbox{demi}\Rightarrow\mbox{pointwise demi},$$
and moreover, a pointwise demicontinuous normed lattice cannot be anti-demicontinuous. It is clear that $c$ is semicontinuous but not order continuous, $c_{2}$ from Example \ref{cn} is topologically semicontinuous but not semicontinuous, while Example \ref{pcn} showcases a pointwise demicontinuous AM-space which is not demicontinuous. If $F$ is as in Example \ref{nb}, then $F\oplus_{\8}\R$ is a non-pointwise demicontinuous AM-space, which is also not anti-demicontinuous. In some cases demicontinuity is equivalent to topological semicontinuity: in the case of AM-spaces this was established in \cite[Theorem 6]{wickstead} (see also part (ii) of Theorem \ref{main}), while for Banach function spaces (i.e. ideals of $L_{0}\left(\mu\right)$, for a $\sigma$-finite measure $\mu$) this follows from the fact that the Lorentz seminorm is semicontinuous (see \cite[Theorem 1, Section 66]{zaanen}); more general results in this direction are \cite[Theorem 4]{fk} and \cite[Corollary 3.7]{cd} (see also Corollary \ref{wfqf}). However, in \cite{elliot} and \cite{att} the authors constructed a demicontinuous Banach lattice which fails to be topologically semicontinuous.

\begin{remark}
Let us say that $F$ is \emph{anti-semicontinuous} if the order closure of $\Ba_{F}$ is equal to $F$. Clearly, anti-demicontinuity implies anti-semicontinuity, and the latter is mutually exclusive with the topological semicontinuity. However, it is unknown whether it is possible for a demicontinuous normed lattice to also be anti-semicontinuous.
\qed\end{remark}

We finish the section with some examples.

\begin{example}\label{nb}
Let $K$ be a compact metric space, and assume that $\left(K_{n}\right)_{n\in\N}$ is a sequence of closed subsets of $K$ such that $K_{n}$ is a nowhere dense subset of $K_{n+1}$, for every $n\in\N$. For example, put $K:=\left[0,1\right]^{\omega}$, and let $K_{n}$ be the set of all points of $K$, where all coordinates starting with $n$-th vanish.

For $n\in\N$ let $E_{n}:=\left(\Co\left(K_{n}\right),\frac{1}{2^{n}}\|\cdot\|_{\8}\right)$, and let $E:=\bigoplus_{\8}\left\{E_{n},~ n\in\N\right\}$. Let $J:\Co\left(K\right)\to E$ be defined by $Jf:=\left(\left.f\right|_{K_{n}}\right)_{n\in\N}$. Let $F:=\overline{J\Co\left(K\right)}$ in $E$. We will denote the constant zero and one functions as $\0$ and $\1$.

For every $m\ge n$ let $T_{n}:E\to E_{n}$ be the coordinate projection, and $R_{mn}:E_{m}\to E_{n}$ be the restriction operator. Then for every $e\in J\Co\left(K\right)$ and $m\ge n$ we have $R_{mn}T_{m}e=T_{n}e$. By continuity, it follows that $R_{mn}T_{m}=T_{n}$ on $F$.

We claim that $\|J\1\|_{L}=0$. Fix $n\in\N$. Let $g:=d\left(\cdot,K_{n}\right)\in\Co\left(K\right)$ and let $h_{k}:=\1\wedge kg$. Clearly, $\left(h_{k}\right)_{k\in\N}\subset\left[\0,\1\right]$ is an increasing sequence, and so $\left(Jh_{k}\right)_{k\in\N}\subset\left[0_{F},J\1\right]$ is also an increasing sequence. For every $k\in\N$ and $m\le n$ we have $\left.h_{k}\right|_{K_{m}}=\0$, and so $T_{m}Jh_{k}=\0$. It follows that $\|Jh_{k}\|=\bigvee\limits_{m>n}\frac{1}{2^{m}}\|T_{m}Jh_{k}\|_{\8}\le \frac{1}{2^{n+1}}$.

Let us prove that $J\1=\bigvee\limits_{k\in\N}Jh_{k}$ in $F$. Let $e\in F$ be an upper bound for $\left(Jh_{k}\right)_{k\in\N}$. We need to show that $e\ge J\1$, which is equivalent to $T_{m}e\ge \1_{K_{m}}$, for every $m\in \N$. First, assume that $m>n$. For $x\in K_{m}\backslash K_{n}$ we have $g\left(x\right)>0$, and so $h_{k}\left(x\right)\uparrow 1$. Hence, $T_{m}e\ge \1_{K_{m}\backslash K_{n}}$, but since $K_{n}$ is nowhere dense in $K_{m}$, and $T_{m}e$ is continuous, we conclude that $T_{m}e\ge\1_{K_{m}}$. If $m\le n$, then $T_{m}e=R_{n+1,m}T_{n+1}e\ge R_{n+1,m}\1_{K_{n+1}}=\1_{K_{m}}$.

It follows that for every $n\in\N$ there is a sequence in $\frac{1}{2^{n+1}}\Ba_{F}$ which increases to $J\1$, and so $\|J\1\|_{L}=0$. Note that since $\|\cdot\|_{L}\le \|\cdot\|$ is a solid seminorm on $F$, its kernel is a closed ideal. Since $J\1$ is a strong unit of $J\Co\left(K\right)$ and the latter is dense in $F$, we conclude that $\|\cdot\|_{L}\equiv 0$.
\qed\end{example}

\begin{example}\label{completion}
Let $X$ be the space of all finite sequences of natural numbers. We will denote the concatenation of $x$ and $y$ by $x^{\frown} y$, and the fact that $x$ is an initial segment of $y$ -- by $x\prec y$.

Let $F:=\left\{f\in\ell_{\8}\left(X\right),~ \lim\limits_{n\to\8}f\left(x^{\frown}n\right)=\frac{1}{2}f\left(x\right)\right\}$, which is clearly a sublattice of $\ell_{\8}\left(X\right)$. It is not hard to check that $F$ is closed in $\ell_{\8}\left(X\right)$, hence it is an AM-space. Let us show that $F$ is anti-demicontinuous.

For $x\in X$ let $e_{x}:X\to\left[0,1\right]$ be defined by $e_{x}\left(y\right)=\frac{1}{2^{\left|y\right|-\left|x\right|}}$, if $x\prec y$ and $e_{x}\left(y\right)=0$, otherwise. By considering the case when $x\prec y$ and $x\not\prec y$ one can easily show that $e_{x}\in F$, with $\|e_{x}\|=1$, for every $x\in X$.

Let $f\in F_{+}$ and let $Y:=f^{-1}\left[0,1\right)$. We will show that $f=\bigvee_{y\in Y}\left(f\wedge e_{y}\right)$. Clearly, $f$ is an upper bound for the set in question. Let $g\in F$ be such that $g\ge f\wedge e_{y}$, for every $y\in Y$. We will prove by induction over $n\in\N_{0}$ that if $x\in X$ is such that $f\left(x\right)<2^{n}$, then $f\left(x\right)\le g\left(x\right)$. If $n=0$ and $f\left(x\right)<2^{n}=1$, then $x\in Y$, and so $g\left(x\right)\ge f\left(x\right)\wedge e_{x}\left(x\right)=f\left(x\right)\wedge 1=f\left(x\right)$. Assume that the claim is proven for $n$, and let $x\in X$ be such that $f\left(x\right)<2^{n+1}$. Then, $\lim\limits_{m\to\8}f\left(x^{\frown}m\right)=\frac{1}{2}f\left(x\right)<2^{n}$, and so for large enough $m\in\N$ we have $f\left(x^{\frown}m\right)<2^{n}$, which implies $f\left(x^{\frown}m\right)\le g\left(x^{\frown}m\right)$, by the hypothesis of induction, and so $f\left(x\right)=2\lim\limits_{m\to\8}f\left(x^{\frown}m\right)\le 2\lim\limits_{m\to\8}g\left(x^{\frown}m\right)=g\left(x\right)$. We conclude that $f\le g$, and so $f=\bigvee_{y\in Y}\left(f\wedge e_{y}\right)$. As $f\wedge e_{y}\in\Ba_{F_{+}}$, for every $y\in Y$, we have established that $F$ is anti-demicontinuous.\medskip

Now let $G:=\ell_{1}\left(X\times\N,w\right)\oplus_{1} F$, where $w:X\times\N\to\left(0,1\right]$ is a summable weight. Since pointwise demicontinuity passes down to projection bands, it follows that $G$ does not have this property. Let $H:=\left\{\left(e,f\right)\in G,~ \forall x\in X:~ \lim\limits_{n\to\8}e\left(x,n\right)=f\left(x\right)\right\}$, which is clearly a sublattice of $G$.

Let $0_{G}<\left(e,f\right)\in H$. If $e=\0$, then $f\left(x\right)=\lim\limits_{n\to\8}e\left(x,n\right)=0$, for every $x\in X$, implying $f=\0$, contradiction. Hence, there are $x_{0}\in X$ and $n_{0}\in\N$ such that $r:=e\left(x_{0},n_{0}\right)>0$. Then, $r\1_{\left\{\left(x_{0},n_{0}\right)\right\}}\le e$, and it is easy to see that $\left(r\1_{\left\{\left(x_{0},n_{0}\right)\right\}},\0\right)$ is an atom in $H$. It follows that $H$ is atomic, hence by Corollary \ref{sufficient} it is pointwise demicontinuous.

Let us show that $H$ is dense in $G$. Let $e\in \ell_{1}\left(X\times\N,w\right)$, let $f\in F$ and let $\varepsilon>0$. There is a finite $A\subset X\times\N$ such that $\sum\limits_{\left(x,n\right)\notin A}\left|e\left(x,n\right)\right|w\left(x,n\right)<\frac{\varepsilon}{2}$ and $\sum\limits_{\left(x,n\right)\notin A}w\left(x,n\right)<\frac{\varepsilon}{2\|f\|+1}$. Consider $e':X\times\N\to\R$ which agrees with $e$ on $A$ and such that $e'\left(x,n\right)=f\left(x\right)$, when $\left(x,n\right)\notin A$. We have that $$\|e'-e\|=\sum\limits_{\left(x,n\right)\notin A}\left|e\left(x,n\right)-f\left(x\right)\right|w\left(x,n\right)\le\frac{\varepsilon}{2}+\frac{\varepsilon}{2\|f\|+1}\|f\|\le\varepsilon,$$ and so in particular $e'\in \ell_{1}\left(X\times\N,w\right)$. Since $A$ is finite, for every $x\in X$ and large enough $n$, we have $\left(x,n\right)\notin A$, hence $e'\left(x,n\right)=f\left(x\right)$, and so $\left(e',f\right)\in H$ at the distance at most $\varepsilon$ from $\left(e,f\right)$. We conclude that $H$ is dense in $G$.
\qed\end{example}

\section{Monotone boundedness and completeness}

In this section $F$ is a normed lattice. We call it \emph{monotonically bounded} if every directed subset of $\Ba_{F_{+}}$ is order bounded. Moreover, we say that $F$ is \emph{$r$-monotonically bounded} if every directed subset of $\Ba_{F_{+}}$ has an upper bound in $r\Ba_{F}$, and \emph{$r+$-monotonically bounded} if it is $s$-monotonically bounded, for every $s>r$. It is clear that $\Co\left(K\right)$ is $1$-monotonically bounded, for every compact Hausdorff $K$. Monotone boundedness passes down to projection bands and majorizing sublattices, and is preserved under renormings. This condition has been considered recently in e.g. \cite{abt} under the name ``strong Nakano property'' and in \cite{aat} under the name ``b-property''. A generalization of monotone boundedness to topological vector lattices was studied in e.g. \cite{taylor}, \cite{ema} (and the references therein) under the name ``BOB''.

\begin{theorem}\label{lev}
The following conditions are equivalent:
\item[(i)] $F$ is monotonically bounded;
\item[(ii)] $F$ is $r$-monotonically bounded, for some $r\ge 1$;
\item[(iii)] Every directed dominable subset of $\Ba_{F_{+}}$ is order bounded.
\end{theorem}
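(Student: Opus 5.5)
The implications (ii)$\Rightarrow$(i)$\Rightarrow$(iii) are immediate: an upper bound in $r\Ba_{F}$ is an upper bound, and (iii) is a special case of (i). So the work lies in (i)$\Rightarrow$(ii) and (iii)$\Rightarrow$(i). For both I would use a single ``gluing'' device of the kind already used in the proof of Proposition \ref{barely}. Given directed sets $P_{n}\subset F_{+}$ containing $0_{F}$, consider the set of finite sums $P:=\bigplus_{n\in\N}P_{n}$. This set is upward directed. If $P_{n}\subset 2^{-n}\Ba_{F}$, then $P\subset\Ba_{F_{+}}$.

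For (i)$\Rightarrow$(ii) I would argue by contradiction. Suppose that for every $n$ there is a directed $P_{n}\subset\Ba_{F_{+}}$ with no upper bound in $n4^{n}\Ba_{F}$. We may add $0_{F}$ to each $P_{n}$. Set $P:=\bigplus_{n}2^{-n}P_{n}\subset\Ba_{F_{+}}$. By (i), $P$ has an upper bound $u$. Then $u\ge 2^{-n}P_{n}$, so $2^{n}u$ is an upper bound of $P_{n}$ with norm $2^{n}\|u\|$. Choosing $n$ with $2^{n}\|u\|\le n4^{n}$ gives a contradiction. Hence some $r$ works. Since bounds only need to exist, no order completeness is used here.

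The main obstacle is (iii)$\Rightarrow$(i). The plan is to first show that (iii) forces pointwise demicontinuity, and then invoke Proposition \ref{barely}(iv): every directed subset of $\Ba_{F_{+}}$ is dominable, hence order bounded by (iii).

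To get pointwise demicontinuity, suppose it fails. Then there is $f>0_{F}$ with, for every $n$, a directed $P_{n}\subset\frac{1}{n2^{n}}\Ba_{F_{+}}$ containing $0_{F}$ and satisfying $P_{n}\uparrow f$. The set $P:=\bigplus_{n}nP_{n}\subset\Ba_{F_{+}}$ is then directed but not dominable, as in the proof of Proposition \ref{barely}. This alone gives no contradiction with (iii), so I would instead build a directed dominable subset of $\Ba_{F_{+}}$ with no upper bound. The natural choice is to truncate: take $P\wedge kf$ for fixed $k$, or better $\bigplus_{n}\bigl(nP_{n}\wedge f\bigr)$ suitably weighted. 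Such sets are order bounded, hence dominable, but they may also be bounded and therefore harmless.

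The alternative I expect to actually work uses a different unbounded set. Since $nP_{n}\uparrow nf$ and the elements $nf$ are unbounded in norm, take $Q:=\bigcup_{k}\bigplus_{n\le k}\ldots$, arranged so that $Q$ is dominated by $\bigvee_{n} nf$-type elements that do exist in $F^{u}$ but not in $F$. I would try $Q:=\bigplus_{n}2^{-n}\bigl(n4^{n}P_{n}\bigr)$, chosen so that $Q$ is norm bounded and its supremum in $F^{u}$ would be $\sum n2^{n}f$. But that is not dominable, since the $f$'s are the same element.

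So the real difficulty is producing a dominable but unbounded directed set. My fallback would be to show directly that under (iii), if a directed $P\subset\Ba_{F_{+}}$ is not dominable, then there is $h>0_{F}$ with $P\wedge kh\uparrow kh$ for all $k$. From this I would extract a contradiction to (iii) via disjointification inside the band of $h$, using Proposition \ref{eps}-like fragments.
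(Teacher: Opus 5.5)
Your arguments for (ii)$\Rightarrow$(i)$\Rightarrow$(iii) and (i)$\Rightarrow$(ii) are correct and match the paper's. The paper uses the threshold $n2^{n}$ where you use $n4^{n}$, which makes no difference. Your reduction of (iii)$\Rightarrow$(i) to pointwise demicontinuity via Proposition \ref{barely} is also exactly the paper's strategy.

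However, you never prove that (iii) forces pointwise demicontinuity, so (iii)$\Rightarrow$(i) has a genuine gap. Every set you write down fails for the reason you identify. Truncating at $f$ or $kf$ gives sets that are order bounded in $F$. Not truncating gives non-dominable sets, because all the suprema are multiples of the same $f$. The fallback via ``disjointification inside the band of $h$'' is not carried out. It also leans on Proposition \ref{eps}, which requires the PPP, and $F$ is not assumed to have it.

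The missing idea is to truncate at pairwise disjoint pieces of $f$ rather than at $f$ itself, and to rescale each piece by its own norm.
\begin{itemize}
\item If pointwise demicontinuity fails at $f>0_{F}$, then $\|f\|_{L}=0$. By Lemma \ref{noatoms}(i), $\left[0_{F},f\right]$ contains no atoms, so you can inductively choose a disjoint sequence $\left(d_{n}\right)_{n\in\N}\subset\left(0_{F},f\right]$.
\item Pick directed sets $P_{n}\subset\frac{\|d_{n}\|}{n2^{n}}\Ba_{F_{+}}$ with $0_{F}\in P_{n}$ and $P_{n}\uparrow f$. Then $Q_{n}:=P_{n}\wedge d_{n}\uparrow d_{n}$.
\item Set $Q:=\bigplus_{n\in\N}\frac{n}{\|d_{n}\|}Q_{n}$. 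This is a directed subset of $\Ba_{F_{+}}$, since each $\frac{n}{\|d_{n}\|}Q_{n}\subset 2^{-n}\Ba_{F}$.
\item $Q$ is dominable. By disjointness, each element of $Q$ lies below $\sum_{k\le n}\frac{k}{\|d_{k}\|}d_{k}=\bigvee_{k\le n}\frac{k}{\|d_{k}\|}d_{k}$ for some $n$. Hence every element lies below the supremum $g$ of the disjoint family $\left(\frac{k}{\|d_{k}\|}d_{k}\right)_{k\in\N}$, which exists in the universally complete $F^{u}$.
\item By (iii), $Q$ has an upper bound $h\in F$. Then $h\ge\frac{n}{\|d_{n}\|}Q_{n}\uparrow\frac{n}{\|d_{n}\|}d_{n}$ gives $\|h\|\ge n$ for every $n$, which is impossible.
\end{itemize}

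Disjointness is exactly what makes the unboundedly growing suprema $\frac{n}{\|d_{n}\|}d_{n}$ compatible with dominability. Atomlessness of $\left[0_{F},f\right]$ is what supplies the disjoint pieces; no projection properties are needed.
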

\begin{proof}
(ii)$\Rightarrow$(i)$\Rightarrow$(iii) is trivial. (i)$\Rightarrow$(ii): Assume towards contradiction that for every $n\in\N$ there is an upward directed $P_{n}\subset \Ba_{F_{+}}$ which has no upper bounds of norm less than $n2^{n}$. We may assume that $0_{F}\in P_{n}$, for every $n\in\N$. Let $P:=\bigplus\limits_{n\in\N}\frac{1}{2^{n}}P_{n}$ (the set of finite sums). It is easy to show that $P$ is an upward directed subset of $\Ba_{F_{+}}$, and so it has an upper bound, say $f$. Let $n\in\N$ be such that $n>\|f\|$. Then, $f\ge P\supset \frac{1}{2^{n}}P_{n}$ yields $2^{n}f\ge P_{n}$. By assumption this implies that $\|2^{n}f\|\ge n2^{n}$, hence $\|f\|\ge n$, contradiction.\medskip

(iii)$\Rightarrow$(i): By Proposition \ref{barely} it is enough to show that $F$ is pointwise demicontinuous. Assume towards contradiction that there is $f>0_{F}$ such that for every $s>0$ there is $P\subset s\Ba_{F_{+}}$ with $P\uparrow f$. According to Lemma \ref{noatoms} $\left[0_{F},f\right]$ contains no atoms. It is not difficult to inductively construct a disjoint sequence $\left(d_{n}\right)_{n\in\N}\subset\left(0_{F},f\right]$. For every $n\in\N$ there is $P_{n}\subset \frac{\|d_{n}\|}{n2^{n}}\Ba_{F_{+}}$ such that $P_{n}\uparrow f$, and so $Q_{n}:=P_{n}\wedge d_{n}\uparrow f\wedge d_{n}=d_{n}$. We may assume that $0_{F}\in P_{n}$, hence $0_{F}\in Q_{n}$, for every $n\in\N$. Let $Q:=\bigplus\limits_{n\in\N}\frac{n}{\|d_{n}\|}Q_{n}$, which is a directed subset of $\Ba_{F_{+}}$. We claim that $Q$ is dominable. Since $\left(\frac{n}{\|d_{n}\|}d_{n}\right)_{n\in\N}$ is disjoint, it has a supremum, say $g$, in $F^{u}$. Every element of $Q$ is of the form $$\frac{1}{\|d_{1}\|}\left(p_{1}\wedge d_{1}\right)+...+\frac{n}{\|d_{n}\|}\left(p_{n}\wedge d_{n}\right)\le \frac{1}{\|d_{1}\|}d_{1}+...+\frac{n}{\|d_{n}\|}d_{n}= \frac{1}{\|d_{1}\|}d_{1}\vee...\vee\frac{n}{\|d_{n}\|}d_{n}\le g,$$ where $p_{k}\in P_{k}$, for every $k=1,...,n$, and the equality in the middle follows from disjointness of $d_{1},...,d_{n}$. Hence, $Q\le g$, and so $Q$ is dominable. It follows that $Q$ has an upper bound, say $h$ in $F$. Take $n>\|h\|$, and note that $h\ge Q\supset \frac{n}{\|d_{n}\|}Q_{n}$ and $Q_{n}\uparrow d_{n}$ imply $h\ge \frac{n}{\|d_{n}\|}d_{n}$, so that $n>\|h\|\ge n$, contradiction.
\end{proof}

\begin{proposition}\label{lef}
Monotone boundedness implies demicontinuity. The $r+$-monotone boundedness implies $r$-demicontinuity.
\end{proposition}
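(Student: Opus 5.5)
The plan is to prove the quantitative claim first and then deduce the qualitative one from it. By Theorem \ref{lev}, a monotonically bounded $F$ is $r$-monotonically bounded for some $r\ge 1$. It is then $(r+1)$-monotonically bounded, hence in particular $r+$-monotonically bounded. So the first claim will follow from the second, giving $r$-demicontinuity and thus demicontinuity.

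For the second claim, assume $F$ is $r+$-monotonically bounded. Let $P\subset\Ba_{F_{+}}$ be directed with $P\uparrow f$; we need $\|f\|\le r$. Fix $s>r$. By assumption $P$ has an upper bound $g\in s\Ba_{F}$. Since $f=\bigvee P$ and $g$ is an upper bound of $P$, we get $0_{F}\le f\le g$. Replacing $g$ by $\left|g\right|$ if necessary, solidity of the norm gives $\|f\|\le\|g\|\le s$. As $s>r$ was arbitrary, $\|f\|\le r$, which is exactly $r$-demicontinuity.

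There is no real obstacle here. The only point to check is that the supremum is dominated by every upper bound lying in $F$. This holds because $P\uparrow f$ means $f$ is the supremum in $F$. The upper bound supplied by monotone boundedness also lies in $F$, so the comparison $f\le g$ is legitimate.
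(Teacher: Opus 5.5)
Your proof is correct and is essentially the paper's argument. For each $s>r$ you take an upper bound $g\in s\Ba_{F}$ of $P$, note $0_{F}\le f\le g$ so that $\|f\|\le s$, let $s\downarrow r$, and then obtain the qualitative claim from Theorem \ref{lev}. One cosmetic slip: ``$(r+1)$-monotonically bounded, hence $r+$-monotonically bounded'' does not follow as written. It is also unnecessary, because $r$-monotone boundedness directly gives $s$-monotone boundedness for every $s>r$, and any finite constant would suffice for mere demicontinuity anyway.
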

\begin{proof}
We prove the second claim first. Assume that $P\subset\Ba_{F_{+}}$ and $f\in F$ are such that $P\uparrow f$. For every $s>r$ there is $g\in s\Ba_{F}$ such that $P\le g$. Then, $0_{F}\le f\le g$, and so $\|f\|\le \|g\|\le s$. Since $s$ was arbitrary, we conclude that $\|f\|\le r$. The first claim now follows from Theorem \ref{lev}.
\end{proof}

$F$ is called \emph{($r$-)monotonically complete} if every directed subset of $\Ba_{F_{+}}$ has a supremum in $F$ (of norm at most $r$). Every monotonically complete normed lattice is norm complete (see \cite[Proposition 2.4.19(i)]{mn}; it also follows immediately from \cite[Theorem 1]{bb}).

Clearly, $F$ is ($r$-)monotonically complete if and only if it is ($r$-)monotonically bounded and order complete. Moreover, $F$ is $r$-monotonically complete if and only if it is monotonically complete and $r$-demicontinuous. It is easy to see that ($r$-)monotone completeness and boundedness pass down to projection bands in $F$. Monotone completeness is stable under renormings.

Note that $F^{*}$ is always $1$-monotonically complete: if $P\subset\Ba_{F_{+}^{*}}$ is directed, it has a weak* limit in $\Ba_{F^{*}}$, which can be easily shown to be the supremum of $P$, using \cite[Corollary 1.3.4(ii)]{mn}. Since $F^{*}$ is an ideal in $F^{\sim}$, while $F^{\sim}_{\oc}$ is a projection band there (see \cite[Theorems 1.73 and 2.22]{ab0}), it follows that $F^{*}_{\oc}=F^{\sim}_{\oc}\cap F^{*}$ is a projection band in $F^{*}$, and so it is $1$-monotonically complete.

The following results are known (see \cite[Proposition 1]{ema} and \cite[Lemma 2.8]{abt}), but we prove them here for completeness. Recall that if $A\subset F$, the set of all suprema of finite subsets of $A$ is denoted by $A^{\vee}$; note that the latter set is directed.

\begin{corollary}\label{levi}
\item[(i)] $Q\subset F$ is order bounded in $F^{**}$ if and only if $\left|Q\right|^{\vee}$ is norm bounded in $F$.
\item[(ii)] $F$ is monotonically bounded if and only if every subset of $F$ which is order bounded in $F^{**}$ is also order bounded in $F$.
\end{corollary}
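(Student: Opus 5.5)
For part (i), the plan is to work through the canonical embedding $F\hookrightarrow F^{**}$. This embedding is a lattice isometry, and $F^{**}=\left(F^{*}\right)^{*}$ is $1$-monotonically complete by the remark preceding the statement. The argument then splits into two directions.

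Suppose first that $Q\le g$ in $F^{**}$, i.e. $\left|q\right|\le g$ for every $q\in Q$. Replacing $g$ by $\left|g\right|$, we get $\left|Q\right|\le \left|g\right|$. Hence every finite supremum $\left|q_{1}\right|\vee\dots\vee\left|q_{n}\right|$ lies in $\left[0,\left|g\right|\right]$. Since the norm of $F^{**}$ is a lattice norm and restricts to the norm of $F$, this gives $\left\|\left|q_{1}\right|\vee\dots\vee\left|q_{n}\right|\right\|\le\|g\|$. Thus $\left|Q\right|^{\vee}$ is norm bounded.

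Conversely, suppose $\left|Q\right|^{\vee}\subset r\Ba_{F}$. The set $\frac{1}{r}\left|Q\right|^{\vee}$ is an upward directed subset of $\Ba_{F^{**}_{+}}$. By $1$-monotone completeness of $F^{**}$, it has a supremum $g\in F^{**}$, which gives $\left|q\right|\le rg$ for all $q\in Q$. Hence $Q\subset\left[-rg,rg\right]$. If $r=0$ the claim is trivial.

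For part (ii), the key tool is Theorem \ref{lev} together with (i).

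Assume first that $F$ is monotonically bounded, and let $Q\subset F$ be order bounded in $F^{**}$. By (i), $\left|Q\right|^{\vee}$ is a norm bounded directed subset of $F_{+}$. After scaling, monotone boundedness yields an upper bound $f\in F$ for it. Then $Q\subset\left[-f,f\right]$.

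Conversely, let $P\subset\Ba_{F_{+}}$ be directed. Since $P$ is directed we have $P^{\vee}$ cofinal in $P$, and in fact $\left|P\right|^{\vee}=P^{\vee}\subset\Ba_{F}$ because the norm is a lattice norm. So by (i), $P$ is order bounded in $F^{**}$. By hypothesis, $P$ is then order bounded in $F$, which is exactly monotone boundedness.

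I expect no real obstacle. The only points needing care are the following.
\begin{itemize}
\item The embedding into $F^{**}$ must be a lattice isometry, so that finite suprema computed in $F$ agree with those in $F^{**}$. This is standard.
\item The $1$-monotone completeness of $F^{**}$, which is recalled just before the statement.
\end{itemize}
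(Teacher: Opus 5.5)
Your proposal is correct and follows essentially the same argument as the paper. Both directions of (i) go through the lattice-norm bound and the ($1$-)monotone completeness of $F^{**}$, and (ii) applies (i) to $\left|Q\right|^{\vee}$ in one direction and to directed subsets of $\Ba_{F_{+}}$ in the other. Two cosmetic points: the appeal to Theorem \ref{lev} is unnecessary, since scaling the definition of monotone boundedness suffices, and ``$Q\le g$'' should read $Q\subset\left[-g,g\right]$.
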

\begin{proof}
(i): Necessity: If $g\in F^{**}_{+}$ is such that $Q\subset\left[-g,g\right]$, then $\left|Q\right|^{\vee}\subset\left[-g,g\right]\subset\|g\|\Ba_{F^{**}}$, and so $\left|Q\right|^{\vee}$ is norm bounded.

Sufficiency: Since $F^{**}$ is monotonically complete, and $\left|Q\right|^{\vee}$ is norm bounded and directed, there is $g\in F^{**}$ such that $\left|Q\right|^{\vee}\uparrow g$, in particular $Q\subset\left[-g,g\right]$.\medskip

(ii): Necessity: If $Q\subset F$ is bounded in $F^{**}$, then $\left|Q\right|^{\vee}$ is norm bounded and directed, hence order bounded in $F$, due to monotone boundedness. It then follows that $Q$ is order bounded in $F$ as well.\medskip

Sufficiency: If $P\subset\Ba_{F_{+}}\subset \Ba_{F_{+}^{**}}$ is directed, then it is order bounded in $F^{**}$, hence in $F$.
\end{proof}

\begin{proposition}\label{pedro}
If $F$ has an order dense sublattice $E$ which is ($r$-)monotonically bounded, then $F$ is ($r$-)monotonically bounded and $E$ is majorizing in $F$.
\end{proposition}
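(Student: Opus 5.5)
Throughout, $E\subset F$ is an order dense sublattice which is ($r$-)monotonically bounded in the induced norm. The plan is to push a directed set in $\Ba_{F_{+}}$ down into $E$, bound it there, and then show that the bound also works in $F$.

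\textbf{Monotone boundedness of $F$.} Let $P\subset\Ba_{F_{+}}$ be directed. Put $Q:=\bigcup_{p\in P}\left[0_{F},p\right]_{E}$. As in the proof of Lemma \ref{solid}, $Q$ is directed: if $s\le p$ and $t\le p'$ with $p,p'\in P$, choose $p''\ge p,p'$ in $P$; then $s\vee t\in E$ and $s\vee t\le p''$. By order density, $Q\uparrow\bigvee P$ whenever the latter exists, and in any case $Q$ and $P$ have the same upper bounds in $F$. Indeed, if $g\ge Q$, then for each $p\in P$ we have $g\wedge p\ge\left[0_{F},p\right]_{E}$, and $\bigvee\left[0_{F},p\right]_{E}=p$ gives $g\wedge p\ge p$, i.e.\ $g\ge p$. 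Since $F$ has a solid norm, $Q\subset\Ba_{E_{+}}$. By assumption $Q$ has an upper bound $e\in E$, with $\|e\|\le r$ in the quantitative case. Hence $e\ge P$. This shows $F$ is ($r$-)monotonically bounded. The only delicate point is the identity ``upper bounds of $Q$ equal upper bounds of $P$'', which rests on $p=\bigvee\left[0_{F},p\right]_{E}$ holding in $F$.

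\textbf{$E$ is majorizing.} Let $f\in F_{+}$; we may assume $\|f\|\le1$. Apply the previous step to the singleton $P=\{f\}$. The set $Q=\left[0_{F},f\right]_{E}$ is directed and lies in $\Ba_{E_{+}}$, so it has an upper bound $e\in E$. By the argument above $e\ge f$, so $E$ is majorizing in $F$.

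\textbf{Main obstacle.} There is no real difficulty. The only thing to check is that monotone boundedness of $E$ gives upper bounds taken in $E$, and that these remain upper bounds in $F$ for the original set. Both follow from order density via $p=\bigvee\left[0_{F},p\right]_{E}$.
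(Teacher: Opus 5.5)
Your proof is correct and is essentially the paper's argument: the same directed set $Q=\bigcup_{p\in P}\left[0_{F},p\right]_{E}\subset\Ba_{E_{+}}$, the same use of $p=\bigvee\left[0_{F},p\right]_{E}$ to show that the bound $e\in E$ for $Q$ also bounds $P$, and the same singleton trick to get that $E$ is majorizing. The only cosmetic difference is that the paper first reduces the non-quantitative case to the $r$-version via Theorem \ref{lev}, whereas you run the argument directly in both cases, which works just as well.
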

\begin{proof}
According to Theorem \ref{lev}, it is enough to prove the version with a constant. Let $P\subset\Ba_{F_{+}}$ be directed. Then, $Q:= \bigcup\limits_{p\in P}\left[0_{F},p\right]_{E}$ is directed, and $p\wedge Q\uparrow p$, for every $p\in P$ (see arguments from the proof of Lemma \ref{solid}). Since also $Q\subset\Ba_{E_{+}}$, it has an upper bound in $E$, say $e$ with $\|e\|\le r$. Then, $p=\bigvee\left(Q\wedge p\right)\le e$, for every $p\in P$. Thus, $P$ is order bounded in $F$ of norm at most $r$. The fact that $E$ is majorizing in $F$ follows from applying the same argument to $P:=\left\{f\right\}$, for $f\in\Ba_{F_{+}}$.
\end{proof}

Let us present a simplified proof of \cite[Theorem 2.3]{aw} and \cite[Theorem 2.6]{taylor} and provide their quantitative versions.

\begin{theorem}\label{awt}
The following conditions are equivalent:
\item[(i)] $F$ is monotonically complete;
\item[(ii)] Every directed norm bounded dominable subset of $F$ has a supremum in $F$;
\item[(iii)] $F$ is norm complete and every disjoint $D\subset F_{+}$ such that $D^{\vee}\subset\Ba_{F}$ has a supremum in $F$;
\item[(iv)] The order adherence of $\Ba_{F}$ in $F^{u}$ is contained in $F$;
\item[(v)] Every uo-Cauchy net in $\Ba_{F}$ has a uo-limit.
\end{theorem}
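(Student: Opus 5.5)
I would prove the cycle (i)$\Rightarrow$(ii)$\Rightarrow$(iii)$\Rightarrow$(i), and then attach (iv) and (v) to it. Directed norm bounded sets are the basic object throughout, and Theorem \ref{lev} together with Proposition \ref{barely} lets me move between domination in $F^{u}$ and order boundedness in $F$.

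(i)$\Rightarrow$(ii) is immediate: a directed norm bounded set $P$ may be shifted and rescaled so that it sits in $\Ba_{F_{+}}$. Concretely, fix $p_{0}\in P$, replace $P$ by its tail $\{p\in P,~ p\ge p_{0}\}$, and consider $p-p_{0}$ scaled into $\Ba_{F_{+}}$. For (ii)$\Rightarrow$(iii), take a disjoint $D\subset F_{+}$ with $D^{\vee}\subset\Ba_{F}$. Since $D$ is disjoint, it is order bounded in $F^{u}$, so $D^{\vee}$ is directed, norm bounded and dominable, and (ii) gives the supremum. For norm completeness under (ii), take a positive sequence $(g_{n})$ with $\sum\|g_{n}\|<\8$. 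Its partial sums are directed and norm bounded. I then need dominability, and I would try to obtain it from the fact that the partial sums $\sum_{k\le n}g_{k}$ are Cauchy. If that fails, I would instead derive pointwise demicontinuity from (ii) by the construction in (iii)$\Rightarrow$(i) of Theorem \ref{lev}, and then invoke Proposition \ref{barely}(iv). Once the supremum exists, \cite[Theorem 1]{bb} gives completeness.

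(iii)$\Rightarrow$(i) is the main obstacle. Given a directed $P\subset\Ba_{F_{+}}$, I must produce $\bigvee P$. First I would show $F$ is pointwise demicontinuous. Arguing by contradiction as in the proof of Theorem \ref{lev}, I build disjoint $d_{n}$ and sets $Q_{n}\uparrow d_{n}$ with small norms. The supremum $g$ of the disjoint family $\{\frac{n}{\|d_{n}\|}\cdot\}$, suitably scaled so that its finite suprema lie in $\Ba_{F}$, then exists by (iii). This yields the same contradiction. Consequently $P$ is dominable, so $e:=\bigvee P$ exists in $F^{u}$, and in fact $e\in F^{L}$. Next, for $r<1$ I would like to apply Proposition \ref{eps} in an order complete space containing $F$. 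The natural choice is the ideal generated by $F$ in $F^{u}$, which has PPP. This produces a disjoint $D$ with $\bigvee D=re$, each $d\le q\in P$. The problem is that $d$ lives in $F^{u}$, not in $F$. To fix this, I would replace each $d$ by elements of $F$ using order density: $d=\bigvee[0,d]_{F}$. I then choose a disjoint system in $F$ whose finite sums are dominated by elements of $P$, hence lie in $\Ba_{F}$, and apply (iii) to it. Disjointness then gives that its supremum in $F$ equals $re$. Finally, I would let $r\uparrow 1$ along a sequence. Norm completeness and a Cauchy estimate on $(r_{n}e)$ — where the differences $(r_{n}-r_{m})e$ are dominated by supremums of norm $\le|r_{n}-r_{m}|$ — give $e\in F$.

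For (i)$\Leftrightarrow$(iv): by Corollary \ref{solid2}, the order adherence of $\Ba_{F}$ in $F^{u}$ consists of those $f$ for which some $Q\subset\Ba_{F_{+}}$ satisfies $Q\uparrow|f|$ in $F^{u}$. Under (i) such a $Q$ has a supremum in $F$, which must coincide with the one in $F^{u}$ by order density. Conversely, (iv) together with pointwise demicontinuity (obtained as above) makes every directed $P\subset\Ba_{F_{+}}$ have a supremum in $F^{u}$ that lies in $F$. For (v): a uo-Cauchy net in $\Ba_{F}$ uo-converges in the universally complete $F^{u}$, and its limit lies in the uo adherence of $\Ba_{F}$, so (iv) gives (v) via Lemma \ref{solid}. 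Conversely, a directed $P\subset\Ba_{F_{+}}$ that is dominable is uo-Cauchy, so (v) gives its uo-limit, which is its supremum. Dominability itself is obtained by applying (v) to the disjoint-sum construction, which rules out the failure of pointwise demicontinuity.
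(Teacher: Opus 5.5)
Your arguments for (i)$\Rightarrow$(ii)$\Rightarrow$(iii) and for (iv) and (v) are essentially fine. For (ii)$\Rightarrow$(iii), use the fallback through Theorem \ref{lev} and Proposition \ref{barely}. In (iv)$\Rightarrow$(i) and (v)$\Rightarrow$(i) the construction of Theorem \ref{lev} works directly: the directed dominable set $Q$ built there has its $F^{u}$-supremum in the order adherence of $\Ba_{F}$ (respectively, as its uo-limit), so you need not refer back to the (iii)-argument.

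The genuine gap is in (iii)$\Rightarrow$(i). You never prove that $F$ is order complete, and both of your sub-steps depend on it.
\begin{itemize}
\item In the pointwise demicontinuity step, hypothesis (iii) cannot be applied to $\left\{\frac{n}{\|d_{n}\|}d_{n}\right\}_{n\in\N}$. Its $n$-th element has norm $n$, so its finite suprema are norm unbounded, and any rescaling that puts them into $\Ba_{F}$ destroys the contradiction. In Theorem \ref{lev} the norm bounded object is the directed, non-disjoint set $Q$. Converting the $Q_{n}\uparrow d_{n}$ into disjoint families with finite suprema in $\Ba_{F}$ requires Proposition \ref{eps} with the pieces lying in $F$.
\item The step ``replace each $d\in D\subset F^{u}$ by a disjoint system in $F$ using $d=\bigvee\left[0,d\right]_{F}$'' is unjustified. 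Order density gives no mechanism for producing disjoint subsets of $F$ with a prescribed supremum. For example, by connectedness, a disjoint subset of $\Co\left[0,1\right]_{+}$ has supremum $\1$ only if it contains $\1$.
\end{itemize}
Without projection bands in $F$, neither step goes through.

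The missing idea is the one the paper establishes first. Since $F$ is norm complete, it is relatively uniformly complete. If $D\subset F_{+}$ is disjoint and order bounded, then $D^{\vee}$ is norm bounded, so after scaling, (iii) gives $D$ a supremum. The Veksler--Geiler theorem then yields order completeness of $F$, so $F$ is an ideal in $F^{u}$.

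With this in hand, take a directed dominable $P\subset\Ba_{F_{+}}$ with $P\uparrow e$, and apply Proposition \ref{eps} in $F^{u}$ with $r=\frac{1}{2}$. This gives a disjoint $D$ whose members are dominated by members of $P$, hence lie in $F$ automatically, and $D^{\vee}\subset\Ba_{F}$. Then (iii) together with regularity of $F$ in $F^{u}$ gives $\frac{1}{2}e\in F$, hence $e\in F$.

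Your limiting procedure $r\uparrow 1$ is superfluous: $re\in F$ for a single $r\in\left(0,1\right)$ already gives $e=\frac{1}{r}\left(re\right)\in F$.

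This argument proves (iii)$\Rightarrow$(ii), which concerns only dominable sets. Then (ii)$\Rightarrow$(i) follows from order completeness together with Theorem \ref{lev}, so no separate proof of pointwise demicontinuity from (iii) is needed.
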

\begin{proof}
(i)$\Rightarrow$(ii) is trivial. For the converse we first observe that every order bounded directed set is norm bounded and dominable, hence has a supremum. It follows that $F$ is order complete. It is also monotonically bounded, according to Theorem \ref{lev}. Hence, $F$ is monotonically complete.\medskip

As was mentioned above, monotone completeness implies norm completeness. If $D\subset F_{+}$ is such that $D^{\vee}\subset\Ba_{F}$, then monotone completeness implies that $D^{\vee}$ has a supremum in $F$, which is also the supremum of $D$. This way we have established (i)$\Rightarrow$(iii).\medskip

(iii)$\Rightarrow$(ii): First, since $F$ is norm complete, it is relatively uniformly complete (see \cite[Proposition 1.18(iv)]{mn}). Moreover, if $D\subset F_{+}$ is disjoint and order bounded, then $D^{\vee}$ is norm bounded, and so $D$ has a supremum. By Veksler-Geiler theorem (see e.g. \cite[Corollary 3.11]{erz2}) this latter property of $F$ together with relative uniform completeness implies order completeness.

Let $P\subset\Ba_{F_{+}}$ be directed and dominable, hence there is $f\in F^{u}$ such that $P\uparrow f$. It follows from Proposition \ref{eps} that there is a disjoint $D\subset F_{+}$ such that $\bigvee_{F^{u}} D=\frac{1}{2}f$ and for every $d\in D$ there is $p\in P$ such that $d\le p$. Let us show that $D^{\vee}\subset\Ba_{F}$. Let $d\in D^{\vee}$ and let $d_{1},...,d_{n}$ be such that $d=d_{1}\vee...\vee d_{n}$. There are $p_{1},...,p_{n}\in P$ such that $d_{k}\le p_{k}$, for every $k=1,...,n$, and $p\in P$ such that $p\ge p_{1},...,p_{n}$. Hence, $d\le p\in P\subset\Ba_{F_{+}}$, which yields $d\in\Ba_{F}$. By our assumption $D$ has a supremum in $F$ which by regularity must be $\frac{1}{2}f$, thus $f\in F$.\medskip

(ii)$\Rightarrow$(iv): As was already discussed before, $F$ is order complete, hence ideal in $F^{u}$, and so $\Ba_{F}$ is solid in $F^{u}$. According to Corollary \ref{solid1}, if $f\in F^{u}$ is contained in the order adherence of $\Ba_{F}$ in $F^{u}$, then there is a directed $P\subset \Ba_{F_{+}}$ such that $P\uparrow\left|f\right|$. Then, $P$ is directed norm bounded and dominable, hence has a supremum in $F$, which must be $\left|f\right|$. Thus, $\left|f\right|\in F$, and since $F$ is an ideal in $F^{u}$, it follows that $f\in F$.\medskip

(iv)$\Rightarrow$(v): First, we again have to start with establishing order completeness of $F$. To that end, note that for every $g\in F^{\delta}_{+}$ we have that $\left[0_{F},g\right]_{F}$ is order bounded in $F$, hence norm bounded, and $\left[0_{F},g\right]_{F}\uparrow g$, hence $\left[0_{F},g\right]_{F}\too g$, which implies that $g\in F$. Thus, $F=F^{\delta}$, hence $F$ is order complete, and so $\Ba_{F}$ is solid in $F^{u}$.

Let $\left(f_{p}\right)_{p\in P}\subset\Ba_{F}$ be uo-Cauchy. Recall that in $F^{u}$ every uo-Cauchy net has a uo-limit (see \cite[Theorem 17]{azouzi}). Hence, there is $f\in F^{u}$ such that $f_{p}\touo f$. Then $f$ is contained in the uo adherence of $\Ba_{F}$ in $F^{u}$. According to Corollary \ref{solid1} this adherence coincides with the order adherence, and so by our assumption it is contained in $F$. Since uo convergence of $F$ is the restriction of uo convergence on $F^{u}$ (see e.g. \cite[Theorem 5.3]{erz}), we conclude that $f_{p}\touo f$ in $F$.\medskip

(v)$\Rightarrow$(ii): Let $P\subset\Ba_{F_{+}}$ be directed and dominable, hence there is $f\in F^{u}$ such that $P\uparrow f$. Then, $P\touo f$, and so $P$ is a uo-Cauchy net in $F^{u}$, thus in $F$. By our assumption this net has a uo-limit in $F$, which must be equal to $f$, because the embedding of $F$ into $F^{u}$ is uo-continuous (see e.g. \cite[Theorem 5.2]{erz}). Hence, $f\in F$.
\end{proof}

Similarities between Theorems \ref{lev} and \ref{awt} inspire the following question, which has a positive answer under the SMP (see Proposition \ref{eps} and Remark \ref{smp}).

\begin{question}
Assume that $F$ is demicontinuous and such that every disjoint $D\subset F_{+}$ such that $D^{\vee}\subset\Ba_{F}$ is order bounded in $F$. Is $F$ monotonically bounded?
\end{question}

\begin{theorem}\label{rawt}
The following conditions are equivalent:
\item[(i)] $F$ is $r$-monotonically complete;
\item[(ii)] Every directed dominable subset of $\Ba_{F_{+}}$ has a supremum in $F$ of norm at most $r$;
\item[(iii)] $F$ is norm complete and every disjoint $D\subset F_{+}$ such that $D^{\vee}\subset\Ba_{F}$ has a supremum of norm at most $r$;
\item[(iv)] The order adherence of $\Ba_{F}$ in $F^{u}$ is contained in $r\Ba_{F}$;
\item[(v)] Every uo-Cauchy net in $\Ba_{F}$ has a uo-limit in $r\Ba_{F}$.
\end{theorem}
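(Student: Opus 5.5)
The plan is to follow the scheme of the proof of Theorem \ref{awt}, using the qualitative equivalences already established there and adding the norm constant at each step. First note that each of (i)--(v) implies monotone completeness of $F$. For (ii), (iii), (iv) and (v) this comes from the corresponding conditions of Theorem \ref{awt}: in (ii), order bounded directed sets are dominable, and after scaling they lie in $\Ba_{F}$. So we may assume throughout that $F$ is order complete, hence an ideal in $F^{u}$, and that $\Ba_{F}$ is solid in $F^{u}$.

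We then prove the cycle (i)$\Rightarrow$(ii)$\Rightarrow$(iv)$\Rightarrow$(v)$\Rightarrow$(i) and handle (iii) separately.

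\emph{(i)$\Rightarrow$(ii)} is trivial.

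\emph{(ii)$\Rightarrow$(iv).} Take $f$ in the order adherence of $\Ba_{F}$ in $F^{u}$. By Corollary \ref{solid1} there is a directed $P\subset\Ba_{F_{+}}$ with $P\uparrow\left|f\right|$ in $F^{u}$. Then $P$ is dominable, so it has a supremum in $F$ of norm at most $r$. Since $F$ is regular in $F^{u}$, this supremum is $\left|f\right|$, and therefore $\|f\|\le r$.

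\emph{(iv)$\Rightarrow$(v).} Argue as in Theorem \ref{awt}. A uo-Cauchy net in $\Ba_{F}$ uo-converges in $F^{u}$ to some $f$, which lies in the uo adherence of $\Ba_{F}$. By Corollary \ref{solid1} this equals the order adherence, so (iv) gives $f\in r\Ba_{F}$.

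\emph{(v)$\Rightarrow$(i).} Let $P\subset\Ba_{F_{+}}$ be directed. It is order bounded because $F$ is monotonically complete, hence $P\uparrow g$ for some $g\in F$. Viewed as a net, $P$ is uo-Cauchy, and its uo-limit $g$ lies in $r\Ba_{F}$ by (v).

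\emph{(i)$\Rightarrow$(iii).} Norm completeness is already known. For a disjoint $D$ with $D^{\vee}\subset\Ba_{F}$, the supremum of the directed set $D^{\vee}$ is $\bigvee D$, and (i) bounds its norm by $r$.

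\emph{(iii)$\Rightarrow$(i).} This is the step I expect to be the main obstacle, because Proposition \ref{eps} only recovers $\frac{1}{2}f$, or more generally $sf$ for $s<1$, and not $f$ itself. Let $P\subset\Ba_{F_{+}}$ be directed with $P\uparrow f$ in $F$, which exists by monotone completeness; $F$ has the PPP since it is order complete. For each $s<1$, Proposition \ref{eps} gives a disjoint $D_{s}$ with $\bigvee D_{s}=sf$ and each element of $D_{s}$ dominated by some $p\in P$. As in the proof of Theorem \ref{awt}, $D_{s}^{\vee}\subset\Ba_{F}$. By regularity, (iii) yields $s\|f\|=\|\bigvee D_{s}\|\le r$. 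Letting $s\uparrow1$ gives $\|f\|\le r$, and this limiting step removes the gap left by Proposition \ref{eps}.
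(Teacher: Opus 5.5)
Your proof is correct and follows the paper's approach. The paper also gets monotone completeness from Theorem \ref{awt} and says the remaining implications carry over with the constant attached. It then treats (iii)$\Rightarrow$(i) exactly as you do, applying Proposition \ref{eps} with $s<1$ to get $\|sf\|\le r$ and letting $s\uparrow 1$; the appeal to regularity there is unnecessary, since Proposition \ref{eps} already gives $\bigvee D_{s}=sf$ in $F$.
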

\begin{proof}
Most of the implications are proven similarly to the corresponding implications in Theorem \ref{awt}. Let us only revise (iii)$\Rightarrow$(i): First, it follows from Theorem \ref{awt} that $F$ is monotonically complete. Let $P\subset\Ba_{F_{+}}$ be directed, hence there is $f\in F$ such that $P\uparrow f$. For every $0< s<1$ there is a disjoint $D\subset F_{+}$ such that $\bigvee D=sf$ and for every $d\in D$ there is $p\in P$ such that $d\le p$. Arguing as in the proof of the implication (iii)$\Rightarrow$(ii) of Theorem \ref{awt} we see that $D^{\vee}\subset \Ba_{F}$, and so $\|sf\|\le r$, thus $\|f\|\le \frac{r}{s}$. Since $s$ was arbitrary, we conclude that $\|f\|\le r$.
\end{proof}

In the special case of $r=1$, the conditions (i) and (v) in Theorem \ref{rawt} can be restated as follows.

\begin{corollary}
$F$ is $1$-monotonically complete if and only if $\Ba_{F}$ is complete with respect to uo convergence.
\end{corollary}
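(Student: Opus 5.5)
The corollary follows by specializing Theorem \ref{rawt} to $r=1$ and reading condition (v) there as a completeness statement. Here ``$\Ba_{F}$ is complete with respect to uo convergence'' means that every uo-Cauchy net in $\Ba_{F}$ uo-converges to an element of $\Ba_{F}$. With this reading, condition (v) of Theorem \ref{rawt} for $r=1$ is exactly the right-hand side. Condition (i) for $r=1$ is $1$-monotone completeness. So the equivalence (i)$\Leftrightarrow$(v) of Theorem \ref{rawt} gives the claim immediately.

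Since that equivalence is only sketched ("proven similarly"), I would also spell out the two implications at $r=1$.

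(i)$\Rightarrow$(v): Let $\left(f_{p}\right)\subset\Ba_{F}$ be uo-Cauchy.
\begin{itemize}
\item By \cite[Theorem 17]{azouzi} it uo-converges to some $f\in F^{u}$.
\item By Corollary \ref{solid1}, applied in $F^{u}$ (where $\Ba_{F}$ is solid, because $F$ is order complete and hence an ideal in $F^{u}$), there is a directed $P\subset\Ba_{F_{+}}$ with $P\uparrow\left|f\right|$.
\item $1$-monotone completeness gives $\bigvee_{F}P\in\Ba_{F}$. By regularity of $F$ in $F^{u}$ this supremum equals $\left|f\right|$, so $\left|f\right|\in\Ba_{F}$.
\item Since $F$ is an ideal in $F^{u}$, we get $f\in F$ with $\|f\|=\|\left|f\right|\|\le1$.
\item Uo convergence in $F$ is the restriction of uo convergence in $F^{u}$ \cite[Theorem 5.3]{erz}, so $f_{p}\touo f$ in $F$.
\end{itemize}

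(v)$\Rightarrow$(i): Let $P\subset\Ba_{F_{+}}$ be directed.
\begin{itemize}
\item Regard $P$ as a net. Any uo-limit of this increasing net is its supremum. So I only need $P$ to be uo-Cauchy in $F$; then (v) gives a uo-limit $f\in\Ba_{F}$, hence $f=\bigvee P$ with $\|f\|\le1$.
\end{itemize}

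The delicate point is showing that $P$ is uo-Cauchy without assuming dominability. Theorem \ref{rawt} handles this by routing through (ii) and (iv), and I would follow the same route.
\begin{itemize}
\item First derive order completeness, as in (iv)$\Rightarrow$(v) of Theorem \ref{awt}.
\item Then use Theorem \ref{lev}, via pointwise demicontinuity as in Proposition \ref{barely}, to make $P$ dominable. A dominable increasing net has a supremum in $F^{u}$, so it is uo-convergent in $F^{u}$ and therefore uo-Cauchy in $F$.
\end{itemize}
I expect no real obstacle here, since everything is inherited from Theorem \ref{rawt}.
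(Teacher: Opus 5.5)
Your proposal is correct and is essentially the paper's argument: the corollary is stated there simply as the case $r=1$ of the equivalence (i)$\Leftrightarrow$(v) in Theorem \ref{rawt}. Your unpacked implications retrace the chain used for Theorems \ref{awt} and \ref{rawt}: Corollary \ref{solid1} and regularity of $F$ in $F^{u}$ for one direction, and (v)$\Rightarrow$(ii)$\Rightarrow$(i) via Theorem \ref{lev} for the other. The only quibbles are that the (v)$\Rightarrow$(i) route goes through (ii) rather than (iv), and that deriving order completeness first is unnecessary there, since once every directed subset of $\Ba_{F_{+}}$ is dominable, (v) directly produces its supremum in $\Ba_{F}$.
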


\section{The Lorentz completion}\label{s6}

Let us review some concepts introduced in Section \ref{s4}. Recall that if $F$ is a normed lattice, the Lorentz seminorm $\|\cdot\|_{L}$ on $F^{u}$ is defined by $\|f\|_{L}=\bigwedge\left\{r>0,~ \exists P\subset r\Ba_{F_{+}} ~ P\uparrow \left|f\right|\right\}$. This is a solid seminorm (with possibly infinite values) equal to the gauge of the order adherence $\Ba_{F}^{L}$ of $\Ba_{F}$ in $F^{u}$. We have $\|\cdot\|_{L}$ finitely valued on $F^{L}:=\bigcup\limits_{n\in\N}n\Ba_{F}^{L}$, and $\|\cdot\|_{L}\le \|\cdot\|$ on $F$. If $F$ is pointwise demicontinuous then $\|\cdot\|_{L}$ is a complete norm. We then call $\left(F^{L},\|\cdot\|_{L}\right)$ the Lorentz completion of $F$. It follows that $F^{L}$ is pointwise demicontinuous, order complete and contains $F$ as an order dense sublattice. If $F$ is $r$-demicontinuous then $\|\cdot\|\le r\|\cdot\|_{L}$. In this case the Lorentz completion has a number of additional properties.

\begin{theorem}\label{delta}
Assume that $F$ is $r$-demicontinuous. Then:
\item[(i)] $F^{L}$ is an $r$-monotonically complete Banach lattice.
\item[(ii)] The norm of $F$ has an $r^{2}$-demicontinuous extension to $F^{L}$, equivalent to $\|\cdot\|_{L}$.
\item[(iii)] The norm completion of $F$ is $r^{2}$-demicontinuous.
\item[(iv)] If $T:F\to E$ is a contractive order continuous homomorphism into a demicontinuous normed lattice, then there is a unique order continuous extension $T^{L}:F^{L}\to E^{L}$ of $T$. Moreover, $T^{L}$ is a contractive homomorphism.
\item[(v)] If $G$ is a normed lattice which isomorphically contains $F$ as an order dense sublattice, then there is a unique injective order continuous homomorphism $J:G\to F^{L}$ which leaves $F$ in place.
\item[(vi)] If $F$ is $r$-semicontinuous, then so is $F^{L}$. In particular, the Lorentz completion preserves topological semicontinuity.
\end{theorem}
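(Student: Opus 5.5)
The plan is to go through the six parts in order. The basic tools are three facts. First, $\Ba_F^L$ is the order adherence of $\Ba_F$ in $F^u$, it is solid, and by Corollary \ref{solid2} membership in it is witnessed by directed sets $P\subset\Ba_{F_+}$ with $P\uparrow|f|$. Second, $r$-demicontinuity gives $\|\cdot\|_L\le\|\cdot\|\le r\|\cdot\|_L$ on $F$. Third, $F$ is order dense in the ideal $F^L$ of $F^u$.

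For (i), I will use Theorem \ref{rawt}(iv) applied to $F^L$. This needs the order adherence of $\Ba_{F^L}$ in $(F^L)^u=F^u$ to lie in $r\Ba_{F^L}$.
\begin{itemize}
\item Take $g\in F^u_+$ and a directed $Q\subset\Ba_{F^L_+}$ with $Q\uparrow g$.
\item For each $q\in Q$ and $s>1$, the set $[0,q]_F$ is directed, increases to $q$, and lies in $s\Ba_F^L\cap F$.
\item Elements of $s\Ba_F^L\cap F$ have $\|\cdot\|\le rs$, by $r$-demicontinuity.
\item So $S=\bigcup_{q}[0,q]_F$ is directed, contained in $rs\Ba_{F_+}$, and $S\uparrow g$, as in the proof of Lemma \ref{solid}.
\item Hence $\|g\|_L\le rs$. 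Letting $s\downarrow1$ gives the claim.
\end{itemize}
I expect to get the constant $r$ without loss here, since $\|\cdot\|_L$ is already the gauge.

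For (ii), I define $\|g\|':=\inf\{\sup_{p\in P}\|p\|:\ P\subset F_+\text{ directed},\ P\uparrow|g|\}$. This is the gauge of the order adherence of $\Ba_F$, scaled, so $\|g\|'=\|g\|_L$. That does not extend $\|\cdot\|$, so I adjust:
\begin{itemize}
\item Instead, take $\|g\|'':=\inf\{\liminf\|p\|\}$ over nets in $F$ order converging to $|g|$.
\item Alternatively, take the gauge of $\mathrm{conv}(\Ba_F\cup r\Ba_F^L)$, intersected appropriately with solid hulls.
\end{itemize}
The cleanest option is the gauge of the solid convex set $\Ba_F\cup\Ba_F^L$ scaled so that its trace on $F$ is $\Ba_F$. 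On $F$, the set $\Ba_F^L\cap F\subset r\Ba_F$, so the candidate is $\|g\|^\sharp:=\inf\{\sup\|p\|: P\uparrow|g|,\ P\subset F_+\}$ if $g\notin F$, and $\|g\|$ if $g\in F$.

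I then need convexity, solidity and the triangle inequality. I expect this to be the main obstacle. My first attempt would be to take the solid convex hull $C$ of $\Ba_F\cup\{$order limits$\}$ and then check two things:
\begin{itemize}
\item $C\cap F=\Ba_F$ up to the factor $r$. This check is where $r^2$ may enter.
\item $r^2$-demicontinuity of the gauge of $C$, obtained by combining two demicontinuity steps in the manner of Proposition \ref{dua}.
\end{itemize}
Equivalence with $\|\cdot\|_L$ then follows because $\Ba_F^L\subset$ (the unit ball of the new norm) $\subset r\Ba_F^L$.

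For (iii), embed the completion $\widehat F$ into $F^L$ with the extended norm. Closures of $F$ in the equivalent complete norm sit inside $F^L$, and $\widehat F$ is a regular sublattice since it lies between $F$ and the order complete $F^L$, in which $F$ is order dense. Then Proposition \ref{pass} transfers the $r^2$-demicontinuity from (ii).

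For (iv), first extend $T$ to $T^u:F^u\to E^u$, the universal completion functor for order continuous homomorphisms. Then show $T^u(\Ba_F^L)\subset\Ba_E^L$: if $P\uparrow|f|$ with $P\subset\Ba_{F_+}$, then $TP\subset\Ba_{E_+}$ and $TP\uparrow T^u|f|$. Uniqueness comes from order density together with order continuity.

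For (v), $G$ embeds order densely into $F^u$, and demicontinuity of the isomorphic norm, together with Corollary \ref{solid2}, shows that $g\in G_+$ is the supremum of $[0,g]_F$, which is norm bounded. So $g\in F^L$. Injectivity and order continuity hold because all the spaces are order dense in $F^u$.

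For (vi), the $r$-semicontinuous norm $\|\cdot\|'$ has Lorentz norm equal to itself on $F$. Since the Lorentz norm is equivalent to and semicontinuous on $F^L$ by (i), with constant $1$, its scaling gives the required bounds.
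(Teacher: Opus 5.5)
Your proofs of (i), (iv), (v) and (vi) follow essentially the paper's route. In (v), norm boundedness of $[0,g]_F$ already puts $g$ into a multiple of $\Ba_F^L$, so you do not need (i) there; you should also record uniqueness in (v), which goes exactly as in (iv). The genuine gap is (ii). Part (iii), as you wrote it, inherits this gap: identifying the norm completion isometrically with the closure of $F$ in $F^L$ requires an \emph{exact} extension of $\|\cdot\|$.

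None of your candidates for (ii) works.
\begin{itemize}
\item $\|\cdot\|'$ and $\|\cdot\|''$ are just $\|\cdot\|_L$. On $F$ this differs from $\|\cdot\|$ unless $F$ is semicontinuous.
\item The solid convex hull $C$ of $\Ba_F$ and its order limits has trace on $F$ containing the order adherence of $\Ba_F$ in $F$. So, as you noticed, $C\cap F=\Ba_F$ only up to a factor $r$, which gives an equivalent norm, not an extension.
\item The piecewise $\|\cdot\|^{\sharp}$ is in general not even solid. If $f\in F_+$ has $\|f\|_L<\|f\|$ and $h\in F^L_+\backslash F$, then $g:=f+\varepsilon h\ge f$ satisfies $\|g\|^{\sharp}<\|f\|^{\sharp}$ for small $\varepsilon>0$.
\end{itemize}
More fundamentally, the sandwich you aim for, $\Ba_F^L\subset U\subset r\Ba_F^L$, is incompatible with $U\cap F=\Ba_F$. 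It would force $\Ba_F^L\cap F\subset\Ba_F$, i.e.\ semicontinuity of $F$.

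The correct sandwich is $\frac{1}{r}\Ba_F^L\subset U\subset\Ba_F^L$. The missing idea is to take $U:=\mathrm{conv}\bigl(\frac{1}{r}\Ba_F^L\cup\Sol\Ba_F\bigr)$, where $\Sol\Ba_F$ is the solid hull of $\Ba_F$ in $F^L$ (convex by Lemma \ref{conve}). This $U$ is convex and solid, and it satisfies $U\cap F=\Ba_F$. To see this, let $y\in U\cap F$, so that $|y|\le\lambda f+(1-\lambda)d$ with $f\in\Ba_{F_+}$ and $0\le d\in\frac{1}{r}\Ba_F^L$. Then $(|y|-\lambda f)^+\in F\cap\frac{1-\lambda}{r}\Ba_F^L\subset(1-\lambda)\Ba_F$ by $r$-demicontinuity. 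Hence $\|y\|\le\lambda+(1-\lambda)=1$.

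The factor $r^{2}$ does not come from the trace on $F$, nor from a Proposition \ref{dua}-type double adherence. It comes from $\|\cdot\|_L\le\|\cdot\|_U\le r\|\cdot\|_L$, combined with the fact that $\|\cdot\|_L$ is only $r$-demicontinuous on $F^L$; this follows from (i) and Proposition \ref{lef}.
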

\begin{proof}
(i): First, $\Ba_{F}^{L}\cap F$ is the order adherence of $\Ba_{F}$ in $F$, and so $\Ba_{F}^{L}\cap F\subset r\Ba_{F}$, by virtue of Proposition \ref{rwf}. Let $C:=\bigcap\limits_{s>1}s\Ba_{F}^{L}$, which is the unit ball with respect to $\|\cdot\|_{L}$. By Lemma \ref{solid}, for every $s>1$ we have that the order adherence of $s\Ba_{F}^{L}$ in $F^{u}$ coincides with the solid hull of the order adherence of $s\Ba_{F}^{L}\cap F\subset rs\Ba_{F}$, and so is contained in $rs\Ba_{F}^{L}$. Thus, the order adherence of $C$ in $F^{u}$ is contained in $\bigcap\limits_{s>1}rs\Ba_{F}^{L}=rC$. It now follows from Theorem \ref{rawt} that $F^{L}$ is $r$-monotonically complete.\medskip

(ii): It follows from Lemma \ref{conve} that $\Sol\Ba_{F}$ in $F^{L}$ is convex and its intersection with $F$ recovers $\Ba_{F}$. Therefore, the gauge of $\Sol\Ba_{F}$ extends $\|\cdot\|$. Recall that $\frac{1}{r}\Ba_{F}^{L}\cap F\subset \Ba_{F}\subset \Ba_{F}^{L}$. Let $B$ be the convex hull of $\frac{1}{r}\Ba_{F}^{L}\cup\Sol\Ba_{F}$. Note that $B$ is solid as a convex hull of a solid set. Then, the gauge $\|\cdot\|_{B}$ of $B$ is a solid norm which extends $\|\cdot\|$ (see \cite[Proposition 2.14]{fhhmz}), and since $\frac{1}{r}\Ba_{F}^{L}\subset B\subset \Ba_{F}^{L}$, it follows that $r\|\cdot\|_{L}\ge\|\cdot\|_{B}\ge\|\cdot\|_{L}$. As $\|\cdot\|_{L}$ is $r$-demicontinuous by (i) and Proposition \ref{lef}, and $P\uparrow f$ in $F^{L}_{+}$, then $\|f\|_{B}\le r\|f\|_{L}\le r^{2}\sup\limits_{p\in P}\|p\|_{L}\le r^{2}\sup\limits_{p\in P}\|p\|_{B}$.\medskip

(iii): It follows from (i) and (ii) that $\left(F^{L},\|\cdot\|_{B}\right)$ is a monotonically complete, hence norm complete normed lattice which contains $F$ isometrically as an order dense sublattice. It follows that the closure of $F$ in $F^{L}$ is also an order dense, hence regular sublattice which is isometrically isomorphic to the norm completion of $F$. Since the $r^{2}$-demicontinuity passes down to regular sublattices, the claim follows.\medskip

(iv): Let $T^{u}:F^{u}\to E^{u}$ be the order continuous homomorphism which extends $T$ (combine \cite[Theorems 1.84 and 7.17]{ab0}). Every $0_{F^{u}}\le g\in \Ba_{F}^{L}$ is a supremum of a directed $P\subset\Ba_{F_{+}}$. Then, $TP$ is a directed subset of $\Ba_{E_{+}}$ with $TP\uparrow T^{u}g$, implying that $T^{u}g\in \Ba_{E}^{L}$. It is now easy to deduce that $T^{L}:=\left.T^{u}\right|_{F^{L}}$ is a contractive order continuous homomorphism. If $S:F^{L}\to E^{L}$ is another order continuous extension of $T$, then $S-T^{L}$ is order continuous and vanishes on $F$. As the order adherence of $F$ in $F^{L}$ is $F^{L}$, we conclude that $S=T^{L}$.\medskip

(v): Let $T:F\to G$ be an isomorphic embedding such that $TF$ is order dense in $G$. Since $G$ is order dense in $G^{u}$, it follows that $TF$ is order dense in $G^{u}$, and so there is an order isomorphism $T^{u}:F^{u}\to G^{u}$, extending $T$ (see \cite[Theorem 7.19]{ab0}). It is enough to show that $T^{u}F^{L}\supset G$, since then $J:=\left.T^{u-1}\right|_{G}$ does the job. Uniqueness is proven in the same way as in (iv).

Let $g\in G_{+}$ and consider the set $P:=T^{-1}\left[0_{G},g\right]$. Clearly, $P$ is directed in $F$ and since $T$ is an isomorphic embedding, and $\left[0_{G},g\right]$ is norm bounded in $G$, it follows that $P$ is norm bounded in $F$, hence in $F^{L}$. Since by (i) the latter is monotonically complete, there is $f\in F^{L}\subset F^{u}$ such that $P\uparrow f$. Due to order denseness of $TF$ in $G$, it follows that $\bigvee TP= \bigvee\left[0_{G},g\right]_{TF}=g$. As $T^{u}$ is order continuous, we conclude that $T^{u}f=\bigvee TP=g$, and so $g\in T^{u}F^{L}$.\medskip

(vi): Let $\|\cdot\|'$ be the equivalent semicontinuous norm on $F$ satisfying $\frac{1}{r}\|\cdot\|\le \|\cdot\|'\le \|\cdot\|$, and let $B$ be its unit ball. We have that $\Ba_{F}\subset B\subset r\Ba_{F}$, hence $\Ba_{F}^{L}\subset B^{L}\subset r\Ba_{F}^{L}$, where $B^{L}$ is the order adherence of $B$ in $F^{u}$, i.e. the unit ball of the Lorentz completion of $\left(F,\|\cdot\|'\right)$. By virtue of (i), the norm $\|\cdot\|'_{L}$ of latter completion is semicontinuous. Since it satisfies $\frac{1}{r}\|\cdot\|_{L}\le \|\cdot\|'_{L}\le \|\cdot\|_{L}$, the result follows.
\end{proof}

\begin{question}
Can the constants in Proposition \ref{dua} and parts (ii) and (iii) of Theorem \ref{delta} be improved?
\end{question}

To summarize, monotone completeness plays a role in the Banach lattice theory somewhat analogous to the role universally complete vector lattices play in the vector lattice theory, and the Lorentz completion may be viewed as the analogue of the universal completion.

It follows from part (iv) of Theorem \ref{delta} that if $\mathbf{DCNL}$ is a category of demicontinuous normed lattices with order continuous homomorphisms and $\mathbf{MCBL}$ is its full subcategory whose objects are monotone complete Banach lattices, then $F^{L}$ is the reflection of $F$ in $\mathbf{MCBL}$.

\begin{question}
Suppose $F$ is a normed lattice, $E$ is a monotone complete Banach lattice, and $J:F\to E$ is an order-continuous homomorphism such that whenever $T:F\to G$ is an order-continuous homomorphism into a monotone complete Banach lattice $G$, there is a unique order-continuous homomorphism $\widehat{T}:E\to G$ such that $\widehat{T}J=T$. What can be said about $F$, $E$, and $J$?
\end{question}

\begin{example}
A monotone complete Banach lattice may not be topologically semicontinuous. Indeed, let $F$ be a demicontinuous Banach lattice which is not topologically semicontinuous, e.g. the example from \cite{att}. Then, $F^{L}$ is monotone complete and contains $F$ isomorphically as an order dense sublattice. If $F^{L}$ was topologically semicontinuous, the same would be true about $F$, contradiction.
\qed\end{example}

Every $F$ isometrically embeds into $F^{**}$ which is monotonically complete. On the other hand, in order to get an order continuous embedding $F$ has to be demicontinuous, as the following result shows.

\begin{corollary}\label{lorentz}
The following conditions are equivalent:
\item[(i)] $F$ is demicontinuous;
\item[(ii)] $F$ embeds isometrically as an order dense sublattice of a monotonically complete Banach lattice;
\item[(iii)] $F$ embeds isomorphically as a regular sublattice of a monotonically complete Banach lattice.\medskip

Moreover, the monotonically complete Banach lattice which contains $F$ isomorphically as an order dense sublattice is unique up to an isomorphism.
\end{corollary}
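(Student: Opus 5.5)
The plan is to prove (i)$\Rightarrow$(ii)$\Rightarrow$(iii)$\Rightarrow$(i), and then derive uniqueness from part (v) of Theorem \ref{delta}.

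For (i)$\Rightarrow$(ii), assume $F$ is $r$-demicontinuous. By part (ii) of Theorem \ref{delta}, the norm of $F$ has an extension $\|\cdot\|_{B}$ to $F^{L}$ that is equivalent to $\|\cdot\|_{L}$. By part (i), $\left(F^{L},\|\cdot\|_{L}\right)$ is $r$-monotonically complete. Monotone completeness is stable under renormings, so $\left(F^{L},\|\cdot\|_{B}\right)$ is monotonically complete, hence a Banach lattice. Since $F$ is order dense in $F^{u}$ and $F\subset F^{L}\subset F^{u}$, it is order dense in $F^{L}$, and the embedding is isometric by construction of $\|\cdot\|_{B}$.

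(ii)$\Rightarrow$(iii) is immediate, because order dense sublattices are regular.

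For (iii)$\Rightarrow$(i), let $E$ be monotonically complete and $T:F\to E$ an isomorphic embedding onto a regular sublattice. By Proposition \ref{lef}, $E$ is demicontinuous. Proposition \ref{pass} then gives that $TF$, with the norm induced from $E$, is demicontinuous. Demicontinuity is stable under renorming, so $F$ itself is demicontinuous. Equivalently, if $P\subset\Ba_{F_{+}}$ and $P\uparrow f$, then $TP\uparrow Tf$ in $E$ by regularity, and so $\|Tf\|\le r\|T\|$, which gives $\|f\|\le r\|T\|\|T^{-1}\|$.

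For uniqueness, let $G$ be a monotonically complete Banach lattice containing $F$ isomorphically as an order dense sublattice. By the implication just proved, $F$ is demicontinuous, so part (v) of Theorem \ref{delta} gives an injective order continuous homomorphism $J:G\to F^{L}$ fixing $F$. I expect the main work to be showing that $J$ is surjective and bicontinuous.

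For surjectivity, take $0\le g\in F^{L}$. There is a directed $P\subset s\Ba_{F_{+}}$ with $P\uparrow g$ in $F^{u}$. This $P$ is norm bounded in $G$, so it has a supremum $h$ in $G$ by monotone completeness. Order continuity of $J$ gives $Jh=\bigvee JP=g$, so $J$ is a bijective lattice isomorphism.

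For continuity, $J$ is a positive operator between Banach lattices, hence bounded, so the open mapping theorem makes $J$ an isomorphism. Alternatively, one can bound norms directly. With $P$ as above, $\|h\|_{G}$ is controlled by the monotone-boundedness constant of $G$ (Theorem \ref{lev}) times $s$, which gives $\|J^{-1}g\|\lesssim\|g\|_{L}$. In the other direction, $\|Jh\|_{L}\lesssim\|h\|_{G}$ follows by writing $h$ as the supremum of $[0,h]\cap F$ and using that this set is norm bounded.
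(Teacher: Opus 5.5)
Your proposal is correct and follows essentially the same route as the paper: (i)$\Rightarrow$(ii) via parts (i) and (ii) of Theorem \ref{delta}, (iii)$\Rightarrow$(i) via Propositions \ref{lef} and \ref{pass}, and uniqueness via part (v) of Theorem \ref{delta} together with the fact that monotone completeness of $G$ forces $J$ to be onto. Where you differ, it is in your favour: your surjectivity argument takes a norm bounded directed $P\subset F_{+}$ with $P\uparrow g$, which stays bounded in $G$ because $F$ embeds isomorphically, and your open-mapping (or direct) estimate then makes explicit the norm equivalence that the paper's one-line ``with equivalent norms'' fact uses without proof.
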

\begin{proof}
(i)$\Rightarrow$(ii) follows from parts (i) and (ii) of Theorem \ref{delta}. (ii)$\Rightarrow$(iii) is trivial.

(iii)$\Rightarrow$(i) follows from the fact that a monotonically complete Banach lattice is monotonically bounded, hence demicontinuous, by virtue of Proposition \ref{lef}, and the latter property passes down to regular sublattices, according to Proposition \ref{pass}.

Uniqueness follows from part (v) of Theorem \ref{delta}, and the fact that if $G$ is an order dense sublattice of a normed lattice $H$ (with equivalent norms), and $G$ is monotonically complete, then $G=H$. Indeed, for every $h\in H_{+}$ we have that $h=\bigvee\left[0_{H},h\right]{G}$, but $\left[0_{H},h\right]_{G}$ is a directed norm bounded subset of $G$, and so it has a supremum in $G$, which must be equal to $h$, thereby showing that $h\in G$.
\end{proof}

\begin{corollary}
If $E\subset F$ is an order dense sublattice, then $E^{L}$ is isomorphic to $F^{L}$.
\end{corollary}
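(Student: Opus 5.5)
The plan is to reduce the statement to the uniqueness clause of Corollary \ref{lorentz}. Throughout, $F$ is assumed demicontinuous, since $F^{L}$ is only defined in that case, say $F$ is $r$-demicontinuous. Then $E$, equipped with the induced norm, is demicontinuous as well. Indeed, $E$ is order dense in $F$, hence regular, and Proposition \ref{pass} shows that $r$-demicontinuity passes down to regular sublattices. So $E^{L}$ is defined, and by Theorem \ref{delta}(i) it is a monotonically complete Banach lattice containing $E$ as an order dense sublattice.

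Next I will show that $F^{L}$ has the same features with respect to $E$.
\begin{itemize}
\item By Theorem \ref{delta}(i), $\left(F^{L},\|\cdot\|_{L}\right)$ is an $r$-monotonically complete Banach lattice.
\item It contains $F$ as an order dense sublattice, since $F$ is order dense in $F^{u}$ and $F^{L}$ is an ideal of $F^{u}$ containing $F$.
\item On $F$ we have $\frac{1}{r}\|\cdot\|\le\|\cdot\|_{L}\le\|\cdot\|$, so the embedding of $F$ into $F^{L}$ is an isomorphic embedding. Restricting to $E$, the embedding of $E$ into $F^{L}$ is also an isomorphic embedding, with respect to the norm $E$ inherits from $F$.
\item Order density is transitive. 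Given $0<h\in F^{L}$, pick $f\in F$ with $0<f\le h$, and then $e\in E$ with $0<e\le f$. Hence $E$ is order dense in $F^{L}$.
\end{itemize}

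Therefore $F^{L}$ is a monotonically complete Banach lattice that contains $E$ isomorphically as an order dense sublattice. The uniqueness part of Corollary \ref{lorentz}, which rests on Theorem \ref{delta}(v), says that such a lattice is unique up to isomorphism. Since $E^{L}$ is also such a lattice, it follows that $F^{L}\cong E^{L}$.

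More explicitly, Theorem \ref{delta}(v) applied to $G:=F^{L}$, with $E$ in place of $F$, gives an injective order continuous homomorphism $J:F^{L}\to E^{L}$ fixing $E$. The argument in the proof of Corollary \ref{lorentz} then shows that $J$ is onto, because the image of $F^{L}$ is an order dense monotonically complete sublattice of $E^{L}$. Since both spaces are Banach, $J$ is then an isomorphism.

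The only step needing care is to check that the hypotheses of the uniqueness claim are met with respect to the correct norm on $E$. Concretely, the norm $\|\cdot\|_{L}$ of $F^{L}$, restricted to $E$, must be equivalent to the norm of $E$. This is exactly the two-sided estimate $\frac{1}{r}\|\cdot\|\le\|\cdot\|_{L}\le\|\cdot\|$ coming from $r$-demicontinuity of $F$, so I expect no real obstacle.
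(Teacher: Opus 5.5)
Your argument is correct and is the one the paper intends, since the corollary is stated without proof directly after Corollary \ref{lorentz}: $F^{L}$ is a monotonically complete Banach lattice containing the $r$-demicontinuous lattice $E$ isomorphically as an order dense sublattice, so the uniqueness clause of Corollary \ref{lorentz} identifies it with $E^{L}$. Two small caveats: in your ``more explicit'' unpacking you must establish equivalence of the norms on $J(F^{L})$ \emph{before} concluding that $J$ is onto, so it cannot come afterwards from the open mapping theorem (it does hold: take a directed $Q\subset s\Ba_{E_{+}}$ with $Q\uparrow \left|Jx\right|$, note that $Q\uparrow\left|x\right|$ in $F^{L}$, and use $r$-demicontinuity of $F^{L}$ to get $\|x\|_{F^{L}}\le r\|Jx\|_{E^{L}}$); and $F^{L}$ is in fact already defined when $F$ is only pointwise demicontinuous, where the argument of Lemma \ref{solid} applied in $E^{u}=F^{u}$ gives $\Ba_{E}^{L}=\Ba_{F}^{L}$ directly, i.e.\ $E^{L}=F^{L}$ isometrically.
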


We denote $F^{**}_{\oc}:=\left(F^{*}_{\oc}\right)^{*}_{\oc}=\left(F^{*}_{\oc}\right)^{\sim}_{\oc}$. Note that it is a $1$-monotonically complete Banach lattice.

\begin{corollary}\label{seco}
Assume that $F$ is $r$-demicontinuous and that $F^{*}_{\oc}$ separates points of $F$. Then, $F^{**}_{\oc}=F^{L}$ (isomorphically).
\end{corollary}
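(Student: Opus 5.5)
The plan is to use the uniqueness part of Corollary \ref{lorentz}: a monotonically complete Banach lattice that contains $F$ isomorphically as an order dense sublattice is unique up to isomorphism. It therefore suffices to show that the canonical map $J:F\to F^{**}_{\oc}$, given by $(Jf)(\nu)=\nu(f)$ for $\nu\in F^{*}_{\oc}$, has three properties: it is an isomorphic embedding, it is a lattice homomorphism with order dense image, and its target is monotonically complete. The last property was noted just before the statement, since $F^{**}_{\oc}$ is $1$-monotonically complete.

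\emph{Isomorphic embedding.} Since $F^{*}_{\oc}$ separates points of $F$, Proposition \ref{dua} shows that $F^{*}_{\oc}$ is $r^{2}$-norming. Hence
$$\tfrac{1}{r^{2}}\|f\|\le\|Jf\|=\bigvee\limits_{\nu\in\Ba_{F^{*}_{\oc}}}\left|\nu\left(f\right)\right|\le\|f\|.$$
$J$ maps into $\left(F^{*}_{\oc}\right)^{\sim}_{\oc}$ because evaluation at $f$ is order continuous on the band $F^{\sim}_{\oc}$: suprema there are computed pointwise on $F_{+}$. $J$ is a lattice homomorphism because $F^{*}_{\oc}$ is a band (in particular an ideal) of $F^{\sim}$, so the standard argument for $F\to F^{\sim\sim}$ via the Riesz--Kantorovich formulas restricts to this ideal.

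\emph{Order denseness.} This is the step I expect to be the main obstacle. Let $0<\phi\in F^{**}_{\oc}$. I would use the known description of order continuous functionals on an ideal of order continuous functionals: by the Nakano-type duality, the order continuous dual of $F^{\sim}_{\oc}$ is the Dedekind completion of $F$ (modulo the kernel, which is trivial here). In our setting, applied to the band $F^{*}_{\oc}$, this realizes $F^{**}_{\oc}$ inside $\left(F^{\delta}\right)^{u}$, with $JF$ order dense in it.

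As a fallback, I would argue directly: the carrier of $\phi$ as a functional is a band in $F^{*}_{\oc}$, and a nonzero order continuous functional on $F^{*}_{\oc}$ must be nonzero on some $\nu\ge0$. Then $\nu$ attains its behaviour on some $f\in F_{+}$ with $\nu(f)>0$. I would then compare $\phi$ with $Jf$ via the band projections in $F^{*}_{\oc}$ generated by $\nu$, cutting $f$ down so that some multiple satisfies $0<J(tf')\le\phi$, approximating $f'$ by components. I expect to have to use the order continuity of $\phi$ together with the theorem (Ando/Nakano) that $\{Jf\}$ is order dense in $\left(F^{\sim}_{\oc}\right)^{\sim}_{\oc}$ when $F^{\sim}_{\oc}$ separates points. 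This is the cleanest citation route.

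\emph{Conclusion.} Having established the isomorphic order dense embedding into the monotonically complete Banach lattice $F^{**}_{\oc}$, Theorem \ref{delta}(v) provides an injective order continuous homomorphism $F^{**}_{\oc}\to F^{L}$ fixing $F$. Corollary \ref{lorentz} (uniqueness) then yields $F^{**}_{\oc}\cong F^{L}$.
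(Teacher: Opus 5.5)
Your argument is the paper's proof. Proposition \ref{dua} makes the canonical map $J:F\to F^{**}_{\oc}$ bounded below, $JF$ is order dense in the $1$-monotonically complete Banach lattice $F^{**}_{\oc}$, and the uniqueness part of Corollary \ref{lorentz} finishes. The step to repair is your justification of order density, which is the only non-formal input.

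\textbf{The Dedekind completion claim is false.} You say the order continuous dual of $F^{\sim}_{\oc}$ is the Dedekind completion of $F$. For $F=c_{0}$ this dual is $\ell_{\8}$, while $F^{\delta}=c_{0}$. In fact the corollary you are proving, combined with Proposition \ref{delta1}, shows that $F^{**}_{\oc}$ is (isomorphic to) $F^{L}$. That space differs from $F^{\delta}$ whenever $F$ is not monotonically bounded.

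\textbf{The cited version of Nakano's theorem is for the wrong space.} Your citation concerns $\left(F^{\sim}_{\oc}\right)^{\sim}_{\oc}$, whereas $F^{**}_{\oc}=\left(F^{*}_{\oc}\right)^{\sim}_{\oc}$. For a non-complete $F$ these can be very different. Take $F=c_{00}$ with the supremum norm. Then $F^{\sim}_{\oc}=\R^{\N}$ and $\left(F^{\sim}_{\oc}\right)^{\sim}_{\oc}=c_{00}$, while $F^{*}_{\oc}=\ell_{1}$ and $F^{**}_{\oc}=\ell_{\8}$. So order density of $F$ in the former says nothing about the latter.

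\textbf{The correct citation.} What you need is the ideal form of the theorem, which is what the paper cites (\cite[Theorem 1.83(3)]{ab0}): if $A$ is an ideal of $F^{\sim}_{\oc}$ separating the points of $F$, then $F$ is order dense in $A^{\sim}_{\oc}$. Apply it with $A=F^{*}_{\oc}$, which is an ideal of $F^{\sim}$ contained in $F^{\sim}_{\oc}$. It is not in general a band of $F^{\sim}$, but the ideal property is all your homomorphism argument uses. Your ``fallback'' sketch is not a proof, so this citation has to carry the step. With it in place, your proof is complete.
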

\begin{proof}
By Proposition \ref{dua} we have that $F^{*}_{\oc}$ is $r^{2}$-norming, and so the usual embedding of $F$ into $F^{**}_{\oc}$ is bounded below. Also, $F$ is order dense in $F^{**}_{\oc}$ (see \cite[Theorem 1.83(3)]{ab0}), and so $F^{**}_{\oc}$ is a monotonically complete Banach lattice which contains $F$ isomorphically as an order dense sublattice. The result now follows from Corollary \ref{lorentz}.
\end{proof}

\begin{remark}
Note that if $F$ is demicontinuous then $F^{LL}=F^{L}$, as a set, but the norms might be non-equal (albeit equivalent). In fact, arguing similarly to Proposition \ref{dua} one can show that in the context of Corollary \ref{seco} we have $F^{**}_{\oc}=F^{LL}$ (isometrically).
\qed\end{remark}

Recall that the norm of $F$ can be extended to a solid norm on $F^{\delta}$, in a non-unique way. The largest of these extensions is given by $\|g\|_{\delta}:=\bigwedge\left\{\|f\|,~ f\ge \left|g\right|\right\}$, for $g\in F^{\delta}$, which is the gauge of $\Sol \Ba_{F}$ in $F^{\delta}$. The following result continues the line of Proposition \ref{pedro}.

\begin{proposition}\label{delta1}
\item[(i)] $F^{\delta}$ is the ideal in $F^{L}$ generated by $F$ with $\|\cdot\|_{L}\le \|\cdot\|_{\delta}$.
\item[(ii)] $F$ is monotonically bounded if and only if $F^{L}=F^{\delta}$. In this case $\|\cdot\|_{\delta}\le r\|\cdot\|_{L}$ on $F^{\delta}$, where $r\in\R$ is the smallest such that $F$ is $r+$-monotonically bounded.
\end{proposition}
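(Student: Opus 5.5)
The plan is to identify $F^{\delta}$ concretely inside $F^{u}$ and then compare the two gauges directly. Throughout, $F^{\delta}$ is realized as the ideal of $F^{u}$ generated by $F$, i.e. the set of $g\in F^{u}$ with $\left|g\right|\le f$ for some $f\in F$. This ideal is order complete because $F^{u}$ is, it contains $F$ as an order dense sublattice, and $F$ is majorizing in it; by uniqueness of the Dedekind completion it is $F^{\delta}$.

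For (i), I would first show $F^{\delta}\subset F^{L}$ together with the norm estimate. Let $g\in F^{\delta}$ and let $f\in F$ satisfy $f\ge\left|g\right|$. By order density of $F$ in $F^{u}$ (as in the proof of Lemma \ref{solid}), the directed set $P:=\left[0_{F},\left|g\right|\right]_{F}$ satisfies $P\uparrow\left|g\right|$. Since $P\subset\left[0_{F},f\right]\subset\|f\|\Ba_{F}$, Corollary \ref{solid2} gives $g\in\|f\|\Ba_{F}^{L}$, so $\|g\|_{L}\le\|f\|$. Taking the infimum over all such $f$ yields $\|g\|_{L}\le\|g\|_{\delta}$, and in particular $g\in F^{L}$. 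Now $F^{\delta}$ is an ideal of $F^{u}$, hence of $F^{L}$, and it contains $F$. Every element of $F^{\delta}$ is dominated by an element of $F$, so $F^{\delta}$ is contained in any ideal of $F^{L}$ containing $F$. Therefore $F^{\delta}$ is exactly the ideal of $F^{L}$ generated by $F$.

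For (ii), sufficiency: assume $F$ is monotonically bounded. By Theorem \ref{lev} it is $r'$-monotonically bounded for some $r'$. The set of admissible constants is closed upward, so its infimum $r$ is the smallest constant for which $F$ is $r+$-monotonically bounded. Take $g\in F^{L}_{+}$ and any $s>\|g\|_{L}$, and choose a directed $P\subset s\Ba_{F_{+}}$ with $P\uparrow g$. For each $t>r$, $r+$-monotone boundedness (rescaled by $s$) gives an upper bound $f\in F$ of $P$ with $\|f\|\le ts$. Then $g=\bigvee P\le f$, so $g\in F^{\delta}$ and $\|g\|_{\delta}\le ts$. Letting $t\downarrow r$ and $s\downarrow\|g\|_{L}$ gives $\|g\|_{\delta}\le r\|g\|_{L}$. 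Solidity extends this to all of $F^{L}$, and combined with (i) it yields $F^{L}=F^{\delta}$.

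For (ii), necessity: assume $F^{L}=F^{\delta}$. By Theorem \ref{lev}(iii) it suffices to show that every directed dominable $P\subset\Ba_{F_{+}}$ is order bounded in $F$. Such a $P$ has a supremum $g$ in $F^{u}$. Then $g$ lies in the order adherence $\Ba_{F}^{L}$, so $g\in F^{L}=F^{\delta}$. Hence $g\le f$ for some $f\in F$, and $f$ bounds $P$.

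The main point needing care is this converse direction. Non-dominable directed sets have no supremum in $F^{u}$ and so cannot be handled through $F^{L}$ at all, which is why the reduction to dominable sets via Theorem \ref{lev}(iii) is essential. Everything else is a routine manipulation of gauges and infima.
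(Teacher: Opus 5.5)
Your proof is correct and follows the paper's argument essentially step for step. In (i) you identify $F^{\delta}$ with the ideal generated by $F$ in $F^{u}$ and check $F^{\delta}\subset F^{L}$ directly via $\left[0_{F},\left|g\right|\right]_{F}\uparrow\left|g\right|$, whereas the paper works inside $F^{L}$ and uses solidity of $\|\cdot\|_{L}$ together with $\|\cdot\|_{L}\le\|\cdot\|$ on $F$, which amounts to the same thing; in (ii) you use Theorem \ref{lev}(iii) for one direction and rescaled $r+$-monotone upper bounds for the other, exactly as the paper does, only with the labels ``sufficiency'' and ``necessity'' swapped.
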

\begin{proof}
(i): $F^{L}$ is an order complete vector lattice which contains $F$ as an order dense sublattice. Then, the ideal generated by $F$ in $F^{L}$ is an order complete vector lattice which contains $F$ as an order dense majorizing sublattice. Hence, this ideal is isomorphic to $F^{\delta}$.

For $g\in F^{\delta}_{+}$ and $t>\|g\|_{\delta}$ find $f\in F$ such that $f\ge g$ and $\|f\|\le t$. Then, $\|g\|_{L}\le \|f\|_{L}\le \|f\|\le t$, and since $t$ was arbitrary, we get $\|g\|_{L}\le \|g\|_{\delta}$.\medskip

(ii): Sufficiency: Let $P\subset\Ba_{F_{+}}$ be directed and dominable. Then, there is $g\in F^{u}$ such that $P\uparrow g$. It follows that $g\in \Ba_{F}^{L}\subset F^{\delta}$, and so there is $f\in F$ with $f\ge g$. We conclude that $P\le f$, hence $P$ is order bounded in $F$. Theorem \ref{lev} now guarantees that $F$ is monotonically bounded.\medskip

Necessity: First, by Theorem \ref{lev} $F$ is $s$-monotonically bounded, for some $s\ge 1$. Let $g\in F^{L}_{+}$ and let $t>\|g\|_{L}$. There is a directed $P\subset t\Ba_{F_{+}}$ such that $P\uparrow g$. Then, by $s$-monotone boundedness there is an upper bound for $P$ in $ts\Ba_{F}$, say $f$. Hence, $g\le f$, and since $g$ was arbitrary, it follows that $F$ is majorizing in $F^{L}$, and so by (i) we get $F^{L}=F^{\delta}$. Moreover, $\|g\|_{\delta}\le \|f\|\le st$, and since $t$ was arbitrary, we conclude that $\|g\|_{\delta}\le s\|g\|_{L}$. Taking infimum over all admissible $s$'s yields $\|\cdot\|_{\delta}\le r\|\cdot\|_{L}$.
\end{proof}

\begin{remark}
We say that $F$ has the \emph{$r$-Nakano property}\footnote{This terminology differs slightly from the one introduced in \cite{wickstead}, but it is in line with the other terms considered in the article.} if every directed order bounded subset of $\Ba_{F_{+}}$ has an upper bound of norm at most $r$, and the \emph{$r+$-Nakano property} if it has the \emph{$s$-Nakano property}, for every $s>r$. Clearly, $r$-, or $r+$-monotone boundedness implies the $r$-, or $r+$-Nakano property, respectively. The $r+$-Nakano property is equivalent to $r$-demicontinuity of $\left(F^{\delta},\|\cdot\|_{\delta}\right)$, according to \cite[Theorem 3]{wickstead}.
\qed\end{remark}

\section{Semicontinuity-like properties in AM-spaces}

Recall that by Kakutani theorem a Banach lattice $F$ is an AM-space if and only if it isometrically embeds as a sublattice of $C\left(K\right)$, for some compact Hausdorff space $K$ (see \cite[Theorem 3.40]{ab0}). It is also easy to see that $F$ embeds as an ideal of $C\left(K\right)$, for some compact Hausdorff space $K$ if and only if it is of the form $C_{0}\left(X\right)$ for some Hausdorff locally compact space $X$ (see also \cite[\S 9, Theorem 13]{lacey} and \cite[Theorem 4]{wickstead} for intrinsic characterizations of these normed lattices). Let us consider some related properties.

\begin{proposition}\label{bm}
If $F$ is a pointwise demicontinuous AM-space, then $F^{L}$ is isometrically isomorphic to $C\left(K\right)$, for some compact Hausdorff extremally disconnected space $K$.
\end{proposition}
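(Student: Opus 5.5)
We need $F^{L}$ to be an AM-space with a strong unit, and order complete. Then Example \ref{ous} together with order completeness gives $C(K)$ with $K$ extremally disconnected.

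The plan has four steps.

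First, $F^{L}$ is a Banach lattice (proved above for pointwise demicontinuous $F$). It is order complete, being an ideal of $F^{u}$.

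Second, $\|\cdot\|_{L}$ is an AM-norm on $F^{L}$. Take $f,g\in F^{L}_{+}$ and $s>\|f\|_{L}\vee\|g\|_{L}$. Pick directed $P,Q\subset s\Ba_{F_{+}}$ with $P\uparrow f$ and $Q\uparrow g$. Then $P\vee Q:=\{p\vee q\}$ is directed, and $P\vee Q\uparrow f\vee g$ by order continuity of the lattice operations in $F^{u}$. Since $F$ is an AM-space, $\|p\vee q\|=\|p\|\vee\|q\|\le s$, so $\|f\vee g\|_{L}\le s$. The reverse inequality holds by solidity.

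Third, $F^{L}$ has a strong unit $e$ with $\Ba_{F^{L}}=[-e,e]$. The set $\Ba_{F_{+}}$ is directed, because in an AM-space $p\vee q\in\Ba_{F}$ for $p,q\in\Ba_{F_{+}}$. By Proposition \ref{barely} it is dominable, so $e:=\bigvee\Ba_{F_{+}}$ exists in $F^{u}$, and $e\in\Ba_{F}^{L}$. Now let $g\in F^{L}_{+}$ with $\|g\|_{L}<s$. Then $g$ is the supremum of a directed subset of $s\Ba_{F_{+}}$, which lies below $se$, so $g\le se$. Hence the unit ball $C=\bigcap_{s>1}s\Ba_{F}^{L}$ is contained in $\bigcap_{s>1}[-se,se]=[-e,e]$. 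Conversely, $[-e,e]\subset\Ba_{F}^{L}\subset C$ by solidity. So $e$ is a strong unit of $F^{L}$ and $\|\cdot\|_{L}=\|\cdot\|_{e}$.

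Fourth, by Example \ref{ous}, $(F^{L},\|\cdot\|_{e})$ is complete, hence an AM-space with unit, hence isometric to $C(K)$ via a lattice isometry. Order completeness of $C(K)$ is equivalent to $K$ being extremally disconnected (Stone--Nakano).

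The main obstacle is the third step, specifically checking that the Lorentz unit ball equals the order interval $[-e,e]$. This hinges on directedness of $\Ba_{F_{+}}$ together with dominability from Proposition \ref{barely}. The rest is routine.
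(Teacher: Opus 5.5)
Your proof is correct and follows essentially the same route as the paper: you obtain $e=\bigvee\Ba_{F_{+}}$ in $F^{u}$ via Proposition~\ref{barely}, show that the Lorentz unit ball is $[-e,e]$, and conclude by the Kakutani--Krein representation together with order completeness of $F^{L}$, which gives extremal disconnectedness of $K$. Your Step~2 is correct but redundant, since $\|\cdot\|_{L}=\|\cdot\|_{e}$ already makes the norm an AM-norm. Also, where you take completeness from the earlier Banach-lattice proposition, the paper derives the $C(K)$ representation from order completeness of the ideal generated by $e$; either works.
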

\begin{proof}
Since $\Ba_{F_{+}}$ is directed, according to Proposition \ref{barely} it is dominable, i.e. order bounded in $F^{u}$. As the latter is order complete, there is $e\in F^{u}$ such that $\Ba_{F_{+}}\uparrow e$. Let $E$ be the ideal in $F^{u}$ generated by $e$. Then, $E$ is an order complete vector lattice with a strong unit, and so there is a lattice isomorphism from $E$ onto $C\left(K\right)$, for some compact Hausdorff extremally disconnected space $K$, which maps $e$ into $\1$ (combine \cite[Theorem 45.4]{lz} with \cite[Theorem 1.50]{ab0}). Note that this isomorphism is an isometry from the order unit norm $\|\cdot\|_{e}$ on $E$ into the uniform norm on $\Co\left(K\right)$.


It remains to show that $\Ba_{F}^{L}=\left[-e,e\right]$. Clearly, the latter is a solid order closed subset of $F^{u}$, which contains $\Ba_{F}$, and so $\Ba_{F}^{L}\subset \left[-e,e\right]$. Conversely, since $\Ba_{F_{+}}\uparrow e$, it follows that $e\in \Ba_{F}^{L}$, and since the latter set is solid, we conclude that $\left[-e,e\right]\subset\Ba_{F}^{L}$.
\end{proof}

Note that in the definitions of semicontinuity-related properties, in the case when $F$ is an AM-space, directed subsets of $\Ba_{F_{+}}$ may be replaced with arbitrary subsets. Indeed, if $P\subset\Ba_{F_{+}}$, then $P^{\vee}$ is a directed subset of $\Ba_{F_{+}}$ with the same supremum as $P$. In the following result part (ii) improves \cite[Theorem 6]{wickstead}. We will present another proof of part (i) in Theorem \ref{david}.

\begin{theorem}\label{main}
Let $F$ be an AM-space. Then:
\item[(i)] $F$ is semicontinuous if and only if it isometrically embeds as a regular sublattice of $C\left(K\right)$, for some compact Hausdorff space $K$.
\item[(ii)] $F$ is demicontinuous if and only if it isomorphically embeds as a regular sublattice of $C\left(K\right)$, for some compact Hausdorff space $K$. In this case it is topologically semicontinuous.
\item[(iii)] $F$ is pointwise demicontinuous if and only if it injectively embeds as a regular sublattice of $C\left(K\right)$, for some compact Hausdorff space $K$.\medskip

    Moreover, ``regular'' can be replaced with ``order dense'', while $K$ can be assumed extremally disconnected.
\end{theorem}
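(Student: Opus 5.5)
The plan is to use Proposition \ref{bm} as the main engine for all three sufficiency directions, and the inheritance results for the necessity directions.

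\textbf{Necessity.} The space $C\left(K\right)$ is $1$-monotonically bounded. By Proposition \ref{lef} it is therefore semicontinuous (take $r=1$; $1+$-monotone boundedness holds). By Proposition \ref{pass}, semicontinuity passes to regular sublattices. Hence an isometric regular embedding gives semicontinuity of $F$, and an isomorphic one gives demicontinuity of $F$, since demicontinuity is stable under renorming. For (iii), $C\left(K\right)$ is pointwise demicontinuous. Given an injective continuous order continuous homomorphism $J:F\to C\left(K\right)$ (an injective homomorphism with regular image is order continuous), Proposition \ref{barely2} yields that $F$ is pointwise demicontinuous. Note that in (iii) "injectively embeds" should be read as a continuous injective lattice homomorphism, which is automatic for lattice homomorphisms between Banach lattices.

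\textbf{Sufficiency.} Let $F$ be pointwise demicontinuous. By Proposition \ref{bm}, $F^{L}$ is isometrically lattice isomorphic to $C\left(K\right)$ with $K$ extremally disconnected, and $F$ sits in $F^{L}$ as an order dense sublattice, hence regularly. The inclusion $\left(F,\|\cdot\|\right)\to\left(F^{L},\|\cdot\|_{L}\right)$ is contractive and injective, which proves (iii). If $F$ is $r$-demicontinuous, then $\|\cdot\|\le r\|\cdot\|_{L}\le r\|\cdot\|$ on $F$, so the inclusion is an isomorphic embedding, which proves (ii). If $F$ is semicontinuous, then $\|\cdot\|_{L}=\|\cdot\|$ on $F$ (as noted after the definition of the Lorentz seminorm), so the embedding is isometric, which proves (i). In each case the image is order dense and $K$ is extremally disconnected, which also gives the "Moreover" clause.

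\textbf{Topological semicontinuity in (ii).} The norm $\|\cdot\|_{L}$ on $F^{L}\cong C\left(K\right)$ is the sup norm, which is semicontinuous. Semicontinuity passes to the regular sublattice $F$ with the restricted norm $\|\cdot\|_{L}$, by Proposition \ref{pass}. This restricted norm is equivalent to $\|\cdot\|$, so $F$ is topologically semicontinuous.

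The only delicate point is confirming that Proposition \ref{bm} gives an isometry for $\|\cdot\|_{L}$ itself, that is, that the unit ball of $\|\cdot\|_{L}$ is $\left[-e,e\right]$. This was established there: $\Ba^{L}_{F}=\left[-e,e\right]$, and $\left[-e,e\right]$ is radially closed, so its gauge is the order unit norm.
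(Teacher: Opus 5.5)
Your proposal is correct and follows essentially the same route as the paper. Proposition \ref{bm}, combined with how $\|\cdot\|$ and $\|\cdot\|_{L}$ compare on $F$ (equal, equivalent, or merely $\|\cdot\|_{L}\le\|\cdot\|$ with $\|\cdot\|_{L}$ a norm), produces the embeddings into $F^{L}\cong\Co\left(K\right)$; Proposition \ref{pass} (with renorming invariance) and Proposition \ref{barely2} (with automatic continuity of lattice homomorphisms between Banach lattices) give the converses. You also make explicit two details the paper leaves implicit: that $F$ is order dense in $F^{L}$, which gives the ``Moreover'' clause, and that $\Ba_{F}^{L}=\left[-e,e\right]$ makes $\|\cdot\|_{L}$ exactly the sup norm.
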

\begin{proof}
Sufficiency in (i) follows from the fact that the semicontinuity passes down to regular sublattices, while in (ii) one should also recall that a renorming of a semicontinuous norm is topologically semicontinuous, hence demicontinuous. For sufficiency in (iii) observe that a homomorphism between Banach lattices is norm continuous (see \cite[Theorem 5.19]{ab0}), and so the claim follows from Proposition \ref{barely2}.

Necessity in (iii) follows immediately from Proposition \ref{bm}, while necessity in (i) and (ii) follows from Proposition \ref{bm} and the fact that if $F$ is (weakly) semicontinuous, then the norm of $F$ is equal (equivalent) to the Lorentz seminorm.
\end{proof}

The following result unifies \cite[Proposition 4]{aat} and \cite[Proposition 2.1]{abt}.

\begin{proposition}
$F$ is $r$-monotonically bounded if and only if there is a strong unit $e$ such that $\|\cdot\|_{e}\le \|\cdot\|\le r\|\cdot\|_{e}$. $F$ is monotonically complete if and only if it is isomorphic to a space $\Co\left(K\right)$, for some compact Hausdorff extremally disconnected $K$.
\end{proposition}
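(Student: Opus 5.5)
We work in the setting of this section, so $F$ is an AM-space. For the first claim, sufficiency is direct. Assume $e$ is a strong unit with $\|\cdot\|_{e}\le\|\cdot\|\le r\|\cdot\|_{e}$, and let $P\subset\Ba_{F_{+}}$ be directed. For each $p\in P$ we have $\|p\|_{e}\le\|p\|\le 1$, so $p\le e$. Hence $e$ is an upper bound for $P$, and $\|e\|\le r\|e\|_{e}=r$. This is exactly $r$-monotone boundedness.

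For necessity, assume $F$ is $r$-monotonically bounded. The set $\Ba_{F_{+}}$ is itself directed, because in an AM-space $\|f\vee g\|=\|f\|\vee\|g\|$ for positive $f,g$. By assumption it therefore has an upper bound $e\in F$ with $\|e\|\le r$. We check the two inequalities.
\begin{itemize}
\item Every $f\in F$ satisfies $|f|/\|f\|\in\Ba_{F_{+}}$, so $|f|\le\|f\|e$. Thus $e$ is a strong unit and $\|f\|_{e}\le\|f\|$.
\item If $|f|\le te$, then $\|f\|\le t\|e\|\le rt$. Taking the infimum over such $t$ gives $\|f\|\le r\|f\|_{e}$.
\end{itemize}
The only point needing care is that the definition in the paper asks for a bound of norm at most $r$ for each directed set, and we apply this to the specific directed set $\Ba_{F_{+}}$.

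For the second claim, sufficiency is again direct. If $F\cong\Co(K)$ with $K$ extremally disconnected, then $\Co(K)$ is order complete, since it is Dedekind complete exactly when $K$ is extremally disconnected. Every directed subset of $\Ba_{\Co(K)_{+}}$ is bounded above by $\1$, so it has a supremum in $\Co(K)$. Hence $\Co(K)$ is monotonically complete. Monotone completeness is stable under renormings, and therefore also under lattice isomorphisms.

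For necessity, monotone completeness means $F$ is both monotonically bounded and order complete. By Theorem \ref{lev}, $F$ is $r$-monotonically bounded for some $r\ge 1$. The first claim then gives a strong unit $e$ with $\|\cdot\|_{e}$ equivalent to $\|\cdot\|$. In particular $(F,\|\cdot\|_{e})$ is complete, since monotonically complete normed lattices are norm complete. By Example \ref{ous}, $(F,\|\cdot\|_{e})$ is therefore isometrically isomorphic to some $\Co(K)$. Since $F$ is order complete, $\Co(K)$ is Dedekind complete, so $K$ is extremally disconnected. Alternatively, one can cite the same combination of \cite[Theorem 45.4]{lz} and \cite[Theorem 1.50]{ab0} used in Proposition \ref{bm}.

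The main step to verify is the AM-space directedness of $\Ba_{F_{+}}$. Everything else reduces to Theorem \ref{lev} and Example \ref{ous}.
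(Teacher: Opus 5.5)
Your proof is correct and follows essentially the same route as the paper. For the first claim, you take an upper bound $e$ of norm at most $r$ for the directed set $\Ba_{F_{+}}$ (directed because $F$ is an AM-space) and note that the norm inequalities amount to $\Ba_{F}\subset[-e,e]\subset r\Ba_{F}$; for the second, you combine Theorem \ref{lev}, the resulting strong unit, the Kakutani representation, and order completeness forcing $K$ to be extremally disconnected. The only difference is that you make explicit details the paper leaves implicit, such as the directedness of $\Ba_{F_{+}}$ and the completeness of $\|\cdot\|_{e}$.
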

\begin{proof}
First, note that $\|\cdot\|_{e}\le \|\cdot\|\le r\|\cdot\|_{e}$ is equivalent to $\Ba_{F}\subset\left[-e,e\right]\subset r\Ba_{F}$. These conditions clearly imply that $e$ is a strong unit.

If $F$ is $r$-monotonically bounded, then there is an upper bound $e$ for $\Ba_{F_{+}}$ of norm at most $r$, so that $e\in r\Ba_{F}$. Then, $\Ba_{F}\subset\left[-e,e\right]\subset r\Ba_{F}$ follows from solidity.

Conversely, if $e\in F_{+}$ is such that $\Ba_{F}\subset\left[-e,e\right]\subset r\Ba_{F}$, then $e$ serves as an upper bound for any directed $P\subset\Ba_{F}$, and $\|e\|\le r$.\medskip

It is clear that if $F$ is isomorphic to $\Co\left(K\right)$, for some compact Hausdorff extremally disconnected $K$, then $F$ is monotonically complete.

Conversely, suppose that $F$ is monotonically complete. Since then $F$ is monotonically bounded, hence has a strong unit, it is isomorphic to $\Co\left(K\right)$, for some compact Hausdorff space $K$. Since also $F$ is order complete, it follows that $K$ is extremally disconnected.
\end{proof}

We will now construct another representation of a semicontinuous AM-space as a regular sublattice of a $\Co\left(K\right)$-space. First, recall that if $B\subset E$ is a \emph{positively balanced} (i.e. such that $\left[0,1\right]B\subset B$)  set in a vector space, then $f:B\to\R$ is \emph{positively homogeneous} if $f\left(te\right)=tf\left(e\right)$, for every $e\in B$ and $0\le t\le 1$. Note that in this case if $e\in B$ and $0\le t,s\le 1$ then $sf\left(te\right)=tf\left(se\right)$.

Let $K_{F}$ be the set of vector lattice homomorphisms from $F$ into $\R$ which are of norm at most $1$, endowed with the weak* topology. It is easy to see that $K_{F}$ is positively balanced and compact. Recall that $F$ is an AM-space if and only if it is isomorphic to $\Co_{\ph}\left(K_{F}\right)$, the space of weak* continuous positively homogeneous functions on $K_{F}$ (see e.g. \cite[Proposition 5.5]{bgdmt}). Therefore, we will view $F$ as a sublattice of $\Co\left(K_{F}\right)$. Let us present an auxiliary Urysohn-style result.

\begin{proposition}\label{urysohn}
Let $x\in K_{F}\cap \So_{F^{*}}$ and let $V\subset K_{F}$ be an open weak* neighborhood of $x$. Then, there is $f\in \So_{F_{+}}$ and $0\le r<1$ such that $f\left(x\right)=1$ and $\left.f\right|_{K_{F}\backslash V}\le r\1$.
\end{proposition}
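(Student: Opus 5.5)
The idea is to build $f$ as a positive element of $F\subset\Co\left(K_{F}\right)$ of the form ``(a peak at $x$) minus (a penalty that vanishes at $x$)'', and then normalize. First I will reduce $V$ to a basic weak* neighbourhood. There are $g_{1},...,g_{n}\in F$ and $\varepsilon>0$ with
$$W:=\left\{y\in K_{F},~ \left|y\left(g_{i}\right)-x\left(g_{i}\right)\right|<\varepsilon,~ i=0,...,n\right\}\subset V.$$
Here $g_{0}:=h$ is a positive element added to the list, chosen as follows. Since $x$ is a positive functional with $\|x\|=1$, we have $\|x\|=\bigvee\left\{x\left(h\right),~ h\in\Ba_{F_{+}}\right\}$. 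So for any $\eta>0$ there is $h\in F_{+}$ with $x\left(h\right)=1$ and $\|h\|\le 1+\eta$. The constant $\eta$ will be fixed at the very end, after $\varepsilon$ and the $g_{i}$'s; since $W$ depends on $h$, I will put $g_{0}:=h$ only after $\eta$ is chosen.

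Next, set $k_{i}:=\left|g_{i}-x\left(g_{i}\right)h\right|\in F_{+}$ for $i=1,...,n$, and
$$f_{0}:=\left(h-C\textstyle\sum_{i}k_{i}\right)^{+}\in F_{+},$$
with a large constant $C$. Because $x$ is a lattice homomorphism, $x\left(k_{i}\right)=\left|x\left(g_{i}\right)-x\left(g_{i}\right)\right|=0$, and hence $f_{0}\left(x\right)=x\left(h\right)=1$. For $y\in K_{F}\backslash W$ there are two cases.
\begin{itemize}
\item If $\left|y\left(h\right)-1\right|\ge\varepsilon'$ for a suitable small $\varepsilon'$, then in the branch $y\left(h\right)\le 1-\varepsilon'$ we get $f_{0}\left(y\right)\le y\left(h\right)\le 1-\varepsilon'$.
\item Otherwise $\left|y\left(h\right)-1\right|<\varepsilon'$, so some $i\ge1$ has $\left|y\left(g_{i}\right)-x\left(g_{i}\right)\right|\ge\varepsilon$. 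Then
$$y\left(k_{i}\right)=\left|y\left(g_{i}\right)-x\left(g_{i}\right)y\left(h\right)\right|\ge\varepsilon-\varepsilon'\max_{i}\left|x\left(g_{i}\right)\right|>\varepsilon/2,$$
so $f_{0}\left(y\right)\le\left(1+\eta\right)-C\varepsilon/2\le 1/2$ once $C$ is large.
\end{itemize}
The remaining branch $y\left(h\right)\ge 1+\varepsilon'$ can occur only if $\eta\ge\varepsilon'$, so I choose $\eta<\varepsilon'$. Thus $f_{0}\le\left(1-\delta\right)\1$ on $K_{F}\backslash V$ for some $\delta>0$ that does not depend on $\eta$ (only $C$ does), while $f_{0}\left(x\right)=1$ and $1\le\|f_{0}\|\le 1+\eta$.

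The main obstacle is the exact normalization. Taking $f:=f_{0}/\|f_{0}\|$ gives $\|f\|=1$ and $f\le\frac{1-\delta}{\|f_{0}\|}\le 1-\delta$ off $V$, but only $f\left(x\right)\ge\frac{1}{1+\eta}$. Equality $f\left(x\right)=1$ requires $\|f_{0}\|=1$, i.e.\ the norm of $x$ has to be attained at some positive element that also peaks at $x$. My first attempt would be an iteration. Choose $h_{m}\in F_{+}$ with $x\left(h_{m}\right)=1$ and $\|h_{m}\|\le 1+2^{-m}$, and replace $h$ by $u_{m}:=h_{1}\wedge...\wedge h_{m}$, which still satisfies $x\left(u_{m}\right)=1$ because $x$ is a homomorphism. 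I would then try to force $\left(u_{m}\right)$ to be norm Cauchy by truncating each new $h_{m}$ against the previous $u_{m-1}$ from above and below, using AM-norm estimates $\|a\vee b\|=\|a\|\vee\|b\|$, so that $\|u_{m-1}-u_{m}\|\lesssim 2^{-m}$. Norm completeness of $F$ then gives $u:=\lim u_{m}$ with $x\left(u\right)=1$ and $\|u\|=1$. Running the construction above with $h:=u$, i.e.\ $\eta=0$, yields $f\in\So_{F_{+}}$ with $f\left(x\right)=1$ and $\left.f\right|_{K_{F}\backslash V}\le r\1$ for $r:=1-\delta<1$.

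If that truncation scheme fails, I would instead use the identification $F\cong\Co_{\ph}\left(K_{F}\right)$ and produce a norm-attaining $u$ directly as a positively homogeneous continuous function on $K_{F}$ that is $\le1$ there and equals $1$ at $x$.
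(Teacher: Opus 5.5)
Your argument is correct apart from one ingredient that you leave open, and its main step takes a different route from the paper's. Suppose you have $h\in\So_{F_{+}}$ with $x(h)=1$. Then your element $f:=\bigl(h-C\sum_{i}\left|g_{i}-x(g_{i})h\right|\bigr)^{+}$ does the job:
\begin{itemize}
\item $x(f)=1$ because $x$ is a homomorphism, and $0\le f\le h$ together with $\|x\|=1$ gives $\|f\|=1$;
\item if $y\in K_{F}\backslash V$, some $i\ge 1$ has $\left|y(g_{i})-x(g_{i})\right|\ge\varepsilon$, and $y(h)\le\|y\|\|h\|\le 1$;
\item so either $f(y)\le y(h)\le 1-\varepsilon'$, or $\left|1-y(h)\right|<\varepsilon'$, in which case $f(y)=0$ for $C\ge 2/\varepsilon$, provided $\varepsilon'\max_{i}\left|x(g_{i})\right|\le\varepsilon/2$.
\end{itemize}
With $\eta=0$ the branch $y(h)\ge 1+\varepsilon'$ is vacuous, and you do not need to put $h$ into the neighbourhood.

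The paper argues differently. For each point $y$ of the compact set $K_{F}\backslash V$ it finds some $f_{y}\in\So_{F_{+}}$ with $f_{y}(x)=1>f_{y}(y)$. It uses \cite[Theorem 4.3]{bgdmt} when $y\in[0,1)x$, and the separation result \cite[Corollary 4.6]{bgdmt} when $x,y$ are linearly independent. It then takes a finite infimum. Your single formula avoids Corollary 4.6 and the compactness argument, and gives an explicit $r=1-\varepsilon'$. The paper's version is shorter only because it quotes both facts.

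The gap is the norm attainment itself, which both proofs need and which you only sketch, with a hedge. As literally written, $u_{m}=h_{1}\wedge\dots\wedge h_{m}$ need not be norm Cauchy: a decreasing sequence in $\Co(K_{F})$ need not converge uniformly. Your fallback of writing down a norm-attaining element of $\Co_{\ph}(K_{F})$ directly just restates the problem; for instance, $y\mapsto\|y\|$ is in general only weak* lower semicontinuous. The paper takes this step from \cite[Theorem 4.3]{bgdmt}.

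Your truncation idea does work once made precise. Take $h_{m}\in F_{+}$ with $x(h_{m})=1$ and $\|h_{m}\|\le 1+2^{-m}$, put $u_{1}:=h_{1}$, and set $u_{m}:=u_{m-1}\wedge\bigl(h_{m}\vee(1-2^{-m})u_{m-1}\bigr)$. Then:
\begin{itemize}
\item $(1-2^{-m})u_{m-1}\le u_{m}\le u_{m-1}$, so $\|u_{m-1}-u_{m}\|\le 2^{-m}\|u_{m-1}\|$;
\item $x(u_{m})=1\wedge\bigl(1\vee(1-2^{-m})\bigr)=1$;
\item the AM property gives $\|u_{m}\|\le\|h_{m}\|\vee(1-2^{-m})\|u_{m-1}\|\le 1+2^{-m}$ by induction, because $(1-2^{-m})(1+2^{1-m})\le 1+2^{-m}$.
\end{itemize}
By completeness, $u:=\lim u_{m}\in F_{+}$ satisfies $x(u)=1$ and $\|u\|\le 1$, so $u\in\So_{F_{+}}$. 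Taking $h:=u$ finishes your proof.
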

\begin{proof}
Let $Y:=K_{F}\backslash V$, which is weak* compact. We first show that for every $y\in Y$ there is $f_{y}\in \So_{F_{+}}$ such that $f_{y}\left(x\right)=1$ and $f_{y}\left(y\right)<1$.

Assume $y=tx$, for some $t\ge 0$. It follows from $y\notin V\ni x$ that $y\ne x$, but since $\|x\|=1$, we conclude that $t<1$. By \cite[Theorem 4.3]{bgdmt} there is $f_{y}\in \So_{F_{+}}$ such that $f_{y}\left(x\right)=1$, and then $f_{y}\left(y\right)=t<1$.

We can now assume that $x,y$ are linearly independent, and so by \cite[Corollary 4.6]{bgdmt} there is $f_{y}\in \So_{F_{+}}$ such that $f_{y}\left(x\right)=1$ and $f_{y}\left(\frac{1}{\|y\|}y\right)=0$. It then follows that $f_{y}\left(y\right)=0<1$.\medskip

For every $y\in Y$ let $U_{y}:=\left\{z\in K_{F},~ f_{y}\left(z\right)<\frac{1+f_{y}\left(y\right)}{2}\right\}$, which is a weak* open neighborhood of $y$. By compactness there are $y_{1},...,y_{n}\in Y$ such that $Y\subset U_{y_{1}}\cup...\cup U_{y_{n}}$. Let $f:=f_{y_{1}}\wedge...\wedge f_{y_{n}}\in\Ba_{F_{+}}$ and let $r:=\frac{1+f_{y_{1}}\left(y_{1}\right)\vee...\vee f_{y_{n}}\left(y_{n}\right)}{2}<1$. We have that $f\left(x\right)=f_{y_{1}}\left(x\right)\wedge...\wedge f_{y_{n}}\left(x\right)=1$, hence $f\in \So_{F_{+}}$. At the same time, if $y\in Y$, then $y\in U_{y_{k}}$, for some $k=1,...,n$, and so $f\left(y\right)\le f_{y_{k}}\left(y\right)<\frac{1+f_{y_{k}}\left(y_{k}\right)}{2}\le r$.
\end{proof}

The following theorem and remark answer a question by David de Hevia.

\begin{theorem}\label{david}
$F$ is semicontinuous if and only if it is regular in $\Co\left(K_{F}\right)$.
\end{theorem}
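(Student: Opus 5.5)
Throughout, $F$ is an AM-space, viewed isometrically as a sublattice of $\Co\left(K_{F}\right)$ via $f\mapsto\left(x\mapsto x\left(f\right)\right)$. This embedding is isometric, since $K_{F}$ is norming for $F$ (every $f\in F$ attains its norm at some lattice homomorphism of norm at most one).

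\emph{Sufficiency} is immediate. $\Co\left(K_{F}\right)$ is semicontinuous; indeed, if $P\subset\Ba_{\Co(K)_{+}}$ and $P\uparrow f$, then $f\le\1$ because $\1$ is an upper bound of $P$. Semicontinuity passes down to regular sublattices by Proposition \ref{pass}, and the embedding is isometric, so $F$ is semicontinuous.

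\emph{Necessity.} Assume $F$ is semicontinuous. We need to show that whenever $P\subset F_{+}$ is downward directed with $\bigwedge_{F}P=0_{F}$, then $\bigwedge_{\Co(K_{F})}P=\0$. Suppose not. Then there is $0\ne g\in\Co\left(K_{F}\right)_{+}$ with $g\le P$. Hence there are a point $x_{0}\in K_{F}$ and $\varepsilon>0$ such that $g>\varepsilon$ on an open set $V\ni x_{0}$. By positive homogeneity of elements of $F$, and since $\bigwedge_F P=0$ forces $P\not\ge$ anything positive at $0$, we may assume $x_{0}\ne0$. After shrinking $V$ and replacing $x_{0}$ by $x:=x_{0}/\|x_{0}\|\in K_{F}\cap\So_{F^{*}}$ (which rescales the bound by $1/\|x_0\|$), we may arrange the following: $p\ge\varepsilon$ on a weak* neighbourhood of $x$ in the norm-one part, for all $p\in P$. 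The care needed here is to handle the homogeneity, working with the neighbourhood $V$ of $x_{0}$ itself rather than its normalisation.

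Apply Proposition \ref{urysohn} to $x$ and $V$. This gives $f\in\So_{F_{+}}$ and $r<1$ with $f\left(x\right)=1$ and $f\le r$ off $V$. Set $h_{p}:=f\wedge\frac{1}{\varepsilon}p\in F_{+}$. The set $\{h_p\}$ is downward directed with infimum $0_{F}$ in $F$. Hence $Q:=\{f-h_{p}\}_{p\in P}\subset F_{+}$ is upward directed and $Q\uparrow f$ in $F$. Estimate the norms of $Q$:
\begin{itemize}
\item on $V$ we have $h_{p}\ge f\wedge 1$, so $f-h_{p}\le (f-1)^{+}$, which is small;
\item off $V$ we have $f-h_{p}\le f\le r$.
\end{itemize}
So the aim is $\|f-h_{p}\|\le r'<1$ uniformly in $p$, with $r'$ independent of $p$. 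Then semicontinuity (Proposition \ref{fatou}) gives $\|f\|\le r'<1$, contradicting $\|f\|=1$.

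To make the bound on $V$ work, use $\frac{1}{\varepsilon'}p$ with $\varepsilon'$ chosen relative to $\|f\|=1$. On $V$ we have $\frac{1}{\varepsilon}p\ge1\ge f$ at points of norm at most one (all of $K_{F}$), so there $f-h_{p}=0$. Hence $\|f-h_{p}\|=\sup_{K_{F}}(f-h_{p})\le r$.

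\textbf{Main obstacle.} The delicate step is the reduction to a neighbourhood $V$ of a norm-one point $x$ on which all $p\in P$ are uniformly $\ge\varepsilon$. This must use positive homogeneity correctly, together with the fact that the sup norm on $K_{F}$ is the norm of $F$. I would first try choosing $x_{0}$ as a maximum point of $g$ and exploiting that $g\le P$ is only needed on $V$, not homogeneously.
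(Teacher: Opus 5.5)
Your proof is correct and follows the paper's argument: you normalize to a norm-one point via positive homogeneity, apply Proposition \ref{urysohn}, and then apply semicontinuity to $f-f\wedge\frac{1}{\varepsilon}p=\left(f-\frac{1}{\varepsilon}p\right)^{+}\uparrow f$, which is the paper's $(f-P)^{+}$ after rescaling $P$. The step you single out as the main obstacle is immediate and needs no maximum point of $g$: $y\mapsto\|x_{0}\|y$ is a weak* continuous self-map of $K_{F}$, so on the weak* open set $\left\{y\in K_{F},~\|x_{0}\|y\in V\right\}\ni x$ one has $p(y)=\frac{1}{\|x_{0}\|}p(\|x_{0}\|y)\ge\frac{1}{\|x_{0}\|}g(\|x_{0}\|y)>\varepsilon$ for all $p\in P$, which is exactly the paper's replacement of $h$ by $h'(y)=\frac{1}{\|x\|}h(\|x\|y)$, and since this set is open in all of $K_{F}$, Proposition \ref{urysohn} applies directly.
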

\begin{proof}
Sufficiency follows from the fact that the semicontinuity passes down to regular sublattices. Therefore, we only need to show necessity. Let $P\subset F_{+}$ be such that $P\downarrow_{F} \0$. Assume that $\0< h\in\Co\left(K_{F}\right)$ is a lower bound for $P$. There is $x\in K_{F}$ such that $h\left(x\right)>0$. Let $h':K_{F}\to \R_{+}$ be defined by $h'\left(y\right):=\frac{1}{\|x\|}h\left(\|x\|y\right)$. Clearly, $h'$ is weak* continuous, and for every $p\in P$ and $y\in K_{F}$ positive homogeneity of $p$ yields $h'\left(y\right)=\frac{1}{\|x\|}h\left(\|x\|y\right)\le \frac{1}{\|x\|}p\left(\|x\|y\right)=p\left(y\right)$. Hence, we can replace $h$ with $h'$ and $x$ with $\frac{1}{\|x\|}x$ if needed and assume that $\|x\|=1$. Also, by rescaling both $P$ and $h$, we may assume that $h\left(x\right)>1$.\medskip

Let $V:=\left\{y\in K_{F},~ h\left(y\right)>1\right\}$, which is a weak* open neighborhood of $x$. Proposition \ref{urysohn} yields $f\in \So_{F_{+}}$ and $0\le r<1$ such that $\left.f\right|_{K_{F}\backslash V}\le r\1$. It follows from $P\downarrow_{F} \0$ that $\left(f-P\right)^{+}\uparrow_{F} f$. For every $p\in P$ and $y\in V$ we have that $f\left(y\right)\le\|f\|=1<h\left(y\right)\le p\left(y\right)$, and so $\left(f-p\right)^{+}\left(y\right)\le \left(f\left(y\right)-h\left(y\right)\right)^{+}=0$. If $p\in P$ and $y\notin V$ we have $\left(f-p\right)^{+}\left(y\right)\le f\left(y\right)\le r$. We conclude that $\left(f-P\right)^{+}\subset r\Ba_{F_{+}}$, and so the semicontinuity gives $\|f\|\le r<1$, contradiction.
\end{proof}

\begin{remark}
We can also view $\Co_{\ph}\left(K_{F}\right)$ embedded into $\Co\left(L_{F}\right)$ by restriction, where $L_{F}:=K_{F}\cap\So_{F^{*}}$. Note that $L_{F}$ may not be compact, or even locally compact (see \cite[\S 9, Theorem 13]{lacey}). We claim that if $F$ is semicontinuous, then $\Co_{\ph}\left(K_{F}\right)$ embeds as a regular sublattice of $\Co\left(L_{F}\right)$. If $P\subset F$ is such that $P\downarrow_{F}\0$ but $h\in \Co\left(L_{F}\right)_{+}$ is a lower bound for $P$ and $x\in L_{F}$ is such that $h\left(x\right)>1$, then $U:=\left\{y\in L_{F},~ h\left(y\right)>1\right\}$, is a weak* open neighborhood of $x$ in $L_{F}$, and so there is a weak* open set $V\subset K_{F}$ such that $V\cap L_{F}=U$. After this, the proof proceeds as in Theorem \ref{david}. It also follows that $F$ embeds as a regular sublattice of $\Co\left(\overline{L_{F}}\right)$.
\qed\end{remark}

In order to consider another representation of AM-spaces we need to recall a concept from the Banach space theory. Let $E$ be a normed space. Recall that the \emph{bounded weak* topology} on $E^{*}$ is the strongest topology which agrees with the weak* topology on the norm bounded subsets of $E^{*}$. This topology is the topological modification of the \emph{bounded weak* convergence} on $E^{*}$, where a net is convergent if it weak* converges (to the same limit) and has a norm bounded tail. It is easy to see that $f:E^{*}\to \R$ is continuous with respect to bounded weak* topology or convergence if and only if it is weak* continuous on norm bounded subsets of $E^{*}$. Finally, a crucial property of the bounded weak* topology is that it agrees with the topology of uniform convergence on compact sets in $E$ on $\Co\left(E\right)$ restricted to $E^{*}$. It is determined by the seminorms $\rho_{K}$, given by $\rho_{K}\left(x\right):=\bigvee\limits_{e\in K}\left|e\left(x\right)\right|$, where $K\subset E$ is compact. In particular, $\rho_{K}$ is a bounded weak* continuous function on $E^{*}$, and also it is the pointwise supremum of $K\cup -K$ (here we view $E$ embedded into $\Co\left(E^{*},\mathrm{bw}^{*}\right)$). More information about bounded weak* topology can be found in \cite[Section 8.8]{at} and \cite[Section 8]{bctv}.\medskip

Let $F$ be an AM-space. As mentioned before it is isometrically isomorphic to $\Co_{\ph}\left(K_{F}\right)$. Let $X_{F}$ be the set of all homomorphisms from $F$ to $\R$, endowed with the bounded weak* topology. Clearly, $X_{F}=\bigcup\limits_{n\in\N}K_{F}$.

\begin{proposition}\label{bouw}
$F$ is isomorphic to $\Co_{\ph}\left(X_{F}\right)$.
\end{proposition}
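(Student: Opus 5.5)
We will show that the restriction map $R:\Co_{\ph}\left(X_{F}\right)\to\Co_{\ph}\left(K_{F}\right)$, $g\mapsto\left.g\right|_{K_{F}}$, is a lattice isometry onto, where $\Co_{\ph}\left(X_{F}\right)$ consists of positively homogeneous bounded weak* continuous functions on $X_{F}$, normed by $\|g\|:=\bigvee_{x\in K_{F}}\left|g\left(x\right)\right|$. Composing with the isometric identification $F\cong\Co_{\ph}\left(K_{F}\right)$ then gives the statement. (Here $X_{F}=\bigcup_{n\in\N}nK_{F}$, since every homomorphism $x$ satisfies $x\in\|x\|K_{F}$.)

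First, $R$ is well defined. Since $K_{F}$ is norm bounded, the bounded weak* topology agrees with the weak* topology on it. Hence the restriction of a bounded weak* continuous function is weak* continuous on $K_{F}$, and positive homogeneity is inherited. Clearly $R$ is linear and preserves pointwise lattice operations.

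Next, the norm above is finite and makes $R$ isometric. Indeed, $K_{F}$ is weak* compact and $g$ is continuous on it, and by positive homogeneity $g$ is determined by its values on $K_{F}$. In particular $R$ is injective: if $g$ vanishes on $K_{F}$, then for $x\in X_{F}\setminus\{0\}$ we have $g\left(x\right)=\|x\|\,g\left(x/\|x\|\right)=0$.

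Surjectivity is the main step. Given $h\in\Co_{\ph}\left(K_{F}\right)$, define $g\left(x\right):=n\,h\left(x/n\right)$ for any $n\in\N$ with $x\in nK_{F}$. This is independent of $n$ by the identity $sf\left(te\right)=tf\left(se\right)$ recorded before Proposition \ref{urysohn}, and it is positively homogeneous on $X_{F}$.

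It remains to check that $g$ is bounded weak* continuous, which I expect to be the only delicate point. By the characterization recalled above, it suffices that $g$ be weak* continuous on each norm bounded subset of $X_{F}$. Any such subset lies in $nK_{F}$ for some $n$. On $nK_{F}$ we have $g=n\,h\circ\left(\frac{1}{n}\,\cdot\right)$, and scaling is a weak* homeomorphism $nK_{F}\to K_{F}$, so $g$ is weak* continuous there.

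The one subtlety is that $X_{F}$ carries the subspace topology induced from $\left(F^{*},\mathrm{bw}^{*}\right)$. I will use the criterion in that form: a function on $X_{F}$ that is weak* continuous on bounded pieces is continuous for the bounded weak* convergence. By the Banach--Dieudonné type description, this convergence coincides on the subspace with the topology, since $X_{F}$ is bw*-closed as a union of weak* compact sets intersected with balls.

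Finally, $Rg=h$, so $R$ is onto and is therefore an isometric lattice isomorphism.
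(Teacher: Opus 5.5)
Your proof is correct and follows the paper's argument: restrict to $K_{F}$, get injectivity from positive homogeneity, and get surjectivity from the homogeneous extension of $h$, whose bounded weak* continuity is checked on each $rK_{F}$ (the paper normalizes by $\|x\|$ rather than by an integer $n$, which makes no difference). Your remark on the subspace topology, which the paper leaves implicit, is sound, though the Banach--Dieudonn\'e theorem plays no role: it suffices that $X_{F}\cap r\Ba_{F^{*}}=rK_{F}$ is weak* compact, so $X_{F}$ is bw*-closed and a subset of $X_{F}$ is relatively closed iff its traces on all $rK_{F}$ are weak* closed.
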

\begin{proof}
Let $R:\Co_{\ph}\left(X_{F}\right)\to \Co_{\ph}\left(K_{F}\right)$ be the restriction operator. It is clear that it is a homomorphism and its injectivity follows from positive homogeneity of elements of $\Co_{\ph}\left(X_{F}\right)$. It is left to prove that $R$ is surjective. Let $g\in \Co_{\ph}\left(K_{F}\right)$ and define $\hat{g}:X_{F}\to\R$ by $\hat{g}\left(0_{F^{*}}\right)=0$ and $\hat{g}\left(x\right):=\|x\|g\left(\frac{1}{\|x\|}x\right)$, when $x\ne 0_{F^{*}}$. Note that if $r\ge \|x\|$, then $\hat{g}\left(x\right)=rg\left(\frac{1}{r}x\right)$. Clearly, $\hat{g}$ is positively homogeneous. To prove that $\hat{g}\in \Co_{\ph}\left(X_{F},\mathrm{bw}^{*}\right)$ it is enough to show that $\left.\hat{g}\right|_{rK_{F}}\in \Co_{\ph}\left(rK_{F}\right)$, for every $r>0$. However, $\hat{g}\left(x\right)=r g\left(\frac{1}{r}x\right)$, for every $x\in rK_{F}$, and so $\left.\hat{g}\right|_{rK_{F}}$ is weak* continuous. Since $g=R\hat{g}$, we have established surjectivity of $R$.
\end{proof}

In the sequel we will identify $F$ with $\Co_{\ph}\left(X_{F}\right)$. Note that $\left.\rho_{L}\right|_{X_{F}}\in \Co_{\ph}\left(X_{F}\right)$, for every compact $L\subset F$. We also have the following  Urysohn-style result inspired by \cite[Theorem 7]{ad}.

\begin{proposition}\label{yussef}
If $0_{F^{*}}\ne x\in X_{F}$, and $U$ is a bounded weak* open neighborhood of $x$, there is $f\in F$ which vanishes outside of $W:=\left(0,+\8\right)U$ with $f\left(x\right)>0$ and such that $f\le g$, for every $g\in F$ such that $g\ge \1_{U}$.
\end{proposition}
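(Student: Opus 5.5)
The plan is to build $f$ directly as a positively homogeneous ``radial gauge'' of a basic bounded weak* neighbourhood of $x$. Positive homogeneity alone gives the domination. Indeed, if $g\in F=\Co_{\ph}\left(X_{F}\right)$ satisfies $g\ge \1_{U}$, then for every $z\in X_{F}$ and $s>0$ with $\frac{1}{s}z\in U$ we have $g\left(z\right)=sg\left(\frac{1}{s}z\right)\ge s$. Hence $g\ge \bigvee\left\{s>0,~ z\in sU\right\}$ pointwise on $W$. So it suffices to produce $f\in\Co_{\ph}\left(X_{F}\right)$ with three properties: $f(x)>0$; $f(z)>0$ only if $z\in sU$ for some $s>0$; and $f(z)\le\bigvee\left\{s>0,~ z\in sU\right\}$.

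\emph{Choice of the neighbourhood.} Using that the bounded weak* topology is the topology of uniform convergence on compact subsets of $F$, I will find a compact $L\subset F$ and $\varepsilon>0$ with $\left\{y\in X_{F},~ \rho_{L}\left(y-x\right)<\varepsilon\right\}\subset U$. Since $x\neq 0_{F^{*}}$, there is $e\in F$ with $x(e)\ne 0$. After adjoining a suitable multiple of $e$ to $L$ (which keeps $L$ compact and only shrinks the neighbourhood), I may also assume $\rho_{L}\left(x\right)>2\varepsilon$. Then define
$$f\left(z\right):=\bigvee\limits_{s>0}s\left(1-\tfrac{1}{\varepsilon}\rho_{L}\left(\tfrac{1}{s}z-x\right)\right)^{+},\qquad z\in X_{F}.$$
The substitution $s=ts'$ shows $f\left(tz\right)=tf\left(z\right)$ for $t>0$, and $f\left(0\right)=0$, so $f$ is positively homogeneous. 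Taking $s=1$ at $z=x$ gives $f\left(x\right)\ge 1>0$. If $f\left(z\right)>0$, then some $s$ has $\rho_{L}\left(\frac{1}{s}z-x\right)<\varepsilon$, so $\frac{1}{s}z\in U$ and $z\in W$. Moreover every nonzero term in the supremum is at most some such $s$, so $f\left(z\right)\le \bigvee\left\{s>0,~ z\in sU\right\}\le g\left(z\right)$ for every $g\ge\1_{U}$, as required.

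\emph{Continuity.} The main obstacle is showing that $f$ is bounded weak* continuous, i.e. weak* continuous on each $nK_{F}$; this then places $f$ in $\Co_{\ph}\left(X_{F}\right)=F$ by Proposition \ref{bouw}. Fix $n$ and put $M:=\bigvee_{e\in L}\|e\|$. For $z\in nK_{F}$ we have $\rho_{L}\left(\frac{1}{s}z\right)\le \frac{nM}{s}$, and hence $\rho_{L}\left(\frac{1}{s}z-x\right)\ge \rho_{L}\left(x\right)-\frac{nM}{s}>\varepsilon$ once $s\ge b:=\frac{nM}{\varepsilon}$. So terms with $s\ge b$ vanish. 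Terms with $s\le a$ are at most $a$. Thus on $nK_{F}$, $f$ is within $a$ of $f_{a,b}\left(z\right):=\bigvee_{s\in\left[a,b\right]}s\left(1-\frac{1}{\varepsilon}\rho_{L}\left(\frac{1}{s}z-x\right)\right)^{+}$. The function $\left(s,z\right)\mapsto\rho_{L}\left(\frac{1}{s}z-x\right)$ is jointly continuous on $\left[a,b\right]\times nK_{F}$, because $\rho_{L}$ is bw* continuous and $\left(s,z\right)\mapsto \frac{1}{s}z$ is continuous into the bounded set $\frac{n}{a}K_{F}$. A supremum over the compact parameter set $\left[a,b\right]$ of a jointly continuous function is continuous, so each $f_{a,b}$ is continuous on $nK_{F}$. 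Letting $a\to 0$ makes $f$ a uniform limit of continuous functions there, hence continuous. If the joint continuity step proves awkward, the fallback is to note that $\rho_{L}$ is $M$-Lipschitz in the dual norm and to estimate $\left|\rho_{L}\left(\frac{1}{s}z-x\right)-\rho_{L}\left(\frac{1}{s'}z-x\right)\right|\le nM\left|\frac{1}{s}-\frac{1}{s'}\right|$, which gives equicontinuity in $s$ directly.
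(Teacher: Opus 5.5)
Your proof is correct, but it makes the bump positively homogeneous in a different way than the paper. The paper uses the same kind of basic neighbourhood $V:=\{y\in X_{F},~\rho_{L}(y-x)<1\}\subset U$ with $t:=\rho_{L}(x)>0$. It does not take a supremum over all dilations; instead it normalises by $\rho_{L}$ itself. It sets $h(y):=\rho_{L}\bigl(ty-\rho_{L}(y)x\bigr)$ and $f:=\frac{1}{t+1}\bigl(\rho_{L}|_{X_{F}}-h\bigr)^{+}$, so that for $\rho_{L}(y)>0$ one has $f(y)=\frac{\rho_{L}(y)}{t+1}\bigl(1-\rho_{L}\bigl(\frac{t}{\rho_{L}(y)}y-x\bigr)\bigr)^{+}$. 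In other words, each $y$ is compared with $x$ only at the single scale that puts it on the level set $\{\rho_{L}=t\}$.

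The paper's choice makes membership $f\in F$ immediate. The function $h$ is positively homogeneous and a composition of bounded weak* continuous maps, and $f$ is obtained from $\rho_{L}|_{X_{F}}$ and $h$ by lattice operations, so no separate continuity argument is needed. Domination then follows from $\rho_{L}\le t+1$ on $V$.

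Your supremum over dilations makes positive homogeneity and domination nearly tautological. The value $f(z)$ is bounded by $\bigvee\{s>0,~z\in sU\}$, and this is a lower bound at $z$ for every positively homogeneous $g\ge\1_{U}$; off $W$ you just use $f(z)=0\le g(z)$. The price is a genuine continuity proof. You need $\rho_{L}(x)>2\varepsilon$ and the boundedness of $nK_{F}$ to cut the dilation parameter down to a compact interval $[a,b]$. After that, joint continuity of $(s,z)\mapsto\rho_{L}(\frac{1}{s}z-x)$ on $[a,b]\times nK_{F}$ and the uniform error bound $a$ finish the argument. All of these steps are valid as you state them.
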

\begin{proof}
Since the bounded weak* topology agrees with the topology of uniform convergence on compacts in $F$, there is a compact set $L\subset F$ such that $V:=\left\{y\in X_{F},~\rho_{L}\left(y-x\right)<1\right\}\subset U$. Note that $\left.\rho_{L}\right|_{X_{F}}\in F$. If $\rho_{L}\left(x\right)=0$, find $e\in F$ such that $e\left(x\right)=1$, and replace $L$ with $L\cup\left\{e\right\}$; then $\rho_{L}\left(x\right)=1$. Hence, we may assume that $t:=\rho_{L}\left(x\right)>0$.

Let $h:X_{F}\to\R_{+}$ be defined by $h\left(y\right):=\rho_{L}\left(ty-\rho_{L}\left(y\right)x\right)$. Clearly, $h$ is positively homogeneous; its bounded weak* continuity follows from that of $\rho_{L}$. Also, note that $h\left(x\right)=0$.

Consider $f\in \Co_{\ph}\left(X_{F}\right)$ defined by $f:=\frac{1}{t+1}\left(\left.\rho_{L}\right|_{X_{F}}-h\right)^{+}$. It follows that $f\left(x\right)=\frac{t}{t+1}>0$. If $\rho_{L}\left(y\right)= 0$, then $f\left(y\right)=0$. Assume that $\rho_{L}\left(y\right)>0$ and $y\notin \left(0,+\8\right)V$. Then, for every $r> 0$ we have $\frac{1}{r}y\notin V$, and so $\rho_{L}\left(\frac{1}{r}y-x\right)\ge 1$. Take $r:=\frac{\rho_{L}\left(y\right)}{t}>0$. We have $\rho_{L}\left(\frac{t}{\rho_{L}\left(y\right)}y-x\right)\ge 1$, therefore $h\left(y\right)=\rho_{L}\left(ty-\rho_{L}\left(y\right)x\right)\ge \rho_{L}\left(y\right)$, and so $f\left(y\right)=0$. Thus, $f$ vanishes outside of $W$.\medskip

Now assume that $g\in F$ is such that $g\ge\1_{U}$. For every $y\notin \left(0,+\8\right)V$ we have that $f\left(y\right)=0\le g\left(y\right)$. Otherwise, $y=rz$, where $r>0$ and $z\in V\subset U$, so that $\rho_{L}\left(z-x\right)<1$, and so $\rho_{L}\left(z\right)\le \rho_{L}\left(x\right)+1=t+1$. Thus, $f\left(z\right)\le \frac{1}{t+1}\rho_{L}\left(z\right)\le 1\le g\left(z\right)$, which implies $f\left(y\right)=rf\left(z\right)\le rg\left(z\right)=g\left(y\right)$. Combining the two cases we conclude that $f\le g$.
\end{proof}

\begin{theorem}\label{regu}
$F$ is a regular sublattice of $\Co\left(X_{F}\right)$.
\end{theorem}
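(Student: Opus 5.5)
We argue as in Theorem \ref{david}, with Proposition \ref{yussef} replacing Proposition \ref{urysohn}. Note that no semicontinuity assumption is needed here. Since $F$ is identified with $\Co_{\ph}\left(X_{F}\right)\subset\Co\left(X_{F}\right)$ via Proposition \ref{bouw}, it is a sublattice. It remains to show regularity: if $P\subset F_{+}$ satisfies $P\downarrow_{F}0_{F}$, then $0$ is the infimum of $P$ in $\Co\left(X_{F}\right)$ as well. Suppose not. Then there is $h\in\Co\left(X_{F}\right)$ with $0<h\le p$ for all $p\in P$, and there is $x\in X_{F}$ with $h\left(x\right)>0$. Every $p\in P$ vanishes at $0_{F^{*}}$ by positive homogeneity, so $x\ne 0_{F^{*}}$. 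Rescaling $P$ and $h$ by the same positive constant, we may assume $h\left(x\right)>1$.

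Let $U:=\left\{y\in X_{F},~ h\left(y\right)>1\right\}$. This is a bounded weak* open neighbourhood of $x$, since $h$ is continuous on $X_{F}$ with the bounded weak* topology. Proposition \ref{yussef} then gives $f\in F$ vanishing outside $W=\left(0,+\8\right)U$, with $f\left(x\right)>0$, and with $f\le g$ for every $g\in F$ such that $g\ge\1_{U}$. It suffices to show that $f\le p$ for every $p\in P$. Then $0<f^{+}\le p$ for all $p$, so $f^{+}\in F$ is a nonzero lower bound of $P$ in $F$, contradicting $P\downarrow_{F}0_{F}$.

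Fix $p\in P$. We have $p\ge h>1$ on $U$, but $p$ need not dominate $\1_{U}$ everywhere in a way that is visible from the displayed property alone. So the key step is to revisit the proof of Proposition \ref{yussef} rather than its statement. If $f\left(y\right)>0$, then $y=rz$ with $r>0$ and $z\in V\subset U$, and $f\left(z\right)\le 1$. Hence $f\left(y\right)=rf\left(z\right)\le r<rh\left(z\right)\le rp\left(z\right)=p\left(y\right)$ by positive homogeneity of $p$. If $f\left(y\right)\le 0$, then $f\left(y\right)\le p\left(y\right)$ trivially since $p\ge 0$. Thus $f\le p$. In other words, the argument in Proposition \ref{yussef} only uses $g\ge 1$ on $U$ together with $g\ge0$ and positive homogeneity, and $p$ satisfies all of these. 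This gives the contradiction.

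The main obstacle is exactly this mismatch. The statement of Proposition \ref{yussef} asks for $g\ge\1_{U}$ globally, which for nonnegative $g$ is equivalent to $g\ge1$ on $U$. Since $p\ge0$ and $p\ge h>1$ on $U$, we do have $p\ge\1_{U}$, so the proposition applies directly and yields $f\le p$. The only remaining care is continuity of $h$ in the bounded weak* topology, which ensures that $U$ is open, and the exclusion of $x=0_{F^{*}}$.
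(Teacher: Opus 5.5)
Your proof is correct and follows the paper's own argument. You take a positive lower bound $h$ of $P$ in $\Co\left(X_{F}\right)$, rescale so that $P\ge\1_{U}$ with $U=\left\{h>1\right\}$ bounded weak* open, and apply Proposition \ref{yussef} to produce a nonzero lower bound of $P$ in $F$. As you eventually note yourself, the detour through the internals of that proof is unnecessary, since $p\ge\0$ and $p\ge h>1$ on $U$ already give $p\ge\1_{U}$.
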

\begin{proof}
Assume that $P\subset F$ is such that $\bigwedge_{F}P=\0$. If $h\in \Co\left(X_{F}\right)$ is such that $P\ge h>\0$, by scaling we may assume that $P\ge\1_{U}$, where $U\subset X_{F}$ is bounded weak* open. By Proposition \ref{yussef} there is $f\in F$ such that $P\ge f>\0$, which contradicts the assumption about infimum. We conclude that $\0$ is the only lower bound for $P$ in $\Co\left(X_{F}\right)_{+}$, and so $\bigwedge_{\Co\left(X_{F}\right)}P=\0$.
\end{proof}

\section{Semicontinuity-like properties in free AM-spaces}\label{s8}

We now present a broad class of anti-demicontinuous AM-spaces. Let $E$ be a normed space. The set $\Co_{\ph}\left(\Ba_{E^{*}},\mathrm{w}^{*}\right)$ of positively homogeneous weak* continuous functions on $\Ba_{E^{*}}$ is a closed sublattice of $\Co\left(\Ba_{E^{*}},\mathrm{w}^{*}\right)$, and so it is an AM-space.

We view $E$ as isometrically embedded into $\Co_{\ph}\left(\Ba_{E^{*}},\mathrm{w}^{*}\right)$ via $e\left(x\right):=\left<x,e\right>$, for $e\in E$ and $x\in \Ba_{E^{*}}$. In fact, $E$ generates $\Co_{\ph}\left(\Ba_{E^{*}},\mathrm{w}^{*}\right)$ as a closed sublattice. Indeed, the closed sublattice of $\Co\left(\Ba_{E^{*}},\mathrm{w}^{*}\right)$ generated by $E$ is the set of all weak* continuous functions $f$ on $\Ba_{E^{*}}$ which satisfy the same \emph{constraints} as elements of $E$, i.e. equations of the form $tf\left(x\right)=sf\left(y\right)$, where $t,s\ge 0$ and $x,y\in \Ba_{E^{*}}$ (see e.g. \cite[Theorem 2.1]{bt}), and it is easy to show using Hahn--Banach theorem that the only constraints satisfied by elements of $E$ are of the form $tf\left(sx\right)=sf\left(tx\right)$, where $0\le t,s\le 1$ and $x\in \Ba_{E^{*}}$.

Spaces of this type have attracted attention recently in relation to free Banach lattices, and in particular, $\Co_{\ph}\left(\Ba_{E^{*}},\mathrm{w}^{*}\right)$ is the \emph{free AM-space} over $E$, in the sense that whenever $F$ is an AM-space and $T:E\to F$ is a linear contraction, there is a contractive homomorphism $\widehat{T}:\Co_{\ph}\left(\Ba_{E^{*}},\mathrm{w}^{*}\right)\to F$ which extends $T$ (see \cite[Proposition 2.2]{ottt}).

\begin{theorem}\label{cph}
For a normed space $E$ the following conditions are equivalent:
\item[(i)] $\dim E<\8$;
\item[(ii)] $F:=\Co_{\ph}\left(\Ba_{E^{*}},\mathrm{w}^{*}\right)$ is a regular sublattice of $\Co\left(\Ba_{E^{*}},\mathrm{w}^{*}\right)$;
\item[(iii)] $F$ is $1$-monotonically bounded;
\item[(iv)] $F$ is semicontinuous, that is $\Ba_{F}$ is order closed in $F$;
\item[(v)] $F$ is pointwise demicontinuous;
\item[(vi)] $F$ is not anti-demicontinuous, that is the order adherence of $\Ba_{F}$ is not equal to $F$.
\end{theorem}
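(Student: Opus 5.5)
The plan is to close two cycles of implications: first (i)$\Rightarrow$(iii)$\Rightarrow$(iv)$\Rightarrow$(v)$\Rightarrow$(vi), and then separately (i)$\Leftrightarrow$(ii) together with the key contrapositive $\neg$(i)$\Rightarrow\neg$(vi). The easy implications are (iv)$\Rightarrow$(v) (semicontinuous implies demicontinuous, hence pointwise demicontinuous) and (v)$\Rightarrow$(vi), which was noted in Section \ref{s4}. For (iii)$\Rightarrow$(iv), apply Proposition \ref{lef} with $r=1$. One needs $1+$-monotone boundedness, which $1$-monotone boundedness gives.

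For (i)$\Rightarrow$(iii), assume $\dim E<\8$. Then $\Ba_{E^{*}}$ is norm compact, and the norm $x\mapsto\|x\|$ is weak* continuous and positively homogeneous. Hence $\1_{\ph}:=\|\cdot\|$ belongs to $F$ and has norm $1$. Every $f\in\Ba_{F_{+}}$ satisfies $f(x)=\|x\|f(x/\|x\|)\le\|x\|$, so $\1_{\ph}$ is an upper bound of norm $1$ for all of $\Ba_{F_{+}}$. For (i)$\Rightarrow$(ii), use the same function: an element $0<h\in\Co(\Ba_{E^{*}})$ below a set $P\subset F_+$ can be replaced by a positively homogeneous function in $F$. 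Concretely, pick $x_0$ with $h(x_0)>0$, so $x_0\ne0$. Use the argument of Theorem \ref{david} to rescale so that $\|x_0\|=1$. Then take a function like $\left(c\1_{\ph}-M\rho\right)^{+}$, where $\rho(y)$ is a norm-continuous positively homogeneous ``distance to the ray through $x_0$''. In finite dimensions this is continuous, and it yields $0<g\in F$ with $g\le h$ on the sphere, hence $g\le P$. This shows $\bigwedge_F P\ne\0$. Alternatively, (i)$\Rightarrow$(ii) follows from (iv) and Theorem \ref{david}, provided one checks that $K_F$ identifies with $\Ba_{E^{*}}$ in this case. For (ii)$\Rightarrow$(iv), semicontinuity passes to regular sublattices.

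The main step is $\dim E=\8\Rightarrow$ anti-demicontinuity, which gives $\neg$(i)$\Rightarrow\neg$(vi) and $\neg$(i)$\Rightarrow\neg$(ii). Since $\ker\|\cdot\|_L$ is a closed ideal and $E$ generates $F$ as a closed sublattice, it suffices to show $\|\,|e|\,\|_L=0$ for every $e\in E$, i.e.\ $|e|\in\varepsilon\Ba_F^L$ for every $\varepsilon>0$. Fix $e$ with $\|e\|=1$ and $\varepsilon>0$. I would build a family $Q\subset\varepsilon\Ba_{F_{+}}$ with $\bigvee_F Q=|e|$ (directedness is free in an AM-space). The candidates are $q=\left(|e|-\rho_L\right)^{+}$, with $\rho_L(x)=\bigvee_{u\in L}|\langle x,u\rangle|$ for finite (or compact) $L\subset E$ chosen so that $\|q\|\le\varepsilon$. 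In infinite dimensions, for every weak* open set there are functionals with large $\rho_L$-value. Showing that the supremum is $|e|$ amounts to this: if $g\in F$ dominates all such $q$, then at every $x$ with $|e|(x)>\varepsilon$ one can choose $L$ with $\rho_L$ small near $x$ in a weak* sense. Here $\dim E=\8$ supplies $u_1,\dots,u_n$ nearly annihilated by $x$, while the constraint $\|q\|\le\varepsilon$ is met because $\Ba_{E^{*}}$ is large. This forces $g\ge|e|$ on a weak* dense set and hence everywhere. I expect this construction, balancing norm smallness against pointwise approximation of $|e|$ via Hahn--Banach and infinite dimensionality (possibly using $\varepsilon$-weighted sums over $n$ as in Example \ref{l1}), to be the main obstacle. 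For $\neg$(i)$\Rightarrow\neg$(ii), anti-demicontinuity together with Theorem \ref{main} gives $\neg$(iv), and (ii)$\Rightarrow$(iv) then yields $\neg$(ii).
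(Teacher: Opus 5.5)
\textbf{There is a genuine gap: the key implication $\dim E=\8\Rightarrow\|\cdot\|_{L}\equiv 0$ is not proved, and the mechanism you sketch for it cannot work.} Your decomposition matches the paper's, and the peripheral steps are fine. Using $x\mapsto\|x\|$ as a norm-one upper bound for $\Ba_{F_{+}}$ is a valid proof of (i)$\Rightarrow$(iii). Your bump function $\left(c\|\cdot\|-M\rho\right)^{+}$, compared with $h$ only on the sphere and then transferred to $\Ba_{E^{*}}$ by homogeneity of $g$ and of the elements of $P$, is a correct alternative to the paper's $g\left(tx\right)=\frac{t}{r}h\left(rx\right)$ in (i)$\Rightarrow$(ii).

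Here is why your sketch fails. Every $q=\left(|e|-\rho_{L}\right)^{+}$ in your family satisfies $\|q\|\le\varepsilon$, which means $\rho_{L}\ge|e|-\varepsilon$ on all of $\Ba_{E^{*}}$. Take $x$ with $|e|\left(x\right)>\varepsilon$, and any $z$ weak* close to $x$. Since $e$ is weak* continuous, $\rho_{L}\left(z\right)$ is bounded below by roughly $|e|\left(x\right)-\varepsilon>0$. So you cannot make $\rho_{L}$ small near such $x$. The constraint $\|q\|\le\varepsilon$ is not met ``because $\Ba_{E^{*}}$ is large''; it is exactly what forbids this. More generally, pointwise domination $g\ge Q$ yields $g\ge|e|$ only at points where $\inf_{L}\rho_{L}=0$. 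All such points lie in the weak* closed set $\left\{|e|\le\varepsilon\right\}$, so they are never weak* dense in $\Ba_{E^{*}}$, and ``hence everywhere'' does not follow from continuity.

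The missing idea is to work at a small scale, use positive homogeneity, and exploit $\dim E=\8$ through the annihilator $G$ of $e$ in $E^{*}$.
\begin{itemize}
\item Fix $y$ with, say, $e\left(y\right)>0$, and take $r\le\frac{\varepsilon}{4\left(\|e\|+\varepsilon\right)}$.
\item Since $\dim G=\8$, there is a weak* null net $\left(x_{p}\right)\subset\So_{G}$. Then $z_{p}:=ry+\frac{1}{2}x_{p}\in\Ba_{E^{*}}$ converges weak* to $ry$, and $e\left(z_{p}\right)=re\left(y\right)$.
\item Pick $u\in\So_{E}$ with $u\left(x_{p}\right)\ge\frac{1}{2}$. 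Then $u\left(z_{p}\right)\ge\frac{1}{4}-r>0$, so $\lambda:=e\left(z_{p}\right)/u\left(z_{p}\right)\in\left[0,\varepsilon\right]$ and $\left(e-\lambda u\right)\left(z_{p}\right)=0$.
\item The singleton $L=\left\{e-\lambda u\right\}$ is admissible, because $|e-\lambda u|\ge|e|-\lambda$ on $\Ba_{E^{*}}$, and $\rho_{L}\left(z_{p}\right)=0$.
\item Hence any upper bound $g$ of $Q$ satisfies $g\left(z_{p}\right)\ge|e|\left(z_{p}\right)=r|e\left(y\right)|$.
\item Weak* continuity then gives $g\left(ry\right)\ge r|e\left(y\right)|$, and homogeneity gives $g\left(y\right)\ge|e\left(y\right)|$.
\end{itemize}
This is essentially the paper's argument. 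There it is phrased as $\bigwedge_{F}\left\{f\in F,~f\ge\left(e-\delta\1\right)^{+}\right\}=\0$, with $\left(e-\delta g\right)^{+}$ vanishing at $ry+\frac{1}{2}x$. Without this construction, (vi)$\Rightarrow$(i) is unproved, and so is your derived $\neg$(i)$\Rightarrow\neg$(ii).
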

\begin{proof}
(i)$\Rightarrow$(ii): The case $\dim E=0$ is trivial, and so we may assume that $\dim E\in\N$. Let $P\subset F$ be such that $\bigwedge_{F}P=\0$. Assume that $h\in \Co\left(\Ba_{E^{*}},\mathrm{w}^{*}\right)_{+}$ is such that $h\le P$. Since every element of $P$ vanishes at $0_{E^{*}}$, it follows that $h\left(0_{E^{*}}\right)=0$. Let $0<r\le 1$ and let $g:\Ba_{E^{*}}\to \R$ be defined by $g\left(tx\right):=\frac{t}{r}h\left(rx\right)$, where $0\le t\le 1$ and $x\in \So_{E^{*}}$. It is easy to see that $\dim E<\8$ yields continuity of $g$, hence $g\in F$, and for every $p\in P$, $0\le t\le 1$ and $x\in \So_{E^{*}}$ we have $g\left(tx\right)=\frac{t}{r}h\left(rx\right)\le \frac{t}{r}p\left(rx\right)=p\left(tx\right)$. Thus, $g\le P$, and $\bigwedge_{F}P=\0$ imply $g\le\0$, hence $h\left(rx\right)=rg\left(x\right)=0$, for every $x\in\So_{E^{*}}$. Since $r$ was arbitrary, we conclude that $h=\0$. Thus, $\bigwedge_{\Co\left(\Ba_{E^{*}},\mathrm{w}^{*}\right)}P=\0$.\medskip

(i)$\Rightarrow$(iii) follows from the fact that if $\dim E=n\in\N$, then $F\simeq\Co\left(S^{n-1}\right)$. Since we have established in Proposition \ref{lef} that $1$-monotone boundedness implies semicontinuity, we get (iii)$\Rightarrow$(iv). (ii)$\Rightarrow$(iv) follows from the fact that semicontinuity passes down to regular sublattices. (iv)$\Rightarrow$(v)$\Rightarrow$(vi) is trivial.\medskip

(vi)$\Rightarrow$(i): We assume that $\dim E=\8$ and prove that $\|\cdot\|_{L}\equiv 0$. Fix $0_{E}\ne e \in E$ and $\delta>0$, and let $Q:=\left\{f\in F,~ f\ge\left(e-\delta\1\right)^{+}\right\}$. Clearly, $Q$ is closed with respect to infimum of finite subsets, hence it is downward directed. We claim that $\bigwedge_{F}Q=\0$.

Assume that $h\in F_{+}$ is such that $\0< h\le Q$. There is $y\in \Ba_{E^{*}}$ with $h\left(y\right)>0$. Note that $e^{+}\in Q$, hence $0<h\left(y\right)\le e^{+}\left(y\right)$, and so $e\left(y\right)>0$.

Let $G$ be the annihilator of $e$ in $E^{*}$, which is weak* closed in $E^{*}$. It follows that $\Ba_{G}$ is weak* compact in $E^{*}$, while since $\dim G=\8$, its unit ball is not norm compact. We conclude that the weak* topology of $E^{*}$ does not agree with the norm topology on $G$. It is now easy to find a net $\left(x_{p}\right)_{p\in P}\subset\So_{G}$ which is weak*-null in $E^{*}$.

Fix $r:=\frac{\delta}{4\left(\|e\|+\delta\right)}\le\frac{1}{2}$ and note that $\Ba_{E^{*}}\ni ry + \frac{1}{2}x_{p}\toww ry$. Since $h$ is weak*-continuous and $h\left(ry\right)=rh\left(y\right)>0$, it follows that there is $p\in P$ such that $h\left(ry+ \frac{1}{2}x\right)>0$, where $x:=x_{p}\in \So_{G}$. Let $g\in \So_{E}$ be such that $g\left(x\right)\ge\frac{1}{2}$. As $\|y\|,\|g\|\le1$, we get $$\frac{1}{r}g\left(ry+ \frac{1}{2}x\right)=\frac{1}{2r}g\left(x\right)+g\left(y\right)\ge \frac{1}{4r}-1=\frac{\left(\|e\|+\delta\right)}{\delta}-1\ge \frac{e\left(y\right)}{\delta},$$
hence $\delta g\left(ry+ \frac{1}{2}x\right)\ge re\left(y\right)=e\left(ry+ \frac{1}{2}x\right)$, where the equality follows from $x\in G$.

We have that $\left(e-\delta g\right)^{+}\in F$ and it follows from $\left.g\right|_{\Ba_{E^{*}}}\le\1$ that $\left(e-\delta g\right)^{+}\ge\left(e-\delta\1\right)^{+}$. Hence, $\left(e-\delta g\right)^{+}\in Q$, but $$\left(e-\delta g\right)^{+}\left(ry+ \frac{1}{2}x\right)=\left(e\left(ry+ \frac{1}{2}x\right)-\delta g\left(ry+ \frac{1}{2}x\right)\right)^{+}=0< h\left(ry+ \frac{1}{2}x\right),$$ contradiction. This concludes the proof of the claim.

It follows that $\left(e-Q\right)^{+}\uparrow e^{+}$, and at the same time for every $q\in Q$ we have $q\ge \left(e-\delta \1\right)^{+}=e-e\wedge\delta\1$, and so $\0\le \left(e-q\right)^{+}\le \left(e-e+e\wedge\delta\1\right)^{+}\le \delta\1$, implying that $\left(e-q\right)^{+}\in \delta\Ba_{F_{+}}$. Since $\delta$ was arbitrary we conclude that $\|e^{+}\|_{L}=0$. Applying the same argument to $-e$ yields $\|e^{-}\|_{L}=0$, hence $\|e\|_{L}=0$. As $e$ was arbitrary, it follows that $\ker\|\cdot\|_{L}$ is a closed ideal in $F$ which contains $E$. The discussion before the theorem implies that such an ideal must be equal to $F$, and so $\|\cdot\|_{L}\equiv 0$.
\end{proof}

\begin{remark}\label{noidea}
If $\dim E>0$ then $\Co_{\ph}\left(\Ba_{E^{*}},\mathrm{w}^{*}\right)$ is neither an ideal, nor an order dense sublattice of $\Co\left(\Ba_{E^{*}},\mathrm{w}^{*}\right)$. Indeed, in order for that to happen $\Co_{\ph}\left(\Ba_{E^{*}},\mathrm{w}^{*}\right)$ has to be regular in $\Co\left(\Ba_{E^{*}},\mathrm{w}^{*}\right)$, hence by Theorem \ref{cph} we get $n:=\dim E\in\N$ and $\Co_{\ph}\left(\Ba_{E^{*}},\mathrm{w}^{*}\right)\simeq\Co\left(S^{n-1}\right)$, which is neither order dense nor an ideal in $\Co\left(\Ba_{\R^{n}}\right)$.
\qed\end{remark}

The close connection of $\Co_{\ph}\left(\Ba_{E^{*}},\mathrm{w}^{*}\right)$ with the free Banach lattice over $E$ inspires the following question (partial results in this direction are \cite[Theorems 3.4 and 3.6]{abt}).

\begin{question}
Characterize those normed spaces $E$ such that the free Banach lattices over $E$ have the properties considered in this article.
\end{question}

We now consider another representation of $F:=\Co_{\ph}\left(\Ba_{E^{*}},\mathrm{w}^{*}\right)$. Let $\delta:E^{*}\to F^{*}$ be given by $\delta_{0_{E^{*}}}\left(f\right):=0$ and $\delta_{x}\left(f\right):=\|x\|f\left(\frac{1}{\|x\|}x\right)$, for $x\ne 0_{E^{*}}$. Note that if $r\ge \|x\|$, then $\delta_{x}\left(f\right)=rf\left(\frac{1}{r}x\right)$. This is a positively homogeneous map from $E^{*}$ into $X_{F}$, and it also preserves norm. In fact, more is true.

\begin{proposition}\label{pho}
$\delta$ is a homeomorphism from $\left(E^{*},\mathrm{bw}^{*}\right)$ onto $X_{F}$.
\end{proposition}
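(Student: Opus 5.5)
Since $E$ generates $F=\Co_{\ph}\left(\Ba_{E^{*}},\mathrm{w}^{*}\right)$ as a closed sublattice, the plan is to first show that $\delta$ maps $E^{*}$ into $X_{F}$ and is a bijection onto $X_{F}$, and then to compare the two topologies on norm bounded pieces.

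\emph{Well-definedness and homomorphism property.} For fixed $x$, $\delta_{x}(f)=rf\left(\frac{1}{r}x\right)$ for any $r\ge\|x\|$. This is the composition of evaluation at a point of $\Ba_{E^{*}}$ with scaling, so it is linear and a lattice homomorphism on $F$. Moreover $\left|\delta_{x}(f)\right|\le\|x\|\|f\|$, so $\delta_{x}\in\|x\|K_{F}\subset X_{F}$. Since $\delta_{x}|_{E}=x$ (positive homogeneity of $e\in E$ gives $\delta_{x}(e)=\left<x,e\right>$), $\delta$ is injective.

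\emph{Surjectivity.} Let $\phi\in X_{F}$. Then $x:=\phi|_{E}\in E^{*}$, since $\phi$ is bounded. Both $\phi$ and $\delta_{x}$ are homomorphisms on $F$ that agree on $E$. Because $E$ generates $F$ as a closed sublattice and homomorphisms are continuous, they agree on the closed sublattice generated by $E$, so $\phi=\delta_{x}$.

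\emph{Homeomorphism.} Both the bounded weak* topology on $E^{*}$ and the topology of $X_{F}$ are determined by their restrictions to norm bounded sets. Note that $\|\delta_{x}\|_{F^{*}}=\|x\|$: the inequality $\le$ is shown above, and $\ge$ follows from $\delta_{x}|_{E}=x$ and $E\subset F$ isometrically. Hence $\delta$ maps $r\Ba_{E^{*}}$ bijectively onto $X_{F}\cap r\Ba_{F^{*}}=rK_{F}$, and it suffices to show that $\delta:r\Ba_{E^{*}}\to rK_{F}$ is a weak*-to-weak* homeomorphism for each $r$. Continuity of $\delta$ holds because for each $f\in F$, $x\mapsto rf\left(\frac{1}{r}x\right)$ is weak* continuous on $r\Ba_{E^{*}}$. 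Continuity of the inverse $\phi\mapsto\phi|_{E}$ is clear, since it is a restriction map and weak* continuous. Alternatively, use compactness of $r\Ba_{E^{*}}$ together with Hausdorffness of $rK_{F}$. Since a bijection that is a homeomorphism on each member of a compatible cover by bounded sets is a homeomorphism for the induced bounded-type topologies, we are done.

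The main obstacle is this last gluing step. We must check that the topology of $X_{F}$ (bounded weak* from $F^{*}$) is exactly the one generated by the pieces $rK_{F}$, and that $\delta$ preserves norm bounded sets in both directions. The isometry $\|\delta_{x}\|=\|x\|$ is what makes this work.
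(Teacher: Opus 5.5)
Your proof is correct. Its topological half is essentially the paper's argument. The paper also uses $\|\delta_{x}\|=\|x\|$ to place everything inside a common ball $r\Ba_{E^{*}}$, gets continuity of $\delta$ from $\delta_{x_{p}}(f)=rf(\frac{1}{r}x_{p})$, and gets continuity of the inverse by testing on $e\in E$. The only difference there is that the paper works with bounded weak* convergent nets rather than with your restrictions $r\Ba_{E^{*}}\to rK_{F}$.

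The genuine difference is in surjectivity. The paper quotes an external fact: every lattice homomorphism on a closed sublattice of a $\Co(K)$-space is a positive multiple of a point evaluation. Hence each element of $X_{F}$ has the form $t\delta_{y}=\delta_{ty}$ with $y\in\Ba_{E^{*}}$. You instead use that $E$ generates $F$ as a closed sublattice (noted in the paper just before Theorem \ref{cph}). Two continuous homomorphisms that agree on $E$ then agree on the closed sublattice generated by $E$. Your route is self-contained modulo that generation fact, and it makes injectivity ($\delta_{x}|_{E}=x$) explicit, which the paper leaves implicit. The paper's route avoids the generation fact but depends on the representation theorem for homomorphisms.

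The gluing step you flag is needed only for continuity of $\delta^{-1}$, and it is routine. Let $V\subset X_{F}$ meet each $rK_{F}$ in a relatively weak* open set. Since $X_{F}\cap r\Ba_{F^{*}}=rK_{F}$ is weak* compact, $C:=X_{F}\backslash V$ meets each $r\Ba_{F^{*}}$ in a weak* compact set. So $C$ is bounded weak* closed in $F^{*}$, and $V=X_{F}\cap(F^{*}\backslash C)$ is relatively bounded weak* open. Hence the relative bounded weak* topology on $X_{F}$ is determined by the pieces $rK_{F}$, as you need. The paper's net formulation implicitly relies on the same fact when it treats the topology of $X_{F}$ as the one induced by bounded weak* convergence of nets in $X_{F}$.
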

\begin{proof}
Since $F$ is a closed sublattice of $\Co\left(\Ba_{E^{*}},\mathrm{w}^{*}\right)$, by \cite[Proposition 5.4]{bgdmt} all homomorphisms on $F$ are of the form $r\delta_{x}$, where $x\in \Ba_{E^{*}}$. It is easy to deduce from this that $\delta$ is a surjection. Assume that $\left(x_{p}\right)_{p\in P}\subset E^{*}$ is bounded weak* convergent to $x\in E^{*}$. Then by passing to a tail we may assume that there is $r>0$ such that $x\in r\Ba_{E^{*}}$ and $x_{p}\in r\Ba_{E^{*}}$, for every $p\in P$. Then, $\delta_{x}\in rK_{F}$ and $\delta_{x_{p}}\in rK_{F}$, for every $p\in P$. Since $x_{p}\toww x$, for every $f\in F$ we have $\delta_{x_{p}}\left(f\right)=rf\left(\frac{1}{r}x_{p}\right)\to rf\left(\frac{1}{r}x\right)=\delta_{x}\left(f\right)$. We conclude that $\delta_{x_{p}}\tobw \delta_{x}$. Conversely assume that $\delta_{x_{p}}\tobw \delta_{x}$. Using positive homogeneity again we may assume that there is $r>0$ such that $x\in r\Ba_{E^{*}}$ and $x_{p}\in r\Ba_{E^{*}}$, for every $p\in P$. For every $e\in E$ viewed as an element of $F$ we have $x_{p}\left(e\right)=\delta_{x_{p}}\left(e\right)\to \delta_{x}\left(e\right)=x\left(e\right)$. Hence, $x_{p}\tobw x$.
\end{proof}

Combining Proposition \ref{pho} with Proposition \ref{bouw} and Theorem \ref{regu} we get the following consequences.

\begin{corollary}\label{regu2}
$F$ is isomorphic to $\Co_{\ph}\left(E^{*},\mathrm{bw}^{*}\right)$, and it is regular in $\Co\left(E^{*},\mathrm{bw}^{*}\right)$.
\end{corollary}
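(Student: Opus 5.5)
The plan is to combine three earlier results: Proposition \ref{bouw}, Proposition \ref{pho} and Theorem \ref{regu}.

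\textbf{Step 1 (the isomorphism).} By Proposition \ref{bouw}, $F$ is isomorphic to $\Co_{\ph}\left(X_{F}\right)$ via the restriction operator $R$. By Proposition \ref{pho}, $\delta:\left(E^{*},\mathrm{bw}^{*}\right)\to X_{F}$ is a homeomorphism. It is also positively homogeneous: $\delta_{tx}=t\delta_{x}$ for $t\ge 0$, which one checks directly from the formula $\delta_{x}\left(f\right)=rf\left(\frac{1}{r}x\right)$ with $r$ large. Therefore composition $g\mapsto g\circ\delta$ has the following properties:
\begin{itemize}
\item it maps $\Co\left(X_{F}\right)$ onto $\Co\left(E^{*},\mathrm{bw}^{*}\right)$;
\item it is a lattice isomorphism, since it is pointwise composition with a bijection;
\item it carries positively homogeneous functions onto positively homogeneous functions, because $\delta$ and $\delta^{-1}$ both commute with multiplication by $t\ge 0$.
\end{itemize}
Hence $\Co_{\ph}\left(X_{F}\right)$ is lattice isomorphic to $\Co_{\ph}\left(E^{*},\mathrm{bw}^{*}\right)$, and so is $F$.

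It is worth recording what the composite map is concretely. An element $f\in F$ is sent to $x\mapsto\delta_{x}\left(f\right)=\|x\|f\left(\frac{1}{\|x\|}x\right)$, which is the positively homogeneous extension of $f$ from $\Ba_{E^{*}}$ to $E^{*}$. Its inverse is restriction to $\Ba_{E^{*}}$. This shows that $F$ sits inside $\Co\left(E^{*},\mathrm{bw}^{*}\right)$ as the sublattice $\Co_{\ph}\left(E^{*},\mathrm{bw}^{*}\right)$.

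\textbf{Step 2 (regularity).} Theorem \ref{regu} says that $F$, identified with $\Co_{\ph}\left(X_{F}\right)$, is a regular sublattice of $\Co\left(X_{F}\right)$. Regularity is a purely order-theoretic property of the pair (sublattice, ambient lattice), so it is preserved by any lattice isomorphism of the ambient lattice that maps the sublattice onto its image. Transporting along $g\mapsto g\circ\delta$ gives that $\Co_{\ph}\left(E^{*},\mathrm{bw}^{*}\right)$ is regular in $\Co\left(E^{*},\mathrm{bw}^{*}\right)$.

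The only real point to check is that these identifications are compatible: the embedding of $F$ into $\Co\left(X_{F}\right)$ used in Theorem \ref{regu}, composed with $\delta$, must coincide with the positively homogeneous extension map described in Step 1. This follows from the identification made in Proposition \ref{bouw} together with the formula for $\delta$. I expect no serious obstacle here; the step requiring the most care is verifying that $\delta$ intertwines positive homogeneity, so that $\Co_{\ph}$ is mapped onto $\Co_{\ph}$.
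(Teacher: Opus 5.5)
Your proposal is correct and follows the paper's own route: the paper obtains this corollary by combining Proposition \ref{pho} with Proposition \ref{bouw} and Theorem \ref{regu}, transporting both the identification $F\simeq\Co_{\ph}\left(X_{F}\right)$ and its regularity in $\Co\left(X_{F}\right)$ along the homeomorphism $\delta$. Your explicit check that $\delta$ commutes with multiplication by nonnegative scalars, and your description of the composite map as the positively homogeneous extension from $\Ba_{E^{*}}$ to $E^{*}$, supply the details the paper leaves implicit.
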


\begin{remark}
Analogously to Remark \ref{noidea} let us point out that $\Co_{\ph}\left(E^{*},\mathrm{bw}^{*}\right)$ is neither order dense, nor an ideal in $\Co\left(E^{*},\mathrm{bw}^{*}\right)$. If $U\ne E^{*}$ is a closed neighborhood of $0_{E^{*}}$, there is $\0<g\in\Co\left(E^{*},\mathrm{bw}^{*}\right)$ which vanishes on $U$. Then, any positively homogeneous function $f$ satisfying $\0\le f\le g$ must be equal to $\0$, and so $\Co_{\ph}\left(E^{*},\mathrm{bw}^{*}\right)$ is not order dense. On the other hand, take $x\ne 0_{E^{*}}$ and find $f\in \Co_{\ph}\left(E^{*},\mathrm{bw}^{*}\right)_{+}$ such that $f\left(x\right)=1$. Find $g\in \Co\left(E^{*},\mathrm{bw}^{*}\right)_{+}$ such that $g\left(x\right)=1$ and $g\left(2x\right)=0$. Then, $g\wedge f\notin \Co_{\ph}\left(E^{*},\mathrm{bw}^{*}\right)_{+}$ is in the ideal generated by $\Co_{\ph}\left(E^{*},\mathrm{bw}^{*}\right)$ in $\Co\left(E^{*},\mathrm{bw}^{*}\right)$, hence the former is not an ideal.
\qed\end{remark}

It now follows that uo convergence on $\Co_{\ph}\left(E^{*},\mathrm{bw}^{*}\right)$ is the restriction of uo convergence on $\Co\left(E^{*},\mathrm{bw}^{*}\right)$, which can be described explicitly (see e.g. \cite[Theorem 7.1]{erz}), while order convergence can be described using the aforementioned description of uo convergence together with the fact that a net is order null iff it is uo null and eventually order bounded. However, it might be possible to refine these criteria to better use the additional structure of $\Co_{\ph}\left(E^{*},\mathrm{bw}^{*}\right)$.

\begin{question}
Find the most efficient criteria for uo and order convergence in $\Co_{\ph}\left(E^{*},\mathrm{bw}^{*}\right)$, as well as descriptions of order bounded and dominable sets.
\end{question}

Since $\Ba_{E^{*}}$ is bounded weak* compact, the topology of uniform convergence on bounded weak* compact sets in $\Co\left(E^{*},\mathrm{bw}^{*}\right)$ restricted to $E$ agrees with the norm topology.

\begin{proposition}
If $E$ is a Banach space, then for $f\in \Co_{\ph}\left(E^{*},\mathrm{bw}^{*}\right)$ the set $K:=\left\{e\in E,~ e\le f\right\}$ is compact in $E$.
\end{proposition}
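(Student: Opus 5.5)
We want to show that $K:=\left\{e\in E,~ e\le f\right\}$ is norm compact. Here elements of $E$ are viewed as functions on $E^{*}$ via $e\left(x\right)=\left<x,e\right>$, and the inequality $e\le f$ is pointwise on $E^{*}$. The plan is to use the fact, recalled before the statement, that the bounded weak* topology on $E^{*}$ is the topology of uniform convergence on norm compact subsets of $E$. We will obtain compactness of $K$ from a Banach--Dieudonn\'e-type argument: we show that $K$ is closed and totally bounded in the Banach space $E$.

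\textbf{Step 1: closedness.} $K$ is norm closed, since each condition $\left<x,e\right>\le f\left(x\right)$ is norm closed in $e$. It is also convex.

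\textbf{Step 2: a compact set controlling $f$.} The function $f$ is bounded weak* continuous, so it is continuous at $0_{E^{*}}$ in that topology, and $f\left(0_{E^{*}}\right)=0$. Hence there is a compact $L\subset E$ such that $\rho_{L}\left(x\right)\le 1$ implies $f\left(x\right)\le 1$. By positive homogeneity of both $\rho_{L}$ and $f$, this gives $f\le\rho_{L}$ wherever $\rho_{L}>0$.

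\textbf{Step 3: the case $\rho_{L}\left(x\right)=0$.} If $\rho_{L}\left(x\right)=0$, then $tx$ lies in the unit ball of $\rho_{L}$ for every $t>0$, so $tf\left(x\right)=f\left(tx\right)\le 1$ for all $t>0$, hence $f\left(x\right)\le 0$. Therefore $f\le\rho_{L}$ on all of $E^{*}$.

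\textbf{Step 4: from the inequality to the hull.} It follows that every $e\in K$ satisfies $\left<x,e\right>\le\rho_{L}\left(x\right)=\sup_{l\in L\cup -L}\left<x,l\right>$ for every $x\in E^{*}$. We now invoke Hahn--Banach separation in $E$. The set $C:=\overline{\mathrm{co}}\left(L\cup -L\right)$ is norm compact, by Mazur's theorem, which uses completeness of $E$. If $e\notin C$, there is $x\in E^{*}$ with $\left<x,e\right>>\sup_{C}x=\rho_{L}\left(x\right)$, a contradiction. Hence $K\subset C$.

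\textbf{Conclusion.} Combining Step 1 with Step 4, $K$ is a closed subset of a compact set, hence compact.

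\textbf{Main obstacle.} The delicate step is Step 2. We need the bounded weak* neighbourhood base at $0_{E^{*}}$ to be given by polars of norm compact sets, that is, $\left\{x,~\rho_{L}\left(x\right)<1\right\}$ with $L$ compact in $E$. This is exactly the recalled identification of the bw* topology with the topology of compact convergence (Banach--Dieudonn\'e). We also need continuity of $f$ at $0_{E^{*}}$ in this topology, not just on bounded sets. This holds because $f$ is continuous for the bw* topology itself, being weak* continuous on bounded sets.
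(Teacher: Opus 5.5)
Your proof is correct, and it takes a different route from the paper.

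\textbf{The paper's route.} It first symmetrizes: with $f^{*}(x):=-f(-x)$ one has $K=[f^{*},f]_{E}$, and replacing $f$ by $|f|\vee|f^{*}|$ reduces to $K=[-f,f]_{E}$. Then $K$ is closed and bounded. For $U:=f^{-1}[0,\varepsilon)$, the estimate $|e(y)-e(x)|\le f(y-x)\le\varepsilon$ for $y\in x+U$ shows that $K$ is equicontinuous on $(E^{*},\mathrm{bw}^{*})$. Arzel\`a--Ascoli on the weak* compact ball $\Ba_{E^{*}}$, whose sup-norm is the norm of $E$, then gives total boundedness.

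\textbf{Your route.} You use the Banach--Dieudonn\'e description of the $\mathrm{bw}^{*}$-neighbourhoods of $0_{E^{*}}$ to dominate $f$ by a single $\rho_{L}$ with $L$ norm compact. You then recognize $\rho_{L}$ as the support function of $C=\overline{\mathrm{co}}(L\cup -L)$, obtain $K\subset C$ by Hahn--Banach separation, and get compactness of $C$ from Mazur's theorem.

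\textbf{Comparison.} Your argument needs no separate symmetrization step, since evenness of $\rho_{L}$ makes $e\le\rho_{L}$ imply $|e|\le\rho_{L}$ automatically. It also needs no Arzel\`a--Ascoli. It yields sharper structural information: every $f\in\Co_{\ph}(E^{*},\mathrm{bw}^{*})$ satisfies $f\le\rho_{L}$ for some compact $L\subset E$, so $K$ lies in the closed absolutely convex hull of a compact set. The price is reliance on the Banach--Dieudonn\'e theorem, which the paper recalls but does not need here. The paper's argument only uses $\mathrm{bw}^{*}$-continuity of $f$ at $0_{E^{*}}$ (straight from the definition of the bounded weak* topology, plus its translation invariance) together with Arzel\`a--Ascoli.

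\textbf{Two cosmetic points.} The neighbourhood base only gives $\rho_{L}(x)<1\Rightarrow f(x)<1$. You pass to the non-strict version either by positive homogeneity or by replacing $L$ with $2L$. Also, add $0_{E}$ to $L$ to avoid the degenerate case $L=\varnothing$ in the identity $\rho_{L}(x)=\sup_{l\in L\cup -L}\langle x,l\rangle$.
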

\begin{proof}
First, note that for $f^{*}\in \Co_{\ph}\left(E^{*},\mathrm{bw}^{*}\right)$ defined by $f^{*}\left(x\right):=-f\left(-x\right)$ and $e\in E$ we have that $e\le f$ iff $e\ge f^{*}$. Hence, $K=\left[f^{*},f\right]_{E}$. By replacing $f$ with $\left|f\right|\vee\left|f^{*}\right|$ if needed we may assume that $f\ge\0$ is even, and so $K=\left[-f,f\right]_{E}$. It follows that $K$ is closed and bounded in $E$, and so it is enough to show that it is equicontinuous. For $\varepsilon>0$ let $U:=f^{-1}\left[0,\varepsilon\right)$, which is a bounded weak*-open neighborhood of $0_{E^{*}}$. If $x\in E^{*}$, $y\in x+U$ and $e\in K$, we have that $\left|e\left(y\right)-e\left(x\right)\right|=\left|e\left(y-x\right)\right|\le f\left(y-x\right)\le\varepsilon$, as $y-x\in U$.
\end{proof}

We now briefly turn to the free AM-spaces over lattices. Let $M$ be a distributive lattice. Let $M^{*}$ be the set of all lattice homomorphisms from $M$ into $\left[-1,1\right]$, and let $M'$ be the set of all bounded lattice homomorphisms from $M$ into $\R$. We endow $M^{*}$ with the pointwise topology, turning it into a compact Hausdorff space. We also endow $M'$ with the bounded pointwise topology; clearly $M'=\bigcup\limits_{r>0}rM^{*}\subset\ell_{\8}\left(M\right)$. It was proven in \cite[Propositions 7.4 and 7.5]{bgdmt} that $F:=\Co_{\ph}\left(M^{*}\right)$ is the free AM-space over $M$, and its renorming is the free Banach lattice over $M$. As before we view $M\subset \Ba_{F}$, and in fact the sublattice generated by $M$ in $F$ is dense. Let $\delta: M^{*}\to K_{F}$ be defined by $\delta_{x}\left(f\right):=f\left(x\right)$. It is not hard to show that $\delta$ is a positively homogeneous homeomorphism, and it follows from there that $M'$ is positively homogeneously homeomorphic to $X_{F}$. The following now is a consequence of Theorems \ref{david} and \ref{regu}.

\begin{proposition}
$F$ is isomorphic to $\Co_{\ph}\left(M'\right)$ and it is always regular in $\Co\left(M'\right)$. It is regular in $\Co\left(M^{*}\right)$ if and only if it is semicontinuous.
\end{proposition}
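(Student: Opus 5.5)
The statement has three parts, and each follows by transporting earlier results along the homeomorphisms $\delta$ that the paper has just described. The plan is to mirror Corollary \ref{regu2}.

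\emph{Step 1: $F\cong\Co_{\ph}\left(M'\right)$.} Since $F=\Co_{\ph}\left(M^{*}\right)$ and $\delta:M^{*}\to K_{F}$ is a positively homogeneous homeomorphism, composition with $\delta$ identifies $\Co_{\ph}\left(K_{F}\right)$ with $\Co_{\ph}\left(M^{*}\right)$ as vector lattices. This identification respects the representation $F\cong\Co_{\ph}\left(K_{F}\right)$ recalled before Proposition \ref{urysohn}. Next, extend $\delta$ positively homogeneously to $M'=\bigcup_{r>0}rM^{*}$ by $\delta_{rx}:=r\delta_{x}$, and check that it is well defined. On each bounded piece $rM^{*}\to rK_{F}$ it is a homeomorphism. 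Because the bounded pointwise topology on $M'$ and the bounded weak* topology on $X_{F}$ are both determined by their restrictions to these bounded pieces, the extension is a positively homogeneous homeomorphism $M'\to X_{F}$. Proposition \ref{bouw} then gives $F\cong\Co_{\ph}\left(X_{F}\right)\cong\Co_{\ph}\left(M'\right)$, the isomorphism being composition with $\delta$. This step is essentially the same argument as Proposition \ref{pho}. The one point needing care is that $\delta$ maps $M^{*}$ onto $K_{F}$: every homomorphism of norm at most $1$ on $F$ is a point evaluation at some $x\in M^{*}$. I will take this from the statement that $\delta$ is a homeomorphism onto $K_{F}$, or from \cite[Propositions 7.4, 7.5]{bgdmt}.

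\emph{Step 2: regularity in $\Co\left(M'\right)$.} Theorem \ref{regu} says $F$ is regular in $\Co\left(X_{F}\right)$. The homeomorphism $\delta:M'\to X_{F}$ induces a lattice isomorphism $\Co\left(X_{F}\right)\to\Co\left(M'\right)$ by composition. This isomorphism maps the copy of $F$ onto $\Co_{\ph}\left(M'\right)$, consistently with Step 1. Lattice isomorphisms preserve infima, so regularity transfers directly.

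\emph{Step 3: regular in $\Co\left(M^{*}\right)$ iff semicontinuous.} In the same way, $\delta:M^{*}\to K_{F}$ induces a lattice isomorphism $\Co\left(K_{F}\right)\cong\Co\left(M^{*}\right)$. This isomorphism carries the embedding $F\subset\Co\left(K_{F}\right)$ onto $F=\Co_{\ph}\left(M^{*}\right)\subset\Co\left(M^{*}\right)$. Hence $F$ is regular in $\Co\left(M^{*}\right)$ iff it is regular in $\Co\left(K_{F}\right)$, and by Theorem \ref{david} the latter holds iff $F$ is semicontinuous.

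The main obstacle is the bookkeeping in Step 1. One must verify that the isomorphisms are compatible, so that the copy of $F$ inside $\Co\left(K_{F}\right)$ used in Theorem \ref{david} matches $\Co_{\ph}\left(M^{*}\right)$ under $\delta$. This reduces to checking that for $f\in F$ and $x\in M^{*}$ the evaluation of $f$ at $\delta_{x}$ equals $f\left(x\right)$, which holds by definition of $\delta$.
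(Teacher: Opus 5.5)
Your proposal is correct and follows the paper's route. The paper also observes that $\delta:M^{*}\to K_{F}$ is a positively homogeneous homeomorphism, which yields a positively homogeneous homeomorphism $M'\to X_{F}$, and then reads the statement off Proposition \ref{bouw} and Theorems \ref{regu} and \ref{david}, exactly as in your Steps 1--3. Your checks (well-definedness of the extension, its behaviour on the bounded pieces, and compatibility of the embeddings) supply details the paper leaves implicit.
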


Also, note that if $x\in M'$ and $f\in\Co_{\ph}\left(M'\right)\simeq \Co_{\ph}\left(M^{*}\right)$, then $\left|f\left(x\right)\right|\le\|f\|\|x\|$. Let us present some partial results concerning semicontinuity of $F$. Part (i) in the next result complements \cite[Corollary 3.3]{brt}.

\begin{proposition}\label{faml}
\item[(i)] If $M$ has the least and greatest elements, then $F$ is $1$-monotonically bounded.
\item[(ii)] If $M$ is totally ordered, but has neither least nor greatest elements, then $F$ is anti-demicontinuous.
\end{proposition}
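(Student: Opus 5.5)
For (i) I would produce a single element of $F$ that dominates $\Ba_{F_{+}}$ and has norm $1$. Let $a$ and $b$ be the least and greatest elements of $M$, and view them as elements of $F=\Co_{\ph}\left(M^{*}\right)$ through evaluation, $a\left(x\right)=x\left(a\right)$. Put $u:=\left|a\right|\vee\left|b\right|\in F_{+}$. Since $M\subset\Ba_{F}$, we get $\|u\|\le 1$, hence $\|u\|\le 1$ is all that is needed for $1$-monotone boundedness. The key claim is then $\left|f\left(x\right)\right|\le\|f\|\,u\left(x\right)$ for every $x\in M^{*}$. A homomorphism $x:M\to\left[-1,1\right]$ is monotone, so $x\left(a\right)\le x\left(m\right)\le x\left(b\right)$ for all $m\in M$. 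Consequently $\sup_{m}\left|x\left(m\right)\right|=\left|x\left(a\right)\right|\vee\left|x\left(b\right)\right|=u\left(x\right)$, which is the $\ell_{\8}$ norm of $x$. Then $\left|f\left(x\right)\right|\le\|f\|\|x\|=\|f\|u\left(x\right)$, using the inequality noted just before the proposition with $\|x\|$ the sup-norm on $M'$. It follows that every $f\in\Ba_{F_{+}}$ satisfies $f\le u$. So every directed subset of $\Ba_{F_{+}}$ has the upper bound $u\in\Ba_{F}$, and $F$ is $1$-monotonically bounded.

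For (ii) I would follow the endgame of the proof of Theorem \ref{cph}. I show $\|m\|_{L}=0$ for each $m\in M$. Since $\ker\|\cdot\|_{L}$ is a closed ideal and the sublattice generated by $M$ is dense, this gives $\|\cdot\|_{L}\equiv 0$. The plan is to use elements $n\in M$ far up or down, truncated so that they have small norm. Fix $m\in M$ and $\delta>0$. Since $M$ has no greatest element, I expect the analogue of $Q$ to be the set $\left\{n\wedge\ldots\right\}$, and for the positive part I would try the family $\left(m-n\right)^{+}$ with $n\le m$ in $M$, together with the fact that $(m-\delta\1)^{+}$ is not available, because $F$ has no unit.

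More concretely, I would show that $P_{\delta}:=\left\{\left(m^{+}-\left(m-n\right)^{+}\right)\wedge\ldots\right\}$ increases to $m^{+}$. A cleaner route is through $Q:=\left\{n\in M,~n\le m\right\}$, with $m-Q\ni m-n$ downward directed. I would show $\bigwedge_{F}\left(\left(m-n\right)^{+}\right)_{n\le m}$ is $\0$ if this family is to give an approximation; but the terms have norm up to $2$, not small. So instead I truncate with differences of two elements. For $n_{1}\le n_{2}$, the element $n_{2}-n_{1}\ge 0$ has, as a function on $M^{*}$, the value $x\left(n_{2}\right)-x\left(n_{1}\right)$.

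The hard step is to find, for each $\delta$, a directed family of elements of norm at most $\delta$ whose supremum in $F$ is $m^{+}$. This amounts to showing that no positive continuous positively homogeneous function sits below the defect. I would prove the infimum claim as in Theorem \ref{cph}, by contradiction. Take $0<h\le$ the decreasing family and a point $x$ with $h\left(x\right)>0$. Because $M$ has no endpoints, I would modify $x$ far out in $M$, letting $x_{n}$ agree with $x$ near $m$ but become constant beyond $n$. Then $x_{n}\to x$ pointwise, so $h\left(x_{n}\right)>0$ eventually, while some element of the family vanishes at $x_{n}$. This is a contradiction. Getting the truncated homomorphisms $x_{n}=\left(x\vee x\left(n'\right)\right)\wedge x\left(n\right)$ (a clamp, which is still a lattice homomorphism on a chain) to kill the right family element is the main obstacle.
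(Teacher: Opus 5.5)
Your part (i) is correct, and it is more direct than the paper's argument. The paper shows that restriction to $Y=\{x\in M^{*},~\|x\|_{\8}=1\}$ is a surjective isometry of $F$ onto $\Co_{b}(Y)$, and then uses the constant $\1_{Y}$. Your $u=|a|\vee|b|$ is exactly the element corresponding to $\1_{Y}$, and the inequality $|f(x)|\le\|f\|\|x\|_{\8}$ makes the isometry unnecessary. Your final reduction in (ii) also matches the paper: $\ker\|\cdot\|_{L}$ is a closed ideal containing $M$, and the sublattice generated by $M$ is dense.

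The gap is the core of (ii). You never specify the directed family of small-norm elements increasing to (a multiple of) $m^{+}$; the sets written with ``$\ldots$'' are placeholders. Moreover, the test points you propose cannot work. The missing idea is as follows. For $r>1$, the family $\{n^{+}\wedge rm^{+}\}_{n\in M}$ lies in $\Ba_{F_{+}}$ (since $M\subset\Ba_{F}$) and is increasing in $n$. Its supremum in $F$ is $rm^{+}$, which gives $\|m^{+}\|_{L}\le\frac{1}{r}$. To verify the supremum, take an upper bound $f$ and $x\in M^{*}$, and put $y_{n}(l):=\frac{1}{r}x(l)$ for $l<n$ and $y_{n}(l):=1$ for $l\ge n$. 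Then:
\begin{itemize}
\item $y_{n}$ is monotone, hence belongs to $M^{*}$ because $M$ is totally ordered;
\item $y_{n}\to\frac{1}{r}x$ because $M$ has no greatest element;
\item for $n>m$ one has $f(y_{n})\ge 1\wedge x(m)^{+}=x(m)^{+}$.
\end{itemize}
Continuity and positive homogeneity then give $f(x)=rf(\frac{1}{r}x)\ge rx(m)^{+}$.

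Both ingredients are indispensable: shrinking $x$ by $\frac{1}{r}$, and \emph{raising} the values beyond $n$ to $1$. Your clamp $x_{n}=(x\vee x(n'))\wedge x(n)$ supplies neither. If $\|m^{+}-q\|\le\delta$, then $q(y)\ge y(m)^{+}-\delta\|y\|_{\8}$ for every $y\in M^{*}$. The clamp satisfies $\|x_{n}\|_{\8}\le\|x\|_{\8}$, and $x_{n}(m)=x(m)$ once $n'\le m\le n$. So no such $q$ can vanish at $x_{n}$ unless $x(m)^{+}\le\delta\|x\|_{\8}$, and this fails, for example, at a constant $x\equiv c\in(0,1]$, which is its own clamp. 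In the direct formulation the same defect appears: at $x\equiv c$ the clamps only yield $f(x)\ge c\wedge rc=c$ instead of $rc$. The negative part $m^{-}$ is then handled symmetrically, using that $M$ has no least element.
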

\begin{proof}
(i): Assume that $M$ has the infimum $m$ and supremum $n$. Then for every $x\in M^{*}$ we have that $n\left(x\right)=\bigvee x\left(M\right)$ and $m\left(x\right)=\bigwedge x\left(M\right)$, therefore $\left|n\left(x\right)\right|\vee \left|m\left(x\right)\right|=\|x\|_{\8}$, and so $\left|n\right|\vee\left|m\right|=\|\cdot\|_{\8}$ on $M^{*}$. In particular, it follows that $\|\cdot\|_{\8}$ is continuous on $M^{*}$.

Let $Y:=\left\{x\in M^{*},~ \|x\|_{\8}=1\right\}$. Let $R:F\to \Co_{b}\left(Y\right)$ be the restriction map, which is clearly a contractive injective homomorphism. We claim that $R$ is a surjective isometry. Once we prove that, $1$-monotone boundedness of $F$ follows immediately, because $\Co_{b}\left(Y\right)$ has this property.

First, let us show that $R$ is an isometry. Let $f\in \So_{F_{+}}$. Due to compactness of $M^{*}$, there is $x\in M^{*}$ such that $f\left(x\right)=1$. Then, $x\ne 0_{M^{*}}$, and so $1=\|f\|\ge f\left(\frac{1}{\|x\|}x\right)=\frac{1}{\|x\|}f\left(x\right)=\frac{1}{\|x\|}\ge 1$ implies $x\in Y$. It follows that $\|Rf\|_{\8}=1$, as required.

To prove surjectivity let $g\in \Co_{b}\left(Y\right)$ and define $\hat{g}:M^{*}\to\R$ by $\hat{g}\left(0_{M^{*}}\right):=0$ and $\hat{g}\left(x\right):=\|x\|_{\8}g\left(\frac{1}{\|x\|_{\8}}x\right)$, for $x\ne 0_{M^{*}}$. Let us prove that $\hat{g}$ is continuous on $M^{*}$. Suppose that $x_{p}\to x$. which also implies $\|x_{p}\|_{\8}\to \|x\|_{\8}$. Assume first that $x=0_{L^{*}}$, so that $\|x_{p}\|_{\8}\to 0$, and since $\|g\|_{\8}<\8$, we have that $\hat{g}\left(x_{p}\right)\le \|g\|_{\8}\|x_{p}\|_{\8}\to 0=\hat{g}\left(x\right)$. If $x\ne 0_{M^{*}}$, by passing to a tail we may assume that $x_{p}\ne 0_{M^{*}}$, for every $p$. Then, $\frac{1}{\|x_{p}\|_{\8}}x_{p}\to \frac{1}{\|x\|_{\8}}x$, therefore $g\left(\frac{1}{\|x_{p}\|_{\8}}x_{p}\right)\to g\left(\frac{1}{\|x\|_{\8}}x\right)$, and so $\hat{g}\left(x_{p}\right)\to \hat{g}\left(x\right)$. Now $\hat{g}\in F$, and $R\hat{g}=g$, thus proving surjectivity of $R$.\medskip

(ii): We now assume that $M$ is totally ordered but has no greatest element, take $m\in M$ and show that $\|m^{+}\|_{L}=0$. To that end it is enough to prove that  for every $r>1$ we have that $\bigvee\limits_{n\in M}\left(n^{+}\wedge rm^{+}\right)= rm^{+}$.

Assume that $f\in F$ is such that $f\ge n^{+}\wedge rm^{+}$, for every $n\in M$. Let $x\in M^{*}$. For $n\in M$ define $y_{n}:M\to \left[-1,1\right]$ by $y_{n}\left(l\right):=\frac{1}{r}x\left(l\right)$, if $l< n$, and $y_{n}\left(l\right):=1$, otherwise. Since $M$ is totally ordered, it is easy to check that $y_{n}\in M^{*}$, for every $n\in M$ and that $y_{n}\to \frac{1}{r}x$ in $M^{*}$. For every $n>m$ we have that $f\left(y_{n}\right)\ge n\left(y_{n}\right)^{+}\wedge rm\left(y_{n}\right)^{+}=1\wedge r\frac{1}{r}m\left(x\right)^{+}=m\left(x\right)^{+}$. Continuity and positive homogeneity now yield $f\left(x\right)=rf\left(\frac{1}{r}x\right)=r\lim\limits_{n>m} f\left(y_{n}\right)\ge rm\left(x\right)^{+}$. It follows that $f\ge rm^{+}$, as required.

Analogously, one can show that if $M$ is totally ordered but has no least element, then $\|m^{-}\|_{L}=0$, for every $m\in M$. In the case when $M$ has neither extrema we get that $\|\cdot\|_{L}$ vanishes on $M$. Since the sublattice generated by $M$ in $F$ is dense,  anti-demicontinuity follows.
\end{proof}

The following example shows that it is possible for $\Co_{\ph}\left(M^{*}\right)$ to be semicontinuous even for unbounded $M$.

\begin{example}\label{finn}
Let $M$ be the set of all finite subsets of $\N$, including $\varnothing$. For $r,s\in\R$ and $n\in\N$ let $y^{r,s,n}$ be a sequence with $y^{r,s,n}_{n}=s$ and $y^{r,s,n}_{m}=r$, for $m\ne n$. Let $K:=\left\{y^{r,s,n},~ -1\le r\le s\le 1,~ n\in\N\right\}$, which is a compact positively balanced subset $\R^{\N}$. Let us show that $K$ is positively homogeneously homeomorphic to $M^{*}$.

Let $\varphi:M^{*}\to\R^{\N}$ be defined by $\varphi\left(x\right)=\left(x\left(\left\{n\right\}\right)\right)_{n\in\N}$. For $-1\le r\le s\le 1$ and $n\in\N$ we have $y^{r,s,n}=\varphi\left(x\right)$, where $x:M\to\left[-1,1\right]$ is defined by $x\left(A\right)=s$ is $n\in A$ and $x\left(A\right)=r$, otherwise. Hence, $\varphi\left(M^{*}\right)\supset K$. For the converse inclusion let $x\in M^{*}$, and $r:=x\left(\varnothing\right)$. If $x\left(\left\{m\right\}\right)=r$, for every $m\in\N$, then $\varphi\left(x\right)=y^{r,r,1}$. Otherwise, there is $n\in\N$ such that $x\left(\left\{n\right\}\right)=s>r$. Then, for every $m\ne n$ we have $x\left(\left\{m\right\}\right)\wedge s=x\left(\left\{m\right\}\right)\wedge x\left(\left\{n\right\}\right)= x\left(\varnothing\right)=r$, implying $x\left(\left\{m\right\}\right)=r$. We conclude that $\varphi\left(x\right)=y^{r,s,n}$.

Let us show that $\varphi$ is homeomorphism. If $x_{p}\to x$ in $M^{*}$, then for every $n\in\N$ we have $x_{p}\left(\left\{n\right\}\right)\to x\left(\left\{n\right\}\right)$, hence $\varphi$ is continuous. Conversely, if $\varphi\left(x_{p}\right)\to \varphi\left(x\right)$, then $x_{p}\left(\left\{n\right\}\right)\to x\left(\left\{n\right\}\right)$, for every $n\in\N$, and so $x_{p}\left(\varnothing\right)=x_{p}\left(\left\{1\right\}\right)\wedge x_{p}\left(\left\{2\right\}\right)\to x\left(\left\{1\right\}\right)\wedge x\left(\left\{2\right\}\right)= x\left(\varnothing\right)$, while for every finite $\varnothing\ne A\subset\N$ we have $x_{p}\left(A\right)=\bigvee\limits_{n\in A} x_{p}\left(\left\{n\right\}\right)\to \bigvee\limits_{n\in A} x\left(\left\{n\right\}\right)=x\left(A\right)$. We conclude that $x_{p}\to x$.\medskip

It follows that $F:=\Co_{\ph}\left(K\right)\simeq \Co_{\ph}\left(M^{*}\right)$. Let us show that $F$ is semicontinuous. Let $Y:=\left\{y^{r,s,n},~ -1\le r< s\le 1,~ n\in\N\right\}$, which is dense in $K$. Let $P\subset \Ba_{F_{+}}$ and $f\in F$ be such that $f=\bigvee P$. In order to prove that $\|f\|_{\8}\le 1$ it is enough to show that $f\left(y\right)\le 1$, for every $y\in Y$. Without loss of generality we may assume that $y=y^{r,s,1}$. Let $g,h\in F$ be defined by $g\left(x\right)=\left(x_{1}-x_{2}\right)^{+}$ and $h\left(x\right)=\left|x_{1}\right|\vee \left|x_{2}\right|$. Note that $g\left(x\right)>0$ is equivalent to the fact that $x=y^{u,v,1}$, for some $-1\le u<v\le 1$, and then $h\left(x\right)=\left|u\right|\vee \left|v\right|=\|x\|$. We claim that $e:=f-\left(f-h\right)^{+}\wedge g\ge P$. Let $x\in K$. If $g\left(x\right)=0$, then $e\left(x\right)=f\left(x\right)\ge p\left(x\right)$, for every $p\in P$. Else, we have that $h\left(x\right)=\|x\|$, and so
$$e\left(x\right)=f\left(x\right)-\left(f\left(x\right)-h\left(x\right)\right)^{+}\wedge g\left(x\right)\ge f\left(x\right)-\left(f\left(x\right)-\|x\|\right)^{+}=f\left(x\right)\wedge \|x\|\ge p\left(x\right),$$ for every $p\in P$. It follows that $e\ge f$, and so $\left(f-h\right)^{+}\wedge g=\0$. Since $g\left(y\right)=s-r>0$, we conclude that $f\left(y\right)\le h\left(y\right)=1$, as required.
\qed\end{example}

\section{Variants of $\sigma$-semicontinuity}\label{s9}

Recall that a net $\left(f_{p}\right)_{p\in P}\subset F$ \emph{$\sigma$-order} converges to $f\in F$ (denoted $f_{p}\toso f$) if there is a countable $Q\subset F$ with $Q\downarrow 0_{F}$ which dominates the tails of $\left(f_{p}-f\right)_{p\in P}$, and \emph{unbounded $\sigma$-order ($\mathrm{u\sigma o}$)} converges to $f$ (denoted $f_{p}\touso f$) if $\left|f_{p}-f\right|\wedge e\toso 0_{F}$, for every $e\in F_{+}$. Clearly, $\sigma$-order convergence implies $\mathrm{u\sigma o}$-convergence to the same limit, and the converse is true for eventually bounded nets. For an increasing sequence, its limit in both senses is its supremum (and they exist simultaneously). If a net $\sigma$-order converges, it contains a sequence which $\sigma$-order converges to the same limit. Hence, $\sigma$-order adherence coincides with the sequential $\sigma$-order adherence (the set of all existing limits of sequences in the set).

A sublattice $E\subset F$ is \emph{$\sigma$-regular} if whenever a countable $P\subset E$ is such that $\bigwedge_{E}P=0_{F}$, it then follows that $\bigwedge_{F}P=0_{F}$, and \emph{super order dense} if for every $f\in F_{+}$ there is a countable $Q\subset E_{+}$ such that $f=\bigvee Q$. The following result is similar to Lemma \ref{solid}.

\begin{lemma}\label{csolid}
Let $E\subset F$ be a super order dense sublattice and let $B\subset F$ be solid. The following sets are equal:
\item[(i)] The set of all $f\in F$ such that there is $Q\subset B_{+}$ with $Q\uparrow\left|f\right|$;
\item[(i')] The set of all $f\in F$ such that there is $Q\subset B_{+}\cap E$ with $Q\uparrow\left|f\right|$;
\item[(ii)] The order adherence of $B$ in $F$;
\item[(ii')] The order adherence of $B\cap E$ in $F$;
\item[(iii)] The uo adherence of $B$ in $F$;
\item[(iii')] The uo adherence of $B\cap E$ in $F$.\medskip

This set is solid and its intersection with $E$ is the common $\sigma$-order and $\mathrm{u\sigma o}$ adherence of $B\cap E$ in $E$. The $\sigma$-order and $\mathrm{u\sigma o}$ closures of $B$ and $B\cap E$ also coincide, and are a solid set. Both the adherence and the closure are convex, whenever $B$ is.
\end{lemma}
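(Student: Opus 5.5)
The statement is the $\sigma$-analogue of Lemma \ref{solid}. I read (i), (i') with countable $Q$, and (ii)--(iii') as $\sigma$-order and $\mathrm{u\sigma o}$ adherences. The plan mirrors the proof of Lemma \ref{solid}. The trivial inclusions go as before. Sets built from $B\cap E$ lie inside those built from $B$. The $\sigma$-order adherence lies inside the $\mathrm{u\sigma o}$ adherence. A countable directed $Q\uparrow|f|$ can be replaced by an increasing sequence (take finite suprema), which $\sigma$-order converges to $|f|$. Moreover, $\sigma$-order and $\mathrm{u\sigma o}$ convergence are compatible with lattice operations, so $f^{\pm}$ and hence $f$ are reached. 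So (i')$\subset$(ii') and (i)$\subset$(ii), and it suffices to show that the set in (iii) is contained in the set in (i').

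Let $(f_{p})\subset B$ with $f_{p}\touso f$. Then $e_{p}:=|f|\wedge|f_{p}|\in B$ is order bounded and $\mathrm{u\sigma o}$-convergent to $|f|$, hence $e_{p}\toso|f|$. So there is a countable $Q\downarrow0_{F}$ dominating the tails of $(|f|-e_{p})$. For $q\in Q$ set $g_{q}:=(|f|-q)^{+}$. Exactly as before, $g_{q}\le e_{p}$ for some $p$, so $g_{q}\in B$ by solidity, and $g_{q}\uparrow|f|$ as $q\downarrow$. Now use super order density instead of order density. For each $q$ choose a countable $R_{q}\subset E_{+}$ with $\bigvee R_{q}=g_{q}$; replacing $R_{q}$ by its finite suprema, we may assume it is directed. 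Then $R_{q}\subset B\cap E$ by solidity.

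Put $S$ to be the set of finite suprema of $\bigcup_{q}R_{q}$. This set is countable and directed. Each element of $S$ is dominated by some $g_{u}$, since $Q$ is downward directed and finitely many $q$'s have a common lower bound $u$. Hence $S\subset B_{+}\cap E$ and $S\uparrow|f|$. This gives (i').

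The main point to check is that the rest transfers with countability preserved. For solidity, if $g_{p}\toso g$ with $g_{p}\in B_{+}$ and $|f|\le g$, then $f_{p}:=g_{p}\wedge f^{+}-g_{p}\wedge f^{-}\in B$ and $f_{p}\toso f$, using the same countable dominating set. For the intersection with $E$, the one step that is not merely countable bookkeeping is that $\sigma$-order convergence in $E$ of nets in $E$ agrees with that in $F$. Super order dense implies order dense, hence regular. A countable $Q\downarrow0$ in $F$ can be dominated by a countable set in $E$ using super order density and the usual construction. Conversely, a countable $Q\downarrow0$ in $E$ stays $\downarrow0$ in $F$ by regularity. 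With this, the characterization (i') shows that $C\cap E$ is the common adherence of $B\cap E$ in $E$.

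Convexity follows from linearity of $\sigma$-order limits. The statements about closures follow by transfinite induction on iterated adherences, as in Lemma \ref{solid}.
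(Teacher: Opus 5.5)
Your proof of the main inclusion (iii)$\subset$(i') is correct, and it is exactly the countable adaptation of the proof of Lemma \ref{solid} that the paper has in mind. The order bounded net $\left|f\right|\wedge\left|f_{p}\right|$ yields a countable $Q\downarrow 0_{F}$. Super order density yields countable directed sets $R_{q}\subset E_{+}$ with $\bigvee R_{q}=g_{q}$. The finite suprema of $\bigcup_{q}R_{q}$ then form the required countable directed set. The solidity, convexity and closure arguments are also fine.

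The gap is in the step you single out for the intersection with $E$. It is not true that a net in $E$ which $\sigma$-order converges in $F$ also $\sigma$-order converges in $E$. Dominating tails inside $E$ needs elements of $E$ lying \emph{above} the members of a countable $Q\downarrow 0_{F}$, which is a majorization property. Super order density only supplies elements of $E$ from below. For instance, $c_{0}$ is super order dense in $\ell_{\infty}$, since every $f\ge 0$ equals $\bigvee_{n}f\1_{\left\{1,\dots,n\right\}}$. The unit vectors satisfy $e_{m}\le\1_{\left\{n,n+1,\dots\right\}}$ for $m\ge n$, and $\1_{\left\{n,n+1,\dots\right\}}\downarrow_{n} 0$, so $e_{n}\toso 0$ in $\ell_{\infty}$. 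However, no tail of $\left(e_{n}\right)$ is order bounded in $c_{0}$, so the sequence is not $\sigma$-order convergent in $c_{0}$.

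You do not need this transfer. Apply the equivalence you have already proved with $E$ in the roles of both $F$ and the super order dense sublattice, and with $B\cap E$ (solid in $E$) in the role of $B$. Then the $\sigma$-order and $\mathrm{u\sigma o}$ adherences of $B\cap E$ in $E$ both equal the set of $f\in E$ for which there is a countable $Q\subset B_{+}\cap E$ with $\bigvee_{E}Q=\left|f\right|$. For $Q\subset E$ and $\left|f\right|\in E$ one has $\bigvee_{E}Q=\left|f\right|$ iff $\bigvee_{F}Q=\left|f\right|$. One implication is trivial, and the other is regularity of $E$ in $F$, which is precisely the half of your claim that is correct. By (i') this set is therefore $C\cap E$, and this argument covers the $\mathrm{u\sigma o}$ adherence in $E$ at the same time.
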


Note that ``countable directed sets'' can be replaced by ``increasing sequences'': if $P\subset F_{+}$ is a countable directed set, enumerate it as $\left\{p_{n}\right\}$, and then take $q_{1}:=p_{1}$, and if $q_{1},...,q_{n}\in P$ are selected, find $q_{n+1}\in P$ such that $q_{n+1}\ge q_{1},...,q_{n},p_{n+1}$.

\begin{corollary}
If $B\subset F$ is solid, then both of its $\sigma$-order and $\mathrm{u\sigma o}$ adherences consist of $f\in F$ such that there is a countable $Q\subset B_{+}$ with $Q\uparrow \left|f\right|$.
\end{corollary}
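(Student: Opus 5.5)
The statement is the corollary of Lemma \ref{csolid} obtained by taking $E=F$ (trivially super order dense in itself). So the plan is really a proof of the main claim of Lemma \ref{csolid} in the countable setting, mirroring the proof of Lemma \ref{solid}. Here the sets in question are read with countable $Q$ and with $\sigma$-order and $\mathrm{u\sigma o}$ adherences. I will show the chain
\[
\text{(countable }Q\uparrow|f|\text{)}\subset\sigma\text{-order adherence}\subset\mathrm{u\sigma o}\text{ adherence}\subset\text{(countable }Q\uparrow|f|\text{)}.
\]

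The first inclusion goes as follows. If $Q\subset B_{+}$ is countable with $Q\uparrow|f|$, then by the remark before the corollary we may replace $Q$ by an increasing sequence $q_n\uparrow|f|$. Put $f_n:=q_n\wedge f^{+}-q_n\wedge f^{-}$. Then $|f_n|=q_n\wedge|f|\le q_n$, so $f_n\in B$ by solidity. Moreover $|f-f_n|=|f|-q_n\downarrow 0$ is a countable dominating family, so $f_n\toso f$. The second inclusion is immediate, since $\sigma$-order convergence implies $\mathrm{u\sigma o}$ convergence.

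The main step is the last inclusion. Let $(f_p)\subset B$ with $f_p\touso f$. As in Lemma \ref{solid}, set $e_p:=|f|\wedge|f_p|\in B$. Lattice operations are $\mathrm{u\sigma o}$ continuous, so $e_p\touso|f|$, and since the net is order bounded by $|f|$, it converges in $\sigma$-order. Hence there is a countable $Q=\{q_n\}\downarrow 0$ dominating the tails of $|f|-e_p$; we may take it decreasing. For each $n$ choose $p_n$ with $|f|-e_{p_n}\le q_n$. Then $g_n:=(|f|-q_n)^{+}\le e_{p_n}$, so $g_n\in B_{+}$ by solidity. Finally $(g_n)$ is increasing and $g_n\uparrow|f|$, because $q_n\downarrow0$ and $(|f|-q)^{+}$ depends order-continuously on $q$. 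This gives the countable $Q\subset B_+$ with $Q\uparrow|f|$.

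The only delicate point is the claim that an order-bounded $\mathrm{u\sigma o}$-convergent net is $\sigma$-order convergent. This is the "converse for eventually bounded nets" stated in the paragraph defining these convergences. With $e:=|f|$ one has $|e_p-|f||\wedge e=|e_p-|f||$, so the $\mathrm{u\sigma o}$ definition directly yields the countable dominating set. Thus no super order density is needed, since $E=F$ and we use $(|f|-q_n)^+$ itself rather than approximating from $E$. The result is the characterization of both adherences.
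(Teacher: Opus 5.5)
Your proof is correct and is essentially the paper's argument. The corollary comes from Lemma \ref{csolid} with $E=F$, and that lemma's proof is the countable transcription of the proof of Lemma \ref{solid}. For the easy inclusion it uses the same solidity trick $q_n\wedge f^{+}-q_n\wedge f^{-}$, and for the hard one the same bound $\left(|f|-q_n\right)^{+}\le |f|\wedge|f_{p_n}|$. Your use of the countable family $\left\{\left(|f|-q_n\right)^{+}\right\}_{n\in\N}$ itself, rather than the uncountable union of intervals used in Lemma \ref{solid}, is exactly the adaptation needed to keep $Q$ countable.
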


The following result is proven similarly to Corollary \ref{solid2}.

\begin{corollary}
Let $E\subset F$ be a super order dense sublattice and let $B\subset E$ be solid in $E$. Then, the $\sigma$-order and $\mathrm{u\sigma o}$ adherences of $B$ in $F$ and the $\sigma$-order and $\mathrm{u\sigma o}$ adherences of $\Sol_{F} B$ all coincide. This set is always solid, and it is convex whenever $B$ is. $f\in F$ belongs to this set if and only if there is a countable $Q\subset B_{+}$ with $Q\uparrow\left|f\right|$.
\end{corollary}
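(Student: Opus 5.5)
The argument runs parallel to the proof of Corollary \ref{solid2}, with Lemma \ref{csolid} in place of Lemma \ref{solid}. Here $E\subset F$ is super order dense and $B$ is solid in $E$. Put $C:=\Sol_{F}B$. Then $C$ is solid in $F$, and $C\cap E=B$: if $e\in E$ and $\left|e\right|\le\left|b\right|$ with $b\in B$, solidity of $B$ in $E$ gives $e\in B$. Lemma \ref{csolid}, applied to $C$, shows that the $\sigma$-order and $\mathrm{u\sigma o}$ adherences of $C$ and of $C\cap E=B$ in $F$ all coincide. It also shows that this common set is solid, and that it consists of those $f$ for which some countable $Q\subset C_{+}\cap E=B_{+}$ satisfies $Q\uparrow\left|f\right|$. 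This is exactly the description claimed in the statement. I read the clauses (i)--(iii') of Lemma \ref{csolid} as their $\sigma$-versions, namely countable $Q$ and $\sigma$-order/$\mathrm{u\sigma o}$ adherences, as the surrounding text makes clear.

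For convexity, assume $B$ is convex. Then $B$ is also closed under absolute values, since it is solid. Lemma \ref{conve} then gives that $C=\Sol_{F}B$ is convex. The convexity clause of Lemma \ref{csolid}, which holds because $\sigma$-order limits preserve convex combinations, transfers this to the adherence.

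Lemma \ref{csolid} is stated here without proof, so to be safe I would verify it in the same way as Lemma \ref{solid}. The only step needing care is the inclusion of the $\mathrm{u\sigma o}$ adherence of $B$ in set (i'). Suppose $f_{p}\touso f$ with $f_{p}\in B$. Then $e_{p}:=\left|f\right|\wedge\left|f_{p}\right|\toso\left|f\right|$, dominated by a countable $Q\downarrow 0_{F}$. For each $q\in Q$, the element $g_{q}:=\left(\left|f\right|-q\right)^{+}$ lies in $B$.

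Super order density now replaces the interval $\left[0_{F},g_{q}\right]_{E}$ by a countable $R_{q}\subset E_{+}$ with $R_{q}\uparrow g_{q}$. I expect this to be the main obstacle, because the union of the sets $R_{q}$ over $q\in Q$ has to be made directed while staying countable. I would handle it by closing $\bigcup_{q}R_{q}$ under finite suprema. This closure is still countable and still lies below elements of the form $g_{u}\in B$, as in the proof of Lemma \ref{solid}. Hence it is a countable directed subset of $B\cap E$ increasing to $\left|f\right|$.

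The remaining inclusions are formal, exactly as in Lemma \ref{solid}. Solidity of the adherence also follows as there, using the nets $g_{p}\wedge f^{+}-g_{p}\wedge f^{-}$.
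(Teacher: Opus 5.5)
Your proposal is correct and is essentially the paper's argument: take $C=\Sol_{F}B$, observe $C\cap E=B$, obtain convexity of $C$ from Lemma \ref{conve}, and read everything off from the $\sigma$-version of Lemma \ref{csolid}. Your added check of the key inclusion in Lemma \ref{csolid} is exactly the countable modification of the proof of Lemma \ref{solid} that the paper leaves implicit: you close $\bigcup_{q\in Q}R_{q}$ under finite suprema, so that it stays countable, directed, and below the elements $g_{u}\in B$.
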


Until the end of the section let $F$ be a normed lattice. We call it \emph{$\sigma$-order continuous} if whenever $Q\subset F$ is countable and such that $Q\downarrow 0_{F}$, it then follows that $Q\to 0_{F}$ in norm, that is $\bigwedge\limits_{q\in Q}\|q\|=0$; equivalently, $f_{p}\toso 0_{F}$ implies $\|f_{p}\|\to 0$. $F$ is \emph{$\sigma$-(order) semicontinuous} if whenever countable $Q\subset\Ba_{F_{+}}$ and $f\in F$ are such that $Q\uparrow f$ it then follows that $\|f\|\le 1$, i.e. $f\in \Ba_{F}$. The following result is proven analogously to Theorem \ref{oc}.

\begin{theorem}
$F$ is $\sigma$-order continuous if and only if every renorming of $F$ is $\sigma$-semicontinuous.
\end{theorem}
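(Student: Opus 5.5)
We mirror the proof of Theorem \ref{oc}, replacing arbitrary directed sets with increasing sequences throughout (which is legitimate by the remark following Lemma \ref{csolid}).

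\emph{Necessity.} It suffices to check that a $\sigma$-order continuous normed lattice is $\sigma$-semicontinuous, because $\sigma$-order continuity is a topological property and so survives renorming. Take a countable $Q\subset\Ba_{F_{+}}$ with $Q\uparrow f$. Then $f-Q\downarrow 0_{F}$, and $f-Q$ is countable, so $\sigma$-order continuity gives $\bigwedge_{q\in Q}\|f-q\|=0$. Hence $f$ lies in the norm closure of $Q$, which is contained in the norm closed set $\Ba_{F}$, so $\|f\|\le1$.

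\emph{Sufficiency.} Suppose $F$ is not $\sigma$-order continuous. Then there is a countable $Q\downarrow 0_{F}$ with $\|q\|\ge\varepsilon$ for all $q\in Q$. Rescale so that $\varepsilon=2$, and, using the remark after Lemma \ref{csolid}, replace $Q$ by a decreasing sequence $(q_n)$ with $q_n\downarrow 0_F$. We may assume $q_1=:f\ge q_n\ge0$; otherwise intersect the sequence with $q_1$. Put $p_n:=f-q_n$, so that $p_n\uparrow f$ and $\|f-p_n\|\ge2$ for every $n$.

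Now set $B:=\Ba_{F}+\bigcup_n[-p_n,p_n]$. This set is convex, since it is the sum of two convex sets and an increasing union of convex intervals is convex. It is solid, since $\Ba_F$ and $\bigcup_n[-p_n,p_n]$ are solid and the sum of solid sets in a vector lattice is solid by the Riesz decomposition property. It is also norm bounded. Consequently $\overline{B}$ is a norm bounded, norm closed, convex, solid set containing $\Ba_F$, so its gauge $\|\cdot\|_{\overline{B}}$ is a solid norm equivalent to $\|\cdot\|$.

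The same estimate as in Theorem \ref{oc} shows $f\notin\overline{B}$. Indeed, for $g\in\Ba_F$ and $h\in[-p_n,p_n]$ we have $f-h\ge f-p_n\ge0$, hence
$$\|f-g-h\|\ge\|f-p_n\|-1\ge1.$$
Since $\overline{B}$ is radially closed, $\|f\|_{\overline{B}}>1$. On the other hand $\|p_n\|_{\overline{B}}\le1$ for every $n$ and $p_n\uparrow f$. So the countable family $\{p_n\}$ witnesses that $\|\cdot\|_{\overline{B}}$ is not $\sigma$-semicontinuous.

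The only point needing care is that the failure of $\sigma$-order continuity yields a \emph{countable} downward directed set bounded away from $0$ in norm, which can be taken to be a sequence dominated by a single element. Both facts follow from the definition together with the remark after Lemma \ref{csolid}, so no real obstacle is expected.
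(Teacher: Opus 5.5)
Your proposal is correct and follows the paper's approach: the paper only says this result is proven analogously to Theorem \ref{oc}, and your argument is exactly that proof with directed sets replaced by countable ones. The extra details you add, namely passing to a decreasing sequence and justifying solidity of $B$ via Riesz decomposition, are valid but not needed.
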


The next result is similar to Proposition \ref{fatou} (Cf. \cite[Theorem 4.5]{ab0}).

\begin{proposition}
The following conditions are equivalent:
\item[(i)] $F$ is $\sigma$-semicontinuous;
\item[(ii)] $\Ba_{F}$ is $\sigma$-order closed;
\item[(ii')] $\Ba_{F}$ is $\mathrm{u\sigma o}$ closed;
\item[(iii)] If $f_{p}\toso f$, then $\|f\|\le\liminf\|f_{p}\|$;
\item[(iii')] If $f_{p}\touso f$, then $\|f\|\le\liminf\|f_{p}\|$;
\item[(iv)] If $f_{n}\toso f$, then $\|f\|\le\liminf\|f_{n}\|$;
\item[(iv')] If $f_{n}\touso f$, then $\|f\|\le\liminf\|f_{n}\|$.
\end{proposition}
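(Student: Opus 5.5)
We will mirror the proof of Proposition \ref{fatou}, replacing Corollary \ref{solid1} with its countable analogue, the corollary following Lemma \ref{csolid}. That corollary says that for a solid $B$ the $\sigma$-order and $\mathrm{u\sigma o}$ adherences of $B$ both consist of the $f$ for which some countable $Q\subset B_{+}$ satisfies $Q\uparrow\left|f\right|$.

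\textbf{(i)$\Rightarrow$(ii) and (i)$\Rightarrow$(ii').} Take $f$ in the $\sigma$-order (or $\mathrm{u\sigma o}$) adherence of $\Ba_{F}$. The corollary gives a countable $Q\subset\Ba_{F_{+}}$ with $Q\uparrow\left|f\right|$. By $\sigma$-semicontinuity, $\|f\|=\|\left|f\right|\|\le 1$. Hence both adherences of $\Ba_{F}$ equal $\Ba_{F}$, so $\Ba_{F}$ is $\sigma$-order and $\mathrm{u\sigma o}$ closed.

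\textbf{(ii)$\Rightarrow$(iii) and (ii')$\Rightarrow$(iii').} Let $r:=\liminf\|f_{p}\|<\8$ and fix $s>r$. Pass to a subnet $\left(f_{p_{q}}\right)$ whose norms eventually lie below $s$. A tail of this subnet lies in $s\Ba_{F}$ and still $\sigma$-order (respectively $\mathrm{u\sigma o}$) converges to $f$.

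The one point that needs care is that subnets preserve these convergences. The same countable dominating set $Q\downarrow 0_{F}$ dominates the tails of any subnet, so this holds. Tails also preserve convergence.

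Since $s\Ba_{F}$ is closed in the relevant sense (scale (ii)), we get $f\in s\Ba_{F}$. As $s>r$ was arbitrary, $\|f\|\le r$.

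\textbf{The remaining implications.} (iii)$\Rightarrow$(iv) and (iii')$\Rightarrow$(iv') are trivial, because sequences are nets. For (iv)$\Rightarrow$(i), take a countable $Q\subset\Ba_{F_{+}}$ with $Q\uparrow f$. By the remark after Lemma \ref{csolid}, we may replace $Q$ with an increasing sequence $\left(q_{n}\right)\subset Q$ whose supremum is still $f$. Then $q_{n}\toso f$, so $\|f\|\le\liminf\|q_{n}\|\le 1$. For (iv')$\Rightarrow$(i), note that $\sigma$-order convergence implies $\mathrm{u\sigma o}$ convergence to the same limit, so the same sequence gives $\|f\|\le 1$.

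Chaining (i)$\Rightarrow$(ii)$\Rightarrow$(iii)$\Rightarrow$(iv)$\Rightarrow$(i) and (i)$\Rightarrow$(ii')$\Rightarrow$(iii')$\Rightarrow$(iv')$\Rightarrow$(i) closes the cycle.

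The main obstacle is the first step, namely knowing that the $\mathrm{u\sigma o}$ adherence of a solid set is reached by countable increasing families. This is exactly what the countable version of Lemma \ref{solid} provides, so it can be taken as given. Everything else is routine bookkeeping.
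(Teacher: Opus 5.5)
Your proof is correct and takes essentially the same approach as the paper, which proves this result by analogy with Proposition \ref{fatou}. The ingredients match: the countable analogue of Corollary \ref{solid1} gives (i)$\Rightarrow$(ii),(ii'); the subnet-and-scaling argument gives (ii)$\Rightarrow$(iii) and (ii')$\Rightarrow$(iii'); and the reduction of countable directed sets to increasing sequences (the remark after Lemma \ref{csolid}) closes the cycle through the sequential conditions (iv),(iv').
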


The meaning of terms \emph{$\sigma$-(order) semicontinuous topology}, \emph{$r$-$\sigma$-(order) semicontinuous norm} and \emph{topologically $\sigma$-(order) semicontinuous norm} are clear. The following result is proven similarly to Proposition \ref{qf}.

\begin{proposition}
$F$ is $r$-$\sigma$-semicontinuous if and only if the common $\sigma$-order and $\mathrm{u\sigma o}$ closure of $\Ba_{F}$ is contained in $r\Ba_{F}$. Moreover, the following conditions are equivalent:
\item[(i)] $F$ is topologically $\sigma$-semicontinuous;
\item[(ii)] The norm topology is $\sigma$-semicontinuous;
\item[(iii)] The common $\sigma$-order and $\mathrm{u\sigma o}$ closure of $\Ba_{F}$ is norm bounded;
\item[(iv)] $\sigma$-order closure preserves norm bounded sets;
\item[(iv')] $\mathrm{u\sigma o}$ closure preserves norm bounded sets.
\end{proposition}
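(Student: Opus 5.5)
We mirror the proof of Proposition \ref{qf}, replacing order notions by their $\sigma$-versions and using the countable analogues of the solid-set results (Lemma \ref{csolid} and the corollaries after it). We also need the $\sigma$-analogue of Proposition \ref{fatou}, stated just above, which says that a solid norm is $\sigma$-semicontinuous iff its unit ball is $\sigma$-order closed. Let $D$ denote the common $\sigma$-order and $\mathrm{u\sigma o}$ closure of $\Ba_{F}$. It is obtained by transfinitely iterating $\sigma$-order adherences, and by Lemma \ref{csolid} (with $E=F$) each iterate is solid and convex and coincides with the corresponding $\mathrm{u\sigma o}$ adherence. Hence $D$ is a convex solid $\sigma$-order closed set, and it equals the $\mathrm{u\sigma o}$ closure.

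\emph{Main claim, necessity.} Let $\|\cdot\|'$ be $\sigma$-semicontinuous with $\|\cdot\|\le\|\cdot\|'\le r\|\cdot\|$, and let $B$ be its unit ball. Then $\Ba_{F}\subset rB\subset r\Ba_{F}$. By the $\sigma$-version of Proposition \ref{fatou}, $rB$ is $\sigma$-order closed. Therefore $D\subset rB\subset r\Ba_{F}$.

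\emph{Main claim, sufficiency.} Suppose $D\subset r\Ba_{F}$. Then $D$ is radially bounded. It is also radially closed: $(1-\frac1n)e\uparrow e$ for $e\ge0$, and solidity handles general $e$, so $D$ contains all such limits since it is $\sigma$-order closed. So the gauge $\|\cdot\|_{D}$ is a solid norm whose unit ball is exactly $D$. By the $\sigma$-version of Proposition \ref{fatou} this norm is $\sigma$-semicontinuous. Finally, $\Ba_{F}\subset D\subset r\Ba_{F}$ gives $\frac1r\|\cdot\|\le\|\cdot\|_{D}\le\|\cdot\|$, which is the equivalent form of $r$-$\sigma$-semicontinuity.

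\emph{The equivalences.}
(i)$\Rightarrow$(ii): the balls of an equivalent $\sigma$-semicontinuous norm form a base of solid $\sigma$-order closed neighbourhoods of zero.
(ii)$\Rightarrow$(iii): pick a $\sigma$-order closed neighbourhood $U\subset\Ba_{F}$ and $s>0$ with $\Ba_{F}\subset sU$. Since $sU$ is $\sigma$-order closed, $D\subset sU\subset s\Ba_{F}$.
(iii)$\Rightarrow$(iv) and (iii)$\Rightarrow$(iv'): any norm bounded set lies in some $t\Ba_{F}$, and its closure lies in $tD$, using that closure commutes with scaling.
(iv)$\Rightarrow$(i) and (iv')$\Rightarrow$(i): apply the main claim to $\Ba_{F}$ itself.

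The only point needing care is the identification of the $\sigma$-order and $\mathrm{u\sigma o}$ closures, together with their solidity and convexity. This is handled by a transfinite induction using Lemma \ref{csolid} at each step. At successor stages we apply it to the solid convex previous iterate; at limit stages we use that an increasing union of solid convex sets is solid and convex. Everything else is formal.
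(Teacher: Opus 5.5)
Your proof is correct and follows the paper's approach: the paper only says this result is proven similarly to Proposition~\ref{qf}, and you carry out that transcription, with the gauge of the $\sigma$-order closure giving the required norm and the $\sigma$-analogues of Proposition~\ref{fatou} and Lemma~\ref{csolid} doing the rest. Your transfinite induction is harmless but redundant, since Lemma~\ref{csolid} already states that the $\sigma$-order and $\mathrm{u\sigma o}$ closures of a solid set coincide and are solid, and convex whenever the set is.
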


We say that $F$ is \emph{$r$-$\sigma$-(order) demicontinuous}, for some $r\ge 1$, if whenever countable $P\subset\Ba_{F_{+}}$ and $f\in F$ are such that $P\uparrow f$ it then follows that $\|f\|\le r$. Observe that $1$-$\sigma$-demicontinuity is precisely the $\sigma$-semicontinuity. We will call $F$ \emph{$\sigma$-(order) demicontinuous} if it is $r$-$\sigma$-demicontinuous, for some $r\ge 1$. This property is stable under renorming. If $F$ is $r$-$\sigma$-semicontinuous, then it is $r$-$\sigma$-demicontinuous, and so every topologically $\sigma$-semicontinuous normed lattice is $\sigma$-demicontinuous. The following two results are analogous to Propositions \ref{rwf} and \ref{wf}.

\begin{proposition}
The following conditions are equivalent:
\item[(i)] $F$ is $r$-$\sigma$-demicontinuous;
\item[(ii)] The common $\sigma$-order and $\mathrm{u\sigma o}$ adherence of $\Ba_{F}$ is contained in $r\Ba_{F}$;
\item[(iii)] Whenever $f_{p}\toso f$, then $\|f\|\le r\liminf\|f_{p}\|$;
\item[(iii')] Whenever $f_{p}\touso f$, then $\|f\|\le r\liminf\|f_{p}\|$;
\item[(iv)] Whenever $f_{n}\toso f$, then $\|f\|\le r\liminf\|f_{n}\|$;
\item[(iv')] Whenever $f_{n}\touso f$, then $\|f\|\le r\liminf\|f_{n}\|$.
\end{proposition}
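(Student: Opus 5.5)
The plan is to run the argument of Proposition \ref{rwf} in the countable setting, using the $\sigma$-analogue of Corollary \ref{solid1}. This is the corollary stated after Lemma \ref{csolid}. It says that for a solid $B$, the $\sigma$-order and $\mathrm{u\sigma o}$ adherences of $B$ both consist of those $f$ for which some countable $Q\subset B_{+}$ satisfies $Q\uparrow\left|f\right|$. By the remark following Lemma \ref{csolid}, such a $Q$ can be taken to be an increasing sequence. I would prove the cycle (i)$\Rightarrow$(ii)$\Rightarrow$(iii)$\Rightarrow$(iv)$\Rightarrow$(i), and in parallel (ii)$\Rightarrow$(iii')$\Rightarrow$(iv')$\Rightarrow$(i).

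\textbf{(i)$\Rightarrow$(ii).} Apply the corollary to the solid set $\Ba_{F}$. The two adherences coincide, and any $f$ in them has $\left|f\right|$ as the supremum of a countable directed subset of $\Ba_{F_{+}}$. By (i) this gives $\|f\|=\|\left|f\right|\|\le r$.

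\textbf{(ii)$\Rightarrow$(iii) and (ii)$\Rightarrow$(iii').} Copy the (ii)$\Rightarrow$(iii) argument of Proposition \ref{fatou}. Put $t:=\liminf\|f_{p}\|$ and assume $t<\8$. Fix $s>t$ and pass to a subnet whose norms eventually lie below $s$. Since a subnet of a $\sigma$-order (respectively $\mathrm{u\sigma o}$) convergent net converges in the same sense to the same limit, $f$ lies in the corresponding adherence of $s\Ba_{F}$. That set is $s$ times the adherence of $\Ba_{F}$, so by (ii) it is contained in $rs\Ba_{F}$. Letting $s\downarrow t$ gives $\|f\|\le rt$.

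\textbf{(iii)$\Rightarrow$(iv) and (iii')$\Rightarrow$(iv').} These are trivial, since sequences are nets.

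\textbf{(iv)$\Rightarrow$(i) and (iv')$\Rightarrow$(i).} Let $P\subset\Ba_{F_{+}}$ be countable with $P\uparrow f$. Extract an increasing sequence $(q_{n})\subset P$ with $q_{n}\uparrow f$ as in the remark after Lemma \ref{csolid}: enumerate $P$ and dominate the $n$-th element at stage $n$. Then the sequence has the same supremum. An increasing sequence converges in $\sigma$-order, hence also in $\mathrm{u\sigma o}$, to its supremum. Applying (iv) or (iv') gives $\|f\|\le r\liminf\|q_{n}\|\le r$.

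The only point needing care is the $\sigma$-version of Corollary \ref{solid1}. Its proof requires that the countable dominating set $Q\downarrow0$ in the proof of Lemma \ref{solid} yields a countable family $g_{q}=(\left|f\right|-q)^{+}\uparrow\left|f\right|$ inside $B$. This holds because $Q$ is countable and we may take $E=F$. That corollary is stated earlier in the paper, so it may be assumed, and the rest is routine bookkeeping.
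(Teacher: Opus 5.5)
Your proof is correct and takes essentially the route the paper intends. The paper only says the result is analogous to Proposition \ref{rwf}, whose template is the proof of Proposition \ref{fatou}: the countable version of Corollary \ref{solid1} gives (i)$\Rightarrow$(ii), the subnet/$\liminf$ argument gives (ii)$\Rightarrow$(iii),(iii'), and reducing countable directed sets to increasing sequences closes the cycle through (iv),(iv'). Your check that $\{(|f|-q)^{+}\}_{q\in Q}$ is countable when $Q$ is, so that the corollary genuinely holds in the $\sigma$-setting, is exactly the point that needs verifying.
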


\begin{proposition}
The following conditions are equivalent:
\item[(i)] $F$ is $\sigma$-demicontinuous;
\item[(ii)] The common $\sigma$-order and $\mathrm{u\sigma o}$ adherence of $\Ba_{F}$ is norm bounded;
\item[(iii)] $\sigma$-order adherence preserves norm bounded sets;
\item[(iii')] $\mathrm{u\sigma o}$ adherence preserves norm bounded sets.
\end{proposition}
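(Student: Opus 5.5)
The plan is to follow the scheme of Proposition \ref{wf}, and to use the preceding proposition (the $\sigma$-analogue of Proposition \ref{rwf}) as the quantitative base. Write $C$ for the common $\sigma$-order and $\mathrm{u\sigma o}$ adherence of $\Ba_{F}$. By the $\sigma$-analogue of Corollary \ref{solid1} (the corollary following Lemma \ref{csolid}), $C$ is the set of all $f\in F$ for which some countable $Q\subset\Ba_{F_{+}}$ satisfies $Q\uparrow\left|f\right|$. Hence $C$ is solid, and $C\subset r\Ba_{F}$ holds exactly when $F$ is $r$-$\sigma$-demicontinuous.

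(i)$\Leftrightarrow$(ii): If $F$ is $r$-$\sigma$-demicontinuous, the previous proposition gives $C\subset r\Ba_{F}$, so $C$ is norm bounded. Conversely, if $C\subset r\Ba_{F}$ for some $r$, we may take $r\ge 1$, since $\Ba_{F}\subset C$ anyway. The same proposition then shows that $F$ is $r$-$\sigma$-demicontinuous.

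(ii)$\Rightarrow$(iii) and (ii)$\Rightarrow$(iii'): Let $A$ be norm bounded, say $A\subset s\Ba_{F}$. The $\sigma$-order adherence of a set is contained in that of any larger set. Both $\sigma$-order and $\mathrm{u\sigma o}$ convergence commute with scalar multiplication, so the adherence of $s\Ba_{F}$ is $sC$. Therefore the $\sigma$-order adherence of $A$ lies in $sC$, which is norm bounded. The $\mathrm{u\sigma o}$ adherence is handled the same way.

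(iii)$\Rightarrow$(ii) and (iii')$\Rightarrow$(ii) are immediate: apply the hypothesis to $A=\Ba_{F}$, whose $\sigma$-order (respectively $\mathrm{u\sigma o}$) adherence is exactly $C$.

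No step is a real obstacle. The only point that needs care is that the $\sigma$-order and $\mathrm{u\sigma o}$ adherences of $\Ba_{F}$ coincide, and this is already supplied by Lemma \ref{csolid} with $E=F$.
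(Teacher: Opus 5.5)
Your proof is correct and matches the argument the paper intends. The paper gives no separate proof; it only says the result is analogous to Proposition \ref{wf}, which it calls straightforward: read off (i)$\Leftrightarrow$(ii) from the quantitative $\sigma$-version of Proposition \ref{rwf}, using the description of the common $\sigma$-order/$\mathrm{u\sigma o}$ adherence as the set of $f$ admitting a countable $Q\subset\Ba_{F_{+}}$ with $Q\uparrow\left|f\right|$, and get (iii), (iii') from monotonicity and scaling of adherences, exactly as you do.
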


As in Example \ref{dual} one shows that if $F^{*}_{\soc}:=F^{\sim}_{\soc}\cap F^{*}$ is $r$-norming, then $F$ is $r$-$\sigma$-semicontinuous.

\begin{proposition}
The $r$-$\sigma$-demicontinuity and $r$-$\sigma$-semicontinuity are inherited by $\sigma$-regular sublattices, for every $r\ge 1$. In particular, $\sigma$-semicontinuity, $\sigma$-demicontinuity and topological $\sigma$-semicontinuity pass down to $\sigma$-regular sublattices. $\sigma$-order continuity also passes down to $\sigma$-regular sublattices.
\end{proposition}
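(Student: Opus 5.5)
The plan is to follow the proof of Proposition \ref{pass}, which is the non-$\sigma$ analogue. The key tool is that for a $\sigma$-regular sublattice $E\subset F$ the inclusion map preserves countable suprema. Indeed, if $Q\subset E$ is countable with $Q\uparrow_{E} e$, then $e-Q\subset E$ is countable with $\bigwedge_{E}(e-Q)=0_{F}$. By $\sigma$-regularity, $\bigwedge_{F}(e-Q)=0_{F}$, that is, $Q\uparrow_{F}e$.

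\textbf{$r$-$\sigma$-demicontinuity.} Let $Q\subset\Ba_{E_{+}}$ be countable with $Q\uparrow_{E}e$. By the observation above, $Q\uparrow_{F}e$, and $Q\subset\Ba_{F_{+}}$ because $E$ carries the restricted norm. Hence $r$-$\sigma$-demicontinuity of $F$ gives $\|e\|\le r$. Taking $r=1$ gives the case of $\sigma$-semicontinuity.

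\textbf{$r$-$\sigma$-semicontinuity.} Let $\|\cdot\|'$ be a $\sigma$-semicontinuous norm on $F$ with $\|\cdot\|\le\|\cdot\|'\le r\|\cdot\|$. Its restriction to $E$ is a $\sigma$-semicontinuous norm on $E$ by the case just proven with $r=1$, applied to $(F,\|\cdot\|')$. This relies on $\sigma$-regularity being a purely order-theoretic notion, so it does not depend on the choice of norm. The restricted norm satisfies the same two-sided estimate on $E$. From these two cases:
\begin{itemize}
\item $\sigma$-demicontinuity passes down, by taking the union over $r$;
\item topological $\sigma$-semicontinuity passes down, by applying the $r$-$\sigma$-semicontinuous case to some $r$.
\end{itemize}

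\textbf{$\sigma$-order continuity.} Let $Q\subset E$ be countable with $Q\downarrow_{E}0_{F}$. Then $\bigwedge_{F}Q=0_{F}$ by $\sigma$-regularity, and $Q$ is still downward directed in $F$, so $Q\downarrow_{F}0_{F}$. $\sigma$-order continuity of $F$ then gives $\bigwedge_{q\in Q}\|q\|=0$, with the same norms computed in $E$.

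There is no real obstacle here. The only point needing care is translating $\sigma$-regularity, which is stated for infima, into preservation of countable increasing suprema via $e-Q$. This works because $e-Q$ is again countable and contained in $E$.
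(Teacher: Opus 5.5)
Your proof is correct and is the routine argument the paper intends. The paper states this result without proof, as the countable analogue of Proposition \ref{pass} (itself unproven), which rests exactly on your observation that a $\sigma$-regular embedding preserves countable suprema and infima, checked via $e-Q$.
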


We say that $F$ is \emph{pointwise $\sigma$-(order) demicontinuous} if for every $f>0$ there is $t>0$ such that whenever $P\subset s\Ba_{F_{+}}$ is countable and such that $P\uparrow f$, it then follows that $s\ge t$. Every $\sigma$-demicontinuous normed lattice is pointwise $\sigma$-demicontinuous. Note that $F$ fails to be pointwise $\sigma$-demicontinuous if and only if there is $f>0_{F}$ such that for every $s>0$ there is a countable $P\subset s\Ba_{F_{+}}$ with $P\uparrow f$.

\begin{proposition}
\item[(i)] The class of pointwise $\sigma$-demicontinuous normed lattices is closed under $\ell_{\8}$ sums.
\item[(ii)] Let $E$ be pointwise $\sigma$-demicontinuous and let $J:F\to E$ be a continuous injective $\sigma$-order continuous homomorphism. Then, $F$ is pointwise $\sigma$-demicontinuous.
\item[(iii)] Assume that $E\subset F$ is a super order dense sublattice which is pointwise $\sigma$-demicontinuous in the induced norm. Then, $F$ is pointwise $\sigma$-demicontinuous.
\item[(iv)] $F$ is pointwise $\sigma$-demicontinuous if and only if every countable directed subset of $\Ba_{F_{+}}$ is dominable.
\end{proposition}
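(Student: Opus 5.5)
The four parts are the countable analogues of Propositions \ref{bp}, \ref{barely2}, \ref{barely3} and the equivalence (i)$\Leftrightarrow$(iv) of Proposition \ref{barely}. I would repeat those arguments verbatim, checking only that every set produced along the way is still countable.

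For (i), let $f=(f_{i})>0$ in $F:=\bigoplus_{\8}\{F^{i}\}$ and pick $i$ with $f_{i}>0$. The coordinate projection $T_{i}$ is order continuous, so $P\uparrow f$ gives $T_{i}P\uparrow f_{i}$. If $P$ is countable, so is $T_{i}P$, and $T_{i}P\subset s\Ba_{F^{i}_{+}}$. Hence $s\ge t$, where $t$ is the constant for $f_{i}$ in $F^{i}$.

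For (ii), scale so that $\|J\|\le 1$. Since $J$ is injective, $Jf>0$. Since $J$ is $\sigma$-order continuous, a countable $P\uparrow f$ gives a countable $JP\uparrow Jf$ with $JP\subset s\Ba_{E_{+}}$. So the constant $t$ for $Jf$ in $E$ works for $f$.

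Part (iii) needs the one genuinely new idea. In Proposition \ref{barely3} one uses $Q=\bigcup_{p}[0,p]_{E}$, which is uncountable. Instead, given $f>0$, use super order density to choose $e\in E$ with $0<e\le f$. Given a countable $P=\{p_{n}\}\subset s\Ba_{F_{+}}$ with $P\uparrow f$, take for each $n$ a countable $Q_{n}\subset E_{+}$ with $Q_{n}\uparrow p_{n}$; this is possible because $p_{n}=\bigvee Q_{n}$, and we may close $Q_{n}$ under finite suprema while keeping it countable and inside $[0,p_{n}]$. Let $Q$ be the set of finite suprema of $\bigcup_{n}Q_{n}$. Then $Q$ is countable, directed, contained in $s\Ba_{E_{+}}$ by solidity, and $\bigvee Q=\bigvee_{n}p_{n}=f$. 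Therefore $Q\wedge e$ is a countable directed subset of $s\Ba_{E_{+}}$ with $Q\wedge e\uparrow e$, where the infimum-continuity of $\wedge$ is used in $F$. The constant for $e$ in $E$ then bounds $s$. One point needs checking: the supremum $Q\wedge e\uparrow e$ is computed in $F$, while the definition of pointwise $\sigma$-demicontinuity of $E$ needs it in $E$. This is harmless, because a supremum in $F$ that is attained in $E$ is also the supremum in $E$.

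For (iv), I would copy (i)$\Leftrightarrow$(iv) of Proposition \ref{barely}. For (i)$\Rightarrow$(iv), let $P\subset\Ba_{F_{+}}$ be countable and directed and $h>0$. If $P$ were not dominable, then for every $k$ we would have $P\wedge kh\uparrow kh$, and $P\wedge kh$ is countable and contained in $\Ba_{F_{+}}$. Then $kh\in\Ba_{F_{+}}$'s countable adherence for all $k$, i.e. $h$ has countable approximants in $\frac1k\Ba_{F_{+}}$ for every $k$, contradicting pointwise $\sigma$-demicontinuity at $h$. For (iv)$\Rightarrow$(i), take countable $P_{n}\subset\frac{1}{n2^{n}}\Ba_{F_{+}}$ with $0\in P_{n}$ and $P_{n}\uparrow f$. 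The set of finite sums $\bigplus_{n}nP_{n}$ is a countable union of countable sets, so it is countable. It is directed and lies in $\Ba_{F_{+}}$, and it fails to be dominable exactly as before. The only step I expect to require care is the countability bookkeeping in (iii); everything else transfers line by line.
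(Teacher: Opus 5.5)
Your proposal is correct and is essentially the paper's proof. Parts (i)--(iii) are the countable transcriptions of Propositions \ref{bp}, \ref{barely2} and \ref{barely3}, and in (iii) your countable $Q_{n}\subset E_{+}$ with $Q_{n}\uparrow p_{n}$ is exactly the device the paper uses in proving (v)$\Rightarrow$(i) of Theorem \ref{clev}; note that the bound $\|q\|\le s$ on finite suprema needs directedness of $P$, not just solidity. For (iv) the paper argues as you do, via $P\wedge nh\uparrow nh$ for necessity and the countable set $\bigplus_{n}nP_{n}$ for sufficiency, up to one wording slip in your necessity step: non-dominability of $P$ yields \emph{some} $h>0$ with $P\wedge kh\uparrow kh$ for all $k$, not this property for a prescribed $h$; the contradiction with pointwise $\sigma$-demicontinuity at that $h$ then goes through unchanged.
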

\begin{proof}
(i), (ii) and (iii) are proven analogously to Propositions \ref{bp}, \ref{barely2} and \ref{barely3}, respectively.

(iv): Sufficiency is proven similarly to the implication (iv)$\Rightarrow$(i) in Proposition \ref{barely}. For necessity assume that $P\subset \Ba_{F_{+}}$ is countable directed and non-dominable, so that there is $h>0_{F}$ such that $nh=\bigvee\left(P\wedge nh\right)$, for every $n\in\N$. Then, for every $n\in\N$ we have that $P_{n}:=P\wedge nh$ satisfies $P_{n}\subset\Ba_{F_{+}}$ and $P_{n}\uparrow nh$, contradicting the pointwise $\sigma$-demicontinuity.
\end{proof}

The formula $\|f\|_{\sigma L}=\bigwedge\left\{r>0,~ \exists \left(p_{n}\right)_{n\in\N}\subset r\Ba_{F_{+}} ~ p_{n}\uparrow \left|f\right|\right\}$ defines the \emph{$\sigma$-Lorentz seminorm} on $F$. By virtue of Corollary \ref{csolid2}, this seminorm is the gauge of the $\sigma$-order adherence of $\Ba_{F}$ in $F$, which is a convex solid set. Consequently, $\|\cdot\|_{\sigma L}$ is a solid seminorm with $\|\cdot\|_{L}\le \|\cdot\|_{\sigma L}\le \|\cdot\|$. It follows that $F$ is $\sigma$-semicontinuous iff $\|\cdot\|_{\sigma L}=\|\cdot\|$, $\sigma$-demicontinuous iff $\|\cdot\|_{\sigma L}$ and $\|\cdot\|$ are equivalent on $F$, and pointwise $\sigma$-demicontinuous iff $\|\cdot\|_{\sigma L}$ is a norm on $F$. This seminorm was studied in detail in \cite{cd}.

We can now introduce the concept of the anti-$\sigma$-demicontinuity. Namely $F$ has this property if $\|\cdot\|_{\sigma L}\equiv 0$. As in Lemma \ref{noatoms} one can show that if $0_{F^{*}}\ne \nu\in F^{*}_{\soc}$, then $\|f\|_{\sigma L}\ge\frac{\left|\nu\left(f\right)\right|}{\|\nu\|}$. Hence, if $F^{*}_{\sigma oc}$ separates points of $F$, then $F$ is pointwise $\sigma$-demicontinuous, and if $F$ is anti-$\sigma$-demicontinuous, then $F^{*}_{\sigma oc}=\left\{0_{F^{*}}\right\}$.

Recall that $F$ has the \emph{countable supremum property (CSP)} if whenever $P\subset F$ is such that $\bigwedge P=0_{F}$, there is a countable $P'\subset P$ such that $\bigwedge P'=0_{F}$. If $F$ has the CSP then $\|\cdot\|_{L}=\|\cdot\|_{\sigma L}$, and so every ``$\sigma$-'' concept from Sections \ref{s3} and \ref{s4} is equivalent to its ``non-$\sigma$-'' counterpart.

The normed lattices in Examples \ref{l1}, \ref{nb} and \ref{completion} are in fact anti-$\sigma$-demicontinuous. Moreover, if $\dim E=\8$, then $F:=\Co_{\ph}\left(\Ba_{E^{*}},\mathrm{w}^{*}\right)$ is also anti-$\sigma$-demicontinuous, since it is anti-demicontinuous and has the countable supremum property (see \cite{apr}).

\begin{theorem}
Let $F$ be an AM-space. Then:
\item[(i)] $F$ is $\sigma$-semicontinuous if and only if it isometrically embeds as a $\sigma$-regular sublattice of $C\left(K\right)$, for some compact Hausdorff space $K$.
\item[(ii)] $F$ is $\sigma$-demicontinuous if and only if it isomorphically embeds as a $\sigma$-regular sublattice of $C\left(K\right)$, for some compact Hausdorff space $K$. In this case it is topologically $\sigma$-semicontinuous.
\end{theorem}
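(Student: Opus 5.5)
The plan is to mimic the proof of Theorem \ref{main}, replacing the Lorentz completion with a $\sigma$-version built from $\|\cdot\|_{\sigma L}$. Sufficiency is the easy half. By the preceding proposition, $r$-$\sigma$-semicontinuity and $r$-$\sigma$-demicontinuity pass to $\sigma$-regular sublattices, and $\Co(K)$ is semicontinuous, hence $\sigma$-semicontinuous. So an isometric $\sigma$-regular copy of $F$ in $\Co(K)$ is $\sigma$-semicontinuous. An isomorphic copy is $\sigma$-semicontinuous under the norm inherited from $\Co(K)$, which is an equivalent norm on $F$. Hence $F$ is topologically $\sigma$-semicontinuous, and in particular $\sigma$-demicontinuous.

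For necessity I would first get the "in this case" clause in (ii) directly. Assume $F$ is $r$-$\sigma$-demicontinuous. Then $\|\cdot\|_{\sigma L}$ is a solid seminorm satisfying $\frac1r\|\cdot\|\le\|\cdot\|_{\sigma L}\le\|\cdot\|$. Its unit ball is the $\sigma$-order adherence $C$ of $\Ba_F$ (more precisely, $\bigcap_{s>1}sC$). I also want $\|\cdot\|_{\sigma L}$ to be an AM-norm: if $P\uparrow f$ and $Q\uparrow g$ are increasing sequences in $s\Ba_{F_+}$, then $P\vee Q\uparrow f\vee g$ and still lies in $s\Ba_{F_+}$, by the AM-property of the original norm. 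The remaining need is that $\|\cdot\|_{\sigma L}$ be $\sigma$-semicontinuous, i.e. that $C$ be $\sigma$-order closed. I expect this to be the main obstacle, because $\sigma$-adherence need not be idempotent in general. In AM-spaces, however, I would argue as follows. Let $f_n\uparrow f$ with each $f_n\in C_+$, and pick sequences $p_{n,k}\uparrow_k f_n$ in $(1+\varepsilon)\Ba_{F_+}$. Set $q_m:=\bigvee_{n,k\le m}(p_{n,k}\wedge f_m)$. This is again in $(1+\varepsilon)\Ba_{F_+}$ by the AM-property, it is increasing, and its supremum is $f$, since $q_m\ge p_{n,k}\wedge f_m$ and letting $m$ and then $k$ grow recovers each $f_n$. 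So the AM structure, which lets us take finite suprema without increasing the norm, is what makes the diagonal argument work. Thus the second adherence collapses and $\|\cdot\|_{\sigma L}$ is an equivalent $\sigma$-semicontinuous AM-norm, which gives topological $\sigma$-semicontinuity. When $r=1$ it equals the original norm.

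It remains to embed a $\sigma$-semicontinuous AM-space isometrically as a $\sigma$-regular sublattice of some $\Co(K)$. After that, (ii) follows by applying (i) to the renorming $\|\cdot\|_{\sigma L}$. For the embedding I would use $K=K_F$ and repeat the proof of Theorem \ref{david} with $P$ countable. Take a countable $P\subset F_+$ with $P\downarrow_F\0$, and suppose some $\0<h\in\Co(K_F)$ satisfies $h\le P$. Normalize $x$ and $h$ as in that proof, and use Proposition \ref{urysohn} to get $f\in\So_{F_+}$ and $r<1$. Then $(f-P)^+$ is a countable set increasing to $f$ and contained in $r\Ba_{F_+}$. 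By $\sigma$-semicontinuity, $\|f\|\le r<1$, a contradiction. Nothing in that argument used uncountability, so it transfers verbatim, and this step should be routine.
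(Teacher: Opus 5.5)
Your proof is correct and follows the paper's route: sufficiency comes from inheritance by $\sigma$-regular sublattices, necessity in (i) from rerunning the proof of Theorem \ref{david} with countable $P$, and necessity in (ii) from passing to the equivalent $\sigma$-Lorentz norm and applying (i). Your diagonal argument is exactly the detail the paper leaves to Wickstead's Theorem 6: it uses the AM-property to keep finite suprema of the countably many approximating sequences in $(1+\varepsilon)\Ba_{F_+}$. The $\wedge f_m$ in $q_m$ is harmless but unnecessary, since $p_{n,k}\le f_n\le f_m$ for $n\le m$.
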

\begin{proof}
(i) is proven analogously to Theorem \ref{david}. For (ii) one has to first replace the original norm on $F$ with the (equivalent) $\sigma$-Lorentz norm, which is $\sigma$-semicontinuous (follow the proof of \cite[Theorem 6]{wickstead}).
\end{proof}

\section{Countable monotone completeness and boundedness}\label{s10}

A normed lattice $F$ is \emph{countably monotonically bounded} if every countable directed subset of $\Ba_{F_{+}}$ is order bounded. \emph{$r$- and $r+$-monotone boundedness} are defined analogously. Countable monotone boundedness passes down to projection bands and majorizing sublattices.

\begin{theorem}\label{clev}
The following conditions are equivalent:
\item[(i)] $F$ is countably monotonically bounded;
\item[(ii)] $F$ is countably $r$-monotonically bounded, for some $r\ge 1$;
\item[(iii)] Every countable directed dominable subset of $\Ba_{F_{+}}$ is order bounded;
\item[(iv)] Every countable subset of $F$ which is order bounded in $F^{**}$ is also order bounded in $F$;
\item[(v)] $F$ has a countably monotonically bounded super order dense sublattice.\medskip

Moreover, the countable monotone boundedness implies the $\sigma$-demicontinuity. The countable $r+$-monotone boundedness implies the $r$-$\sigma$-demicontinuity. A super order dense countably monotonically bounded sublattice of $F$ is majorizing.
\end{theorem}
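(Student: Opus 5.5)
The plan is to transfer the proofs of Theorem \ref{lev}, Corollary \ref{levi}, Proposition \ref{lef} and Proposition \ref{pedro} to the countable setting. Throughout, directed sets are replaced by countable directed sets, or equivalently by increasing sequences. The only new ingredient is that every construction must stay countable.

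The first step is (ii)$\Rightarrow$(i)$\Rightarrow$(iii), which is trivial. For (i)$\Rightarrow$(ii), argue by contradiction. Suppose that for each $n$ there is a countable directed $P_{n}\subset\Ba_{F_{+}}$ with $0_{F}\in P_{n}$ and no upper bound of norm less than $n2^{n}$. Form $P:=\bigplus_{n}\frac{1}{2^{n}}P_{n}$. This set is countable, being a countable union of countable sets of finite sums, and it is directed and contained in $\Ba_{F_{+}}$. The argument of Theorem \ref{lev} then applies verbatim.

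The main obstacle is (iii)$\Rightarrow$(i). In Theorem \ref{lev} this went through pointwise demicontinuity via Proposition \ref{barely}. Here I would use the countable analogue, part (iv) of the last proposition of Section \ref{s9}: it suffices to show that $F$ is pointwise $\sigma$-demicontinuous.
\begin{itemize}
\item Assume not. Then there is $f>0_{F}$ such that for every $s>0$ some countable $P\subset s\Ba_{F_{+}}$ satisfies $P\uparrow f$.
\item The $\sigma$-analogue of Lemma \ref{noatoms}(i) shows that $\left[0_{F},f\right]$ has no atoms. The proof is identical, since for an atom $e\le f$ we get $P\wedge e\uparrow e$.
\item Pick a disjoint sequence $d_{n}\in(0_{F},f]$ and countable $P_{n}\subset\frac{\|d_{n}\|}{n2^{n}}\Ba_{F_{+}}$ with $P_{n}\uparrow f$. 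Set $Q_{n}:=P_{n}\wedge d_{n}$ and $Q:=\bigplus_{n}\frac{n}{\|d_{n}\|}Q_{n}$.
\item $Q$ is countable, directed, contained in $\Ba_{F_{+}}$, and dominated by $\bigvee_{n}\frac{n}{\|d_{n}\|}d_{n}$ in $F^{u}$. By (iii) it has an upper bound $h\in F$, and taking $n>\|h\|$ gives the same contradiction as before.
\end{itemize}

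For (i)$\Leftrightarrow$(iv), follow Corollary \ref{levi}(ii). Given a countable $Q$ that is order bounded in $F^{**}$, the set $\left|Q\right|^{\vee}$ is countable, directed and norm bounded, hence order bounded in $F$. Conversely, a countable directed subset of $\Ba_{F_{+}}$ is order bounded in $F^{**}$.

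For (i)$\Leftrightarrow$(v), the direction (i)$\Rightarrow$(v) takes $E=F$. For (v)$\Rightarrow$(ii), mimic Proposition \ref{pedro}, after first noting that $E$ is countably $r$-monotonically bounded by (i)$\Rightarrow$(ii) applied to $E$. Let $(p_{n})\subset\Ba_{F_{+}}$ be increasing. Super order density gives countable $Q_{n}\subset E_{+}$ with $Q_{n}\uparrow p_{n}$, and replacing $Q_{n}$ by $Q_{n}^{\vee}$ makes each directed. Then $R:=\bigcup_{n}Q_{n}^{\vee}$ is countable and contained in $\Ba_{E_{+}}$, since its elements lie below some $p_{m}$. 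The set $R^{\vee}$ is still countable, directed, and bounded by $p_{m}$'s, so it has an upper bound $e\in E$ with $\|e\|\le r$, and then $p_{n}=\bigvee Q_{n}\le e$ for all $n$. Applying this to the constant sequence $p_{n}=f$ shows that $E$ is majorizing, which is the last claim of the theorem.

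The remaining claims copy Proposition \ref{lef}. If $(p_{n})\subset\Ba_{F_{+}}$ with $p_{n}\uparrow f$, then for each $s>r$ there is an upper bound $g\in s\Ba_{F}$, so $\|f\|\le\|g\|\le s$. This gives the $r+$ statement, and the unquantified one follows from (i)$\Rightarrow$(ii).
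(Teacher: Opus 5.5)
Your proposal is correct and follows essentially the paper's route: (i)–(iv) are obtained by transferring Theorem \ref{lev} and Corollary \ref{levi}, with the countable analogue of Proposition \ref{barely} (part (iv) of the last proposition of Section \ref{s9}) used in (iii)$\Rightarrow$(i). The implication (v)$\Rightarrow$(i) and the majorizing claim use the same union-of-countable-approximants argument as the paper, and the last two claims copy Proposition \ref{lef}. Your only deviation is cosmetic: you first make $E$ countably $r$-monotonically bounded and then conclude (ii) directly, which slightly sharpens the paper's (v)$\Rightarrow$(i) into a quantitative form.
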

\begin{proof}
Equivalence of (i)-(iv) is proven analogously to Theorem \ref{lev} and Corollary \ref{levi}. The last two claims are proven analogously to Proposition \ref{lef}.\medskip

(i)$\Rightarrow$(v) is trivial. Let us prove the converse. Assume that $E\subset F$ is a super order dense sublattice which is countably monotonically bounded. Let $Q\subset\Ba_{F_{+}}$ be countable and directed. For every $q\in Q$ there is a countable $P_{q}\subset E_{+}$ such that $q=\bigvee P_{q}$. Let $P:=\bigcup\limits_{q\in Q}P_{q}$; then $P^{\vee}$ is a countable directed subset of $E$. To show that $P^{\vee}\subset \Ba_{F}$ use the argument from the proof of the implication (iii)$\Rightarrow$(ii) from the proof of Theorem \ref{awt}. It follows that $P^{\vee}$, and hence $P$, is order bounded in $E$ by some $e$. Then, for every $q$ we have that $P_{q}\le e$, and so $q\le e$.
\end{proof}

$F$ is called countably \emph{($r$-)monotonically complete} if every countable directed subset of $\Ba_{F_{+}}$ has a supremum in $F$ (of norm at most $r$). Every countably monotonically complete normed lattice is norm complete. Countable ($r$-)monotone completeness is equivalent to countable ($r$-)monotone boundedness together with countable order completeness. Countable $r$-monotone completeness is equivalent to countable monotone completeness together with the $r$-$\sigma$-demicontinuity. Countable ($r$-)monotone completeness and boundedness pass down to projection bands in $F$.

\begin{remark}
The reader may have noticed that the properties considered in Section \ref{s9} are related to $\sigma$-order convergence, and because of this we use the prefix ``$\sigma$-'' for naming them. In contrast to that the properties considered in Section \ref{s10} seem to be related to order convergence of sequences, and so in naming these properties we use the modifier ``countable'' instead.
\qed\end{remark}

Parts of the following result recover \cite[Theorem 2.4]{aw} and \cite[Theorem 2.5]{taylor}.

\begin{theorem}\label{clef}
The following conditions are equivalent:
\item[(i)] $F$ is countably ($r$-)monotonically complete;
\item[(ii)] Every countable directed dominable subset of $\Ba_{F_{+}}$ has a supremum in $F$ (of norm at most $r$);
\item[(iii)] $F$ is norm complete and every countable disjoint $D\subset F_{+}$ such that $D^{\vee}\subset\Ba_{F}$ has a supremum in $F$ (of norm at most $r$);
\item[(iv)] The sequential order adherence of $\Ba_{F}$ in $F^{u}$ is contained in $F$ (in $r\Ba_{F}$);
\item[(v)] Every uo-Cauchy sequence in $\Ba_{F}$ has a uo-limit (of norm at most $r$).\medskip

In particular, $F$ is countably monotonically complete and $\sigma$-semicontinuous if and only if $\Ba_{F}$ is sequentially complete with respect to uo convergence.
\end{theorem}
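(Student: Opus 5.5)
The plan is to follow the scheme of Theorems \ref{awt} and \ref{rawt}, replacing directed sets by increasing sequences and order completeness by $\sigma$-order completeness. We prove the cycle (i)$\Rightarrow$(ii)$\Rightarrow$(iv)$\Rightarrow$(v)$\Rightarrow$(ii)$\Rightarrow$(i) and, separately, (i)$\Rightarrow$(iii)$\Rightarrow$(ii). The bracketed $r$-versions are carried along, as in Theorem \ref{rawt}.

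(i)$\Rightarrow$(ii) is trivial. For (ii)$\Rightarrow$(i), we first get countable order completeness: an order bounded increasing sequence is norm bounded (after rescaling) and dominable, so it has a supremum. Next, countable monotone boundedness follows from the equivalence (i)$\Leftrightarrow$(iii) of Theorem \ref{clev}.

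(i)$\Rightarrow$(iii): norm completeness is known. For countable disjoint $D$ with $D^{\vee}\subset\Ba_{F}$, enumerate $D$ and take the increasing sequence of finite suprema.

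(iii)$\Rightarrow$(ii): by norm completeness, $F$ is relatively uniformly complete. Countable disjoint order bounded sets have suprema, so a $\sigma$-version of Veksler--Geiler gives $\sigma$-Dedekind completeness. Then take an increasing dominable sequence $P\subset\Ba_{F_{+}}$ with $P\uparrow f$ in $F^{u}$. Since $F^{u}$ has PPP, Proposition \ref{eps} (countable case) gives a countable disjoint $D$ with $\bigvee D=sf$ and each $d$ dominated by some $p$. Hence $D^{\vee}\subset\Ba_{F}$, so $D$ has a supremum in $F$. The main obstacle is identifying this supremum with $sf$: we need $\sigma$-regularity of $F$ in $F^{u}$ for countable sets, which holds since $F$ is order dense in $F^{u}$. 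In the $r$-version we let $s\uparrow1$ to get $\|f\|\le r$.

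(ii)$\Rightarrow$(iv): by the countable analogue of Corollary \ref{solid1}, a sequential order limit in $F^{u}$ of elements of $\Ba_{F}$ has $|f|$ equal to the supremum of an increasing sequence in $\Ba_{F_{+}}$. That sequence is dominable, so by (ii) $|f|\in F$. Since $F$ is $\sigma$-order complete, it is a $\sigma$-ideal, but we need $f\in F$ itself. Here we use $f^{\pm}\le|f|$ together with the fact that $f^{\pm}$ are also sequential order limits of $\Ba_{F}$ (the adherence is solid). So $f^{\pm}$ are suprema of increasing sequences in $\Ba_{F_{+}}$, and these lie in $F$ by (ii).

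(iv)$\Rightarrow$(v): take a uo-Cauchy sequence in $\Ba_{F}$. It has a uo-limit $f$ in $F^{u}$ (\cite{azouzi}). For sequences in the universally complete $F^{u}$, the uo limit is also a $\sigma$-order limit of a sequence from the solid adherence, via $|f|\wedge|f_{n}|$ and Corollary-type arguments. So $f\in F$, and the uo convergence restricts to $F$.

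(v)$\Rightarrow$(ii): an increasing dominable sequence is uo-Cauchy in $F^{u}$, hence in $F$ (\cite[Theorem 5.3]{erz}). Its uo limit in $F$ must be its supremum, by uo-continuity of the embedding into $F^{u}$.

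The final claim is the case $r=1$: (i) with $r=1$ is countable monotone completeness plus $\sigma$-semicontinuity, and (v) with $r=1$ says $\Ba_{F}$ is sequentially uo complete.
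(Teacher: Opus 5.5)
Your overall scheme is the paper's: it says everything except (ii)$\Rightarrow$(iv) goes as in Theorems \ref{awt} and \ref{rawt}. However, your (ii)$\Rightarrow$(iv), which is the one step the paper writes out, is not justified as stated.

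The countable analogue of Corollary \ref{solid1} applies only to sets that are solid in the ambient lattice, and $\Ba_{F}$ is not solid in $F^{u}$. Indeed, $F$ is an ideal of $F^{u}$ exactly when it is order complete, and $\sigma$-order completeness is not enough. So your remark that $F$ is then a $\sigma$-ideal is false. For example, let $X$ be uncountable and let $F\subset\ell_{\infty}\left(X\right)$ consist of the functions that are constant off a countable set. Then $F$ is a $\sigma$-order complete Banach lattice with $F^{u}=\R^{X}$, but $\1_{A}\in\left[0,\1\right]\backslash F$ whenever $A$ and $X\backslash A$ are uncountable.

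Applying the corollary to $\Sol_{F^{u}}\Ba_{F}$ instead only yields an increasing sequence in $\Sol_{F^{u}}\Ba_{F}$. Its terms need not lie in $F$, so (ii) cannot be applied to it. The $\sigma$-analogue of Corollary \ref{solid2} would require $F$ to be super order dense in $F^{u}$, which also fails in this example. So you never actually produce an increasing sequence in $\Ba_{F_{+}}$ with supremum $\left|f\right|$ or $f^{\pm}$.

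The missing idea is to build that sequence inside $F$, using the $\sigma$-order completeness you have already derived from (ii). Put $g_{n}:=\bigwedge_{m\ge n}\left|f_{m}\right|$, computed in $F$. Since $F$ is order dense, hence regular, in $F^{u}$, this is also the infimum in $F^{u}$. Therefore $\Ba_{F_{+}}\ni g_{n}\uparrow\liminf_{m}\left|f_{m}\right|=\left|f\right|$, and (ii) gives $\left|f\right|\in r\Ba_{F}$. Running the same argument with $f_{m}^{\pm}$ gives $f^{\pm}\in F$, hence $f\in r\Ba_{F}$. This is exactly the paper's argument.

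Your (iv)$\Rightarrow$(v) has the same defect: $\left|f\right|\wedge\left|f_{n}\right|$ need not be in $F$, while (iv) only concerns order limits of sequences taken from $\Ba_{F}$. To repair it, first derive $\sigma$-order completeness of $F$ from (iv). An increasing order bounded sequence in $F_{+}$ has its $F^{u}$-supremum in the sequential order adherence of a multiple of $\Ba_{F}$. Then use the same $g_{n}$: uo convergence gives $\bigwedge_{m\ge n}\left(\left|f_{m}\right|\wedge\left|f\right|\right)\le g_{n}\le\left|f\right|$, so again $g_{n}\uparrow\left|f\right|$, and (iv) applies.

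Finally, in (iii)$\Rightarrow$(ii) the point that needs checking is not regularity. It is whether the $d_{n}$ produced by Proposition \ref{eps} in $F^{u}$ lie in $F$. They do, because $\left(p_{n}-sf\right)^{+}=\bigwedge_{m}\left(p_{n}-sp_{m}\right)^{+}$ and $P_{\left(p_{n}-sf\right)^{+}}\left(sf\right)=\bigvee_{m}P_{\left(p_{n}-sf\right)^{+}}\left(sp_{m}\right)\le p_{n}$ can once more be computed in $F$.
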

\begin{proof}
Equivalence of (i)-(iii), as well as the implications (iv)$\Rightarrow$(v)$\Rightarrow$(ii) are proven analogously to Theorems \ref{awt} and \ref{rawt}. Let us only sketch a proof for (ii)$\Rightarrow$(iv). First, it follows that $F$ is countably order complete. Assume that $f\in F^{u}$ is an order limit of a sequence in $\Ba_{F_{+}}$. For $n\in\N$ define $g_{n}:=\bigwedge\limits_{m\ge n}f_{m}$, which exists due to countable order completeness of $F$. It follows that $\Ba_{F_{+}}\ni g_{n}\uparrow f$, and so using (ii) we see that $f\in F$.
\end{proof}

The following result extends \cite[Lemma 2.5]{abt}.

\begin{proposition}\label{cle}
Assume that $F$ has a countable supremum property and a weak unit. Then, if $F$ is countably ($r$-)monotonically bounded or complete, then it is ($r$-)monotonically bounded or complete, respectively.
\end{proposition}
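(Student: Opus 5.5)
The strategy is to reduce an arbitrary directed subset of $\Ba_{F_{+}}$ to a countable one, using the weak unit and the countable supremum property. Once that reduction is in place, the countable hypotheses apply directly.

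Fix a weak unit $u\in F_{+}$ and a directed $P\subset\Ba_{F_{+}}$. By Theorem \ref{lev}, boundedness with a constant is equivalent to plain boundedness, so it suffices to produce an upper bound of norm at most $r$ (respectively any $s>r$, or a supremum). For each $n\in\N$, consider the set $P\wedge nu$. It is directed and order bounded by $nu$. I will work with the corresponding downward directed set $nu-P\wedge nu$. If this set had infimum $0_{F}$ in $F$, the CSP would give a countable subset with infimum $0_{F}$. In general the infimum need not exist, so I plan to work in $F^{\delta}$ (or $F^{u}$). There $P\wedge nu\uparrow g_{n}$ for some $g_{n}\le nu$. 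Since $F$ is order dense in $F^{\delta}$, the CSP passes to $F^{\delta}$ for sets drawn from $F$. This yields a countable $P_{n}\subset P$ such that $P_{n}\wedge nu\uparrow g_{n}$ in $F^{\delta}$. Take $P_{0}:=\bigcup_{n}P_{n}$ and enlarge it to a countable directed subset $P'\subset P$ by adding upper bounds in $P$ of finite subsets, iterating countably many times. By countable ($r$-)monotone boundedness, $P'$ has an upper bound $e\in F$ of norm at most $r$ (or at most $s$).

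It remains to check that $e\ge P$. Fix $p\in P$ and $n\in\N$. Then $p\wedge nu\le g_{n}=\bigvee(P_{n}\wedge nu)\le e\wedge nu$, with the suprema taken in $F^{\delta}$, where the order of $F$ is the restriction. Hence $(p-e)^{+}\wedge nu\le (p\wedge nu-e\wedge nu)^{+}=0_{F}$ for every $n$. Since $u$ is a weak unit, $(p-e)^{+}=\bigvee_{n}(p-e)^{+}\wedge nu=0_{F}$, so $p\le e$. This settles the ``bounded'' version.

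For the ``complete'' version, countable monotone completeness gives countable order completeness. So $F$ is order complete: countable order completeness together with the CSP implies order completeness, because by the CSP every supremum can be realized along a countable subset. Monotone boundedness then follows from the previous paragraph, and the two together give monotone completeness. For the $r$-version, I will note that $\bigvee P=\bigvee P'$, since both equal the supremum computed in the argument above. Therefore $\|\bigvee P\|\le r$ by countable $r$-monotone completeness.

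The main obstacle is the CSP step. One must check that the CSP of $F$ transfers to the sets $nu-P\wedge nu$ even though their infimum is taken in $F^{\delta}$. The first thing I would try is to avoid $F^{\delta}$ altogether. Note that $\{(p'-p)^{+}\wedge nu : p'\in P\}$ lies in $F$. Moreover, for fixed $p$ and $n$, a countable subfamily whose infimum is $0_{F}$ can be chosen inside $F$ by applying the CSP to $\{(p\wedge nu-p'\wedge nu)^{+}\}_{p'\in P}$. The catch is that this works for each $p$ separately, while $P$ may be uncountable. To handle that, I would replace the per-$p$ choice by a maximal disjoint-system argument in the spirit of Proposition \ref{eps}, using the weak unit to keep the construction countable.
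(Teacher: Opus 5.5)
Your route differs from the paper's and is viable. As written, though, it has a gap exactly at the step you flag.

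\textbf{How the two routes compare.} The paper argues as follows:
\begin{itemize}
\item Theorem \ref{clev} gives $\sigma$-demicontinuity, and the CSP upgrades this to demicontinuity, hence pointwise demicontinuity.
\item Proposition \ref{barely} then makes $P$ dominable, with a supremum $g$ in $F^{u}$.
\item An external theorem says that the CSP together with a weak unit passes to $F^{u}$. This yields a countable $P'\subset P$ with $P'\uparrow g$.
\end{itemize}
Your truncation by $nu$ keeps everything order bounded in $F$. So you only need a countable-supremum statement in $F^{\delta}$, which holds with no weak unit at all. The weak unit then enters only through $p=\bigvee_{n}(p\wedge nu)$. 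This bypasses demicontinuity, dominability and the external result, and it shows clearly why the weak unit matters.

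\textbf{The gap.} Regularity of $F$ in $F^{\delta}$ transfers the CSP only to subsets of $F$ whose infimum or supremum already lies in $F$. Your $g_{n}=\bigvee_{F^{\delta}}(P\wedge nu)$ need not lie in $F$. The per-$p$ and disjoint-system workaround is not needed; close the gap as follows.
\begin{itemize}
\item Let $B:=\{b\in F,~ b\ge g_{n}\}$. It contains $nu$, and $\bigwedge_{F^{\delta}}B=g_{n}$ because $F$ is order dense and majorizing in $F^{\delta}$.
\item The set $\{b-p\wedge nu,~ b\in B,~ p\in P\}\subset F_{+}$ has infimum $g_{n}-g_{n}=0$ in $F^{\delta}$, hence also in $F$.
\item The CSP yields pairs $(b_{k},p_{k})_{k\in\N}$ with $\bigwedge_{k}(b_{k}-p_{k}\wedge nu)=0_{F}$. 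By regularity this is also the infimum in $F^{\delta}$.
\item Put $h:=\bigvee_{F^{\delta}}\{p_{k}\wedge nu\}_{k\in\N}$. Then $0\le g_{n}-h\le b_{k}-p_{k}\wedge nu$ for every $k$, so $h=g_{n}$.
\end{itemize}
Thus $P_{n}:=\{p_{k}\}_{k\in\N}$ works, and the rest of your argument goes through.

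\textbf{Smaller points.}
\begin{itemize}
\item The inequality $(p-e)^{+}\wedge nu\le(p\wedge nu-e\wedge nu)^{+}$ is false. In $\R$, take $p=3$, $e=1$, $nu=2$: the left side is $2$ and the right side is $1$. You do not need it. From $p\wedge nu\le g_{n}\le e\wedge nu\le e$ for all $n$ and $p\wedge nu\uparrow_{n}p$ you get $p\le e$ directly.
\item For the complete case, the detour through order completeness is unnecessary, and it would again require the $F^{\delta}$ version of the CSP. Instead take $e:=\bigvee P'$, which exists with $\|e\|\le r$ by countable ($r$-)monotone completeness. The bounded argument shows $e\ge P$, and every upper bound of $P$ dominates $P'$, so $e=\bigvee P$.
\item For the non-$r$ version no constant is needed at all. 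Where a constant is needed, the relevant result is Theorem \ref{clev}, not Theorem \ref{lev}.
\end{itemize}
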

\begin{proof}
We first deal with $r$-monotone boundedness. It follows from Theorem \ref{clev} that $F$ is $\sigma$-demicontinuous, and since it has the CSP, it is in fact demicontinuous, thus pointwise demicontinuous. Let $P\subset\Ba_{F_{+}}$ be directed. By Proposition \ref{barely} it is dominable, and so it has a supremum in $F^{u}$, say $g$. Since the CSP together with a weak unit imply that $F^{u}$ has the CSP (see \cite[Theorem 6.2]{kt2}), there is a countable $P'\subset P$ such that $P'\uparrow g$. As $P'\subset\Ba_{F_{+}}$ we can apply our assumption to get $h\in F$ such that $P'\le h$ and $\|h\|\le r$. Then, $g=\bigvee P'\le h$, and so $P\le g\le h$. Thus, $F$ is $r$-monotonically bounded. The case of completeness is similar, but the last two steps are replaced with the observation that $g\in F$ and $\|g\|\le r$, because $g$ is the supremum of a countable directed subset of $\Ba_{F_{+}}$.
\end{proof}

The next example shows that the requirement of the weak unit is not superfluous in Proposition \ref{cle}.

\begin{example}
Let $X$ be an uncountable set and let $F:=\ell_{\8}^{\sigma}\left(X\right)$ be the space of all bounded functions on $X$ with a countable support. It is easy to check that $F$ is a closed ideal of $\ell_{\8}\left(X\right)$, hence an order complete semicontinuous AM-space. It is also not hard to verify its countable $1$-monotone completeness. On the other hand, it is not monotonically bounded. Indeed,  $\left\{\1_{A},~A\subset X\mbox{ -- finite}\right\}$ is a directed subset of $\Ba_{F_{+}}$, which is not order bounded.
\qed\end{example}

\begin{remark}
If $F$ is almost countably complete then $F$ has the countable universal completion, and so we can extend $\|\cdot\|_{\sigma L}$ to it. In this case $\sigma$-Lorentz completion can be defined. It will be investigated in future work.
\qed\end{remark}

\section*{Acknowledgements}

The author is grateful to Antonio Avil\'{e}s, Enrique Garc\'ia-S\'anchez, Alberto Salguero-Alarc\'on, Mitchell Taylor, Pedro Tradacete and Nazaret Trejo-Arroyo for valuable conversations on the topic of the paper, David de Hevia for conjecturing what became Theorem \ref{david}, as well as Anthony Wickstead for pointing out the necessary condition in part (i) of Theorem \ref{main}. Additionally, David Mu\~noz-Lahoz gets credit for setting up the interface that enabled interactions with models GPT-5.5 and GPT-5.6 Sol, which generated ideas for Examples \ref{secon}, \ref{nb} and \ref{completion}, Proposition \ref{urysohn}, Theorems \ref{lev} and \ref{david}, as well as a significant part of Section \ref{s8}. A part of the work on the paper was conducted during the author's visit to the University of Alberta. The author is grateful to Vladimir Troitsky for hospitality and organizing a seminar where a lot of aspects of the paper were discussed and improved. This research has been partially supported by grant PCI2024-155094-2 funded by MICIU/AEI/10.13039/501100011033

\end{document}